\numberwithin{equation}{section}
\theoremstyle{plain}
\newtheorem{theorem}[equation]{Theorem}
\newtheorem{lemma}[equation]{Lemma}
\newtheorem{corollary}[equation]{Corollary}
\newtheorem{proposition}[equation]{Proposition}
\theoremstyle{definition}
\newtheorem{definition}[equation]{Definition}
\newtheorem{construction}[equation]{Construction}
\newtheorem{remark}[equation]{Remark}
\newtheorem{example}[equation]{Example}
\setlist[enumerate,1]{label={(\roman*)}}
\DeclareMathOperator{\forg}{forg}
\def\Vec{\mathop{\tt Vec}\nolimits}
\def\Norm{\mathop{\tt Norm}\nolimits}
 \DeclareMathOperator{\End}{End}
 \DeclareMathOperator{\Spec}{Spec}
 \DeclareMathOperator{\Hom}{Hom}
 \DeclareMathOperator{\Aut}{Aut}
\DeclareMathOperator{\G}{G}
\let\into\hookrightarrow
\newcommand{\defeq}{\colonequals}
\newcommand{\Gm}[1][\empty]{
  \ifthenelse{\equal{#1}{\empty}}
    {\mathbb{G}_m}
    {\mathbb{G}_{m,#1}}}
 \newcommand{\Gred}[1][\empty]{
  \ifthenelse{\equal{#1}{\empty}}
    {G^{\text{red}}}
    {G^{\text{red},#1}}}
 \newcommand{\Rep}[1][\empty]{
  \ifthenelse{\equal{#1}{\empty}}
    {\mathop{\text{\tt Rep}}\nolimits}
    {\mathop{\text{$#1$-{\tt Rep}}}\nolimits}}
\DeclareMathOperator{\gr}{gr}
\newcommand\toover[1]{\mathrel{\smash{\overset{#1}{\to}}}}
\newcommand\varto[1]{\mathrel{\hbox to #1pt{\rightarrowfill}}}
\newcommand\vartoover[2]{\mathrel{\smash{\overset{#2}{\varto{#1}}}}}
\renewcommand{\implies}{\Rightarrow}
\let\longto\longrightarrow
\let\onto\twoheadrightarrow
\def\isoto{\stackrel{\sim}{\longto}}
\newcommand{\BG}{{\mathbb{G}}}
\newcommand{\BQ}{{\mathbb{Q}}}
\newcommand{\BR}{{\mathbb{R}}}
\newcommand{\BV}{{\mathbb{V}}}
\newcommand{\BZ}{{\mathbb{Z}}}
\newcommand{\CG}{{\mathcal G}}
\newcommand{\CH}{{\mathcal H}}
\newcommand{\CP}{{\mathcal P}}
\newcommand{\CS}{{\mathcal S}}
\newcommand{\CT}{{\mathcal T}}
\newcommand{\CX}{{\mathcal X}}
\def\UEnd{\mathop{\underline{\rm End}}\nolimits}
\def\UAut{\mathop{\underline{\rm Aut}}\nolimits}
\def\ULie{\mathop{\underline{\rm Lie}}\nolimits}
\def\ULie{\mathop{\underline{\rm Lie}}\nolimits}
\newcommand{\leftexp}[2]{{\vphantom{#2}}^{#1}{#2}}
\let\phi\varphi
\DeclareMathOperator{\GL}{GL}
\DeclareMathOperator{\id}{id}
\DeclareMathOperator{\Lie}{Lie}
\def\Mod{\mathop{\tt Mod}\nolimits}
\DeclareMathOperator{\colim}{colim}
\DeclareMathOperator{\Gal}{Gal}
\DeclareMathOperator{\Cent}{Cent}
\newcommand{\FunEx}[1][\empty]{
\ifthenelse{\equal{#1}{\empty}}
{\operatorname{Fun}_{\text{ex}}}
{\operatorname{Fun}_{\text{ex},#1}}}
\newcommand{\FunLex}[1][\empty]{
\ifthenelse{\equal{#1}{\empty}}
{\operatorname{Fun}_{\text{lex}}}
{\operatorname{Fun}_{\text{lex},#1}}}
\newcommand{\FunRex}[1][\empty]{
\ifthenelse{\equal{#1}{\empty}}
{\operatorname{Fun}_{\text{rex}}}
{\operatorname{Fun}_{\text{rex},#1}}}
\newcommand{\FunColim}[1][\empty]{
\ifthenelse{\equal{#1}{\empty}}
{\operatorname{Fun}_{\text{colim}}}
{\operatorname{Fun}_{\text{colim},#1}}}
\newcommand{\FunLA}[1][\empty]{
\ifthenelse{\equal{#1}{\empty}}
{\operatorname{Fun}_{\text{la}}}
{\operatorname{Fun}_{\text{la},#1}}}
\begin{document}

\title{Normed Fiber Functors}

\author{Paul Ziegler\footnote{TU Darmstadt,
 {\tt ziegler@mathematik.tu-darmstadt.de}
}}


\maketitle
\abstract{
  By Goldman-Iwahori, the Bruhat-Tits building of the general linear group $\GL_n$ over a local field $\ell$ can be described as the set of non-archimedean norms on $\ell^n$. Via a Tannakian formalism, we generalize this picture to a description of the Bruhat-Tits building of an unramified reductive group $G$ over $\ell$ as the set of norms on the standard fiber functor of a special parahoric integral model of $G$. We also give a moduli-theoretic description of the parahoric group scheme associated to a point of the building as the group scheme of tensor automorphisms of the lattice chains defined by the corresponding norm.
  }

\section{Introduction}

In the theory of reductive groups over non-archimedean fields, Bruhat-Tits buildings are the analogues of symmetric spaces in the theory of real Lie groups. Given a reductive group $G$ over a Henselian discretely valued non-archimedean field $\ell$, the (extended) Bruhat-Tits building $I(G)$ is a polysimplicial complex with a $G(\ell)$-action obtained by gluing affine spaces $\BR^n$, the so-called apartments of $I(G)$, along hyperplanes determined by the root system of $G$. This $G(\ell)$-set $I(G)$ encodes a lot of information about the group structure of $G(\ell)$ and its open compact subgroups. It was constructed by Bruhat and Tits at a large level of generality in \cite{BT1} and \cite{BT2}. 

A second important aspect of this theory are the so-called Bruhat-Tits group schemes. These are natural smooth integral models of $G$ over the ring of integers $\ell^\circ$ of $\ell$ whose set of $\ell^\circ$-points is equal to the stabilizer in $G(\ell)$ of a point of $I(G)$. They play an important role for example in the theory of Shimura varieties or in the representation theory of reductive groups over local fields.

Already before the work of Bruhat and Tits, the building $I(\GL_n)$ was defined by Goldman-Iwahori in \cite{GoldmanIwahori} as the set of non-archimedean norms $\alpha\colon \ell^n \to \BR^{\geq 0}$. For such a norm $\alpha \in I(\GL_n)$ and an element $\gamma \in \BR^{>0}$, let $V^{\alpha\leq\gamma}\defeq \{v \in V\mid \alpha(v) \leq \gamma\}$. Then the Bruhat-Tits group scheme associated to $\alpha$ has a moduli-theoretic description as the scheme of automorphisms of the lattice chain $(V^{\alpha\leq \gamma})_{\gamma \in \BR^{>0}}$ in $V$.

In this article we generalize this picture of $I(\GL_n)$ as a set of norms from the general linear group to unramified reductive groups via a Tannakian formalism. This extends previous work of Wilson \cite{Wilson} and Cornut \cite{CornutFiltrations}, by which this article was heavily inspired. For various classical groups, a description of $I(G)$ of this kind was already given by Bruhat and Tits in \cite{BTClassiques1} and \cite{BTClassiques2}.

\paragraph{Normed Fiber Functors}

The basic idea is to describe a point in $I(G)$ by specifying a non-archimedean norm on each finite-dimensional representation of $G$ in a way which is functorial and compatible with tensor products. Since the category $\Norm^\circ(\ell)$ of finite-dimensional normed vector spaces over $\ell$ is only $\ell^\circ$-linear and not $\ell$-linear, to achieve this functoriality, we need to work with a category of representations which is only $\ell^\circ$-linear as well. Because of this, we work with the category $\Rep^\circ \CG$ of dualizable representations of a suitable smooth affine integral model $\CG$ of $G$ over $\ell^\circ$. In the language of buildings, this choice of an integral model corresponds to the choice of a base point in $I(G)$. We denote by $\Vec_\ell$ the category of $\ell$-vector spaces and consider the standard fiber functor $\omega\colon \Rep^\circ \CG \to \Vec_\ell$.

\begin{definition}[{c.f. Definition \ref{NFFDef}}]
  A norm on $\omega$ consists of non-archimedean norms $\alpha_X$ on $\omega(X)$ for each $X \in \Rep^\circ \CG$ which are functorial in $X$ and compatible with tensor products.

\end{definition}

The above description of the Bruhat-Tits group schemes for $G=\GL_n$ naturally generalizes to the following construction (c.f. Definition \ref{SchStabDef}): To a norm $\alpha=(\alpha_X)_{X \in \Rep^\circ \CG}$, we associate the \emph{stabilizer group scheme} $\UAut^\otimes(\alpha)$ over $\ell^\circ$ whose $R$-valued sections for some $\ell^\circ$-algebra $R$ are given by $R$-linear automorphisms of $\omega(X)^{\alpha_X\leq\gamma} \otimes_{\ell^\circ} R$ for varying $X \in \Rep^\circ X$ and $\gamma \in \BR^{>0}$ which are functorial in $X$ and $\gamma$ and compatible with tensor products. This moduli functor is representable by an affine group scheme over $\ell^\circ$ with generic fiber $G$.

For a vector space $V$ over $\ell$ equipped with a norm $\alpha$, a \emph{splitting basis} of $(V,\alpha)$ is a basis $(e_i)$ of $V$ such that $\alpha(\sum_i \lambda_i e_i)=\max_i |\lambda_i|\alpha(e_i)$. Such a basis always exists over a discretely valued field $\ell$. We generalize this to a notion of \emph{splittability} for norms on a fiber functor which plays a key technical role in our analysis of normed fiber functors (c.f. Definition \ref{SplittingDef}). Then our main result on normed fiber functors is the following:
\begin{theorem}[c.f. Theorem \ref{MainTheorem}] \label{MainThmIntro}
  Let $\CG$ be a smooth affine group scheme over $\ell^\circ$ and $\alpha=(\alpha_X)_{X \in \Rep^\circ \CG}$ a norm on a fiber functor $\omega\colon \Rep^\circ \CG \to \Vec_\ell$.
  \begin{enumerate}
  \item The group scheme $\UAut^\otimes(\alpha)$ is smooth over $\ell^\circ$.
  \item The norm $\alpha$ is splittable.
  \end{enumerate}
\end{theorem}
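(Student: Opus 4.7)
The plan is to establish part (ii) first and derive part (i) from it. A splitting of $\alpha$ is encoded, in the Tannakian dictionary, as a homomorphism from the diagonalizable $\ell$-group scheme with character group $\BR$ into the generic fiber of $\UAut^\otimes(\alpha)$, whose induced $\BR$-grading on each $\omega(X)$ realizes $\alpha_X$ in the sense that $\alpha_X(\sum_\gamma v_\gamma) = \max\{\gamma : v_\gamma \neq 0\}$. Producing such a cocharacter is thus the central task.

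For (ii), I would proceed by a compactness/limit argument over finite subcategories. For any finite full subcategory $\mathcal{F} \subseteq \Rep^\circ \CG$, let $S_\mathcal{F}$ be the set of systems of splittings $(\lambda_X)_{X \in \mathcal{F}}$ of the $\alpha_X$ compatible with morphisms in $\mathcal{F}$ and with the tensor structure restricted to $\mathcal{F}$. Each $S_\mathcal{F}$ is a closed subset of a product of Goldman--Iwahori apartments, hence compact. Non-emptiness reduces to the classical Goldman--Iwahori theorem applied to a sufficiently large generator $X_0 \in \mathcal{F}$, together with a propagation step across tensor products, duals, and subquotients. Once non-emptiness of every $S_\mathcal{F}$ is established, the inverse limit $\varprojlim_\mathcal{F} S_\mathcal{F}$ is non-empty by compactness, providing a global splitting of $\alpha$.

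For (i), I would use the splitting $\lambda$ from (ii) to decompose $\UAut^\otimes(\alpha)$ into smooth pieces. Let $\gr(\alpha)$ denote the associated graded fiber functor; it is manifestly split by $\lambda$. Passing to the associated graded yields a morphism $\UAut^\otimes(\alpha) \to \UAut^\otimes(\gr(\alpha))$ which admits $\lambda$ as a section. The target $\UAut^\otimes(\gr(\alpha))$ identifies with the centralizer of $\lambda$ inside the scheme of tensor automorphisms of $\omega$, hence is smooth by smoothness of $\CG$ and the standard result on centralizers of cocharacters in smooth affine group schemes. The kernel, consisting of automorphisms inducing the identity on $\gr(\alpha)$, is a successive extension of vector group schemes built from internal Hom spaces between distinct graded pieces, hence smooth and unipotent. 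Thus $\UAut^\otimes(\alpha)$ is a semidirect product of smooth $\ell^\circ$-group schemes, and so smooth.

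The main obstacle is the non-emptiness of the $S_\mathcal{F}$, specifically the subquotient-compatibility step. Starting from a Goldman--Iwahori splitting on a large $X_0$, the induced grading on a subquotient $Y$ will not in general split the restricted norm $\alpha_Y$ — only splittings \emph{adapted to $Y$} have this property. To overcome this I would use that the set of splittings of $\alpha_{X_0}$ compatible with a fixed subspace structure is itself a non-empty closed convex face of the Goldman--Iwahori apartment, and then apply a Helly-type intersection argument to the finitely many adaptedness conditions imposed by $\mathcal{F}$ to extract a single splitting simultaneously compatible with all the required subobjects.
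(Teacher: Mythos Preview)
Your argument for (ii) has a genuine gap. The compactness claim fails: the set of splittings of a fixed norm $\alpha_X$ on $\omega(X)$ is an affine space (it parametrizes choices of complements in the norm filtration), not a compact set, and Goldman--Iwahori apartments are copies of $\BR^n$, likewise non-compact. So the inverse-limit step has no traction. More telling is that your proposed proof of (ii) never uses the smoothness of $\CG$; but splittability genuinely fails for non-smooth $\CG$, so smoothness must enter somewhere, and in your outline there is no place for it. The Helly-type step you sketch at the end is where the real content would have to live, and ``non-empty closed convex face'' is not a correct description of the adaptedness locus.

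The paper's route is quite different and explains where smoothness is used. One passes to a non-archimedean extension $m/\ell$ with $|m^*|=\Gamma$; over $m$ every norm comes from a lattice, so $\alpha_m$ corresponds to an integral fiber functor $\lambda$ and $\UAut^\otimes(\alpha_m)\cong\UAut^\otimes(\lambda)$ is an fpqc-form of $\CG_{m^\circ}$, hence smooth. One then studies the base-change map $b^{m/\ell}\colon \UAut^\otimes(\alpha)_{m^\circ}\to \UAut^\otimes(\alpha_m)$: its special-fiber kernel is the filtered unipotent group $U_{<1}(\alpha)$, and a dimension count against the weight decomposition of $\Lie(\UAut^\otimes(\alpha_m)_{\tilde m})$ under the canonical cocharacter $\chi^{m/\ell}$ forces each graded piece $\gr_\delta\UAut^\otimes(\alpha)_{\tilde\ell}$ to be a vector group and the image of $b^{m/\ell}_{\tilde m}$ to be open in the centralizer $L^{m/\ell}$. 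This already gives smoothness of the special fiber. A splitting cocharacter is then produced by lifting $\chi^{m/\ell}$ through a maximal torus of $\UAut^\otimes(\alpha)_{\tilde\ell}$, descending to $\tilde\ell$, and \emph{deforming to $\ell^\circ$ via Grothendieck's theorem on lifting tori in smooth group schemes} --- this deformation step is exactly where smoothness of $\CG$ (hence of $\UAut^\otimes(\alpha)^\circ$) is consumed. Smoothness of the whole $\UAut^\otimes(\alpha)$ then follows from the fibrewise smoothness criterion using the smooth centralizer of the splitting as a witness. Your part (i) sketch, granted (ii), is closer in spirit to this last step, but note the semidirect product decomposition holds only on the special fiber, not integrally, so the argument has to go through fibrewise smoothness rather than a global splitting of group schemes.
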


\paragraph{The Tannakian formalism for $I(G)$}

Using Theorem \ref{MainThmIntro}, we can then give the desired Tannakian description of $I(G)$ for a unramified reductive group $G$ over $\ell$: We take $\CG$ to be a reductive integral model of $G$. Then we consider the ``tensorial building'' $N^\otimes(\CG)$ given by the set of norms on the standard fiber functor $\omega_\CG \colon \Rep^\circ \CG \to \Vec_\ell$. The group $G(\ell)=\UAut^\otimes(\omega_\CG)(\ell)$ acts naturally on $N^\otimes(\CG)$ via its action on $\omega(X)$ for each $X \in \Rep^\circ \CG$. We prove:

\begin{theorem}[c.f. Theorem \ref{BTComp}] \label{BCompThmIntro}
  \begin{enumerate}
  \item There is a natural $G(\ell)$-equivariant bijection $$c\colon N^\otimes(\CG) \isoto I(G).$$
  \item For each norm $\alpha \in N^\otimes(\CG)$, the group scheme $\UAut^\otimes(\alpha)$ is the parahoric group scheme associated to the point $c(\alpha) \in I(G)$.
  \end{enumerate}
\end{theorem}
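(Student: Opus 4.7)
The plan is to reduce the description of an arbitrary point of $B(G)$ to the standard apartment through the base point, using the splittability provided by Theorem \ref{MainThmIntro}(ii), and then compare with the classical construction of Bruhat--Tits.

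To construct $c$, given $\alpha \in B^\otimes(\CG)$, I would first invoke splittability to fix a splitting of $\alpha$: a tensor $\BR$-grading $\omega_\CG = \bigoplus_{r \in \BR} \omega_r$ with respect to which $\alpha$ is the sup-norm of homogeneous components. By Tannakian duality this corresponds to a homomorphism $\lambda \colon D \to \CG$, where $D$ is the diagonalizable $\ell^\circ$-group with character group $\BR$. After base change to $\ell$, the image of $\lambda$ lies in a maximal $\ell$-split torus $S \subset G$, so $\lambda$ determines a real cocharacter $\lambda \in X_*(S) \otimes \BR$. Writing $x_0$ for the special point to which $\CG$ is the associated parahoric, I set $c(\alpha) \defeq x_0 + \lambda$, which lies in the apartment $\CA(S) \subset B(G)$.

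The main technical step is to verify that $c(\alpha)$ does not depend on the chosen splitting and that $c$ is a $G(\ell)$-equivariant bijection. Two splittings of $\alpha$ should differ by conjugation under an element of $\UAut^\otimes(\alpha)(\ell^\circ)$; using the smoothness from Theorem \ref{MainThmIntro}(i) together with an infinitesimal lifting argument I would show that this action is transitive on splittings, and then reduce to the classical $\GL_n$ case via a faithful representation of $\CG$ to see that the resulting points in $B(G)$ coincide. For surjectivity, given $x \in B(G)$ I choose an apartment $\CA(S)$ containing both $x_0$ and $x$; since the integral model of $S$ is contained in $\CG$ by the special-point hypothesis, the vector $x - x_0 \in X_*(S) \otimes \BR$ integrates to a homomorphism $D \to \CG$, producing a splittable norm $\alpha_x$ with $c(\alpha_x) = x$. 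Equivariance is built into the construction, and injectivity follows from the uniqueness of the grading realizing $\alpha$ once everything has been placed in a single apartment.

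For part (ii), both $\UAut^\otimes(\alpha)$ and the Bruhat--Tits group scheme $\CG_{c(\alpha)}$ are smooth affine $\ell^\circ$-group schemes with generic fiber $G$, using Theorem \ref{MainThmIntro}(i) for the former and standard Bruhat--Tits theory for the latter. The equivariance of $c$ identifies $\UAut^\otimes(\alpha)(\ell^\circ)$ with the stabilizer in $G(\ell)$ of $c(\alpha)$, which by Bruhat--Tits theory equals $\CG_{c(\alpha)}(\ell^\circ)$. Invoking smoothness and the uniqueness of smooth affine models of $G$ prescribed by their $\ell^\circ$-points (after base change to a strict henselization in order to match connected components), the two group schemes agree. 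The main obstacle will be the well-definedness of $c$: it requires a precise Tannakian description of the set of splittings of a given norm together with a transitivity statement for $\UAut^\otimes(\alpha)$ acting on them. Once that is established, the remaining assertions follow by combining general Bruhat--Tits formalism with the Goldman--Iwahori description for the general linear group.
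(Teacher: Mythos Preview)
Your overall architecture---define $c$ on apartments via cocharacters and then glue---matches the paper's, and your argument for (ii) is essentially the paper's (smoothness plus Theorem~\ref{Etoffe}). But there is a genuine gap in how you extract a cocharacter into $\CG$ from splittability. A splitting of $\alpha$ in the sense of Definition~\ref{SplittingDef} is a pair $(\lambda,\chi)$ where $\lambda$ is \emph{some} integral model of $\omega_G$ and $\chi\colon D^\Gamma_{\ell^\circ}\to\UAut^\otimes(\lambda)$; there is no reason $\lambda=\omega_\CG$ or $\UAut^\otimes(\lambda)\cong\CG$. What Theorem~\ref{MainThmIntro}(ii) gives you is only that $\theta=g\cdot\theta_\chi$ for some $g\in G(\ell)$ and some $\chi\colon D^\Gamma_{\ell^\circ}\to\CG$ (this is Proposition~\ref{StrongerSplitting}, and already needs triviality of \'etale $\CG$-torsors, hence a reduction to the strictly Henselian case). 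Your formula $c(\alpha)=x_0+\lambda$ ignores this $g$, so as written it only defines $c$ on the subset $\bigcup_{T\in S(\CG)}A^\otimes(T)$; showing this subset is all of $B^\otimes(\CG)$ is itself a nontrivial step, which the paper carries out using the Cartan decomposition for the special parahoric $\CG$.

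Your well-definedness argument is also incomplete. You appeal to the fact that two splittings are conjugate under $\UAut^\otimes(\alpha)(\ell^\circ)$ (this is Theorem~\ref{MainTheorem}(vi)), but that group is not $\CG(\ell^\circ)$ and does not a priori fix $x_0$; using that it fixes $c(\alpha)$ is circular. The reduction to $\GL_n$ via a faithful representation does not help, since there is no canonical map $B(G)\to B(\GL_n)$ to pull the statement back along. The paper avoids this entirely: it first glues the maps $c_T$ by passing to a non-archimedean extension $m$ with $|m^*|=\Gamma$ and $G_m$ split, where $T(m)$ acts transitively on each apartment; then two presentations $\theta=t\cdot\theta_\CG=t'\cdot\theta_\CG$ force $t^{-1}t'\in\CG(m^\circ)$, and since $\CG(m^\circ)$ fixes $x_0$ one gets $c_T(\theta)=c_{T'}(\theta)$ directly. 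The $G(\ell)$-equivariance is then obtained not ``from the construction'' but again via the Cartan decomposition $G(\ell)=\CG(\ell^\circ)\,Z_G(T)(\ell)\,\UAut^\otimes(\alpha_\theta)(\ell^\circ)$, together with a separate computation (Proposition~\ref{BZAction}) that each $c_T$ is $Z_G(T)(\ell)$-equivariant.
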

For a more precise description of the bijection $c$, see Theorem \ref{BTComp}. The inverse of the map from Theorem \ref{BCompThmIntro} was already constructed in \cite[Theorem 132]{CornutFiltrations}.

As an application of this result, we give in Subsection \ref{FuncSS} a new, shorter, proof of a result of Landvogt from \cite{LandvogtFunctoriality} on the functoriality of Bruhat-Tits buildings, and in \ref{SpecFibSS} a description of the special fibers of parahoric group schemes.

The proofs of Theorems \ref{MainThmIntro} and \ref{BCompThmIntro} proceed by reduction to a normed overfield $m$ of $\ell$ for which the norm map $m \to \BR^{\geq 0}$ is surjective. Over such a field, all norms take a very simple form: Let $\ell^{\circ\circ} \subset \ell^\circ$ be the maximal ideal. Any $\ell^\circ$-lattice $L$ in a vector space $V$ over $\ell$ induces a norm $\alpha_L$ on $V$ determined by $\alpha_L(\lambda v)=|\lambda|$ for all $v \in L \setminus \ell^{\circ\circ} L$. If the norm map on the ground field is surjective, then any norm on a vector space is of this form, and consequently any norm on a fiber functor is determined by an integral model of this fiber functor. Using this fact, over such a large field $m$, the above results can be verified, and to prove them over $\ell$, we investigate how the group scheme $\Aut^\otimes(\alpha)$ changes when $\alpha$ is replaced by its base change to $m$.

\paragraph{Structure of the paper}
In Sections \ref{NVSSection} (resp. \ref{NFFSection}) we develop in parallel our results on normed vector spaces (resp. on normed fiber functors):

In Subsections \ref{StabSS1} and \ref{StabSS2} we define the group schemes $\UAut^\otimes(\alpha)$ and prove their representability.

Next we treat the notion of splittability. Given a norm on a fiber functor, we cannot find splitting bases of the norms on all the vector spaces $\omega(X)$ which are functorial and compatible with tensor products. Hence the notion of a splitting basis of a normed vector space $(V,\alpha)$ cannot be directly generalized to norms on fiber functors. Because of this, in Subsection \ref{SplittingSS1} we introduce two variants of this notion: The first (c.f Definition \ref{SplitDef0}) is given by pair $(L,\chi)$ consisting of an $\ell^\circ$-lattice $L \subset V$ together with a homomorphism $D^{\BR^{>0}}_{\ell^\circ} \to \GL(V)$ from the diagonalizable group scheme $D^{\BR^{>0}}$ with character group $\BR^{>0}$, and the second is given by a homomorphism $D^{\BR^{>0}}_{\ell^\circ} \to \UAut^\otimes(\alpha)$ over $\ell^\circ$. These are then generalized to normed fiber functors in Subsection \ref{SplittingSS2}.

In Subsections \ref{BCSS1} and \ref{BCSS2}, we consider the behaviour of our norms under base change: If $\alpha$ is a norm on a vector space or a fiber functor over $\ell$ and $m$ is some valued overfield of $\ell$, there is a natural base change $\alpha_m$ of $\alpha$ to $m$ which comes with a base change homomorphism
\begin{equation*}
  b^{m/\ell}\colon \UAut^\otimes(\alpha)_{m^\circ} \to \UAut^\otimes(\alpha_m)
\end{equation*}
over $m^\circ$ (c.f. Constructions \ref{UAutBCC0} and \ref{UAutBCC}). The generic fiber of $b^{m/\ell}$ is an isomorphism. The special fiber of $b^{m/\ell}$ is more complicated, and understanding its behaviour is crucial for the proof of our main results. If $|m^*|=\BR^{>0}$, then the image of $b^{m/\ell}_{\tilde m}$ is equal to the centralizer of a canonical homomorphism
\begin{equation*}
  \chi^{m/\ell} \colon D^{\BR^{>0}/|\ell^*|}_{\tilde m} \to \UAut^\otimes(\alpha)_{\tilde m}
\end{equation*}
(c.f. Constructions \ref{ChiCons} and \ref{ChiRamCons}).

To understand the kernel of $b^{m/\ell}_{\tilde m}$, in Subsections \ref{USS1} and \ref{USS2} we introduce and study a natural filtration of $\UAut^\otimes(\alpha)_{\tilde \ell}$ by certain unipotent subgroups.

Then in Subsection \ref{MainThmSS}, we combine all of the above to obtain our main result on normed fiber functors, Theorem \ref{MainTheorem}. In addition to the facts already stated in Theorem \ref{MainThmIntro}, this contains a number of facts about the structure of the special fiber $\UAut^\otimes(\alpha)_{\tilde \ell}$.

Finally, in Section \ref{BTSec}, we use our results on normed fiber functors to give in Theorem \ref{BTComp} our Tannakian description of Bruhat-Tits buildings.

\paragraph{Acknowledgements} The author thanks Christophe Cornut and the anonymous referee for pointing out a number of issues in a previous version of this article. The author is also grateful to Christophe Cornut, Martin Gallauer, Daniel Sch\"appi, Eva Viehmann and Torsten Wedhorn for helpful conversations and suggestions on the subject of this article. The author acknowledges support by the ERC in form of Eva Viehmann's Consolidator Grant 770936: Newton Strata and by the Deutsche Forschungsgemeinschaft (DFG) through the Collaborative Research Centre TRR 326 "Geometry and Arithmetic of Uniformized Structures", project number 444845124.

\section{Normed Vector Spaces} \label{NVSSection}

\subsection{Normed Vector Spaces}
We fix throughout a totally ordered abelian group $(\Gamma,\leq)$ which we write multiplicatively. By a \emph{non-archimedean field} we mean a field $\ell$ equipped with a non-archimedean norm $|\cdot|\colon \ell \to \Gamma \cup \{0\}$. For such a field, we denote by $\ell^{\leq \gamma}$ (resp. $\ell^{<\gamma}$) the subset of elements of norm $\leq \gamma$ (resp. of norm $<\gamma$). We also write $\ell^\circ$ for the subring $\ell^{\leq 1}$ of $\ell$ and $\ell^{\circ\circ}$ for the ideal $\ell^{<1}$ of $\ell^\circ$. We denote the quotient field $\ell^\circ/\ell^{\circ\circ}$ by $\tilde\ell$. By a \emph{non-archimedean extension} $m$ of $\ell$ we mean an overfield $m$ of $\ell$ equipped with a norm $|\cdot | \colon m \to \Gamma \cup \{0\}$ extending the norm on $\ell$. When we talk about $\ell$ being discretely valued we always exclude the case of the trivial valuation.

We fix a non-archimedean field $\ell$.

\begin{definition}
  \begin{enumerate}
  \item   A \emph{norm} on a vector space $V$ over $\ell$ is a function $\alpha\colon V \to \Gamma \cup \{0\}$ satisfying the following conditions:
  \begin{enumerate}
  \item $\alpha(v)=0$ if and only if $v=0$
  \item $\alpha(\lambda v)=|\lambda|\alpha(v)$ for all $\lambda \in \ell$ and $v \in V$.
  \item $\alpha(v+w)\leq \max(\alpha(v),\alpha(w))$ for all $v,w \in V$.
  \end{enumerate}

  \item A \emph{normed vector space} over $\ell$ is a vector space $V$ over $\ell$ together with a norm on $V$.

    \item A contractive homomorphism $h\colon (V,\alpha) \to (W,\beta)$ of normed vector spaces over $\ell$ is a $\ell$-linear map satisfying $\beta(h(v))\leq \alpha(v)$ for all $v \in V$.

    \item For a normed vector space $(V,\alpha)$ over $\ell$ and some $\gamma \in \Gamma$, we denote by $V^{\alpha \leq \gamma} \subset V$ (resp. $V^{\alpha < \gamma} \subset V$) the $\ell^\circ$-submodule of elements of norm $\leq \gamma$ (resp. $<\gamma$).
    \item For any $\ell^\circ$-algebra $R$ and any $\gamma \in \Gamma$, we denote the $R$-module $V^{\alpha\leq\gamma}\otimes_{\ell^\circ} R$ by $V^{\alpha\leq \gamma}_R$.
    \item Given a norm $\alpha$ on $V$ and an element $g \in \GL(V)$, we let $g\cdot \alpha$ be the norm $v \mapsto \alpha(g^{-1}v)$ on $V$.
\end{enumerate}
\end{definition}

\begin{definition}
  A contractive homomorphism $h\colon (V,\alpha) \to (W,\beta)$ of normed vector spaces over $\ell$ is \emph{strict} if the image norm on $h(V)$ induced from $V$ coincides with the subspace norm induced from $W$, that is if for all $w \in h(V)$ the minimum
  \begin{equation*}
    \min\{\alpha(v) \mid v\in h^{-1}(w) \}
  \end{equation*}
exists and is equal to $\beta(w)$.
\end{definition}

\begin{definition}
  Let $(V,\alpha)$ be a normed vector space over $\ell$.
  \begin{enumerate}
  \item A basis $(v_i)_{i\in I}$ of $V$ \emph{splits} the norm $\alpha$ on $V$ if 
    \begin{equation*}
      \alpha(\sum_{i\in I}\lambda_i v_i)=\max_{i\in I}|\lambda_i|\alpha(v_i)
    \end{equation*}
    for all tuples $(\lambda_i)_{i\in I} \in \ell^{\oplus I}$.

  \item The norm $\alpha$ is \emph{splittable} if there exists a basis of $V$ which splits $\alpha$.
  \end{enumerate}
\end{definition}

\begin{definition} \label{EllStarBasisDef}
  \begin{enumerate}
  \item For a normed vector space $(V,\alpha)$, we let $\gr(V,\alpha)$ be the $\Gamma$-graded $\tilde\ell$-vector space $\oplus_{\gamma \in \Gamma}V^{\alpha\leq\gamma}/V^{\alpha<\gamma}$.
  \item We endow $\gr(V,\alpha)$ with the $\ell^*$-action induced by the maps
    \begin{equation*}
      V^{\alpha\leq\gamma} \to V^{\alpha\leq |\lambda|\gamma}, \; v \mapsto \lambda v
    \end{equation*}
    for $\lambda \in \ell^*$ and $\gamma \in \Gamma$.
    \item A family $(f_i)_{i\in I}$ of homogenous elements $f_i$ of $\gr(V,\alpha)$ of degree $\gamma_i$ is an $\ell^*$-basis of $\gr(V,\alpha)$ if for every $\gamma \in \Gamma$ the following holds: Let $(f_j)_{j \in J\subset I}$ be those elements of the family for which $\gamma_i \in \gamma |\ell^*|$. Then for some (or equivalently any) elements $(\lambda_i)_{i\in J}$ of $\ell^*$ satisfying $\gamma=|\lambda_i|\gamma_i $ the family $(\lambda_i \cdot f_i)_{i \in J}$ forms a basis of $V^{\alpha\leq\gamma}/V^{\alpha<\gamma}$.
  \end{enumerate}

\end{definition}

  Such $\ell^*$-bases always exist, we can for example construct them as follows:
  \begin{example} \label{EllStarBasisEx}
    Let $(V,\alpha)$ be a normed vector space and $S \subset \Gamma$ be a set of representatives for $\alpha(V\setminus \{0\})/|\ell^*| \subset \Gamma/|\ell^*|$. Then any homogenous basis of $\oplus_{\gamma \in S} V^{\alpha\leq\gamma}/V^{\alpha<\gamma}$ is an $\ell^*$-basis of $\gr(V,\alpha)$.
  \end{example}

  \begin{lemma} \label{EllStarCard}
    \begin{enumerate}
    \item An $\ell^*$-basis of $\gr(V,\alpha)$ always exists.
    \item Any two $\ell^*$-bases of $\gr(V,\alpha)$ have the same cardinality.
    \end{enumerate}
  \end{lemma}
  \begin{proof}
    (i) follows from Example \ref{EllStarBasisEx}.

    For (ii), it follows from Definition \ref{EllStarBasisDef}, that for any $\ell^*$-basis $(f_i)_{i \in I}$ of $\gr(V,\alpha)$ and any family $(\lambda_i)_{i\in I} \in (\ell^*)^I$, the family $(\lambda_i \cdot f_i)_{i\in I}$ is again an $\ell^*$-basis. One can choose these $\lambda_i$ in such a way, that the family $(\lambda_i\cdot f_i)_{i \in I}$ is of the form constructed in Example \ref{EllStarBasisEx}. So (ii) follows from the fact that all homogenous bases of $\oplus_{\gamma \in S}V^{\alpha\leq\gamma}/V^{\alpha<\gamma}$ have the same cardinality.
  \end{proof}
  
\begin{lemma} \label{EllInd}
  Let $(V,\alpha)$ be a normed vector space over $\ell$ and $(e_i)_{i\in I} \in V^I$ a family of vectors in $V$ with images $\bar e_i$ in $V^{\alpha\leq \alpha(e_i)}/V^{\alpha < \alpha(e_i)}$. The following are equivalent:
  \begin{enumerate}
  \item For each family $(\lambda_i)_{i \in I} \in \ell^{\oplus I}$ the identity
    \begin{equation*}
      \alpha(\sum_i \lambda_i e_i)=\max_i |\lambda_i| \alpha(e_i)
    \end{equation*}
    holds.
  \item For all $(\lambda_i)_i \in (\ell^*)^{\oplus I}$ the family $(\lambda_i \cdot \bar e_i)_i$ is linearly independent in $\gr(V,\alpha)$.
  \end{enumerate}
  A family $(e_i)_{i\in I}$ satisfying these conditions is linearly independent in $V$.
\end{lemma}
\begin{proof}
  For a family $(\lambda_i)_{i \in I} \in \ell^{\oplus I}$ such that all non-zero $\lambda_i e_i$ have the same norm $\alpha(\lambda_ie_i)=\gamma$, the linear combination $\sum_i \lambda_ie_i$ has norm $\gamma$ if and only if $\sum_{i \in I: \lambda_i\not= 0} \lambda_i \cdot \bar e_i$ is non-zero in $V^{\alpha\leq\gamma}/V^{\alpha<\gamma}$. This implies the equivalence of (i) and (ii). For the last claim, one notes that any non-trivial way of writing $0\in V$ as a linear combination of the $e_i$ would violate (i).
\end{proof}

This implies:
\begin{lemma} \label{EllBase}
  Let $(V,\alpha)$ be a normed vector space over $\ell$. For a basis $(e_i)_{i\in I}$ of $V$ the following are equivalent:
  \begin{enumerate}
  \item The family $(e_i)_{i\in I}$ is a splitting basis of $(V,\alpha)$. 
  \item The images $\bar e_i$ of the $e_i$ in $V^{\alpha\leq \alpha(e_i)}/V^{\alpha < \alpha(e_i)}$ form a $\ell^*$-basis of $\gr(V,\alpha)$.
  \end{enumerate}
\end{lemma}

\begin{definition}
  \begin{enumerate}
  \item   We denote by $\Norm^\circ(\ell)$ the category whose objects are finite-dimensional $\ell$-vector spaces equipped with a splittable norm, and whose morphisms are contractive homomorphisms of such vector spaces.
  \item We denote by $\forg\colon \Norm^\circ(\ell) \to \Vec_\ell$ the forgetful functor to the category of vector spaces over $\ell$.
  \item We call a sequence $ 0 \to U \to V \to W \to 0$ in $\Norm^\circ(\ell)$ short exact if it consists of strict morphisms and if the underlying sequence of vector spaces is exact.
  \end{enumerate}
\end{definition}

  The category of normed vector spaces, as well as the category $\Norm^\circ(\ell)$ admit finite direct sums, given by the direct sum of the underlying vector spaces equipped with the maximum norm.

  \begin{proposition} \label{NormSplit}
    Let $(V,\alpha) \in Norm^\circ(\ell)$.
    \begin{enumerate}
    \item For every subspace $U \subset V$, the restriction $\alpha|_U$ is splittable.
    \item For every quotient $V \onto W$ of $V$ and every $w \in V$ the minimum
  \begin{equation*}
    \beta(w)=\min\{\alpha(v) \mid v\in h^{-1}(w) \}
  \end{equation*}
exists, and this defines a splittable norm $\beta$ on $W$.
\item  Every short exact sequence in $\Norm^\circ(\ell)$ splits.
    \end{enumerate}
  \end{proposition}
  \begin{proof}
    Let $0 \to U \to V \to W \to 0$ be a short exact sequence of vector spaces. Let $\{\gamma_1,\hdots,\gamma_s\} \subset \Gamma/|\ell^*|$ be a set of representatives of $\alpha(V \setminus \{0\})/|\ell^*|$. We write again $\alpha$ for $\alpha|_{U}$ and for each $1\leq i \leq s$ we let $W^{\beta\leq\gamma_i}$ (resp. $W^{\beta < \gamma_i}$) be the image of $W^{\alpha\leq\gamma_i}$ (resp. $W^{\alpha<\gamma_i}$) in $W$. This gives us an exact sequence
    \begin{equation*}
      0 \to U^{\alpha\leq\gamma_i}/U^{\alpha<\gamma_i} \to V^{\alpha\leq \gamma_i}/V^{\alpha<\gamma_i} \to W^{\beta\leq \gamma_i}/W^{\beta < \gamma_i} \to 0.
    \end{equation*}
    We choose elements $f_{i,1},\hdots, f_{i,r_i}$ of $U^{\alpha\leq\gamma_i}$ which map to a basis of $U^{\alpha\leq\gamma_i}/U^{\alpha<\gamma_i}$ and elements $g_{i,1},\hdots,g_{i,t_i}$ of $V^{\alpha\leq\gamma_i}$ which map to a basis of $W^{\beta\leq\gamma_i}/W^{\beta<\gamma_i}$. Then $$(f_{i,1},\hdots,f_{i,r_i},g_{i,1},\hdots,g_{i,t_i})$$ maps to a basis of $V^{\alpha\leq\gamma_i}/V^{\alpha<\gamma_i}$ and hence the family of all the $f_{i,j}$ and $g_{i,j}$ satisfies the conditions of Lemma \ref{EllInd}. In particular this family is linearly independent in $V$.

    Next we claim that the images of the $g_{i,j}$ in $W$ are linearly independent. If not, then some linear combination $x$ of these elements lies in $U$. But then by the choice of the $f_{i,j}$, there exists some linear combination $y$ of the $f_{i,j}$ for which $\alpha(x+y)<\alpha(x)$. This violates Lemma \ref{EllInd} (i). 

    If $r$ is the number of elements $f_{i,j}$ and $t$ the number of elements $g_{i,j}$, then we find $r \leq \dim(U)$, $t \leq \dim(W)$ and hence
    \begin{equation*}
      r+s \leq \dim(U)+\dim(W)=\dim(V)
    \end{equation*}

    On the other hand, since the $f_{i,j}$ and $g_{i,j}$ form an $\ell^*$-basis of $\gr(V,\alpha)$ and since $(V,\alpha)$ is splittable, Lemma \ref{EllBase} and Lemma \ref{EllStarCard} (ii) imply $\dim(V)=r+s$. So all these inequalities are equalities.

    Hence the $f_{i,j}$ form a basis of $U$, which is a splitting basis of $(U,\alpha|_{U})$ by Lemma \ref{EllBase}. This shows (i).

    Similarly, the $f_{i,j}$ and $g_{i,j}$ together form a splitting basis of $(V,\alpha)$ and the images $\pi(g_{i,j})$ of the $g_{i,j}$ in $W$ form a basis of $W$. Hence if we let $\beta$ be the norm on $W$ given by
    \begin{equation*}
      \beta\left(\sum_{i,j} \lambda_{i,j} \pi(g_{i,j})\right)=\max_{i,j} |\lambda_{i,j}| \gamma_i,
    \end{equation*}
    we see that this norm $\beta$ satisfies the formula from (ii). This proves (ii).

    Finally, for (iii), we note that the assignment $\pi(g_{i,j}) \mapsto g_{i,j}$ defines a section in $\Norm^\circ(\ell)$ of $\pi\colon (V,\alpha) \to (W,\beta)$.
  \end{proof}

  This shows in particular that equipping $\Norm^\circ(\ell)$ with the class of short exact sequences makes it into an exact category in the sense of Quillen.

  Proposition \ref{NormSplit} also implies:
  \begin{lemma} \label{NormExact}
    Let $(V_1,\alpha_1) \to (V_2,\alpha_2) \to (V_3,\alpha_3)$ be an exact sequence in $\Norm^\circ(\ell)$. Then for each $\gamma \in \Gamma$, the sequence
    \begin{equation*}
      0 \to V_1^{\alpha_1\leq\gamma} \to V_2^{\alpha_2\leq\gamma} \to V_3^{\alpha_3\leq \gamma} \to 0 
    \end{equation*}
    of $\ell^\circ$-modules is exact.
  \end{lemma}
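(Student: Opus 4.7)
The plan is to unwind the definitions. An exact sequence in $\Norm^\circ(\ell)$ is, by construction of the exact structure, a short exact sequence $0 \to V_1 \xrightarrow{f} V_2 \xrightarrow{g} V_3 \to 0$ of $\ell$-vector spaces in which both $f$ and $g$ are strict. I would first extract two concrete consequences of strictness. Because $f$ is injective, every element of $f(V_1)$ has a unique preimage, so the strictness condition collapses to $\alpha_2(f(v)) = \alpha_1(v)$ for all $v \in V_1$. Because $g$ is surjective, strictness says that for every $w \in V_3$ there exists $v \in g^{-1}(w)$ with $\alpha_2(v) = \alpha_3(w)$ (the infimum being attained, as built into the definition via $\min$).

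Given these two facts, I would verify the four required statements for a fixed $\gamma \in \Gamma$. First, boundedness of $f$ and $g$ ensures that they restrict to $\ell^\circ$-linear maps $V_1^{\alpha_1 \leq \gamma} \to V_2^{\alpha_2 \leq \gamma} \to V_3^{\alpha_3 \leq \gamma}$. Injectivity of the first map follows from injectivity of $f$. For exactness at the middle term: if $v \in V_2^{\alpha_2 \leq \gamma}$ satisfies $g(v) = 0$, then by exactness of the underlying sequence $v = f(v')$ for a unique $v' \in V_1$, and strictness of $f$ gives $\alpha_1(v') = \alpha_2(v) \leq \gamma$, so $v' \in V_1^{\alpha_1 \leq \gamma}$ as needed. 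For surjectivity at the right: given $w \in V_3^{\alpha_3 \leq \gamma}$, strictness of $g$ produces a lift $v \in g^{-1}(w)$ with $\alpha_2(v) = \alpha_3(w) \leq \gamma$.

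I do not anticipate any substantive obstacle. The content of the lemma is essentially a reformulation of the definition of a strict morphism at the level of the $\ell^\circ$-submodules of elements of norm at most $\gamma$; indeed, the class of admissible exact sequences was presumably designed precisely so that the functors $(V,\alpha) \mapsto V^{\alpha \leq \gamma}$ become exact. Splittability of the norms plays no direct role in the argument; in particular it is not needed to guarantee that the minimum in the strictness condition is attained, since attainedness is already part of the definition.
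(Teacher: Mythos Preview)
Your proof is correct and is precisely the unpacking of the paper's one-line argument, which simply observes that $\alpha_3$ (resp.\ $\alpha_1$) being the image norm (resp.\ subspace norm) induced by $\alpha_2$ immediately yields the claim. The two approaches are the same; you have just made explicit the details the paper leaves to the reader.
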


  \begin{proposition} \label{ExtSplittable}
    Consider a sequence $(U,\alpha) \to (V,\alpha) \to (W,\alpha)$ of strict contractive morphisms of normed vector spaces whose underlying sequence of vector spaces is short exact and such that $(U,\alpha) \in \Norm^\circ(\ell)$ and $(W,\alpha) \in \Norm^\circ(\ell)$. Then $(V,\alpha)$ is also in $\Norm^\circ(\ell)$.
  \end{proposition}
  \begin{proof}
    The assumptions imply that for each $\gamma \in \Gamma$ the sequence
    \begin{equation*}
      0 \to U^{\alpha\leq\gamma}/U^{\alpha<\gamma} \to V^{\alpha\leq \gamma}/V^{\alpha<\gamma} \to W^{\alpha\leq\gamma}/W^{\alpha<\gamma} \to 0
    \end{equation*}
    is exact. Hence for each $\gamma$ we may choose elements $f_1,\hdots,f_r$ of $U^{\alpha\leq\gamma}$ mapping to a basis of $U^{\alpha\leq\gamma}/U^{\alpha<\gamma}$ and elements $g_1,\hdots,g_t$ of $V^{\alpha\leq\gamma}$ mapping to a basis of $W^{\alpha\leq\gamma}/W^{\alpha<\gamma}$. If we do this for every $\gamma$ in a set of representatives of $(\alpha(U\setminus \{0\}) \cup \alpha(W \setminus \{0\}))/|\ell^*|$ it follows using Lemma \ref{EllBase} that we obtain a splitting basis of $(V,\alpha)$. 
  \end{proof}
\begin{lemma} \label{TensorNorm}
  Consider objects $(V,\alpha), (W,\beta) \in \Norm^\circ(\ell)$.
  \begin{enumerate}
  \item   The formula
  \begin{equation*}
    (\alpha\otimes \beta)(z)=\min\left\{ \max_j \alpha(v_j)\beta(w_j) \mid v_j \in V, w_j \in W, z=\sum_j v_j \otimes w_j \right\}
  \end{equation*}
  defines a norm on $V \otimes W$.
  \item If $(e_i)_{1\leq i\leq n}$ (resp. $(f_j)_{1\leq j\leq m}$) is a splitting basis of $(V,\alpha)$ (resp. $(W,\beta)$), then $(e_i \otimes f_j)_{i,j}$ is a splitting basis of $(V\otimes W, \alpha \otimes \beta)$.
  \end{enumerate}
\end{lemma}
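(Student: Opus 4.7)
The plan is to prove both parts simultaneously by establishing a closed formula for $\alpha \otimes \beta$ in terms of chosen splitting bases. First I would fix splitting bases $(e_i)_{1\leq i\leq n}$ of $(V,\alpha)$ and $(f_j)_{1\leq j\leq m}$ of $(W,\beta)$, which exist since $(V,\alpha), (W,\beta) \in \Norm^\circ(\ell)$. These give a basis $(e_i \otimes f_j)$ of $V \otimes W$, so each $z \in V \otimes W$ has a unique expansion $z = \sum_{i,j} \lambda_{ij}\, e_i \otimes f_j$.

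The main step, and the principal obstacle, is to show that for any decomposition $z = \sum_k u_k \otimes w_k$ of $z$ as a sum of simple tensors,
\[
\max_k \alpha(u_k)\beta(w_k) \;\geq\; \max_{i,j} |\lambda_{ij}|\alpha(e_i)\beta(f_j).
\]
The argument is pure bookkeeping: expanding $u_k = \sum_i a_{ki} e_i$ and $w_k = \sum_j b_{kj} f_j$, one finds $\lambda_{ij} = \sum_k a_{ki} b_{kj}$, so the ultrametric inequality yields $|\lambda_{ij}| \leq \max_k |a_{ki}||b_{kj}|$. Combined with the splitting inequalities $\alpha(u_k) \geq |a_{ki}|\alpha(e_i)$ and $\beta(w_k) \geq |b_{kj}|\beta(f_j)$, choosing for each $(i,j)$ an index $k$ attaining the maximum on the right-hand side gives $\max_k \alpha(u_k)\beta(w_k) \geq |\lambda_{ij}|\alpha(e_i)\beta(f_j)$, and maximizing over $(i,j)$ yields the claim.

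The reverse direction is immediate: the explicit decomposition $z = \sum_{i,j}(\lambda_{ij} e_i) \otimes f_j$ realizes the value $\max_{i,j} |\lambda_{ij}|\alpha(e_i)\beta(f_j)$. Hence the infimum in the definition of $(\alpha \otimes \beta)(z)$ is attained, and
\[
(\alpha\otimes\beta)\Bigl(\sum_{i,j} \lambda_{ij}\, e_i \otimes f_j\Bigr) \;=\; \max_{i,j} |\lambda_{ij}|\alpha(e_i)\beta(f_j).
\]
Part (ii) is now exactly this formula. For part (i), homogeneity and the ultrametric inequality for $\alpha \otimes \beta$ follow directly from the defining infimum by routine manipulations, while non-degeneracy follows from the explicit formula, since its vanishing forces all $\lambda_{ij}$ to be zero and hence $z = 0$.
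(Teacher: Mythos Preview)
Your proof is correct and follows essentially the same approach as the paper: the paper's proof is a two-line sketch that says to express the $v_j$ and $w_j$ in terms of the splitting bases to check that the minimum exists, and then to verify (i) and (ii) directly. You have simply filled in the bookkeeping details of that sketch.
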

\begin{proof}
  Let $(e_i)$ and $(f_i)$ be as in (ii). Given an element $\sum_j v_j \otimes w_j$ of $V \otimes W$, one checks that the minimum in (i) exists by expressing the $v_j$ in terms of the $e_i$ and the $w_j$ in terms of the $f_i$. Then one readily verifies (i) and (ii).
\end{proof}
This makes $\Norm^\circ(\ell)$ into a symmetric monoidal category. Similarly one checks:
\begin{lemma}
  Let $(V,\alpha) \in \Norm^\circ(\ell)$.
  \begin{enumerate}
  \item The formula
    \begin{equation*}
      \alpha^*(\phi) \defeq \max\left\{ |\phi(v)|/\alpha(v) \mid v \in V \setminus \{0\} \right\}
    \end{equation*}
    defines a norm $\alpha^*$ on the dual space $V^*$ of $V$.
  \item The dual basis of any splitting basis for $\alpha$ is a splitting basis for $\alpha^*$.
  \item The object $(V^*,\alpha^*) \in \Norm^\circ(\ell)$ is dual to $(V,\alpha)$ in the symmetric monoidal category $\Norm^\circ(\ell)$.
  \end{enumerate}
\end{lemma}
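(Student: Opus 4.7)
The plan is to prove all three parts in parallel by exploiting a splitting basis $(e_i)_{i \in I}$ of $(V,\alpha)$, which exists by hypothesis since $(V,\alpha) \in \Norm^\circ(\ell)$. This is the same strategy as in the proof of Lemma \ref{TensorNorm}: the splitting basis reduces everything to transparent computations with coordinates.

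For (i), I would first establish the explicit formula $\alpha^*(\phi) = \max_{i \in I} |\phi(e_i)|/\alpha(e_i)$. The upper bound comes from a direct computation: writing $v = \sum_i \lambda_i e_i \neq 0$, the ultrametric inequality gives $|\phi(v)| \leq \max_i |\lambda_i||\phi(e_i)|$, and the splitting identity $\alpha(v) = \max_j |\lambda_j|\alpha(e_j)$ combines with the observation that $|\lambda_i|\alpha(e_i) \leq \max_j |\lambda_j|\alpha(e_j)$ to give, for each $i$ with $\lambda_i \neq 0$,
\[
  \frac{|\lambda_i||\phi(e_i)|}{\max_j |\lambda_j|\alpha(e_j)} \leq \frac{|\phi(e_i)|}{\alpha(e_i)}.
\]
Hence $|\phi(v)|/\alpha(v) \leq \max_i |\phi(e_i)|/\alpha(e_i)$, and the bound is attained by $v = e_{i_0}$ for any $i_0$ maximizing the right-hand side, so the supremum in the definition of $\alpha^*$ is attained and equals this maximum. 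The three norm axioms for $\alpha^*$ then follow: positive-definiteness from the fact that $\phi = 0$ iff $\phi(e_i) = 0$ for all $i$, homogeneity from the definition, and the ultrametric inequality by applying the inequality $|(\phi+\psi)(v)| \leq \max(|\phi(v)|, |\psi(v)|)$ pointwise and then dividing by $\alpha(v)$.

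For (ii), specializing the formula to the dual basis $(e_i^*)$ immediately gives $\alpha^*(e_i^*) = 1/\alpha(e_i)$, and then for $\phi = \sum_i \mu_i e_i^*$ the formula reads $\alpha^*(\phi) = \max_i |\mu_i|/\alpha(e_i) = \max_i |\mu_i|\alpha^*(e_i^*)$, which is exactly the splitting property. For (iii), I would construct the evaluation morphism $\mathrm{ev}\colon V^* \otimes V \to \ell$, $\phi \otimes v \mapsto \phi(v)$, and the coevaluation morphism $\mathrm{coev}\colon \ell \to V \otimes V^*$, $1 \mapsto \sum_i e_i \otimes e_i^*$. The triangle identities hold on underlying $\ell$-vector spaces, so it suffices to check boundedness. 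For $\mathrm{ev}$, this is immediate from the definition of the tensor norm together with the pointwise bound $|\phi(v)| \leq \alpha^*(\phi)\alpha(v)$. For $\mathrm{coev}$, Lemma \ref{TensorNorm}(ii) shows that $(e_i \otimes e_j^*)_{i,j}$ splits $\alpha \otimes \alpha^*$, so
\[
  (\alpha \otimes \alpha^*)\Bigl(\sum_i e_i \otimes e_i^*\Bigr) = \max_i \alpha(e_i)\alpha^*(e_i^*) = 1,
\]
which matches the norm of $1 \in \ell$.

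There is no serious obstacle. The one subtlety worth flagging is that $\Gamma$ is an arbitrary totally ordered abelian group, so \emph{a priori} the supremum in the definition of $\alpha^*$ need not be attained; the role of the splitting basis is precisely to reduce the supremum to a maximum over the finite set $I$. This is why the lemma is formulated inside $\Norm^\circ(\ell)$ rather than for arbitrary normed vector spaces, and is the main reason all three parts can be handled uniformly.
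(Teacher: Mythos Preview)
Your proposal is correct and follows precisely the approach the paper intends: the paper's proof is the single phrase ``Similarly one checks'' (referring back to the splitting-basis method of Lemma~\ref{TensorNorm}), and you have supplied exactly those details. Your observation that splittability is what guarantees the supremum defining $\alpha^*$ is actually attained in the arbitrary ordered group $\Gamma$ is a good point to make explicit.
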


So altogether $\Norm^\circ(\ell)$ is an $\ell^\circ$-linear exact rigid symmetric monoidal category. For objects $(V,\alpha), (W,\beta) \in \Norm^\circ(\ell)$, we denote by
\begin{equation*}
  \Hom(\alpha,\beta)\defeq (V^* \otimes W)^{\alpha^* \otimes \beta\leq 1}
\end{equation*}
the module of contractive homomorphisms $(V,\alpha) \to (W,\beta)$. We also write $\End(\alpha)$ for the $\ell^\circ$-algebra $\Hom(\alpha,\alpha)$.

\begin{definition}
  For a finitely generated $\ell$-vector space $V$, by a $\ell^\circ$-lattice $L \subset V$ we mean a finitely generated (and hence free) $\ell^\circ$-submodule $L$ for which the natural homomorphism $L \otimes_{\ell^\circ} \ell \to V$ is an isomorphism.
\end{definition}
\begin{construction}\label{LatticeNorm}
  Let $V$ be a finite dimensional vector space over $\ell$ and $L \subset V$ an $\ell^\circ$-lattice. Then we obtain a norm $\alpha_L$ on $V$ by
  \begin{equation*}
    \alpha_L(\lambda v)\defeq |\lambda| 
  \end{equation*}
for $v \in L \setminus \ell^{\circ\circ} L$ and $\lambda \in \ell$. This norm satisfies $V^{\alpha_L \leq 1}=L$ and takes values in $|\ell|$. 
\end{construction}

Conversely: 
\begin{lemma} \label{SpecialNormClass}
  If $(V,\alpha) \in \Norm^\circ(\ell)$ satisfies $\alpha(V) \subset |\ell|$, then $\alpha=\alpha_L$ for the lattice $L=V^{\alpha \leq 1}$.
\end{lemma}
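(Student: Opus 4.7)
The plan is to check $\alpha(v) = \alpha_L(v)$ pointwise for every nonzero $v \in V$, which will follow directly from unpacking Construction \ref{LatticeNorm} together with the hypothesis $\alpha(V) \subset |\ell|$. The whole argument amounts to exhibiting, for each such $v$, an explicit scalar $\mu \in \ell^*$ that realises the extremum defining $\alpha_L(v)$.

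Fix a nonzero $v \in V$ and set $\gamma \defeq \alpha(v)$. By hypothesis $\gamma \in |\ell|$, so I may choose $\mu \in \ell^*$ with $|\mu| = \gamma$; then
\begin{equation*}
\alpha(\mu^{-1} v) = |\mu|^{-1} \alpha(v) = 1,
\end{equation*}
so $\mu^{-1} v \in V^{\alpha \leq 1} = L$. Consequently $\lambda = \mu^{-1}$ lies in the index set $\{\lambda \in \ell : \lambda v \in L\}$ defining $\alpha_L(v)$ and contributes the value $|\mu^{-1}|^{-1} = \gamma$ to it.

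For the matching inequality in the opposite direction, let $\lambda \in \ell^*$ be any element with $\lambda v \in L$; then $\alpha(\lambda v) \leq 1$ rewrites as $|\lambda|\gamma \leq 1$, whence $|\lambda|^{-1} \geq \gamma$. Thus $\mu^{-1}$ actually realises the extremum in Construction \ref{LatticeNorm}, giving $\alpha_L(v) = \gamma = \alpha(v)$. I do not anticipate any genuine obstacle: the hypothesis $\alpha(V) \subset |\ell|$ is precisely what guarantees that this extremum is attained by the explicit witness $\mu^{-1}$, and splittability enters only implicitly, ensuring that $L = V^{\alpha \leq 1}$ is an honest $\ell^\circ$-lattice so that Construction \ref{LatticeNorm} applies in the first place.
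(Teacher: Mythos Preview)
Your proof is correct and follows essentially the same approach as the paper: pick a scalar $\mu$ (the paper calls it $\lambda$) with $|\mu|=\alpha(v)$, observe that $\mu^{-1}v$ lies in $L$ with norm exactly $1$, and conclude that this scalar realises the defining extremum of $\alpha_L(v)$. Your write-up is in fact more explicit than the paper's, which compresses the verification into a single sentence; your two-sided check (exhibiting the witness $\mu^{-1}$ and then bounding all other admissible $\lambda$) makes the argument cleaner, and your closing remark about why $L$ is an honest lattice is a useful clarification the paper omits.
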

\begin{proof}
  For $v \in V\setminus \{0\}$, let $\lambda \in \ell^*$ be an element with $\alpha(v)=|\lambda|$. Then $\alpha_L(v)=\alpha_L(\lambda (\lambda^{-1}v))=|\lambda|=\alpha(v)$ since $\alpha(\lambda^{-1}v)=1$ and hence $\lambda^{-1}v \in L \setminus \ell^{\circ\circ}L$.
\end{proof}

\begin{definition}
  Let $V$ be a finite-dimensional vector space over $\ell$. By a \emph{filtration} on $V$ we mean a splittable norm on $V$ (with values in $\Gamma$) with respect to the trivial norm $\ell \to \Gamma \cup \{0\}$ which sends every non-zero element of $\ell$ to $1 \in \Gamma$.

  We denote the set of (usual) splittable norms on $V$ by $N(V)$ and the set of filtrations by $F(V)$.
\end{definition}

The following is shown in \cite[6.1.3]{CornutFiltrations}:
\begin{proposition} \label{PullMapExists}
  There exists a unique map
  \begin{equation*}
    +\colon N(V) \times F(V) \to N(V)
  \end{equation*}
  described by the following: For any $F \in F(V)$ and $\alpha \in N(V)$, there exists a basis $(e_1,\hdots,e_n)$ of $V$ splitting both $F$ and $\alpha$, and any such basis again splits $\alpha+F$ with values $(\alpha+F)(e_i)=\alpha(e_i)F(e_i)$.
\end{proposition}
\subsection{Stabilizer Groups} \label{StabSS1}
We fix a non-archimedean field $\ell$ and an object $(V,\alpha) \in \Norm^\circ(\ell)$.

\begin{definition} \label{SchStabDef0}

 For an $\ell^\circ$-algebra $R$, we let $\UEnd(\alpha)(R)$ be the set of tuples $(\phi^{\gamma})_{\gamma \in \Gamma}$ satisfying the following conditions:
  \begin{enumerate}
  \item For each $\gamma \in \Gamma$, the object $\phi^{\gamma}$ is an $R$-linear endomorphism of the $R$-module $V^{\alpha\leq \gamma}_R$.
  \item For all $\gamma \leq \delta$ in $\Gamma$, the diagram
    \begin{equation*}
      \xymatrix{
        V^{\alpha\leq \gamma}_R \ar[r] \ar[d]_{\phi^{\gamma}} & V^{\alpha\leq \delta}_R \ar[d]^{\phi^{\delta}} \\
        V^{\alpha\leq \gamma}_R \ar[r] & V^{\alpha \leq \delta}_R,
}
    \end{equation*}
in which the horizontal arrows are the base change to $R$ of the inclusion $$V^{\alpha \leq \gamma} \into V^{\alpha\leq \delta},$$ commutes.
\item For all $\lambda \in \ell^*$ and all $\gamma \in \Gamma$, the diagram
    \begin{equation*}
      \xymatrix{
        V^{\alpha \leq \gamma}_R \ar[r] \ar[d]_{\phi^{\gamma}} & V^{\alpha \leq \gamma|\lambda|}_R \ar[d]^{\phi^{\gamma |\lambda|}} \\
        V^{\alpha \leq \gamma}_R \ar[r] & V^{\alpha \leq \gamma |\lambda|}_R,
}
    \end{equation*}
    in which the horizontal arrows are the base change to $R$ of the isomorphism $$V^{\alpha \leq \gamma} \isoto V^{\alpha \leq \gamma |\lambda|}, \; v \mapsto \lambda v,$$ commutes.
    \end{enumerate}
Under termwise addition and composition, the set $\UEnd(\alpha)(R)$ is naturally a $R$-algebra.

 For a homomorphism $R \to S$ of $\ell^\circ$-algebras, there is a natural base change homomorphism $\UEnd(\alpha)(R) \to \UEnd(\alpha)(S)$. So we obtain a presheaf of rings $\UEnd(\alpha)$ on the category of affine schemes over $\ell^\circ$.

 Finally we denote by $\UAut(\alpha) \subset \UEnd(\alpha)$ the group presheaf given by the units of $\UEnd(\alpha)$.
 \end{definition}

 \begin{proposition} \label{UAutRepr0}
   \begin{enumerate}
   \item The presheaves $\UEnd(\alpha)$ and $\UAut(\alpha)$ are sheaves for the fpqc-topology.
   \item Let $R$ be an $\ell^\circ$-algebra for which the $R$-modules $V^{\alpha\leq \gamma}_R$ are locally free of finite type for all $\gamma \in \Gamma$. Then $\UEnd(\alpha)_R$ and $\UAut(\alpha)_R$ are representable by an affine group scheme of finite type over $R$.
   \item More precisely, in the situation of (ii), there exist finitely many elements $\gamma_1,\hdots,\gamma_n \in \Gamma$ for which the natural homomorphisms

   \begin{equation*}
     \UEnd(\alpha)_R \to \prod_{1 \leq i \leq n}\End_R(V^{\alpha \leq \gamma_i}_R)
   \end{equation*}
   and
   \begin{equation*}
     \UAut(\alpha)_R \to \prod_{1\leq i \leq n}\GL(V^{\alpha\leq \alpha(e_i)}_R)
   \end{equation*}
   are representable by closed immersions.
   \end{enumerate}
 \end{proposition}
 \begin{proof}
   Claim (i) follows from descent theory.

   For (iii), let $(e_i)_{1\leq i\leq n}$ be a splitting basis of $(V,\alpha)$. Then the norm $\alpha(v)$ of any element $v \in V$ lies in $\alpha(e_i)|\ell^*|$ for some $1\leq i \leq n$. Using this one checks that the natural homomorphism
   \begin{equation*}
     \UEnd(\alpha)_R \to \prod_{1 \leq i \leq n}\End_R(V^{\alpha \leq \alpha(e_i)}_R)
   \end{equation*}
   is representable by a closed immersion.
   Then
   \begin{equation*}
     \UAut(\alpha)_R \to \prod_{1\leq i \leq n}\GL(V^{\alpha\leq \alpha(e_i)}_R)
   \end{equation*}
   is representable by a closed immersion as well.

   Then (iii) implies (ii).
 \end{proof}

 \begin{lemma} \label{UAutReprConc}
   Proposition \ref{UAutRepr0} (ii) in particular applies in the following cases:
   \begin{enumerate}
   \item For $R=\ell^\circ$ in case $\ell^\circ$ is a discrete valuation ring and $\Gamma$ has rank one.
   \item Always for $R=\ell$ or $R=\tilde\ell$.
   \end{enumerate}
 \end{lemma}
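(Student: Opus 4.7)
I plan to reduce all three cases to analyzing the $\ell^\circ$-submodules $\ell^{\leq\delta}\subseteq \ell$ for $\delta\in\Gamma$. By the definition of $\Norm^\circ(\ell)$ there exists a splitting basis $(e_i)_{1\leq i\leq n}$ of $(V,\alpha)$, and the splitting identity directly yields the decomposition of $\ell^\circ$-modules
$$V^{\alpha\leq\gamma}=\bigoplus_{i=1}^n \ell^{\leq\gamma/\alpha(e_i)}\, e_i.$$
Since base change commutes with direct sums, it will suffice for each of the three rings $R$ in question to show that $\ell^{\leq\delta}\otimes_{\ell^\circ}R$ is locally free of finite rank for every $\delta\in\Gamma$.

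For $R=\ell$ this is immediate: any nonzero element of $\ell^{\leq\delta}$ is invertible in $\ell$, so $\ell^{\leq\delta}\otimes_{\ell^\circ}\ell=\ell$ and hence $V^{\alpha\leq\gamma}_\ell\cong V$ is free of rank $n$. For $R=\ell^\circ$ in the DVR case, the value group $|\ell^*|=|\pi|^{\BZ}$ is discrete in $\Gamma$, so setting $m=\min\{k\in\BZ: |\pi|^k\leq\delta\}$ one checks that $\ell^{\leq\delta}=\pi^m\ell^\circ$ is free of rank one, and therefore $V^{\alpha\leq\gamma}$ is free of rank $n$ over $\ell^\circ$.

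The main point, and the only step requiring more than a direct calculation, is the case $R=\tilde\ell$ for arbitrary $\ell^\circ$. Here I plan to show that $\ell^{\leq\delta}\otimes_{\ell^\circ}\tilde\ell$ has dimension at most one by splitting on whether the set $\{|x|:x\in\ell^{\leq\delta}\setminus\{0\}\}$ attains its supremum. If yes, with $x_0$ achieving the maximum, then $\ell^{\leq\delta}=x_0\ell^\circ$ is principal and reduces modulo $\mathfrak{m}$ to a copy of $\tilde\ell$. If no, then for every $y\in\ell^{\leq\delta}$ there is some $x\in\ell^{\leq\delta}$ with $|x|>|y|$, and writing $y=(y/x)\cdot x$ with $|y/x|<1$ places $y$ in $\mathfrak{m}\ell^{\leq\delta}$; this forces $\ell^{\leq\delta}=\mathfrak{m}\ell^{\leq\delta}$, and hence $\ell^{\leq\delta}\otimes_{\ell^\circ}\tilde\ell=0$. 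Either way $V^{\alpha\leq\gamma}_{\tilde\ell}$ will be at most $n$-dimensional over the field $\tilde\ell$, and thus automatically free of finite rank, which finishes the argument.
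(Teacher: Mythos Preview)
Your proof is correct and follows essentially the same approach as the paper's: both reduce via a splitting basis to the one-variable modules $\ell^{\leq\delta}$, and for $R=\tilde\ell$ both invoke the dichotomy that $\ell^{\leq\delta}$ is either principal (free of rank one) or satisfies $\ell^{\circ\circ}\ell^{\leq\delta}=\ell^{\leq\delta}$; you simply spell out why this dichotomy holds. One minor inaccuracy: for $R=\ell$ the module $V^{\alpha\leq\gamma}_\ell$ need not have rank $n$ in general (if $\Gamma$ contains elements below all of $|\ell^*|$, some $\ell^{\leq\gamma/\alpha(e_i)}$ may be zero, cf.\ Lemma~\ref{GenUAutDesc}), but it is still free, which is all the statement requires.
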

 \begin{proof}
   Let $(e_i)$ be a splitting basis of $(V,\alpha)$. For $\gamma \in \Gamma$, such a basis induces an isomorphism
   \begin{equation*}
     V^{\alpha\leq\gamma}_R \cong \oplus_i \ell^{\leq \gamma/\alpha(e_i)}\otimes_{\ell^\circ} R.
   \end{equation*}
   So the $R$-modules $V^{\alpha\leq\gamma}_R$ are free of finite rank if for all $\gamma \in \Gamma$, the $R$-module $\ell^{\leq\gamma} \otimes_{\ell^\circ} R$ is free of finite rank.

   In case (i), each $\ell^{\leq\gamma}$ is free of rank one and so the above applies. In case (ii), for $R=\tilde\ell$, the above applies since $\ell^{\leq\gamma}$ is either free of rank one over $\ell^\circ$ or satisfies $\ell^{\circ\circ}\ell^{\leq\gamma}=\ell^{\leq\gamma}$, and for $R=\ell$ since $\ell^{\leq\gamma} \otimes_{\ell^\circ} \ell \subset \ell$ is either $0$ or $\ell$.
 \end{proof}

 \begin{lemma} \label{GenUAutDesc}
   Assume that for each element $\gamma \in \Gamma$ there exists an element $x \in \ell^*$ satisfying $|x| \leq \gamma$.
   \begin{enumerate}
   \item For each $\gamma \in \Gamma$ the inclusion $V^{\alpha\leq\gamma} \into V$ induces an isomorphism $V^{\alpha\leq\gamma}_\ell \cong V$.
   \item For varying $\gamma$, the isomorphisms from (i) induce an isomorphism $\UAut(\alpha)_\ell \cong \GL(V)$.
   \end{enumerate}
 \end{lemma}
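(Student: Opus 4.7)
The plan for (i) is to show that the natural $\ell$-linear map $V^{\alpha\leq\gamma}\otimes_{\ell^\circ}\ell \to V$ induced by the inclusion is a bijection. For injectivity I would use that $\ell^\circ$ is a valuation ring, so $\ell$ is flat over $\ell^\circ$; then the base change of the injection $V^{\alpha\leq\gamma} \hookrightarrow V$ to $\ell$ is injective, and its codomain $V\otimes_{\ell^\circ}\ell$ is canonically $V$ since $V$ is already an $\ell$-vector space. For surjectivity I would invoke the cofinality hypothesis: given $v \in V \setminus \{0\}$, the element $\gamma\alpha(v)^{-1}$ lies in $\Gamma$, so there exists $\lambda \in \ell^*$ with $|\lambda| \leq \gamma\alpha(v)^{-1}$; then $\lambda v \in V^{\alpha\leq\gamma}$ and $v = \lambda^{-1}\otimes \lambda v$ is in the image.

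For (ii), for any $\ell$-algebra $R$, base changing the isomorphism from (i) along $\ell \to R$ gives, for every $\gamma \in \Gamma$, a natural isomorphism $V^{\alpha\leq\gamma}_R \isoto V \otimes_\ell R$. Under these identifications, the inclusion $V^{\alpha\leq\gamma}_R \hookrightarrow V^{\alpha\leq\delta}_R$ for $\gamma \leq \delta$ becomes the identity of $V \otimes_\ell R$, and the multiplication-by-$\lambda$ map $V^{\alpha\leq\gamma}_R \to V^{\alpha\leq\gamma|\lambda|}_R$ of Definition \ref{SchStabDef0}(iii) becomes multiplication by $\lambda$, which is a unit in $R$. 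Using this, I would define a homomorphism $\UAut^\otimes(\alpha)(R) \to \GL(V)(R)$ by sending $(\phi^\gamma)_\gamma$ to the common automorphism of $V \otimes_\ell R$ obtained from any single $\phi^\gamma$: conditions (ii) and (iii) of Definition \ref{SchStabDef0} guarantee that all the $\phi^\gamma$ correspond to the same element of $\GL(V_R)$. Conversely, any $g \in \GL(V \otimes_\ell R)$ transports through the identifications $V^{\alpha\leq\gamma}_R \cong V \otimes_\ell R$ to a family $(\phi^\gamma)_\gamma$ of $R$-linear automorphisms which automatically satisfy (ii) and (iii) because the transition maps are now the identity and multiplication by a scalar.

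There is no real obstacle in either part: (i) is immediate from flatness of $\ell/\ell^\circ$ together with the cofinality hypothesis, and (ii) is then a direct bookkeeping consequence of (i) and the structure of Definition \ref{SchStabDef0}. The only point requiring some care is ensuring that the constructions in (ii) are functorial in the $\ell$-algebra $R$, but this is automatic since all the identifications $V^{\alpha\leq\gamma}_R \cong V \otimes_\ell R$ arise by base change from the single isomorphism in (i).
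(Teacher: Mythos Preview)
Your proposal is correct and follows essentially the same line as the paper's proof. The only minor difference is in part (i): the paper reduces to the one-dimensional case via a splitting basis and then observes that the hypothesis makes each $\ell^{\leq \gamma/\alpha(e_i)}$ nonzero, whereas you argue directly using flatness of $\ell$ over $\ell^\circ$ for injectivity and the cofinality hypothesis for surjectivity; both arguments are equivalent, and yours has the slight advantage of not invoking splittability.
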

 \begin{proof}
   A splitting basis $(e_i)_{1\leq i \leq n}$ of $(V,\alpha)$ gives identifications
   \begin{equation*}
     V^{\alpha\leq\gamma} \cong \oplus_i \ell^{\leq \gamma/\alpha(e_i)}
   \end{equation*}
   for all $\gamma \in \Gamma$. The assumption ensures that all the $\ell^\circ$-submodules $\ell^{\leq \gamma/\alpha(e_i)}$ of $\ell$ appearing here are not zero. This implies (i). Then (ii) follows from the definition of $\UAut(\alpha)$.
 \end{proof}
 We also give a different description of $\UEnd(\alpha)$:
 \begin{construction} \label{UAutCompCons0}
   We consider the ring $\End(\alpha)= (V\otimes V^*)^{\alpha\leq 1}$ of contractive endomorphisms of $V$ with respect to $\alpha$. For an element $\gamma \in \Gamma$, the evaluation map $V \otimes V^* \otimes V \to V, \; v \otimes h \otimes w \mapsto h(w)v$ restricts to a morphism
   \begin{equation*}
     \End(\alpha) \otimes V^{\alpha\leq \gamma} = (V \otimes V^*)^{\alpha \leq 1} \otimes V^{\alpha \leq \gamma} \to V^{\alpha \leq \gamma}.
   \end{equation*}
   For an $\ell^\circ$-algebra $R$, via the base change of this morphism to $R$, any element of $\End(\alpha)_R$ acts naturally on $V^{\alpha\leq \gamma}_R$. For varying $\gamma$, these actions satisfy the conditions from Definition \ref{SchStabDef0} and hence give a ring homomorphism
   \begin{equation} \label{EndCompMor}
     g_{\alpha,R} \colon \End(\alpha)_R \to \UEnd(\alpha)(R).
   \end{equation}
\end{construction}

 \begin{proposition} \label{UAutComp0}
   For any $\ell^\circ$-algebra $R$, the homomorphism $g_{\alpha,R}$ is an isomorphism.
 \end{proposition}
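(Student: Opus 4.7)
The plan is to fix a splitting basis $(e_i)_{1 \leq i \leq n}$ of $(V,\alpha)$ and set $\gamma_i := \alpha(e_i)$. This gives the explicit identifications
\begin{equation*}
V^{\alpha\leq\gamma}_R = \bigoplus_{i} \ell^{\leq\gamma/\gamma_i} \otimes_{\ell^\circ} R
\qquad\text{and}\qquad
\End(\alpha)_R = \bigoplus_{i,j} \ell^{\leq\gamma_j/\gamma_i}\otimes_{\ell^\circ} R \cdot (e_i\otimes e_j^*),
\end{equation*}
and I will use these to write down an inverse to (\ref{AutCompMor}) explicitly at the level of $R$-points.

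Given $(\phi^\gamma)_{\gamma}\in\UAut^\otimes(\alpha)(R)$, the evaluation of $\phi^{\gamma_j}$ on the basis vector $e_j\otimes 1 \in V^{\alpha\leq\gamma_j}_R$ has the form $\phi^{\gamma_j}(e_j\otimes 1)=\sum_i c_{ij}(e_i\otimes 1)$ with $c_{ij}\in\ell^{\leq\gamma_j/\gamma_i}\otimes_{\ell^\circ} R$. This assembles into the element $c := \sum_{i,j}c_{ij}(e_i\otimes e_j^*)\in\End(\alpha)_R$, and I define the candidate inverse by $(\phi^\gamma)\mapsto c$.

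The key step is to check that the action of $c$ on $V^{\alpha\leq\gamma}_R$ coming from Construction \ref{UAutCompCons0} recovers the original $\phi^\gamma$. Since $V^{\alpha\leq\gamma}_R$ is generated as an $R$-module by elements of the form $\mu(e_j\otimes 1)$ for $\mu\in\ell^{\leq\gamma/\gamma_j}$, it is enough, by $R$-linearity, to check agreement on such elements. For $\mu\neq 0$ we have $\mu\in\ell^*$, so condition (iii) of Definition \ref{SchStabDef0} (with $\lambda=\mu$ and the threshold $\gamma_j$) yields
\begin{equation*}
\phi^{|\mu|\gamma_j}(\mu(e_j\otimes 1)) \;=\; \mu\,\phi^{\gamma_j}(e_j\otimes 1) \;=\; \sum_i \mu\,c_{ij}(e_i\otimes 1),
\end{equation*}
and condition (ii), applied to the inclusion $|\mu|\gamma_j\leq\gamma$, identifies the left-hand side with $\phi^\gamma(\mu(e_j\otimes 1))$. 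This matches the matrix action of $c$, so $\phi^\gamma$ is exactly multiplication by $c$ on $V^{\alpha\leq\gamma}_R$ for every $\gamma$.

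It remains to observe that $c$ is a unit and that the two constructions are mutually inverse. Applying the same prescription to $(\phi^\gamma)^{-1}\in\UAut^\otimes(\alpha)(R)$ produces an element $c'\in\End(\alpha)_R$ whose action on each $V^{\alpha\leq\gamma}_R$ inverts $c$; testing $cc'$ and $c'c$ on the basis vectors $e_j\otimes 1$ shows $cc'=c'c=\mathrm{id}$ in $\End(\alpha)_R$, so $c\in\End(\alpha)_R^*$. Conversely, starting from $c\in\End(\alpha)_R^*$, the action from (\ref{AutCompMor}) clearly satisfies conditions (i)--(iii), and evaluating the resulting tuple on $e_j\otimes 1$ inside $V^{\alpha\leq\gamma_j}_R$ recovers the matrix coefficients $c_{ij}$ of $c$. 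The only non-formal point of the argument is the matrix-coefficient extraction in the previous paragraph, which uses conditions (ii) and (iii) in tandem with $R$-linearity to reduce the verification to the single basis evaluation $\phi^{\gamma_j}(e_j\otimes 1)$; everything else is bookkeeping with the splitting basis.
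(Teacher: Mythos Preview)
Your proof is correct and follows essentially the same approach as the paper: both fix a splitting basis, extract matrix coefficients by evaluating $\phi^{\gamma_j}$ on $e_j\otimes 1$, and use conditions (ii) and (iii) of Definition \ref{SchStabDef0} to show these coefficients determine $\phi$ and assemble into a unit of $\End(\alpha)_R$. Your write-up is slightly more explicit than the paper's in verifying that $c$ is a unit (via the element $c'$ obtained from $(\phi^\gamma)^{-1}$) and in checking the two constructions are mutually inverse, but the underlying idea is identical.
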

 \begin{proof}
   We fix a splitting basis $(e_i)$ of $(V,\alpha)$. 

   For an $\ell$-linear endomorphism $\phi$ of $V$ we consider the coefficients $a_{ij} \in \ell$ given by $h(e_i)=\sum_{j} a_{ij} e_j$ for all $i$. Then $\phi$ is in $\End(\alpha)$ if and only if $|a_{ij}| \leq \alpha(e_i)/\alpha(e_j)$ for all $1\leq i,j \leq n$. This gives an identification
   \begin{equation*}
       \End(\alpha) \cong \oplus_{i,j} \ell^{\leq \alpha(e_i)/\alpha(e_j)}
     \end{equation*}
     which in turn induces an isomorphism
\begin{equation} \label{EndIdent}
  \End(\alpha)_R \cong \oplus_{i,j} \ell^{\leq \alpha(e_i)/\alpha(e_j)}\otimes R.
\end{equation}

We construct an inverse of $g_{\alpha,R}$ as follows: Let $\phi=(\phi^{\gamma})_{\gamma} \in \UEnd(\alpha)(R)$ for some $\ell^\circ$-algebra $R$.  For any $\gamma \in \Gamma$, the basis $(e_1,\hdots, e_n)$ gives an identification
   \begin{equation*}
     V^{\alpha \leq \gamma}_R \cong \oplus_{i}\ell^{\leq \gamma/\alpha(e_i)} \otimes_{\ell^\circ} R.
   \end{equation*}

   We fix an index $i$ and take $\gamma = \alpha(e_i)$. We consider the element $e_i \otimes 1 \in V^{\alpha\leq \gamma}_R$ and use the above identification to write its image $\phi^{\gamma}(e_i \otimes 1)$ under $\phi$ as a tuple $(a_{ij} \otimes r_j)_j$ with elements $a_{ij} \in \ell^{\leq \alpha(e_i)/\alpha(e_j)}$ and $r_j \in R$. Under the identification \eqref{EndIdent}, for varying $i$ these tuples give an element $h \in \End(\alpha)_R$. This defines a map $g'_{\alpha,R}\colon \UEnd(\alpha)(R) \to \End(\alpha)_R$. One readily checks that the composition $g'_{\alpha,R} \circ g_{\alpha,R}$ is the identity. So $g'_{\alpha,R}$ is surjective.

   To finish the proof, we note that these tuples $(a_{ij} \otimes r_j)_j$ for all $i$ completely determine $\phi$: Indeed let $\gamma \in \Gamma$ be arbitrary and $\lambda \in \ell$ with $\lambda e_i \in V^{\alpha \leq \gamma}$. Then $\lambda e_i \otimes 1 \in V^{\alpha\leq \gamma}_R$ is the image of $e_i \otimes 1 \in V^{\alpha \leq \alpha(e_i)}$ under the composition
   \begin{equation*}
     V^{\alpha\leq \alpha(e_i)}_R \toover{v \mapsto \lambda v} V^{\alpha \leq |\lambda| \alpha(e_i)}_R \to V^{\alpha \leq \gamma}_R.
   \end{equation*}
Hence $g'_{\alpha,R}$ is injective. So $g'_{\alpha,R}$, and hence also $g_{\alpha,R}$, is bijective.
\end{proof}
   In particular $g_{\alpha,R}$ restricts to a group isomorphism
   \begin{equation} \label{AutCompMor}
     \End(\alpha)^*_R \defeq (\End(\alpha)_R)^* \isoto \UAut(\alpha)(R).
   \end{equation}

\subsection{Splittings} \label{SplittingSS1}
We fix a non-archimedean field $\ell$.

\begin{definition}
  For an abelian group $A$, we denote by $D^A$ the diagonalizable group scheme over $\BZ$ with character group $A$. So for any ring $R$ and any $R$-module $M$, a representation of $D^A_R$ on $M$ is the same as a direct sum decomposition $M=\oplus_{w \in A}M^w$ into weight spaces $M^w$. 
\end{definition}

In Section \ref{NFFSection} we will consider tensor functor from certain categories of representations into $\Norm^\circ(\ell)$. To consider splittings of such tensor functors, we need to reformulate the notion of a splitting basis of an object $(V,\alpha) \in \Norm^\circ(\ell)$ in a way which can be made compatible with the formation of tensor products. For this we replace the decomposition $V=\oplus_i \ell e_i$ given by a splitting basis by the $\Gamma$-grading of $V$ making each $e_i$ homogenous of degree $\alpha(e_i)$, which we specify via a homomorphism $\chi\colon D^\Gamma_\ell \to \GL(V)$. But since replacing a vector $e_i$ in a splitting basis by a scalar multiple $\lambda e_i$ changes its norm to $|\lambda| \alpha(e_i)$, such a grading alone is not enough to specify a norm: We also need to add some kind of integral structure to $\chi$. We give two variants of such an integral structure: The first is to specify a lattice $L \subset V$ for which $\chi$ extends to a homomorphism $D^\Gamma_{\ell^\circ} \to \GL(L)$ over $\ell^\circ$. The second is to require $\chi$ to extend to a certain homomorphism $D^{\Gamma}_{\ell^\circ} \to \UAut(\alpha)$ over $\ell^\circ$.

\begin{construction} \label{ChiNormCons0}
  Let $V$ be a finite-dimensional $\ell$-vector space. To an $\ell^\circ$-lattice $L \subset V$ together with homomorphism $$\chi\colon D^\Gamma_{\ell^\circ} \to \GL(L)$$ over $\ell^\circ$, we associate a norm $\alpha_{L,\chi}$ on $V$ as follows: 

The homomorphism $\chi$ induces weight decompositions $L=\oplus_{w \in \Gamma}L^w$ and $V=\oplus_{w\in \Gamma}V^w$. By Construction \ref{LatticeNorm}, each lattice $L^w \subset V^w$ induces a norm $\alpha_{L^w}$ on $V^w$. Now we let
  \begin{equation*}
    \alpha_{L,\chi}(\sum_{w\in \Gamma}v_w)=\max_{w\in \Gamma}(\alpha_{L^w}(v_w)w)
  \end{equation*}
for all finite sums $\sum_{w \in \Gamma}v_w \in V$ with each $v_w$ in $V^w$. This defines a norm on $V$.
\end{construction}

\begin{lemma} \label{SplitEq0}
  \begin{enumerate}
  \item In the situation of Construction \ref{ChiNormCons0}, the norm $\alpha_{L,\chi}$ is splittable.
  \item Every splittable norm $\alpha$ on $V \in \Vec_\ell$ is equal to the norm $\alpha_{L,\chi}$ for suitable $L$ and $\chi$.
  \end{enumerate}
\end{lemma}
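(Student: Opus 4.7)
The plan is to verify both parts by a direct computation using the weight decomposition and a splitting basis. The main tool for (i) is that picking $\ell^\circ$-bases of each weight summand of $L$ produces a splitting basis of $\alpha_{L,\chi}$, and for (ii) a splitting basis of $\alpha$ is used to manufacture the grading. Nothing here looks hard; the only point requiring care is making the data $(L,\chi)$ live over $\ell^\circ$ rather than just over $\ell$, which amounts to noting that a homomorphism $D^\Gamma_{\ell^\circ}\to\GL(L)$ is the same as a $\Gamma$-grading of $L$ as an $\ell^\circ$-module.

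For (i), I would first unwind Construction \ref{ChiNormCons0}: the homomorphism $\chi$ gives a $\Gamma$-grading $L=\oplus_{w\in\Gamma}L^w$, hence $V=\oplus_{w\in\Gamma}V^w$ with $V^w=L^w\otimes_{\ell^\circ}\ell$. Choose for each $w\in\Gamma$ an $\ell^\circ$-basis $(e^w_i)_{i\in I_w}$ of the finitely generated projective module $L^w$; such a basis exists after possibly passing to the direct summand case, and by Construction \ref{LatticeNorm} satisfies $\alpha_{L^w}(e^w_i)=1$. I would then combine these to a basis $(e^w_i)_{w\in\Gamma,\,i\in I_w}$ of $V$ and compute, using the definition of $\alpha_{L,\chi}$ as a max over the weight pieces together with the splitting property of each $\alpha_{L^w}$, that
\begin{equation*}
  \alpha_{L,\chi}\!\left(\sum_{w,i}\lambda^w_i e^w_i\right)=\max_{w\in\Gamma}\Bigl(\max_{i\in I_w}|\lambda^w_i|\Bigr)\cdot w=\max_{w,i}|\lambda^w_i|\,\alpha_{L,\chi}(e^w_i),
\end{equation*}
which is precisely the splitting condition.

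For (ii), let $(e_i)_{1\leq i\leq n}$ be a splitting basis of $\alpha$, set $w_i\defeq \alpha(e_i)\in\Gamma$, and group the basis vectors by weight: for each $w\in\Gamma$ let $I_w\defeq\{i\mid w_i=w\}$. Define the $\ell^\circ$-lattice $L\defeq\oplus_i \ell^\circ e_i$ together with the $\Gamma$-grading $L=\oplus_w L^w$ where $L^w\defeq\oplus_{i\in I_w}\ell^\circ e_i$; this grading corresponds to a homomorphism $\chi\colon D^\Gamma_{\ell^\circ}\to\GL(L)$. Then I would verify $\alpha=\alpha_{L,\chi}$ by direct computation: for $v=\sum_i\lambda_i e_i$ with weight decomposition $v=\sum_w v_w$, $v_w=\sum_{i\in I_w}\lambda_i e_i$, one has $\alpha_{L^w}(v_w)=\max_{i\in I_w}|\lambda_i|$, so
\begin{equation*}
  \alpha_{L,\chi}(v)=\max_w\Bigl(\max_{i\in I_w}|\lambda_i|\Bigr)w=\max_i|\lambda_i|\,\alpha(e_i)=\alpha(v),
\end{equation*}
where the last equality uses that $(e_i)$ splits $\alpha$.

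The only genuine subtlety to double-check is in (i): if $L^w$ happens not to be free (in case $\ell^\circ$ is not a valuation ring, which is not assumed in this subsection), one should either work locally on $\Spec\ell^\circ$ or note that the statement of the lemma is about the norm on $V$, so one can first extend scalars to a faithfully flat cover making each $L^w$ free; by definition the resulting norm is unchanged. Apart from that bookkeeping, both directions are formal.
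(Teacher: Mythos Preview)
Your proof is correct and follows the same approach as the paper: in (i) you take bases of the weight pieces $L^w$ and verify they form a splitting basis, and in (ii) you build $L$ and $\chi$ from a splitting basis exactly as the paper does. Your worry at the end is unnecessary: throughout this section $\ell$ is a fixed non-archimedean field, so $\ell^\circ$ is a valuation ring (in particular local), and each $L^w$, being a direct summand of the finitely generated torsion-free $\ell^\circ$-module $L$, is automatically free.
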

\begin{proof}
  (i) By construction, a basis of $L$ formed out of bases of the $L^w$ splits $\alpha_{L,\chi}$.

  (ii) Let $(e_1,\hdots,e_n)$ be a splitting basis of $(V,\alpha)$. We take $L$ to be the $\ell^\circ$-lattice spanned by $(e_1,\hdots,e_n)$ and $\chi\colon D^\Gamma_{\ell^\circ} \to GL(L)$ the homomorphism which acts with weight $\alpha(e_i)$ on each $e_i$. Then $\alpha=\alpha_{L,\chi}$.
\end{proof}

This motivates the following definition:
\begin{definition} \label{SplitDef0}
  Let $\alpha$ be a norm on a finite-dimensional vector space $V$ over $\ell$. A pair $(L,\chi)$ as in Construction \ref{ChiNormCons0} \emph{splits} $\alpha$ if $\alpha=\alpha_{L,\chi}$.
\end{definition}

Now we fix $(V,\alpha) \in \Norm^\circ(\ell)$. We also a give the following notion of a splitting of $(V,\alpha)$ which doesn't involve a lattice:

\begin{definition} \label{IntSplitDef0}
  Let $R$ be a $\ell^\circ$-algebra and consider a homomorphism $\chi\colon D^\Gamma_R \to \UAut(\alpha)_R$ or $D^{\Gamma/|\ell^*|}_R \to \UAut(\alpha)_R$. For each $\gamma \in \Gamma$, the group sheaf $\UAut(\alpha)_R$ acts naturally on $V^{\alpha \leq \gamma}_R/V^{\alpha < \gamma}_R$. We say that $\chi$ \emph{splits} $\alpha$ over $R$ if for all $\gamma\in \Gamma$, all weights of $\chi$ on $V^{\alpha \leq \gamma}_R/ V^{\alpha < \gamma}_R$ are in $\gamma |\ell^*|$.
\end{definition}

\begin{remark}
   Since $V^{\alpha\leq\gamma}/V^{\alpha<\gamma}$ is an $\tilde \ell$-module, in the situation of Definition \ref{IntSplitDef0}, the homomorphism $\chi$ splits $(V,\alpha)$ over $R$ if and only if $\chi_{R \otimes_{\ell^\circ} \tilde \ell}$ splits $(V,\alpha)$ over $R \otimes_{\ell^\circ} \tilde\ell$.

\end{remark}

b
These two notions of a splitting are equivalent in the following sense:
\begin{proposition} \label{SplitExtEquiv0}
  For a homomorphism $\chi\colon D^\Gamma_\ell \to \GL(V)$ the following are equivalent:
  \begin{enumerate}
  \item There exists an $\ell^\circ$-lattice $L \subset V$ together with an extension $\chi_1 \colon D^\Gamma_{\ell^\circ} \to \GL(L)$ of $\chi$ for which the pair $(L,\chi_1)$ splits $\alpha$.
  \item The homomorphism $\chi$ extends to a homomorphism $\chi_2 \colon D^\Gamma_{\ell^\circ} \to \UAut(\alpha)$ which splits $\alpha$ over $\ell^\circ$. 
  \end{enumerate}

  If this is the case, then such objects $\chi_1$, $\chi_2$ and $L$ are uniquely determined by the above.

\end{proposition}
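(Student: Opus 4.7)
The plan is to observe that (i) and (ii) are two parameterizations of the same auxiliary data attached to the weight decomposition $V = \bigoplus_{w \in \Gamma} V^w$ induced by $\chi$, and to exhibit explicit formulas going each way. For (i) $\Rightarrow$ (ii), I would unwind Construction \ref{ChiNormCons0} to obtain
\[
V^{\alpha\leq \gamma} \;=\; \bigoplus_{w\in\Gamma}(V^w)^{\alpha_{L^w}\leq\gamma/w}
\]
for each $\gamma \in \Gamma$; this endows every $V^{\alpha\leq\gamma}$ with a $\Gamma$-grading which is functorial in $\gamma$ and compatible with scaling by $\lambda \in \ell^*$. After base change to any $\ell^\circ$-algebra $R$ these gradings yield, in the sense of Definition \ref{SchStabDef0}, a homomorphism $\chi_2\colon D^\Gamma_{\ell^\circ} \to \UAut(\alpha)$ extending $\chi$. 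The splitting condition of Definition \ref{IntSplitDef0} is then immediate: the $w$-weight piece of $V^{\alpha\leq\gamma}/V^{\alpha<\gamma}$ equals $(V^w)^{\alpha_{L^w}\leq\gamma/w}/(V^w)^{\alpha_{L^w}<\gamma/w}$, which is nonzero only when $\gamma/w \in |\ell^*|$.

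For (ii) $\Rightarrow$ (i), $\chi_2$ equips each $V^{\alpha\leq\gamma}$ with a $\Gamma$-grading whose generic fiber recovers $\chi$, and since a graded $\ell^\circ$-submodule of a graded $\ell$-vector space is determined by its intersections with the homogeneous components, one has $V^{\alpha\leq\gamma} = \bigoplus_w (V^{\alpha\leq\gamma} \cap V^w)$. The linchpin is the orthogonality relation $\alpha(\sum_w v_w) = \max_w \alpha(v_w)$: the nontrivial inequality follows because, were the maximum value $\gamma$ not attained, the images $[v_w]$ of those summands with $\alpha(v_w) = \gamma$ would give nonzero elements of distinct weight pieces of the $\Gamma$-graded $\tilde\ell$-vector space $V^{\alpha\leq\gamma}/V^{\alpha<\gamma}$ whose sum nevertheless vanishes. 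Applied to a single nonzero $v \in V^w$ with $\alpha(v) = \gamma$, the same bookkeeping together with the splitting condition forces $w \in \gamma|\ell^*|$, whence $\alpha(V^w) \subseteq w|\ell|$. I then set $L^w := V^w \cap V^{\alpha\leq w}$ and $L := \bigoplus_w L^w$, and let $\chi_1$ act on $L^w$ with weight $w$; orthogonality and the value restriction together yield $\alpha = \alpha_{L,\chi_1}$.

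The main technical obstacle is to show that each $L^w$ is an $\ell^\circ$-lattice in $V^w$, equivalently that $\beta_w := w^{-1}\alpha|_{V^w}$ (which now takes values in $|\ell|$) is splittable, so that Lemma \ref{SpecialNormClass} applies and realizes $L^w$ as $(V^w)^{\beta_w \leq 1}$. I would prove this by induction on the number of nonzero weight spaces, reducing to the two-summand orthogonal case: starting from a splitting basis of $\alpha$, orthogonality guarantees that for each basis vector at least one of its weight components realizes its norm, which permits extracting splitting bases of the two summands via a careful projection-and-replacement argument. Finally, uniqueness of $(L, \chi_1, \chi_2)$ follows at once: the grading $V = \bigoplus V^w$ is pinned down by $\chi$; the identity $\alpha = \alpha_{L,\chi_1}$ forces $L^w = V^w \cap V^{\alpha \leq w}$ and determines $\chi_1$; and the grading on each $V^{\alpha\leq\gamma}$ is forced by the intersection formula, determining $\chi_2$.
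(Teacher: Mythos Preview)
Your proof is correct and follows essentially the same route as the paper's: both directions proceed by translating between a $\Gamma$-grading of the lattice $L$ and compatible $\Gamma$-gradings of the submodules $V^{\alpha\leq\gamma}$, and for $(ii)\Rightarrow(i)$ both first establish the orthogonal decomposition $(V,\alpha)=\bigoplus_w (V^w,\alpha|_{V^w})$ together with the value restriction $\alpha(V^w)\subset w|\ell|$, then build $L$ weight-by-weight. The one visible difference is that the paper, after noting the orthogonal decomposition, simply writes ``for each $w$ choose a basis $(e_i^w)_i$ of $V^w$ which splits $\alpha|_{V^w}$'' and rescales; you instead flag the splittability of each $\alpha|_{V^w}$ as the ``main technical obstacle'' and sketch a projection-and-replacement argument. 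In effect you are supplying a justification for a step the paper takes for granted (it is the standard fact that an orthogonal summand of a splittable normed vector space is again splittable, equivalently that any subspace of an object of $\Norm^\circ(\ell)$ admits an adapted splitting basis). Your orthogonality argument via linear independence of homogeneous classes in $V^{\alpha\leq\gamma}/V^{\alpha<\gamma}$ is also a bit more explicit than the paper's, which reads orthogonality directly off the identity $V^{\alpha\leq\gamma}=\bigoplus_w(V^w\cap V^{\alpha\leq\gamma})$.
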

\begin{proof}
  Let $V=\oplus_{w \in \Gamma} V^w$ be the weight decomposition given by $\chi$.
  
  $(i) \implies (ii)$: Let $(L,\chi_1)$ be as in (i) and $L=\oplus_{w \in \Gamma} L^w$ the associated weight decomposition. Then, for $\gamma \in \Gamma$, Construction \ref{ChiNormCons0} implies
  \begin{equation*}
    V^{\alpha\leq \gamma}=\oplus_{w\in \Gamma}(V^w)^{\alpha\leq\gamma}.
  \end{equation*}
  These decompositions are functorial in $\gamma$ and hence give an extension $\chi_2 \colon D^{\Gamma}_{\ell^\circ} \to \UAut(\alpha)$ of $\chi$. For $v \in V^w$, we find $\alpha(v)=\alpha_{L^w}(v) w \in |\ell^*| w$. Hence $\chi_2$ splits $\alpha$ over $\ell^\circ$. The fact that $V^{\alpha\leq\gamma}$ is contained in $V$ implies that such an extension $\chi_2$ is unique if it exists.

  $(ii) \implies (i)$: We reverse the above argument: If $\chi_2$ exists, then the induced weight decomposition of $V^{\alpha \leq \gamma}$ is given by
  \begin{equation*}
    V^{\alpha\leq \gamma}=\oplus_{w\in \Gamma} \left(V^w \cap V^{\alpha \leq \gamma}\right).
  \end{equation*}
This shows that there is  a direct sum decomposition
\begin{equation*}
  (V,\alpha)=\oplus_{w\in \Gamma} (V^w,\alpha|_{V^w})
\end{equation*}
of normed vector spaces. By Proposition \ref{NormExact}, each of the factors in this decomposition is splittable. For each $w\in \Gamma$ we choose a basis $(e_i^w)_i$ of $V^w$ which splits $\alpha|_{V^w}$. The fact that $\chi_2$ splits $\alpha$ over $\ell^\circ$ implies that for each $w$ and $i$ there exists an element $x_i^w \in \ell^*$ for which
\begin{equation}
  \label{eq:bla3430}
  \alpha(e_i^w)=w |x_i^w|.
\end{equation}
We consider the $\ell^\circ$-lattice $L$ in $V$ spanned by the elements $((x_i^w)^{-1} e_i^w)_{i,w}$. Then \eqref{eq:bla3430} says that $(L,\chi)$ splits $\alpha$. Along the same lines one checks the claim about uniqueness.
\end{proof}

For later use we note the following fact:
\begin{lemma} \label{SplitTransl}
  Let $L \subset V$ be a $\ell^\circ$-lattice, let $\chi\colon D^\Gamma_{\ell^\circ} \to \GL(L)$ a homomorphism and let $g \in \GL(V)$. We consider $\leftexp{g}{\chi}$ as a homomorphism $D^{\Gamma}_{\ell^\circ} \to GL(g\cdot L)$. Then $\alpha_{g\cdot L,\leftexp{g}{\chi}}=g\cdot \alpha_{L,\chi}$.
\end{lemma}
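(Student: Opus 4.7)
The plan is to reduce everything to the explicit formula for $\alpha_{L,\chi}$ provided by Construction \ref{ChiNormCons0} and check that conjugation by $g$ intertwines all the pieces compatibly. Let $V=\oplus_{w\in\Gamma}V^w$ be the weight decomposition induced by $\chi$ and $L=\oplus_{w\in\Gamma}L^w$ the corresponding decomposition of $L$, with $L^w=L\cap V^w$. Applying $g$, the weight decomposition of $V$ under $\leftexp{g}{\chi}$ is $V=\oplus_{w\in\Gamma}gV^w$, and the corresponding decomposition of the lattice $g\cdot L$ is $g\cdot L=\oplus_{w\in\Gamma} g\cdot L^w$. Thus an element $u\in V$ decomposes as $u=\sum_w u_w$ with $u_w\in gV^w$ if and only if $g^{-1}u=\sum_w g^{-1}u_w$ is the decomposition of $g^{-1}u$ relative to $\chi$.

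The next step is the key pointwise identity: for any $w\in\Gamma$ and any $v\in V^w$, one has
\begin{equation*}
  \alpha_{g\cdot L^w}(gv)=\alpha_{L^w}(v).
\end{equation*}
This is immediate from Construction \ref{LatticeNorm}, since $\lambda\cdot(gv)\in g\cdot L^w$ if and only if $\lambda v\in L^w$, so both expressions equal $\max\{|\lambda|^{-1}\mid\lambda\in\ell,\ \lambda v\in L^w\}$.

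Combining these two observations and applying the formula of Construction \ref{ChiNormCons0} to both sides, for $u=\sum_w u_w$ as above we obtain
\begin{equation*}
  \alpha_{g\cdot L,\leftexp{g}{\chi}}(u)=\max_{w\in\Gamma}\alpha_{g\cdot L^w}(u_w)\,w
  =\max_{w\in\Gamma}\alpha_{L^w}(g^{-1}u_w)\,w
  =\alpha_{L,\chi}(g^{-1}u)
  =(g\cdot\alpha_{L,\chi})(u),
\end{equation*}
which is the desired equality. No step here is truly an obstacle; the only subtlety is keeping straight that conjugating $\chi$ by $g$ maps the weight-$w$ space to its image under $g$, which is what allows the two weight decompositions to match up term by term.
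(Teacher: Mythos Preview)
Your proof is correct and follows the same approach as the paper, which simply notes that the identity follows directly from Construction \ref{ChiNormCons0}; you have merely made the verification explicit by tracking the weight decompositions and the lattice norms under the action of $g$.
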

\begin{proof}
 This follows directly from Construction \ref{ChiNormCons0}.
\end{proof}

\subsection{Base change of normed vector spaces} \label{BCSS1}
We fix a non-archimedean field $\ell$, an object$(V,\alpha) \in \Norm^\circ(\ell)$ and a non-archimedean extension $m$ of $\ell$. The following is shown in the same way as Lemma \ref{TensorNorm}:
\begin{lemma} \label{NormBC}
  \begin{enumerate}
  \item For $w \in V_m$, the minimum
  \begin{equation*}
    \alpha_m(w)\defeq \min \left\{ \max_{i}|\lambda_i|\alpha(v_i) \bigg\vert w=\sum_i v_i\otimes \lambda_i \text{ with } v_i \in V,\; \lambda_i \in m \right\}
  \end{equation*}
  exists and this defines a norm $\alpha_m$ on $V_m$ which restricts to $\alpha$ on $V$.
\item The image in $V_m$ of any splitting basis of $\alpha$ is a splitting basis of $\alpha_m$.
\item This construction defines an exact $\ell^\circ$-linear tensor functor
  \begin{equation*}
    \Norm^\circ(\ell) \to \Norm^\circ(m)
  \end{equation*}
  \end{enumerate}  
\end{lemma}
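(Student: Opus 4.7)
The plan mirrors the proof of Lemma \ref{TensorNorm}: I fix a splitting basis $(e_i)_{1 \leq i \leq n}$ of $(V, \alpha)$ and compute $\alpha_m$ explicitly in its terms. Every $w \in V_m$ has a unique expression $w = \sum_i \mu_i (e_i \otimes 1)$ with $\mu_i \in m$, and I claim the minimum defining $\alpha_m(w)$ is attained and equals $\max_i |\mu_i| \alpha(e_i)$. The upper bound is witnessed by the decomposition $w = \sum_i \mu_i (e_i \otimes 1)$ itself. For the lower bound, given any decomposition $w = \sum_j v_j \otimes \lambda_j$, I expand $v_j = \sum_i a_{ij} e_i$ to obtain $\mu_i = \sum_j a_{ij} \lambda_j$; the ultrametric inequality combined with the splitting identity $\alpha(v_j) = \max_i |a_{ij}|\alpha(e_i)$ (which in particular gives $|a_{ij}|\alpha(e_i) \leq \alpha(v_j)$) yields $|\mu_i|\alpha(e_i) \leq \max_j |\lambda_j|\alpha(v_j)$ for every $i$, and hence $\max_i |\mu_i|\alpha(e_i) \leq \max_j |\lambda_j|\alpha(v_j)$.

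From this coordinate formula, (i) and (ii) follow immediately: the three norm axioms are direct; the equality $\alpha_m|_V = \alpha$ holds because for $v = \sum_i a_i e_i \in V$ one has $\alpha_m(v) = \max_i |a_i|\alpha(e_i) = \alpha(v)$ by the splitting property of $(e_i)$; and the image in $V_m$ of any splitting basis of $\alpha$ is a splitting basis of $\alpha_m$ by the same explicit formula applied to that basis.

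For (iii), $\ell^\circ$-linearity is formal, and functoriality follows because if $h \colon (V,\alpha) \to (W,\beta)$ is bounded, then $h \otimes \mathrm{id}_m$ transports any decomposition of $w \in V_m$ to a decomposition of $(h \otimes \mathrm{id}_m)(w) \in W_m$ whose defining maximum is no larger. Tensor compatibility $(\alpha \otimes \beta)_m = \alpha_m \otimes \beta_m$ is proved by choosing splitting bases $(e_i)$ of $\alpha$ and $(f_j)$ of $\beta$: by Lemma \ref{TensorNorm}(ii) the family $(e_i \otimes f_j)$ splits $\alpha \otimes \beta$ with weights $\alpha(e_i)\beta(f_j)$, so its image in $(V \otimes W)_m$ splits both $(\alpha \otimes \beta)_m$ (by (ii) of the present lemma) and $\alpha_m \otimes \beta_m$ (by Lemma \ref{TensorNorm}(ii) applied over $m$) with identical weights, forcing the two norms to coincide.

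The remaining point is exactness, which I expect to be the main obstacle. Given a strict exact sequence $0 \to (V_1,\alpha_1) \to (V_2,\alpha_2) \to (V_3,\alpha_3) \to 0$ in $\Norm^\circ(\ell)$, the base-changed sequence is exact as a sequence of $m$-vector spaces by flatness, and its maps remain bounded by functoriality, so only strictness at both ends needs verification. I would handle this by producing a splitting basis of $(V_2, \alpha_2)$ adapted to the sequence: concatenate a splitting basis $(e_i)$ of $(V_1,\alpha_1)$ with lifts $(\tilde f_j)$ to $V_2$ of a splitting basis $(f_j)$ of $(V_3,\alpha_3)$ chosen so that $\alpha_2(\tilde f_j) = \alpha_3(f_j)$; strictness of the original sequence together with splittability of $(V_3,\alpha_3)$ allows such lifts to be selected (after modifying by elements of $V_1$ of appropriate norm), and one then verifies that the concatenation splits $\alpha_2$ by computing in the adapted basis. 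The coordinate formula from the first paragraph then yields that the base-changed sequence is strict exact in $\Norm^\circ(m)$ directly.
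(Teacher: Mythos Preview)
Your proposal is correct and follows the same approach as the paper, which simply states that the lemma ``is shown in the same way as Lemma \ref{TensorNorm}'' (i.e., by computing in a splitting basis). You have supplied considerably more detail than the paper does, particularly for the exactness claim in (iii); your adapted-basis argument there is sound (the existence of lifts $\tilde f_j$ with $\alpha_2(\tilde f_j)=\alpha_3(f_j)$ is guaranteed by Lemma \ref{NormExact}, and the verification that the concatenated family splits $\alpha_2$ goes through as you indicate).
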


For an $m^\circ$-algebra $R$ and $\gamma \in \Gamma$, we will generally denote the $R$-module $(V_m)^{\alpha_m \leq \gamma} \otimes_{m^\circ} R$ by $V^{\alpha_m \leq \gamma}_R$. (By contrast, note that by our convention from above, we write $V^{\alpha\leq\gamma}_R$ for the $R$-module $V^{\alpha\leq\gamma} \otimes_{\ell^\circ} R$.)

\begin{construction} \label{BCPresCons}
For an element $\gamma \in \Gamma$, we construct a chain complex

     \begin{multline} \label{BCPres}
      \left(\oplus_{\sigma, \sigma' \in \Gamma, \sigma' < \sigma} V^{\alpha\leq \gamma/\sigma}\otimes_{\ell^\circ} m^{\leq \sigma'} \right) \oplus \left( \oplus_{\sigma \in \Gamma, \lambda \in \ell^*} V^{\alpha\leq \gamma/\sigma}\otimes_{\ell^\circ}m^{\leq \sigma} \right) \\
      \to \oplus_{\sigma \in \Gamma} V^{\alpha\leq \gamma/\sigma}\otimes_{\ell^\circ}m^{\leq \sigma} \to V_m^{\alpha_{m} \leq \gamma} \to 0
    \end{multline}
as follows: 

The first arrow is given on each summand with $\sigma' < \sigma$ by the inclusion
\begin{align*} 
V^{\alpha\leq \gamma/\sigma}\otimes_{\ell^\circ} m^{\leq \sigma'} &\to \left( V^{\alpha\leq \gamma/\sigma'}\otimes_{\ell^\circ} m^{\leq \sigma'} \right) \oplus \left( V^{\alpha\leq \gamma/\sigma}\otimes_{\ell^\circ} m^{\leq \sigma} \right), \\
 x &\mapsto (x,-x)
\end{align*}
and on each summand with $\sigma \in \Gamma$ and $\lambda \in \ell^*$ by the inclusion
\begin{align*}
  V^{\alpha\leq \gamma/\sigma}\otimes_{\ell^\circ}m^{\leq \sigma} &\to \left( V^{\alpha\leq \gamma/\sigma}\otimes_{\ell^\circ}m^{\leq \sigma} \right) \oplus \left( V^{\alpha\leq \gamma/\sigma|\lambda|}\otimes_{\ell^\circ}m^{\leq \sigma |\lambda|} \right) \\
  v\otimes x &\mapsto (v\otimes x, -( v/\lambda) \otimes (\lambda x)).
\end{align*}
The second arrow is given by addition in $V_m$.
\end{construction}
\begin{lemma} \label{alPres}
 In the situation of Construction \ref{BCPresCons}, the sequence \eqref{BCPres} is exact.
\end{lemma}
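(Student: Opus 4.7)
The plan is to reduce the exactness of \eqref{BCPres} to the one-dimensional case $V = \ell$ with $\alpha = |\cdot|$ by means of a splitting basis, and then to construct an explicit inverse to the natural surjection from the cokernel $P$ of the first arrow onto $V_m^{\alpha_m \leq \gamma}$.

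First I would fix a splitting basis $(e_i)_{1 \leq i \leq n}$ of $(V, \alpha)$. For any $\sigma \in \Gamma$ this yields a decomposition $V^{\alpha \leq \gamma/\sigma} = \bigoplus_i \ell^{\leq \gamma/(\sigma\alpha(e_i))} e_i$ as $\ell^\circ$-modules, and hence, after tensoring with $m^{\leq \sigma}$ over $\ell^\circ$, a decomposition of every term of \eqref{BCPres} along the $e_i$. Direct inspection of the two types of contributions to the first arrow, and of the second arrow $v\otimes x \mapsto vx$, shows they respect this decomposition. So the sequence splits as the direct sum over $i$ of analogous sequences for the one-dimensional spaces $(\ell e_i, \alpha|_{\ell e_i})$. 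Setting $\gamma_i := \gamma/\alpha(e_i)$, each summand is exactly the sequence \eqref{BCPres} for $V = \ell$, $\alpha = |\cdot|$, with $\gamma$ replaced by $\gamma_i$. Thus it suffices to treat this one-dimensional case.

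In this case, surjectivity is trivial: for nonzero $b \in m^{\leq \gamma}$ the element $1 \otimes b \in \ell^{\leq \gamma/|b|} \otimes_{\ell^\circ} m^{\leq |b|}$ maps to $b$. For injectivity of the induced map $\phi\colon P \to m^{\leq \gamma}$, I would construct an inverse $\psi\colon m^{\leq \gamma} \to P$ by $\psi(b) := [1 \otimes b]_{|b|}$ for $b \neq 0$ and $\psi(0) := 0$, where $[\cdot]_\sigma$ denotes the class in $P$ of an element of the $\sigma$-summand. Additivity of $\psi$ is verified by collecting terms via the type (a) relation: setting $\sigma_0 := \max(|b|, |b'|) \leq \gamma$, the type (a) relation yields $[1 \otimes b]_{|b|} = [1 \otimes b]_{\sigma_0}$ (applied to $1 \otimes b \in \ell^{\leq \gamma/\sigma_0} \otimes m^{\leq |b|}$ when $|b| < \sigma_0$, and trivially when $|b| = \sigma_0$), and similarly for $b'$ and $b+b'$; bilinearity of $\otimes$ then gives $\psi(b) + \psi(b') = [1 \otimes (b+b')]_{\sigma_0} = \psi(b+b')$.

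The equality $\phi \circ \psi = \id$ is immediate. For $\psi \circ \phi = \id$ it suffices to check on generators $[a \otimes b]_\sigma$ with $a, b \neq 0$: the type (b) relation with $\lambda = a \in \ell^*$ gives $[a \otimes b]_\sigma = [1 \otimes ab]_{\sigma|a|}$, and since $\sigma|a| \leq \gamma$ (because $a \in \ell^{\leq \gamma/\sigma}$) a further type (a) relation identifies this class with $[1 \otimes ab]_{|ab|} = \psi(ab)$. Hence $\phi$ is an isomorphism and \eqref{BCPres} is exact. I expect the main nuisance to be the bookkeeping around when the strict inequality in the type (a) relation must actually be invoked versus when the required equality of summands is tautological; the algebraic content is otherwise straightforward.
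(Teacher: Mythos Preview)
Your proposal is correct and follows exactly the paper's approach: the paper's proof reads ``Using a splitting basis of $(V,\alpha)$, one reduces the claim to the case $\dim V=1$. In this case the claim follows by a direct verification.'' You have simply spelled out that direct verification in detail, and the argument with the explicit inverse $\psi$ is clean and accurate.
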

\begin{proof}

Using a splitting basis of $(V,\alpha)$, one reduces the claim to the case $\dim V=1$. In this case the claim follows by a direct verification.
\end{proof}

\begin{construction} \label{UAutBCC0}
  For any $m^\circ$-algebra $R$, the base changes of the right exact sequence \eqref{BCPres} to $R$ for varying $\gamma \in \Gamma$ induce a natural homomorphism
\begin{equation*}
  \UAut(\alpha)(R) \to \UAut(\alpha_m)(R).
\end{equation*}
  For varying $R$ these give a base change morphism
  \begin{equation} \label{UAutBC}
    b^{m/\ell}\colon \UAut(\alpha)_{m^\circ} \to \UAut(\alpha_m).
  \end{equation}
\end{construction}
\begin{lemma}\label{UnrBC0}
  If $|m^*|=|\ell^*|$, then $b^{m/\ell}\colon \UAut(\alpha)_{m^\circ} \to \UAut(\alpha_m)$ is an isomorphism.
\end{lemma}
\begin{proof}
  In this case, for all $\gamma \in \Gamma$, the inclusion $V^{\alpha\leq \gamma}_{m^\circ} \into V^{\alpha_m\leq \gamma}_m$ inside $V_m$ is an equality. This implies the claim.
\end{proof}

\begin{lemma} \label{NormExactnessDescent}
  Let $0 \to (X,\alpha_X) \to (Y,\alpha_Y) \to (Z,\alpha_Z) \to 0$ be a sequence in $\Norm^\circ(\ell)$ which becomes short exact after base change to $m$. Then this sequence is short exact in $\Norm^\circ(\ell)$.
\end{lemma}
\begin{proof}
  By descent, the underlying sequence of vector spaces is exact. The fact that $\alpha_Y$ restricts to $\alpha_X$ is automatic from the corresponding statement over $m$. To show that $(Y,\alpha_Y) \to (Z,\alpha_Z)$ is strict, we argue as follows: By Proposition \ref{NormSplit} there exists a sub-vector space $Z'$ of $Y$ such that $(Z',\alpha_Y|_{Z'})$ is splittable and gives a direct sum decomposition $(Y,\alpha_Y)=(X,\alpha_X)\oplus (Z',\alpha_Y|_{Z'})$ in $\Norm^\circ(\ell)$. Then it suffices to show that the induced morphism $(Z',\alpha_Y|_{Z'}) \to (Z,\alpha_Z)$ in $\Norm^\circ(\ell)$ is an isomorphism. This can be checked after base change to $m$, where it follows from the fact that $(Y_m,\alpha_{Y,m}) \to (Z_m,\alpha_{Z,m})$ is strict.
\end{proof}

\begin{lemma} \label{RestrictionSplittable}
  Let $V$ be a finite-dimensional vector space over $\ell$ and $\alpha$ a splittable norm on $V_m$. Then the restriction $\alpha|_V$ is splittable.
\end{lemma}
\begin{proof}
  We proceed by induction on $\dim(V)$. Since any norm on a one-dimensional vector space is splittable, the claim holds in case $\dim(V)=1$. In general, let $e_1 \in V$ be a non-zero vector. By applying Proposition \ref{NormSplit} to the sub-vector space $U$ of $V_m$ spanned by $e_1$, we see that we can extend $e_1$ to a splitting basis $(e_1,\hdots,e_n)$ of $V_m$. Then the inquality
  \begin{equation*}
    \alpha(\sum_{i=1}^n \lambda_i e_i) \geq |\lambda_1|\alpha(e_1),
  \end{equation*}
  for any $\lambda_1,\hdots,\lambda_n \in m$ shows that the projection $\pi\colon V \to \ell e_1$ associated to this basis is contractive with respect to the norm $\alpha|_V$ on $V$ and the splittable norm $\beta$ given by $\beta(\lambda e_1)= |\lambda| \alpha(e_1)$ on $\ell e_1$.

  If we let $U\defeq \ker(\pi)$, then $\alpha|_{U_m}$ is splittable by Proposition \ref{NormSplit} and hence $\alpha|_U$ is splittable by induction. Hence by applying Proposition \ref{ExtSplittable} to the sequence $U \to V \to \ell e_1$ we find that $\alpha|_V$ is splittable.
\end{proof}
From now on we assume that $|m^*|=\Gamma$.

\begin{lemma} \label{BCNorm2}
  For all $\gamma \in \Gamma$ the following inclusion holds:
    \begin{equation*}
     V^{\alpha < \gamma} \otimes_{\ell^\circ} m^\circ \subset m^{\circ\circ} V^{\alpha_m \leq \gamma}_m
    \end{equation*}
    If $|\ell^*|$ is dense in $|m^*|$, then this inclusion is an equality.
\end{lemma}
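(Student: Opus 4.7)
The plan is to reduce the whole statement to a one-dimensional claim by fixing a splitting basis $(e_i)_{1\leq i\leq n}$ of $(V,\alpha)$. Such a basis also splits $\alpha_m$, and setting $\delta_i\defeq\gamma/\alpha(e_i)\in\Gamma$ I obtain the direct sum decompositions $V^{\alpha<\gamma}=\bigoplus_i\ell^{<\delta_i}\cdot e_i$ and $V_m^{\alpha_m\leq\gamma}=\bigoplus_i m^{\leq\delta_i}\cdot e_i$. Both $V^{\alpha<\gamma}\otimes_{\ell^\circ}m^\circ$ and $m^{\circ\circ}\cdot V_m^{\alpha_m\leq\gamma}$ split correspondingly as direct sums, so the lemma reduces to showing, for every $\delta\in\Gamma$, the inclusion $\ell^{<\delta}\cdot m^\circ\subseteq m^{\circ\circ}\cdot m^{\leq\delta}$ of $m^\circ$-submodules of $m$, with equality when $|\ell^*|$ is dense in $|m^*|$.

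For the inclusion, it suffices to place each generator $\lambda\in\ell^{<\delta}\setminus\{0\}$ into $m^{\circ\circ}\cdot m^{\leq\delta}$. Since $|\lambda|\in|\ell^*|$ and $|\lambda|<\delta\in\Gamma$, Lemma \ref{DensityCons} applied with $\gamma_1=|\lambda|$ and $\gamma_2=\delta$ yields $\mu\in m^*$ with $|\lambda|<|\mu|\leq\delta$. Then the factorization $\lambda=(\lambda\mu^{-1})\cdot\mu$ exhibits $\lambda$ as a product of $\lambda\mu^{-1}\in m^{\circ\circ}$ and $\mu\in m^{\leq\delta}$, which is exactly what is needed. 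Summing over the basis $(e_i)$ gives the first claim.

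For the reverse inclusion under density, given $\mu\in m^{\circ\circ}$ and $y\in m^{\leq\delta}$ I need to write $\mu y=\lambda b$ with $\lambda\in\ell^{<\delta}$ and $b\in m^\circ$; if $\mu y=0$ this is trivial, and otherwise this amounts to locating $\lambda\in|\ell^*|$ with $|\mu y|\leq\lambda<\delta$, after which I set $b\defeq\mu y/\lambda\in m^\circ$. The strict inequality $|\mu y|\leq|\mu|\delta<\delta$ is available from the data, and $|\mu y|\in|m^*|$. The density of $|\ell^*|$ in $|m^*|$, combined with Lemma \ref{DensityCons}, then produces the required $\lambda$: one first uses density to interpolate an element of $|\ell^*|$ between $|\mu|$ and $1$ inside $|m^*|$, and then combines this with Lemma \ref{DensityCons} to transfer the estimate onto an element of $|\ell^*|$ lying below the $\Gamma$-element $\delta$.

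The step I expect to be the main obstacle is this last interpolation in the reverse inclusion: $\delta$ need not lie in $|m^*|$, so density of $|\ell^*|$ in $|m^*|$ does not by itself produce an element of $|\ell^*|$ in $[|\mu y|,\delta)$, and one must combine density with Lemma \ref{DensityCons} in the correct direction to end up with an $|\ell^*|$-bound that is strictly below $\delta$ and at least $|\mu y|$. Once this value-group juggling is done, globalizing via the direct sum decomposition over the splitting basis $(e_i)$ concludes the proof.
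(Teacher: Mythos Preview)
Your reduction to the one-dimensional statement via a splitting basis is exactly the paper's approach, and your argument for the forward inclusion using Lemma~\ref{DensityCons} is correct.

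The reverse inclusion, however, is not settled by what you wrote, and the line you sketch does not work. Lemma~\ref{DensityCons} only produces elements of $|m^*|$, never of $|\ell^*|$, so it cannot be used ``in the other direction'' to land in $|\ell^*|$. Likewise, your step ``interpolate an element of $|\ell^*|$ between $|\mu|$ and $1$'' can fail outright: if $|\mu|$ happens to be an immediate predecessor of $1$ in $|m^*|$, the open interval $(|\mu|,1)$ in $|m^*|$ is empty. Even when that interval is nonempty and you find $|\lambda_0|\in|\ell^*|$ there, the quantity $|\lambda_0|\cdot|y|$ you would produce lies only in $|m^*|$, not $|\ell^*|$, so you have not moved closer to an element of $\ell^{<\delta}$.

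Here is a clean way to close the gap. Keep the data $z=\mu y\neq 0$ with $|\mu|<1$ and $|y|\leq\delta$; then $|z|<|y|$ and both $|z|,|y|$ lie in $|m^*|$. Now split into two cases on the ordered group $|m^*|$. If $(|z|,|y|)\cap|m^*|\neq\emptyset$, this is a nonempty open subset of $|m^*|$, and density of $|\ell^*|$ gives $\lambda\in\ell^*$ with $|z|<|\lambda|<|y|\leq\delta$; set $b=z/\lambda\in m^\circ$. If instead $(|z|,|y|)\cap|m^*|=\emptyset$, then the element $g:=|y|/|z|>1$ satisfies $(1,g)\cap|m^*|=\emptyset$, which (since $|m^*|$ is a group) forces the order topology on $|m^*|$ to be discrete; hence density means $|\ell^*|=|m^*|$, and you may simply take $\lambda\in\ell^*$ with $|\lambda|=|z|<\delta$ and $b=z/\lambda\in m^\circ$. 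In either case $z=\lambda b\in\ell^{<\delta}\cdot m^\circ$, which is what you need.
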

\begin{proof}
  Using a splitting basis of $(V,\alpha)$, one reduced to the case $\dim(V)=1$. In this case, the claim follows from the fact that $|m^*|=\Gamma$.
\end{proof}

\begin{definition} \label{BCDecDef}
  Let $\mu$ and $\gamma$ be elements of $\Gamma$. We denote the class $\mu |\ell^*| \in \Gamma/|\ell^*|$ by $[\mu]$ and let
  \begin{equation*}
    \gr_{[\mu]}^{\ell/m}(V^{\alpha_m \leq \gamma}_{\tilde m}) \subset V^{\alpha_m \leq \gamma}_{\tilde m}
  \end{equation*}
  be the image of the composition
  \begin{equation*}
    V^{\alpha \leq \mu} \otimes m^{\leq \gamma/\mu} \into V_m^{\alpha_m \leq \gamma} \onto V^{\alpha_m \leq \gamma}_{\tilde m}.
  \end{equation*}
 (Note that this image only depends on $[\mu]$.)
\end{definition}

\begin{proposition} \label{BCGrProps}
  \begin{enumerate}
  \item The submodules $\gr_{[\mu]}^{\ell/m}(V^{\alpha_m \leq \gamma}_{\tilde m})$ give a $\Gamma/|\ell^*|$-grading of $V^{\alpha_m\leq \gamma}_{\tilde m}$. 
  \item With respect to the gradings from (i), for each $x \in m^*$, the isomorphism $V^{\alpha\leq \gamma}_{\tilde m} \isoto V^{\alpha \leq \gamma |x|}_{\tilde m}$ induced by multiplication by $x$ is an isomorphism of graded vector spaces.
  \item Given a second normed vector space $(W,\alpha) \in \Norm^\circ(\ell)$ and an element $\gamma' \in \Gamma$, we consider the natural morphism
    \begin{equation*}
      V^{\alpha\leq\gamma}_{\tilde m} \otimes_{\tilde m} W^{\alpha \leq \gamma'}_{\tilde m} \to (V\otimes W)^{\alpha \leq \gamma\gamma'}_{\tilde m},
    \end{equation*}
    whose source we equip with the tensor product of the gradings from (i). This is a morphism of graded vector spaces.
  \end{enumerate}
\end{proposition}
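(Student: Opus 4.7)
The plan is to fix a splitting basis $(e_i)_{i \in I}$ of $(V,\alpha)$ with $a_i \defeq \alpha(e_i)$ and reduce everything to the one-dimensional computation in each basis direction. Such a basis gives $V^{\alpha \leq \mu} = \oplus_i \ell^{\leq \mu/a_i} e_i$ for every $\mu \in \Gamma$ and, after base change, a direct sum decomposition $V^{\alpha_m \leq \gamma}_{\tilde m} = \oplus_i M_i \cdot e_i$, where $M_i \defeq m^{\leq \gamma/a_i}/m^{\circ\circ}m^{\leq \gamma/a_i}$.

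For part (i), I then compute, for each $i$, the contribution of the $e_i$-summand to $\gr^{\ell/m}_{[\mu]}(V^{\alpha_m \leq \gamma}_{\tilde m})$, which is the image of $\ell^{\leq \mu/a_i} \cdot m^{\leq \gamma/\mu}$ inside $M_i$. When $[\mu] = [a_i]$, writing $a_i = |\lambda|\mu$ for some $\lambda \in \ell^*$ gives $\lambda^{-1} \in \ell^{\leq \mu/a_i}$ with $\lambda^{-1} m^{\leq \gamma/\mu} = m^{\leq \gamma/a_i}$, so this image is all of $M_i$. When $[\mu] \neq [a_i]$, the ratio $\mu/a_i$ lies outside $|\ell^*|$, so $\ell^{\leq \mu/a_i} = \ell^{<\mu/a_i}$; applying Lemma \ref{BCNorm2} to the one-dimensional normed space $(\ell,|{\cdot}|)$ with parameter $\mu/a_i$ then yields $\ell^{\leq \mu/a_i} \cdot m^\circ \subseteq m^{\circ\circ} m^{\leq \mu/a_i}$, and multiplying by $m^{\leq \gamma/\mu}$ forces $\ell^{\leq \mu/a_i} \cdot m^{\leq \gamma/\mu} \subseteq m^{\circ\circ} m^{\leq \gamma/a_i}$, so the image in $M_i$ vanishes. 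Summing across $i$ produces $\gr^{\ell/m}_{[\mu]}(V^{\alpha_m \leq \gamma}_{\tilde m}) = \oplus_{i\,:\,[a_i] = [\mu]} M_i \cdot e_i$, giving the desired $\Gamma/|\ell^*|$-grading.

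Parts (ii) and (iii) then reduce to tracing definitions. For (ii), multiplication by $x \in m^*$ sends the image of $V^{\alpha \leq \mu} \otimes m^{\leq \gamma/\mu}$ in $V^{\alpha_m \leq \gamma}_{\tilde m}$ into the image of $V^{\alpha \leq \mu} \otimes m^{\leq \gamma|x|/\mu}$ in $V^{\alpha_m \leq \gamma|x|}_{\tilde m}$, and by part (i) these are exactly the two $[\mu]$-graded pieces. For (iii), the tensor product map carries the image of $(V^{\alpha \leq \mu_1} \otimes m^{\leq \gamma/\mu_1}) \otimes (W^{\alpha \leq \mu_2} \otimes m^{\leq \gamma'/\mu_2})$ into the image of $(V \otimes W)^{\alpha \leq \mu_1\mu_2} \otimes m^{\leq \gamma\gamma'/(\mu_1\mu_2)}$, which lies in the $[\mu_1\mu_2] = [\mu_1][\mu_2]$-graded piece.

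The hard step will be the vanishing claim in (i) when $[\mu] \neq [a_i]$: this is precisely the point at which the standing hypothesis on $\ell \subset m$ must be invoked via Lemma \ref{BCNorm2}, and it is what unifies the surjective-norm case and the density case.
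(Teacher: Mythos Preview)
Your proof is correct and follows essentially the same route as the paper: both reduce via a splitting basis to a one-dimensional computation, both invoke Lemma \ref{BCNorm2} (the point where the standing hypothesis on $\ell\subset m$ enters) to get the vanishing of $\gr^{\ell/m}_{[\mu]}$ in the ``wrong'' class, and both dispose of (ii) and (iii) directly from the definition of the grading. The only cosmetic difference is that you keep all basis directions in play and write the graded pieces explicitly as $\bigoplus_{i:[a_i]=[\mu]} M_i e_i$, whereas the paper first passes to $\dim V=1$ and argues by contrapositive that a nonzero image forces $\alpha(v)=\mu$; the content is the same.
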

\begin{proof}
  (i) Using a splitting basis of $V$ one reduces to the case that $V$ is one-dimensional. In this case, let $\mu \in \Gamma$ be an element for which there exist elements $v \in V^{\alpha\leq \mu}$ and $x \in m^{\leq \gamma/\mu}$ for which the image of $x \otimes v$ in $V^{\alpha_m \leq \gamma}_{\tilde m}$ is non-zero. Then Lemma \ref{BCNorm2} implies $\alpha(v)=\mu$. Since the norms of non-zero elements of $V$ form a single class $[\mu_0]\in \Gamma/|\ell^*|$, this shows that $\gr^{m/\ell}_{[\mu_0]}(V^{\alpha_m\leq \gamma}_{\tilde m})=V^{\alpha_m\leq \gamma}_{\tilde m}$ and $\gr^{m/\ell}_{[\mu]}(V^{\alpha_m\leq \gamma}_{\tilde m})=0$ for all $[\mu]\in \Gamma/|\ell^*|$ different from $[\mu_0]$. This proves (i).

  (ii) and (iii) follow directly from the definition of the grading.
\end{proof}

\begin{construction}
  \label{ChiCons}
  For each $\gamma \in \Gamma$, the grading from Proposition \ref{BCGrProps} (i) corresponds to a homomorphism $D^{\Gamma/|\ell^*|}_{\tilde m} \to \GL(V^{\alpha_m\leq \gamma}_{\tilde m})$. Proposition \ref{BCGrProps} (ii) implies that these homomorphisms fit together to a homomorphism
  \begin{equation*}
    \chi^{m/\ell}\colon D^{\Gamma/|\ell^*|}_{\tilde m} \to \UAut(\alpha_m)_{\tilde m}.
  \end{equation*}
\end{construction}

\begin{proposition} \label{DilProp1}
  Let $R$ be a $m^\circ$-algebra and $\phi \in \UAut(\alpha_m)(R)$. We denote the ring $R\otimes_{m^\circ} \tilde m$ by $\tilde R$.
  \begin{enumerate}
  \item If $\phi$ is in the image of the base change homomorphism \eqref{UAutBC}, then the image of $\phi$ in $\UAut(\alpha_m)( \tilde R)$ centralizes the homomorphism
    \begin{equation*}
       \chi^{m/\ell}_{\tilde R} \colon D^{\Gamma/|\ell^*|}_{\tilde R} \to \UAut(\alpha_m)_{\tilde R}.
    \end{equation*}

  \item   Assume that $|\ell^*|$ is dense in $|m^*|$ and that $R$ is flat over $m^\circ$. Then the converse of (i) holds as well.
  \end{enumerate}
\end{proposition}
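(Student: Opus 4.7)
The plan is to unfold (i) directly from Construction \ref{UAutBCC0}, and to prove (ii) by combining the matrix description of $\UAut$ from Proposition \ref{UAutComp0} with a density argument on the value groups.

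For (i), let $\phi'=(\phi'^{\mu})_{\mu\in\Gamma}\in \UAut(\alpha)(R)$. By Construction \ref{UAutBCC0}, the automorphism $b^{m/\ell}(\phi')$ of $V^{\alpha_m\leq\gamma}_R$ is obtained by descending the diagonal action of $\phi'$ (acting by $\phi'^{\gamma/\sigma}\otimes\id$ on each summand indexed by $\sigma$) along the surjection
\[
\bigoplus_\sigma V^{\alpha\leq\gamma/\sigma}\otimes_{\ell^\circ} m^{\leq\sigma}\otimes_{m^\circ} R \twoheadrightarrow V^{\alpha_m\leq\gamma}_R
\]
obtained by base changing \eqref{BCPres} to $R$. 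In particular, for every $\mu \in \Gamma$, the automorphism $b^{m/\ell}(\phi')$ preserves the image in $V^{\alpha_m\leq\gamma}_R$ of $V^{\alpha\leq\mu}\otimes_{\ell^\circ} m^{\leq\gamma/\mu}$. Reducing modulo $m^{\circ\circ}$ identifies this image with the graded piece $\gr^{\ell/m}_{[\mu]}(V^{\alpha_m\leq\gamma}_{\tilde R})$ of Definition \ref{BCDecDef}. Since by Construction \ref{ChiCons} the cocharacter $\chi^{m/\ell}$ is exactly the one associated to these gradings, preserving them for all $\gamma$ and $\mu$ is equivalent to centralizing $\chi^{m/\ell}_{\tilde R}$.

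For (ii), fix a splitting basis $(e_i)_{1\leq i\leq n}$ of $(V,\alpha)$ and set $\gamma_i \defeq \alpha(e_i)$; this is also a splitting basis of $(V_m,\alpha_m)$. By Proposition \ref{UAutComp0}, $\UAut(\alpha_m)(R) = \End(\alpha_m)_R^*$ with $\End(\alpha_m)_R = \bigoplus_{i,j} m^{\leq\gamma_i/\gamma_j}\otimes_{m^\circ} R$, and $b^{m/\ell}$ is induced componentwise by the maps
\[
\iota_{ij}\colon \ell^{\leq\gamma_i/\gamma_j}\otimes_{\ell^\circ} R \lto m^{\leq\gamma_i/\gamma_j}\otimes_{m^\circ} R.
\]
In these coordinates $\chi^{m/\ell}$ acts on $e_i$ with weight $[\gamma_i]\in \Gamma/|\ell^*|$. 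Hence $\phi\in\UAut(\alpha_m)(R)$ with matrix entries $(a_{ij})$ centralizes $\chi^{m/\ell}_{\tilde R}$ precisely when, for each $(i,j)$ with $[\gamma_i]\neq [\gamma_j]$, the reduction of $a_{ij}$ vanishes in $m^{\leq\gamma_i/\gamma_j}\otimes_{m^\circ}\tilde R$, equivalently, $a_{ij}$ lies in the image of $m^{<\gamma_i/\gamma_j}\otimes_{m^\circ} R$.

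It remains to show that each $\iota_{ij}$ identifies $\ell^{\leq\gamma_i/\gamma_j}\otimes_{\ell^\circ} R$ with the corresponding submodule of $m^{\leq\gamma_i/\gamma_j}\otimes_{m^\circ} R$. If $[\gamma_i]=[\gamma_j]$, picking $\lambda \in \ell^*$ of norm $\gamma_i/\gamma_j$ identifies both sides with $R$ via multiplication by $\lambda$, making $\iota_{ij}$ the identity. If $[\gamma_i]\neq [\gamma_j]$, setting $\sigma \defeq \gamma_i/\gamma_j \notin|\ell^*|$ gives $\ell^{\leq\sigma}=\ell^{<\sigma}$; by the density hypothesis and Lemma \ref{DensityCons}, $|\ell^*|\cap(0,\sigma)$ is cofinal in $|m^*|\cap(0,\sigma)$, whence $\ell^{<\sigma}$ and $m^{<\sigma}$ are both directed unions of $\ell^{\leq\tau}$ and $m^{\leq\tau}$ respectively over $\tau$ in this common cofinal set. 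Applying the previous case termwise and taking filtered colimits makes $\iota_{ij}$ an isomorphism onto the image of $m^{<\sigma}\otimes_{m^\circ} R$. Thus the entries of $\phi$ lift uniquely to $\phi'\in\End(\alpha)_R$; applying the same lifting to $\phi^{-1}$ and using the injectivity of $\End(\alpha)_R \to \End(\alpha_m)_R$ shows that $\phi'\in\End(\alpha)_R^* = \UAut(\alpha)(R)$ with $b^{m/\ell}(\phi')=\phi$. The main difficulty is the correct translation of the centralizing condition into the vanishing of matrix entries modulo $m^{\circ\circ}$, together with the case analysis on whether $\gamma_i/\gamma_j$ lies in $|\ell^*|$; the flatness of $R$ over $\ell^\circ$ enters to ensure the kernel of reduction is correctly described, and density of $|\ell^*|$ in $|m^*|$ provides the cofinality needed for the colimit comparison to succeed.
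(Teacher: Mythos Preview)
Your argument for (i) matches the paper's: both reduce to the observation that anything coming through $b^{m/\ell}$ preserves the images of $V^{\alpha\leq\mu}\otimes m^{\leq\gamma/\mu}$, and that these images reduce modulo $m^{\circ\circ}$ to the graded pieces defining $\chi^{m/\ell}$.

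For (ii) you take a genuinely different route. The paper argues conceptually: under the density hypothesis, Lemma \ref{BCNorm2} gives the chain
\[
m^{\circ\circ}V^{\alpha_m\leq\gamma}_R \subset (V^{\alpha\leq\mu}\otimes_{\ell^\circ} m^{\leq\gamma/\mu})\otimes_{m^\circ} R \subset V^{\alpha_m\leq\gamma}_R,
\]
so the middle term is exactly the preimage of the graded piece $\gr^{m/\ell}_{[\mu]}(V^{\alpha_m\leq\gamma}_{\tilde R})$; hence preserving the grading over $\tilde R$ is equivalent to preserving these submodules over $R$, and taking $\gamma=\mu$ recovers an element of $\UAut(\alpha)(R)$. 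Your approach instead passes through the matrix description of Proposition \ref{UAutComp0}, does a case split on whether $\gamma_i/\gamma_j\in|\ell^*|$, and uses cofinality of $|\ell^*|\cap(0,\sigma)$ in $|m^*|\cap(0,\sigma)$ to compare the ``off-diagonal'' entries. Your version makes the role of individual coordinates explicit and isolates exactly where density is used; the paper's version is coordinate-free and shorter.

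One point in your argument is not fully justified. When $\sigma=\gamma_i/\gamma_j\notin|\ell^*|$, your filtered-colimit comparison yields an isomorphism $\ell^{<\sigma}\otimes_{\ell^\circ} R\cong m^{<\sigma}\otimes_{m^\circ} R$, but this alone does not show that the composite $\iota_{ij}\colon \ell^{<\sigma}\otimes_{\ell^\circ} R\to m^{\leq\sigma}\otimes_{m^\circ} R$ is injective: you still need $m^{<\sigma}\otimes_{m^\circ} R\to m^{\leq\sigma}\otimes_{m^\circ} R$ to be injective, which is not automatic. This is precisely where the $\ell^\circ$-flatness of $R$ should enter (not, as you suggest, in describing the kernel of reduction): choose any $\tau\in|\ell^*|$ with $\tau\geq\sigma$; then $\ell^{\leq\sigma}\otimes_{\ell^\circ} R\hookrightarrow\ell^{\leq\tau}\otimes_{\ell^\circ} R\cong m^{\leq\tau}\otimes_{m^\circ} R$ is injective by flatness, and this composite factors through $\iota_{ij}$, forcing $\iota_{ij}$ to be injective. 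With this in hand, your lifting of $\phi$ and $\phi^{-1}$ and the appeal to injectivity of $\End(\alpha)_R\to\End(\alpha_m)_R$ go through.
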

\begin{proof}
  Note that a section of $\UAut(\alpha_m)(\tilde R)$ centralizes $\chi^{m/\ell}_{\tilde R}$ if and only if its action on all $V^{\alpha_m \leq \gamma}_{\tilde R}$ preserves the grading which defines $\chi^{m/\ell}$. Using this, claim (i) follows from the definition of these gradings.

  For (ii), note that under the given assumptions, by Lemma \ref{BCNorm2} for all elements $\mu \leq \gamma$ there is the following chain of inclusions:
  \begin{equation*}
    m^{\circ\circ}V^{\alpha_m\leq \gamma}_R \subset (V^{\alpha \leq \mu} \otimes_{\ell^\circ} m^{\leq \gamma/\mu})\otimes_{m^\circ} R \subset V^{\alpha_m \leq \gamma}_R
  \end{equation*}
  This shows that an $R$-linear automorphism of $V^{\alpha_m \leq \gamma}_R$ preserves $(V^{\alpha \leq \mu} \otimes_{\ell^\circ} m^{\leq \gamma/\mu})\otimes_{m^\circ} R$ if and only if it preserves
  the submodule $\gr^{m/\ell}_{[\mu]}(V^{\alpha_m \leq \gamma}_{\tilde m})_{\tilde R}$ of $V^{\alpha_m \leq \gamma}_{\tilde R}$. This implies (ii).
\end{proof}

\begin{lemma} \label{WeightBCIsoLemma}
  Let $\chi\colon D^{\Gamma/|\ell^*|}_{\ell^\circ} \to \UAut(\alpha)$ be a homomorphism which splits $\alpha$ over $\ell^\circ$. For $\gamma \in \Gamma$, we write the weight decomposition of $V^{\alpha\leq\gamma}$ induced by $\chi$ as
  \begin{equation*}
    V^{\alpha\leq \gamma}=\oplus_{[\mu]\in\Gamma/|\ell^*|} (V^{\alpha\leq\gamma})^{[\mu]}.
  \end{equation*}
  We also let $\chi'\defeq b^{m/\ell}\circ\chi_{m^\circ} \colon D^{\Gamma/|\ell^*|}_{m^\circ}\to \UAut(\alpha_m)$ and again write the associated weight decompositions as
  \begin{equation*}
    V^{\alpha_m\leq \gamma}_m=\oplus_{[\mu]\in\Gamma/|\ell^*|} (V_m^{\alpha_m \leq\gamma})^{[\mu]}.
  \end{equation*}
  \begin{enumerate}
  \item   For all $\gamma, \mu \in \Gamma$, the inclusion $V^{\alpha\leq \mu} \otimes m^{\leq \gamma/\mu} \into V^{\alpha_m\leq \gamma}_m$ restricts to an isomorphism

  \begin{equation} \label{WeightBCIso}
    (V^{\alpha\leq\mu})^{[\mu]} \otimes m^{\leq \gamma/\mu} \isoto (V_m^{\alpha_m \leq \gamma})^{[\mu]}.
  \end{equation}
\item The inclusions $V^{\alpha\leq\gamma'} \into V^{\alpha\leq \gamma}$ for $\gamma'\leq \gamma$ induce an isomorphism
  \begin{equation} \label{WeightCompIso}
    \colim_{\gamma'\leq \gamma, \gamma'\in [\mu]} (V^{\alpha\leq \gamma'})^{[\mu]} \isoto (V^{\alpha\leq \gamma})^{[\mu]}.
  \end{equation}
    \end{enumerate}
\end{lemma}
\begin{proof}
  (i) The definition of the two weight decompositions appearing here implies that the inclusion $V^{\alpha\leq \mu} \otimes m^{\leq \gamma/\mu} \into V^{\alpha_m\leq \gamma}_m$ restricts to an injection $$(V^{\alpha\leq\mu})^{[\mu]} \otimes m^{\leq \gamma/\mu} \into (V_m^{\alpha_m \leq \gamma})^{[\mu]}.$$

  To show that this inclusion is surjective let $v \in (V_m^{\alpha_m \leq \gamma})^{[\mu]}$. We can write $v$ as $\sum_{i=1}^s v_i \otimes x_i$ for some elements $v_i \in V$ and $x_i \in m^*$ satisfying $|x_i|\alpha(v_i)\leq \gamma$. After decomposing each $v_i$ into its homogenous components and throwing away the components not of degree $[\mu]$, we may assume that each $v_i$ lies in $(V^{\alpha\leq \alpha(v_i)})^{[\mu]}$. Then the fact that $\chi$ splits $\alpha$ implies that $\alpha(v_i)\in [\mu]$ for all $i$. Hence after replacing each $v_i \otimes x_i$ by $xv_i \otimes x_i/x$ for a suitable $x \in \ell^*$, we may assume that $\alpha(v_i)=\mu$ for all $i$. Then the $x_i$ are in $m^{\leq \gamma/\mu}$ and the decomposition $v=\sum_i v_i \otimes x_i$ gives a preimage of $v$ under \eqref{WeightBCIso} as desired.

  (ii) The colimit appearing here is simply a union inside $V$. That this union is equal to $(V^{\alpha\leq \gamma})^{[\mu]}$ follows from the fact that by Definition \ref{IntSplitDef0} any element $v \in (V^{\alpha\leq\gamma})^{[\mu]}$ satisfies $\alpha(v) \in [\mu]$.
\end{proof}
\begin{proposition} \label{SplBCComp}
  \begin{enumerate}
  \item A homomorphism $\chi \colon D^{\Gamma/|\ell^*|}_{\tilde m} \to \UAut(\alpha)_{\tilde m}$ splits $\alpha$ over $\tilde m$ if and only if $b^{m/\ell}_{\tilde m}\circ  \chi=\chi^{m/\ell}$.
  \item If a homomorphism $\chi\colon D^{\Gamma/|\ell^*|}_{\ell^\circ} \to \UAut(\alpha)$ splits $\alpha$ over $\ell^\circ$, then $b^{m/\ell}$ restricts to an isomorphism
    \begin{equation*}
      {\Cent}_{\UAut(\alpha)_{m^\circ}}(\chi_{m^\circ}) \isoto \Cent_{\UAut(\alpha_m)}(b^{\ell/m}\circ\chi_{m^\circ}).
    \end{equation*}
  \end{enumerate}
\end{proposition}
\begin{proof}
  (i) First we claim that for $\mu \in \Gamma$, the inclusion $V^{\alpha\leq \mu}\into V_m^{\alpha_m\leq \mu}$ induces an isomorphism
  \begin{equation} \label{MuIso}
    \left(V^{\alpha\leq\mu}/V^{\alpha<\mu}\right) \otimes_{\tilde\ell} \tilde m \isoto \gr^{m/\ell}_{[\mu]}(V^{\alpha_m \leq \mu}_{\tilde m}).
  \end{equation}
  Using a splitting basis this claim can be reduced to the case $\dim(V)=1$, where it follows from a direct verification. So $\chi_{\tilde m}$ acts on the source of \eqref{MuIso} with weight $[\mu]$ if and only if it does so on the target of \eqref{MuIso}. This statement translates to claim (i).

  (ii) Let $R$ be an $m^\circ$-algebra. We use the notation from Lemma \ref{WeightBCIsoLemma}. Giving a section of the group $\Cent_{\UAut(\alpha)_{m^\circ}}(\chi_{m^\circ})(R)$ amounts to giving $R$-linear automorphisms of the modules $(V^{\alpha\leq\gamma})^{[\mu]}_R$ for all $\gamma \in \Gamma$ and $[\mu]\in \Gamma/|\ell^*|$ which are functorial in $\gamma$ and with respect to the morphisms from Definition \ref{SchStabDef0} (iii). For such $\gamma$ and $[\mu]$, the isomorphism \eqref{WeightCompIso} induces an isomorphism
  \begin{equation*}
    \colim_{\gamma'< \gamma, \gamma'\in [\mu]}(V^{\alpha\leq \gamma'})^{[\mu]}_R \isoto (V^{\alpha\leq \gamma})^{[\mu]}_R. 
  \end{equation*}

  So one sees that giving a section of $\Cent_{\UAut(\alpha)_{m^\circ}}(\chi_{m^\circ})(R)$ is equivalent to giving $R$-linear automorphisms of the modules $(V^{\alpha\leq\mu})^{[\mu]}_R$ for all $\mu \in \Gamma$  which are functorial with respect to the morphisms from Definition \ref{SchStabDef0} (iii).
  
  On the other hand, giving a section of the group $\Cent_{\UAut(\alpha_m)}(b^{m/\ell}\circ\chi)$ is equivalent to giving analogously functorial $R$-linear automorphisms of the modules $(V_m^{\alpha_m\leq\gamma})^{[\mu]} \otimes_{m^\circ} R$ for all $\gamma$ and $[\mu]$. Via the isomorphisms \eqref{WeightBCIso} these two data are equivalent.
  \end{proof}

\subsection{The subgroups $U_\delta(\alpha) \subset \UAut(\alpha)_{\tilde \ell}$} \label{USS1}
We fix a non-archimedean field $\ell$.

\begin{definition} \label{SpecFiberFil}
  Let $(V,\alpha) \in \Norm^\circ(\ell)$, let $\gamma \in \Gamma$, let $\delta \in \Gamma^{\leq 1}$ and let $R$ be a $\tilde \ell$-algebra.
  
  \begin{enumerate}
  \item   We let $$F^{\leq \delta}(V^{\alpha\leq \gamma}_{\tilde \ell}) \subset V^{\alpha\leq\gamma}_{\tilde\ell}$$
be the image of the homomorphism
\begin{equation*}
  V^{\alpha \leq \delta\gamma} \into V^{\alpha\leq \gamma} \onto V^{\alpha\leq\gamma}_{\tilde\ell}.
\end{equation*}
\item We denote the base change $F^{\leq \delta}(V^{\alpha\leq \gamma}_{\tilde \ell})_R$ by $F^{\leq \delta}(V^{\alpha\leq \gamma}_R)$.
\item We also let $$F^{<\delta}(V^{\alpha\leq\gamma}_R)\defeq \cup_{\delta'<\delta} F^{\leq \delta'}(V^{\alpha\leq\gamma}_R) \subset V^{\alpha\leq\gamma}_R$$ and
  \begin{equation*}
    \gr^{\delta}(V^{\alpha\leq\gamma}_R)\defeq F^{\leq \delta}(V^{\alpha\leq \gamma}_R)/F^{< \delta}(V^{\alpha\leq \gamma}_R).
  \end{equation*}
  \end{enumerate}
  We note that the objects just defined are naturally functorial in $(V,\alpha)$.

  \begin{lemma} \label{FiltExact1}
  For $\delta \in \Gamma^{\leq 1}$ and $\gamma \in \Gamma$, the natural functors $\Norm^\circ(\ell) \to \Mod_{\tilde\ell}$ sending $(V,\alpha)$ to $F^{\leq\delta}(V^{\alpha\leq\gamma}_{\tilde\ell})$, to $F^{<\delta}(V^{\alpha\leq\gamma}_{\tilde\ell})$ or to $\gr^\delta(V^{\alpha\leq\gamma}_{\tilde\ell})$ are exact.
\end{lemma}
\begin{proof}
  Since all exact sequences in $\Norm^\circ(\ell)$ split, it suffices to check that these functors are additive. This follows from their construction.
\end{proof}

  We fix again an object $(V,\alpha) \in \Norm^\circ(\ell)$.
  
  \begin{lemma} \label{SpecFilSplittable}
   Let $\gamma \in \Gamma$. There exists a $\Gamma$-grading on $V^{\alpha\leq\gamma}_{\tilde\ell}$ which splits the filtration $(F^{\leq\delta}(V^{\alpha\leq\gamma}_{\tilde\ell}))_{\delta}$. 
 \end{lemma}
 \begin{proof}
   Let $(e_i)_{1\leq i \leq n}$ be a splitting basis of $(V,\alpha)$. This induces a decomposition
   \begin{equation*}
     V^{\alpha\leq\gamma}_{\tilde\ell} \cong \oplus_i \ell^{\leq \gamma/\alpha(e_i)} \otimes \tilde \ell.
   \end{equation*}
   For each $1\leq i \leq n$, the $\ell^\circ$-module $\ell^{\leq\gamma/\alpha(e_i)}$ is either free of rank one, in which case we choose a generator $x_i$ of this module, or satisifies $\ell^{\circ\circ} \ell^{\leq\gamma/\alpha(e_i)}=\ell^{\leq\gamma/\alpha(e_i)}$, in which case $\ell^{\leq\gamma/\alpha(e_i)}_{\tilde\ell}$ is zero. So images of the $x_i$ form a basis of $V^{\alpha\leq\gamma}_{\tilde \ell}$ and giving each $x_i$ the weight $\alpha(x_i)/\gamma$ defines a $\Gamma$-grading as desired.
 \end{proof}
\end{definition}

\begin{definition} \label{IellDef}
  We let
  \begin{equation*}
    I_\ell\defeq \{ \gamma \in \Gamma^{\leq 1} \mid \forall x \in \ell^{\circ\circ}\colon |x| < \gamma \}.
  \end{equation*}
\end{definition}
This is a convex subset of $\Gamma$ with maximal element $1$. For example if $\Gamma$ and $|\ell^*|$ have rank one, then $I_\ell=\{1\}$ if $|\ell^*|$ is dense in $\Gamma$, and $I_\ell=(|\pi|,1]$ for a generator $\pi$ of $\ell^{\circ\circ}$ otherwise.

\begin{lemma} \label{IellInj}
  Any class $[\mu]\in \Gamma/|\ell^*|$ contains at most one element of $I_\ell$.
\end{lemma}
\begin{proof}
   If $\delta \in I_\ell$ and $|x|\delta \in I_\ell$ for some $x \in \ell^{\circ\circ}$, then $|x| < |x|\delta$ and hence $1< \delta$, a contradiction. 
\end{proof}
\begin{lemma} \label{cl}
  Let $\gamma \in \Gamma$ and $\delta \in \Gamma^{<1}$.
  \begin{enumerate}
  \item If $\delta \in I_\ell$, then the surjection $V^{\alpha\leq\delta\gamma} \to F^{\leq \delta}(V^{\alpha\leq\gamma}_{\tilde \ell})$ factors through an isomorphism
    \begin{equation*}
     V^{\alpha\leq\delta\gamma}/V^{\alpha<\delta\gamma} \isoto \gr^\delta(V^{\alpha\leq\gamma}_{\tilde \ell}).
    \end{equation*}
  \item If $\delta \not \in I_\ell$, then $F^{\leq\delta}(V^{\alpha\leq\gamma}_{\tilde \ell})=0$.
  \end{enumerate}
\end{lemma}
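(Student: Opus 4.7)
The plan is to handle the two cases separately; both reduce to elementary manipulations with the ultrametric inequality and the definition of $I_\ell$.

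For part (ii), the hypothesis $\delta \in \Gamma^{<1}\setminus I_\ell$ furnishes an element $x \in \ell^{\circ\circ}$ with $|x|\geq \delta$. Then any $v\in V^{\alpha\leq\delta\gamma}$ satisfies $\alpha(v/x)=\alpha(v)/|x|\leq \delta\gamma/|x|\leq \gamma$, so $v/x\in V^{\alpha\leq\gamma}$ and therefore $v=x\cdot(v/x)\in \ell^{\circ\circ}V^{\alpha\leq\gamma}$. Hence $v$ maps to $0$ in $V^{\alpha\leq\gamma}_{\tilde\ell}$, which proves $F^{\leq\delta}(V^{\alpha\leq\gamma}_{\tilde\ell})=0$.

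For part (i), the composite $V^{\alpha\leq\delta\gamma}\to F^{\leq\delta}(V^{\alpha\leq\gamma}_{\tilde\ell})\onto \gr^\delta(V^{\alpha\leq\gamma}_{\tilde\ell})$ is surjective by construction, so the task is to identify its kernel with $V^{\alpha<\delta\gamma}$. The inclusion $V^{\alpha<\delta\gamma}\subseteq\ker$ is immediate: if $\alpha(v)<\delta\gamma$, then $\delta':=\alpha(v)/\gamma<\delta$ lies in $\Gamma^{<1}$ and $v\in V^{\alpha\leq\delta'\gamma}$, so the image of $v$ belongs to $F^{\leq\delta'}\subseteq F^{<\delta}$.

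The reverse inclusion is the key step. Given $v\in V^{\alpha\leq\delta\gamma}$ whose image lies in $F^{<\delta}$, choose $\delta'<\delta$ and $w\in V^{\alpha\leq\delta'\gamma}$ with $v-w\in \ell^{\circ\circ}V^{\alpha\leq\gamma}$. Writing $v-w=\sum_i x_iw_i$ with $x_i\in \ell^{\circ\circ}$ and $w_i\in V^{\alpha\leq\gamma}$, the ultrametric inequality gives $\alpha(v-w)\leq\max_i |x_i|\alpha(w_i)\leq (\max_i|x_i|)\gamma$. The hypothesis $\delta\in I_\ell$ forces each $|x_i|<\delta$, hence (the max being taken over finitely many terms) $\alpha(v-w)<\delta\gamma$. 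Combined with $\alpha(w)\leq\delta'\gamma<\delta\gamma$, the ultrametric inequality yields $\alpha(v)\leq\max(\alpha(w),\alpha(v-w))<\delta\gamma$, so $v\in V^{\alpha<\delta\gamma}$. The main (mild) obstacle is precisely this last step, where the definition of $I_\ell$ is used to pass from the open condition $|x_i|<1$ to the strict bound $|x_i|<\delta$ uniformly.
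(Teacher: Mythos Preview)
Your proof is correct and follows essentially the same strategy as the paper's, hinging on the key observation that $\delta\in I_\ell$ forces $|x|<\delta$ for every $x\in\ell^{\circ\circ}$. In fact your treatment of part (i) is more careful: the paper asserts that any $v$ in the kernel of the composite already lies in $\ell^{\circ\circ}V^{\alpha\leq\gamma}$, whereas you correctly introduce the auxiliary element $w\in V^{\alpha\leq\delta'\gamma}$ and argue with $v-w$, which is what is actually needed when $F^{<\delta}\neq 0$.
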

\begin{proof}
  (i) By the definition of $\gr^\delta(V^{\alpha\leq\gamma}_{\tilde \ell})$, the composition
  \begin{equation}
    \label{eq:blubcomp24}
     V^{\alpha\leq\delta\gamma} \to F^{\leq\delta}(V^{\alpha\leq\gamma}_{\tilde\ell}) \to  \gr^\delta(V^{\alpha\leq\gamma}_{\tilde\ell})
  \end{equation}
  has kernel $V^{\alpha<\gamma\delta}+\left(\ell^{\circ\circ} V^{\alpha\leq\gamma} \cap V^{\alpha\leq\gamma\delta}\right)$. So it suffices to show that $\ell^{\circ\circ} V^{\alpha\leq \gamma} \subset V^{\alpha<\gamma\delta}$. This follows from the assumption that $\delta\in I_\ell$.
 
(ii) If $\delta \not\in I_\ell$, then there exists an element $x \in \ell^{\circ\circ}$ for which $\delta \leq |x|$. This implies that the morphism $V^{\alpha\leq\delta\gamma}\to V^{\alpha\leq\gamma}_{\tilde\ell}$ is zero.
\end{proof}

In the following, we endow $\End(\alpha)_{\tilde\ell}=(V^* \otimes V)^{\alpha\leq 1}_{\tilde\ell}$ with the filtration by the submodules $F^{\leq\delta}((V^*\otimes V)^{\alpha\leq 1}_{\tilde\ell})$ associated to the normed vector space $V^* \otimes V$. It follows from the definition of this filtration that it is multiplicative for the ring structure on $\End(\alpha)_{\tilde\ell}$ in the sense that
\begin{equation*}
  F^{\leq\delta}(\End(\alpha)_{\tilde\ell}) \cdot F^{\leq\delta'}(\End(\alpha)_{\tilde\ell}) \subset F^{\leq\delta\delta'}(\End(\alpha)_{\tilde\ell})
\end{equation*}
for all $\delta,\delta'\leq 1$.

To simplify the notation, in this subsection, for an element $e \in \UEnd(\alpha)(R)$, we denote by $\tilde e \in \End(\alpha)_R$ the element corresponding to $e$ under the isomorphism \eqref{EndCompMor}.

\begin{lemma} \label{EaDesc}
  Let $R$ be a $\tilde\ell$-algebra, let $h \in \UEnd(\alpha)(R)$ and let $\delta \in \Gamma^{\leq 1}$. The following are equivalent:
  \begin{enumerate}
  \item For every $\gamma \in \Gamma$, every $\delta'\in \Gamma^{\leq 1}$ and every $v \in F^{\leq \delta'}(V^{\alpha\leq \gamma}_R)$, the element $h(v)-v$ of $V^{\alpha\leq\gamma}_R$ lies in $F^{\leq\delta\delta'}(V^{\alpha\leq\gamma}_R)$.
  \item The element $\tilde h - \id$ of $\End(\alpha)_R$ lies in $F^{\leq\delta}(\End(\alpha)_R)$.
  \end{enumerate}
\end{lemma}
\begin{proof}
  We fix a splitting basis $(e_1,\hdots,e_n)$ of $(V,\alpha)$. As in the proof of Proposition \ref{UAutComp0}, such a basis induces isomorphisms
  \begin{equation*}
      \End(\alpha)_R \cong \oplus_{i,j} \ell^{\leq \alpha(e_i)/\alpha(e_j)}\otimes R
    \end{equation*}
    and
    \begin{equation*}
      V^{\alpha\leq\gamma}_R \cong \oplus_i \ell^{\leq \gamma/\alpha(e_i)} \otimes R
    \end{equation*}
    for all $\gamma \in \Gamma$. Let $\tilde h-\id$ correspond to $(a_{ij} \otimes r_{ij})_{i,j}$ under the first isomorphism. Then condition (ii) is equivalent to the condition that each entry $a_{ij} \otimes r_{ij}$ is in the image of $\ell^{\leq\delta\alpha(e_i)/\alpha(e_j)} \otimes R \to \ell^{\leq \alpha(e_i)/\alpha(e_j)} \otimes R$. In the same way, an element $(x_i \otimes r_i)_i$ of $V^{\alpha\leq \gamma}_R$ (with each $x_i$ in $\ell^{\leq\gamma/\alpha(e_i)}$ and $r_i$ in $R$) lies in $F^{\leq\delta}(V^{\alpha\leq\gamma}_R)$ if and only if each $x_i \otimes r_i$ lies in the image of $\ell^{\leq\delta\gamma/\alpha(e_i)}\otimes R$. 

    By the proof of Proposition \ref{UAutComp0}, for each $1\leq i\leq n$ the coefficients $(a_{ij} \otimes r_{ij})_j$ of $\tilde h$ are obtained as the image of $e_i \otimes 1 \in V^{\alpha\leq\alpha(e_i)}_R$ under $h$. Using this it follows that (i) implies (ii).

    The fact that (ii) implies (i) follows from the fact that for $\gamma \in \Gamma$ and $\delta, \delta' \in \Gamma^{\leq 1}$, the morphism $\End(\alpha) \otimes V^{\alpha\leq \gamma} \to V^{\alpha\leq \gamma}$ from Construction \ref{UAutCompCons0} induces a morphism
    \begin{equation*}
      F^{\leq\delta}(\End(\alpha)_R) \otimes F^{\leq\delta'}(V^{\alpha\leq\gamma}_R) \to F^{\leq \delta\delta'}(V^{\alpha\leq\gamma}_R).
    \end{equation*}

\end{proof}
\begin{definition}
  \begin{enumerate}
  \item  For an $\tilde\ell$-algebra $R$, we let $U_{\leq \delta}(\alpha)(R) \subset \UAut(\alpha)(R)$ be the set of elements $\phi \in \UAut(\alpha)(R)$ which (considered as an element of $\UEnd(\alpha)(R)$) satisfy the equivalent conditions of Lemma \ref{EaDesc}.
  \item These subsets are functorial in $R$ and so we obtain a sub-presheaf $U_{\leq \delta}(\alpha) \subset \UAut^\otimes(\alpha)_{\tilde\ell}$.
  \item We also define a sub-presheaf $U_{<\delta}(\alpha) \subset \UAut(\alpha)_{\tilde \ell}$ by $U_{<\delta}(\alpha)\defeq \colim_{\mu<\delta} U_{\leq \mu}(\alpha)$.
  \end{enumerate}
\end{definition}
Note that in particular $U_{\leq 1}(\alpha)=\UAut(\alpha)_{\tilde\ell}$.

The following uses the fact that $\UAut(\alpha)_{\tilde\ell}$ is representable by an affine group scheme of finite type over $\tilde\ell$ by Lemma \ref{UAutReprConc}.
\begin{proposition}\label{UdRepr1}
  Let $\delta \in \Gamma^{\leq 1}$.

      The presheaves $U_{\leq \delta}(\alpha)$ and $U_{<\delta}(\alpha)$ are representable by closed subgroup schemes of $\UAut^\otimes(\alpha)_{\tilde\ell}$. 
    \end{proposition}
    
\begin{proof}
  For a $\tilde\ell$-algebra $R$, the fact that $U_\delta(\alpha)(R)$ is a subgroup of $\UAut(\alpha)(R)$ follows from the fact that
  \begin{equation*}
    (\tilde u \tilde u' - \id)=(\tilde u-\id)\tilde u' + (\tilde u' - \id)
  \end{equation*}
  in $\End(\alpha)_R$ for all sections $u,u' \in U_{\leq \alpha}(R)$ using the multiplicativity of the filtration on $\End(\alpha)_R$ and Lemma \ref{EaDesc} (ii).

  To show that $U_{\leq\delta}(\alpha)$ and $U_{<\delta}(\alpha)$ are representable, one uses Proposition \ref{UAutRepr0} (iii) to reduce to the analogous claim for a single module $V^{\alpha\leq \gamma}_{\tilde \ell}$, which can be readily verified.
\end{proof}

\begin{lemma} \label{UComm1}
  Let $\delta, \delta' \in \Gamma^{\leq 1}$.
  \begin{enumerate}
  \item  The commutator of $U_{\leq \delta}(\alpha)$ and $U_{\leq \delta'}(\alpha)$ is contained in $U_{\leq \delta\delta'}(\alpha)$.
  \item  The commutator of $U_{\leq \delta}(\alpha)$ and $U_{< \delta'}(\alpha)$ is contained in $U_{< \delta\delta'}(\alpha)$.
  \end{enumerate}
\end{lemma}
\begin{proof}
  For sections $u$ of $U_{\leq \delta}(\alpha)$ and $v$ of $U_{\leq \delta'}(\alpha)$ we can write
  \begin{equation*}
    \tilde u\tilde v \tilde u^{-1}\tilde v^{-1}-\id = (\tilde u-\id)(\tilde v -\id)\tilde u^{-1} \tilde v^{-1} +(\tilde v - \id)(\tilde u^{-1}-\id)\tilde v^{-1}
  \end{equation*}
in $\End(\alpha)_R$. Using the multiplicativity of the filtration on $\End(\alpha)_R$ and Lemma \ref{EaDesc} (ii) this implies the claim.
\end{proof}

\begin{definition}
  Let $\delta \in \Gamma^{\leq 1}$. By Lemma \ref{UComm1}, the group scheme $U_{<\delta}(\alpha)$ is normal in $U_{\leq \delta}(\alpha)$. Hence we may consider the quotient scheme
  \begin{equation*}
    \gr_{\delta} \UAut^\otimes(\alpha)_{\tilde \ell} \defeq U_{\leq \delta}(\alpha)/ U_{<\delta}(\alpha).
  \end{equation*}
\end{definition}

Next we show that the group schemes $\gr_\delta \UAut(\alpha)_{\tilde \ell}$ for $\delta < 1$ are vector group schemes over $\tilde \ell$ and hence that $U_{<1}(\alpha)$ is a split unipotent group scheme. Here, for a vector space $W$ over some field $K$, we denote by $\BV(W)$ the associated vector group scheme satisfying $\BV(W)(K)=W$.

\begin{proposition} \label{GrdDesc1}
  Let $\delta \in \Gamma^{<1}$ and let $R$ be a $\tilde \ell$-algebra.
  \begin{enumerate}
  \item  The map
  \begin{align*}
    U_{\leq \delta}(R) &\to \gr^\delta(\End(\alpha)_R) = F^{\leq \delta}(\End(\alpha)_R)/F^{<\delta}(\End(\alpha)_R)\\
u  &\mapsto (\tilde u-\id)+F^{<\delta}(\End(\alpha)_R)
  \end{align*}
 is a group homomorphism with respect to the additive group structure on the target.
\item The homomorphism from (i) has kernel $U_{<\delta}(\alpha)(R)$.
\item  The induced homomorphism
  \begin{equation*}
      U_{\leq \delta}(\alpha)(R)/U_{<\delta}(\alpha)(R) \to \gr^\delta(\End(\alpha)_R)
  \end{equation*}
  is an isomorphism.
\item Hence we obtain an isomorphism
\begin{equation*}
  \gr_\delta \UAut^\otimes(\alpha)_{\tilde \ell} \to \BV(\gr^\delta(\End(\alpha)_{\tilde\ell})
\end{equation*}
of group schemes over $\tilde \ell$.
  \end{enumerate}
\end{proposition}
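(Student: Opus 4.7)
The plan is to transfer the statement to the filtered algebra $\End(\alpha)_R$ via the isomorphism $\UAut^\otimes(\alpha)(R) \cong \End(\alpha)_R^*$ from Proposition \ref{UAutComp0}, after which the proposition becomes a formal linearization argument. The first step, on which the rest depends, is to establish the equivalence
\begin{equation*}
u \in U_{\leq \delta}(\alpha)(R) \iff \tilde u - \id \in F^{\leq \delta}(\End(\alpha)_R).
\end{equation*}
Fixing a splitting basis $(e_i)$ of $(V,\alpha)$ yields compatible decompositions $\End(\alpha) = \bigoplus_{i,j} \ell^{\leq \alpha(e_j)/\alpha(e_i)} \cdot (e_i \otimes e_j^*)$ and $V^{\alpha \leq \gamma} = \bigoplus_i \ell^{\leq \gamma/\alpha(e_i)} \cdot e_i$ for every $\gamma \in \Gamma$. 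Under these, the coordinate description of $F^{\leq \delta}(\End(\alpha)_R)$ and the conditions defining $U_{\leq \delta}(\alpha)(R)$ (tested on each $e_j \otimes 1 \in V^{\alpha \leq \alpha(e_j)}_R$) reduce to identical entrywise bounds on the matrix of $\tilde u - \id$. Taking unions over $\mu < \delta$ on both sides yields the corresponding equivalence for $U_{<\delta}$ and $F^{<\delta}$, proving (ii).

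For (i), the expansion $\widetilde{uv} - \id = (\tilde u - \id) + (\tilde v - \id) + (\tilde u - \id)(\tilde v - \id)$ reduces the claim to the inclusion $F^{\leq\delta}(\End(\alpha)_R) \cdot F^{\leq\delta}(\End(\alpha)_R) \subset F^{<\delta}(\End(\alpha)_R)$. This follows from submultiplicativity $F^{\leq\delta} \cdot F^{\leq\delta'} \subset F^{\leq\delta\delta'}$---a direct consequence of the operator-norm inequality: if $a,b \in \End(\alpha)$ satisfy $\alpha(a(v)) \leq \delta \alpha(v)$ and $\alpha(b(v)) \leq \delta' \alpha(v)$ for all $v$, then $\alpha((ab)(v)) \leq \delta\delta' \alpha(v)$---combined with $\delta^2 < \delta$ for $\delta < 1$.

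For (iii), injectivity is already (ii). For surjectivity, given $c \in \gr^\delta(\End(\alpha)_R)$, I would lift to $\tilde c \in F^{\leq\delta}(\End(\alpha)_R)$ and set $\tilde u := \id + \tilde c$. The key point is that $\tilde c$ is nilpotent in $\End(\alpha)_R$: by submultiplicativity, $\tilde c^k \in F^{\leq\delta^k}(\End(\alpha)_R)$, and using the splitting basis one checks that $F^{\leq\mu}(\End(\alpha)_R)$ vanishes for $\mu$ strictly below the finite collection of critical values (the ratios of the effective generating norms of $\ell^{\leq\alpha(e_j)/\alpha(e_i)}$ to $\alpha(e_j)/\alpha(e_i)$ over the finitely many index pairs). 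Since $\delta < 1$, $\delta^k$ eventually falls below this threshold, so $\tilde c^k = 0$. Hence $\id + \tilde c$ has finite inverse $\sum_{k \geq 0}(-\tilde c)^k$ in $\End(\alpha)_R^* \cong \UAut^\otimes(\alpha)(R)$, and by the equivalence from Step 1 the corresponding $u$ lies in $U_{\leq\delta}(\alpha)(R)$ and projects to $c$. Part (iv) then follows: parts (i)--(iii) give a functorial isomorphism on $R$-points of representable group schemes (using Proposition \ref{UdRepr1}), and since $\gr^\delta(\End(\alpha)_{\tilde\ell})$ is a finite-dimensional $\tilde\ell$-vector space, this identifies the quotient with the vector group scheme.

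The main obstacle is the first step: one must carefully track the interaction between the filtration $F^{\leq\delta}$ and the subtle difference between $\ell^{\circ\circ}\cdot\ell^{\leq\gamma}$ and $\ell^{<\gamma}$ (these agree only when $\gamma \in |\ell^*|$, which may fail). Once this bookkeeping is in place, the remaining parts are formal manipulations in the submultiplicative filtered algebra $\End(\alpha)_R$.
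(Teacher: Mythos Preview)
Your proposal is correct and follows essentially the same approach as the paper: both reduce everything to the filtered algebra $\End(\alpha)_R$ via Proposition~\ref{UAutComp0}, use the identity $\widetilde{uv}-\id = (\tilde u-\id)+(\tilde v-\id)+(\tilde u-\id)(\tilde v-\id)$ together with submultiplicativity $F^{\leq\delta}\cdot F^{\leq\delta'}\subset F^{\leq\delta\delta'}$ for (i), and invoke nilpotence of elements of $F^{\leq\delta}$ to invert $\id+\tilde c$ for (iii). Your treatment of the key equivalence $u\in U_{\leq\delta}(\alpha)(R)\iff \tilde u-\id\in F^{\leq\delta}(\End(\alpha)_R)$ via splitting bases is more explicit than the paper's one-line ``using the definition of \eqref{AutCompMor}, one checks that\ldots'', and your nilpotence argument (eventual vanishing of $F^{\leq\delta^k}$) spells out what the paper leaves implicit; cf.\ Lemma~\ref{cl}~(ii), which gives the cleanest reason the filtration terminates.
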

\begin{proof}
  (i) For $u,u' \in U_{<\delta}(R)$ we find
  \begin{equation*}
    (\tilde u \tilde u' - \id) - ((\tilde u-\id)+(\tilde u'-\id))=(\tilde u-\id)(\tilde u'-\id) \in F^{\leq \delta^2}(\End(\alpha)_R).
  \end{equation*}

  (ii) This follows from the definition of $U_{<\delta}(\alpha)(R)$.

  (iii) We already know from (ii) that this homomorphism is injective. Let $e \in F^{\leq \delta}(\End(\alpha)_R)$.  As a consequence of Lemma \ref{SpecFilSplittable}, for each non-zero $x \in \End(\alpha)_R$, the minimum
  \begin{equation*}
    d(x)\defeq \min \{\gamma \in \Gamma^{\leq 1} \mid x \in F^{\leq \delta}(\End(\alpha)_R) \}
  \end{equation*}
  exists, and the function $x \mapsto d(x)$ has only finitely many values. Since $e\cdot F^{\leq \delta'}(\End(\alpha)_{R}) \subset F^{\leq \delta\delta'}(\End(\alpha)_{R})$ for all $\delta' \in \Gamma^{\leq 1}$, we find $d(e\cdot x) < d(x)$ for all non-zero $x \in \End(\alpha)_R$. This implies that $e$ is nilpotent. Hence $1+e \in \End(\alpha)_R^*\cong \UAut(\alpha)(R)$. This element $1+e$ gives a preimage in $U_{\leq \delta}(\alpha)(R)$ of the class of $e$ in $\gr^\delta(\End(\alpha)_R)$. So the homomorphism in (iii) is surjective.
  
(iv) The isomorphism in (iii) is functorial in $R$. Hence the presheaf $$R \mapsto U_{\leq\delta}(\alpha)(R)/U_{<\delta}(\alpha)(R)$$ is a sheaf. This sheaf is then canonically isomorphic to $\gr_{\delta} \UAut^\otimes(\alpha)_{\tilde\ell}$. This gives (iv).
\end{proof}

\begin{corollary} \label{UaSplit}
  \begin{enumerate}
  \item For $\delta \in \Gamma^{<1}$ the group scheme $U_{\leq \delta}(\alpha)$ is a split unipotent group scheme (i.e. an extension of copies of the additive group scheme).
  \item For $\delta \in \Gamma^{\leq 1}$, there exists an element $\mu < \delta$ of $\Gamma$ for which $U_{<\delta}(\alpha)=U_{\leq \mu}(\alpha)$.
  \end{enumerate}
\end{corollary}
\begin{proof}
  We prove (i) by transfinite induction over $\Gamma^{<1}$. For a given $\delta < 1$, if (i) holds for all $\mu<\delta$, then all $U_{\leq \mu}(\alpha)$ for $\mu<\delta$ are smooth and connected and hence the increasing union $U_{<\delta}(\alpha)=\cup_{\mu<\delta}U_{\leq \mu}(\alpha)$ stabilizes. So $U_{<\delta}(\alpha)$ is a split unipotent group scheme and hence by Proposition \ref{GrdDesc1} so is $U_{\leq \delta}(\alpha)$.

  Then the stabilization argument just used shows (ii) for all $\delta\leq 1$. 
\end{proof}

We can also describe the group $\gr_0 \UAut(\alpha)_{\tilde \ell}$:
\begin{lemma} \label{Gr0Desc}
  Let $\gamma_1,\hdots,\gamma_r \in \Gamma$ be a set of representatives of $\alpha(V\setminus \{0\})/|\ell^*| \subset \Gamma/|\ell^*|$. Then the natural action of $\UAut(\alpha)_{\tilde \ell}$ on the $\tilde\ell$-vector space $\oplus_{i=1}^r V^{\alpha\leq\gamma_i}/V^{\alpha < \gamma_i}$ induces an isomorphism
  \begin{equation*}
    \gr_0 \UAut(\alpha)_{\tilde\ell} \isoto \prod_{i=1}^r \GL(V^{\alpha\leq\gamma_i}/V^{\alpha<\gamma_i}).
  \end{equation*}
\end{lemma}
\begin{proof}
  By definition of the $U_{\leq \delta}(\alpha)$, there exists a natural homomorphism
  \begin{equation*}
    \UAut(\alpha)_{\tilde\ell} \to \prod_{i=1}^r \GL(V^{\alpha\leq\gamma_i}/V^{\alpha<\gamma_i})
  \end{equation*}
  with kernel $U_{<1}(\alpha)$. It remains to prove that this homomorphism is surjective. For this, let $m$ be a non-archimedean extension of $\ell$ whose residue field $\tilde m$ is an algebraic closure of $\tilde \ell$.

  By rescaling an arbitrary splitting basis of $\alpha$, we choose a splitting basis $(e_j)_{j=1}^n$ of $\alpha$ for which $\alpha(e_j) \in \{\gamma_1,\hdots,\gamma_r\}$ for all $j$. Then the images $\bar e_j \in V^{\alpha\leq \alpha(e_j)}/V^{\alpha<\alpha(e_j)}$ of the $e_j$ form a basis of $\oplus_{i=1}^r V^{\alpha\leq\gamma_i}/V^{\alpha < \gamma_i}$. So if we write an element $x \in \prod_{i=1}^r \GL(V^{\alpha\leq\gamma_i}/V^{\alpha<\gamma_i})(\tilde m)$ as a matrix with respect to the basis $(\bar e_j)_j$, then by lifting the coefficients of such a basis to $m^\circ$ we get the matrix with respect to $(e_j)_j$ of a preimage of $x$ in $\UAut(m^\circ)$.
\end{proof}
\begin{remark}
  Alternatively, we could describe $\gr_0 \UAut(\alpha)_{\tilde\ell}$ as the scheme of automorphisms of $\gr(V,\alpha)$ which commute with the $\ell^*$-action on this vector space.
\end{remark}
\begin{lemma} \label{SplitFilComp}
  Let $R$ be an $\tilde\ell$-algebra an $\chi\colon D^{\Gamma/|\ell^*|}_R \to \UAut(\alpha)_R$ a homomorphism which splits $\alpha$ over $R$. For $\gamma \in \Gamma$ let
  \begin{equation*}
    V^{\alpha\leq\gamma}_R =\oplus_{[\mu]\in\Gamma/|\ell^*|} (V^{\alpha\leq\gamma}_R)^{[\mu]}
  \end{equation*}
  be the weight decomposition given by $\chi$.

  This decomposition splits the filtration $(F^{\leq\delta}(V^{\alpha\leq\gamma}_R))_{\delta\in\Gamma^{\leq 1}}$ in the following sense:
  \begin{enumerate}
  \item For each $\delta \in I_\ell$, the submodule $(V^{\alpha\leq\gamma}_R)^{[\delta\gamma]}$ is contained in $F^{\leq\delta}(V^{\alpha\leq\gamma}_R)$ and is isomorphic to $\gr^{\delta}(V^{\alpha\leq\gamma}_R)$ via the projection map $F^{\leq\delta}(V^{\alpha\leq\gamma}_R) \to \gr^\delta(V^{\alpha\leq\gamma}_R)$.
  \item For each $[\mu] \in \Gamma/|\ell^*|$ for which $(V^{\alpha\leq\gamma}_R)^{[\mu]}$ is non-zero, there exists a unique $\delta \in I_\ell$ for which $[\mu]=[\delta\gamma]$.
  \item So we obtain canonical isomorphisms
    \begin{equation*}
      V^{\alpha\leq\gamma}_R =\oplus_{\delta \in I_\ell} (V^{\alpha\leq\gamma}_R)^{[\delta\gamma]}=\oplus_{\delta\in I_\ell} \gr^\delta(V^{\alpha\leq\gamma}_R).
    \end{equation*}
  \end{enumerate}
  
\end{lemma}
\begin{proof}
  The submodules $F^{\leq\delta}(V^{\alpha\leq\gamma}_R)$ are by construction preserved by the action of $\UAut(\alpha)_R$ and hence by the action of $D^{\Gamma/|\ell^*|}_R$ via $\chi$. Since $\chi$ splits $\alpha$ over $R$, by Lemma \ref{cl} it can only have weight $[\delta\gamma]$ on each non-zero subquotient $\gr^\delta(V^{\alpha\leq\gamma}_R)$.

  Furthermore by Lemmas \ref{IellInj} and \ref{cl}, the subquotients $\gr^\delta(V^{\alpha\leq\gamma}_R)$ can only be non-zero for $\delta \in I_\ell$ and any class $[\mu]\in \Gamma/|\ell^*|$ contains at most one element of $I_\ell$.

These considerations imply (i) and (ii). Then (iii) is given by the combination of (i) and (ii).
\end{proof}

\begin{lemma} \label{SplitConj}
  Let $R$ be a $\tilde \ell$-algebra. For any two homomorphisms $$\chi,\chi'\colon D^{\Gamma/|\ell^*|}_R \to \UAut(\alpha)_R$$ which split $\alpha$ over $R$, there exists a unique section $u \in U_{<1}(\alpha)(R)$ for which $\chi'=\leftexp{u}{\chi}$.
\end{lemma}
\begin{proof}
  By Lemma \ref{SplitFilComp}, for $\gamma \in \Gamma$, both $\chi$ and $\chi'$ induce an isomorphism $$V^{\alpha\leq\gamma}_R \cong \oplus_{\delta\in I_\ell} \gr^\delta(V^{\alpha\leq\gamma}_R).$$ By composing one of these isomorphism with the inverse of the other we obtain an automorphism of $V^{\alpha\leq\gamma}_R$. For varying $\gamma$, these automorphisms form an element $u\in \UAut(\alpha)(R)$. The construction of $u$ implies that it lies in $U_{<1}(\alpha)(R)$ and that via $u$ the homomorphisms $\chi$ and $\chi'$ are conjugate.

 Along the same lines one sees that any $u \in U_{<1}(\alpha)(R)$ for which $^u \chi=\chi'$ must arise in this way. This shows the uniqueness of $u$.
\end{proof}

\begin{lemma}  \label{SplittingUnique0}
 Let $R$ be a $\ell^\circ$-algebra such that every connected component $C$ of $\Spec(R)$ has non-empty special fiber $C_{\tilde \ell}$. Let $\CT \subset \UAut(\alpha)_R$ be a split closed subtorus. There exists at most one homomorphism $\chi\colon D^{\Gamma/|\ell^*|}_R \to \CT$ which splits $\alpha$ over $R$. 

\end{lemma}
\begin{proof}

  For $\gamma \in \Gamma$, we consider the weight decomposition 
  \begin{equation*}
      V^{\alpha\leq\gamma}_R=\oplus_{w \in X^*(\CT)}(V^{\alpha\leq\gamma}_R)^w
    \end{equation*}
    given by the action of $\CT$ on $V^{\alpha\leq\gamma}_R$. Any homomorphism $\chi\colon D^{\Gamma/|\ell^*|}_R \to \CT$ is uniquely determined by the weights $\langle \chi,w \rangle \in \Gamma/|\ell^*|$ with which it acts on the modules $(V^{\alpha\leq\gamma}_R)^w$ for all $w \in X^*(\CT)$ and $\gamma \in \Gamma$. In particular $\chi$ is uniquely determined by its geometric fibers. Hence using the assumption on $R$ we may replace $R$ by $R_{\tilde \ell}$ and assume that $R$ is a $\tilde \ell$-algebra.

    Then on $V^{\alpha\leq\gamma}_R$ we have the filtration by submodules $F^{\leq\delta}(V^{\alpha\leq\gamma}_R)$ which is by construction preserved by the action of $\UAut(\alpha)_R$ and hence of $\CT$. If $\chi$ splits $\alpha$ over $R$, then by Lemma \ref{cl} its weights on the subquotients $\gr^\delta(V^{\alpha\leq\gamma}_R)$ are uniquely determined. Hence $\chi$ is uniquely determined. 
\end{proof}

For the following, note that by the conjugacy of split maximal tori, any smooth commutative group scheme of finite type over a field has a unique maximal split torus.

\begin{lemma} \label{SplitLift}
  Let $T \subset \UAut(\alpha)_{\tilde\ell}$ be a subtorus and $\chi\colon D^{\Gamma/|\ell^*|}_{\tilde\ell} \to T \subset \UAut(\alpha)_{\tilde\ell}$ a homomorphism which splits $\alpha$ over $\tilde\ell$.
  \begin{enumerate}
  \item The homomorphism $\chi$ factors through the unique maximal split subtorus of $T$.
  \item The homomorphism $\chi$ can be lifted to a homomorphism $D^\Gamma_{\tilde\ell} \to T$ which splits $\alpha$ over $\tilde\ell$.
  \end{enumerate}
\end{lemma}
\begin{proof}
  Since by Lemma \ref{SplitConj} any two such $\chi$ are conjugate over $\tilde\ell$, it suffices to prove the claim for any single such $\chi$. So we may take $\chi$ to be the composition $D^{\Gamma/|\ell^*|}_{\tilde\ell} \to D^{\Gamma}_{\tilde\ell} \toover{\tilde\chi_{\tilde\ell}} \UAut(\alpha)_{\tilde\ell}$ for a homomorphism $\tilde\chi\colon D^\Gamma_{\ell^\circ} \to \UAut(\alpha)$ splitting $\alpha$ over $\ell^\circ$, which exists by Lemma \ref{SplitEq0} and Proposition \ref{SplitExtEquiv0}. Then (ii) is satisfied by construction and (i) follows from the fact that since $\Gamma$ is torsion-free, any homomorphism $D^\Gamma_{\tilde\ell} \to T$ factors through the unique maximal split subtorus of $T$.
\end{proof}

Let now $m$ be a non-archimedean extension of $\ell$ satisfying $|m^*|=\Gamma$.

\begin{lemma} \label{VKer}  
 For $\gamma \in \Gamma$, the composition
  \begin{equation*}
    V^{\alpha \leq \gamma} \otimes_{\ell^\circ} m^\circ \into V_m^{\alpha_m \leq \gamma} \to V^{\alpha_m \leq \gamma}_{\tilde m}
  \end{equation*}
  has kernel $V^{\alpha < \gamma}\otimes_{\ell^\circ} m^\circ + V^{\alpha\leq \gamma}\otimes_{\ell^\circ} m^{\circ\circ}$.
\end{lemma}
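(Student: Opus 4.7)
The plan is to reduce to the case $\dim V = 1$ by choosing a splitting basis $(e_i)$ of $(V,\alpha)$. By Lemma~\ref{TensorNorm}(ii) (applied to the base-change functor, or directly), the image of $(e_i)$ in $V_m$ is a splitting basis of $(V_m,\alpha_m)$. Hence all four modules $V^{\alpha\leq\gamma}$, $V^{\alpha<\gamma}$, $V^{\alpha_m\leq\gamma}$, $V^{\alpha_m<\gamma}$ split as direct sums over the summands $\ell\cdot e_i$, and so does the homomorphism in question together with the asserted kernel. This reduces the statement to the one-dimensional case $(V,\alpha)=(\ell,|\cdot|)$.

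In that case, setting $\mu=\gamma$, I must show that the kernel of
\begin{equation*}
  \ell^{\leq\mu}\otimes_{\ell^\circ} m^\circ \to m^{\leq\mu}/m^{<\mu}
\end{equation*}
equals $K'\defeq \ell^{<\mu}\otimes m^\circ + \ell^{\leq\mu}\otimes m^{\circ\circ}$. The inclusion $K'\subset\ker$ is immediate: $\ell^{<\mu}$ lands in $m^{<\mu}$, and $\ell^{\leq\mu}\cdot m^{\circ\circ}\subset m^{<\mu}$ because $|m^*|=\Gamma$ forces $m^{\circ\circ}\cdot m^{\leq\mu}=m^{<\mu}$.

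For the reverse inclusion, I observe that the natural quotient
\begin{equation*}
  \left(\ell^{\leq\mu}\otimes_{\ell^\circ} m^\circ\right)\big/K' \;=\; (\ell^{\leq\mu}/\ell^{<\mu})\otimes_{\tilde\ell}\tilde m
\end{equation*}
is zero when $\mu\notin |\ell^*|$ (in which case there is nothing more to prove), and is one-dimensional over $\tilde m$ when $\mu=|\lambda|$ for some $\lambda\in\ell^*$, generated by the class of $\lambda\otimes 1$. Choosing $z\in m^*$ with $|z|=\mu$ (possible since $|m^*|=\Gamma$), multiplication by $z^{-1}$ identifies $m^{\leq\mu}/m^{<\mu}\cong m^\circ/m^{\circ\circ}=\tilde m$, under which the induced map $(\ell^{\leq\mu}/\ell^{<\mu})\otimes\tilde m\to\tilde m$ sends the generator to the class of $\lambda/z$, which is nonzero because $|\lambda/z|=1$. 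So the induced map on the quotient is injective, proving the reverse inclusion.

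The reduction to the one-dimensional case via a splitting basis is the only mildly delicate step (one must check that the decompositions of all four modules are compatible with the map), but nothing subtle occurs; the one-dimensional calculation is essentially a direct inspection, with the only real input being that $|m^*|=\Gamma$ supplies the element $z$ needed to trivialize $m^{\leq\mu}/m^{<\mu}$.
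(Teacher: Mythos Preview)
Your proof is correct and follows the same approach as the paper: reduce to $\dim V=1$ via a splitting basis, then verify directly. The paper leaves the one-dimensional case as ``readily verified''; you have supplied those details cleanly, in particular using $|m^*|=\Gamma$ to identify $m^{\circ\circ}m^{\leq\mu}=m^{<\mu}$ and to produce the element $z$. One small imprecision: the one-dimensional summands are $(\ell\cdot e_i,\,|\cdot|\alpha(e_i))$, not literally $(\ell,|\cdot|)$, but since you then prove the statement for arbitrary $\mu\in\Gamma$ (corresponding to $\gamma/\alpha(e_i)$), this is harmless.
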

\begin{proof}
  Using a splitting basis of $(V,\alpha)$, one reduces to the case $\dim(V)=1$. Then the claim can be readily verified.
\end{proof}

\begin{proposition} \label{GrCompProp}
   Let $\gamma\in \Gamma$ and $\delta \in I_\ell$. For any $x \in m$ of norm $\delta$, the composition
  \begin{equation} \label{CompComp}
    V^{\alpha \leq \delta\gamma} \otimes m^{\circ} \into V_m^{\alpha_m \leq \delta\gamma} \vartoover{30}{v \mapsto (1/x)v} V_m^{\alpha_m \leq \gamma} \to V^{\alpha_m \leq \gamma}_{\tilde m}
  \end{equation}
  induces a bijection
  \begin{equation*}
    \gr^{\delta}(V^{\alpha\leq\gamma}_{\tilde m}) \isoto \gr^{m/\ell}_{[\delta\gamma]}(V^{\alpha_m\leq\gamma}_{\tilde m}).
  \end{equation*}
\end{proposition}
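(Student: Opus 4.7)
The statement is a bijection between two subquotients, so the plan is to separately identify the image and the kernel of the composition \eqref{CompComp}, viewed as a single $\ell^\circ$-linear map from $V^{\alpha\leq\delta\gamma}\otimes_{\ell^\circ} m^\circ$ to $V^{\alpha_m\leq\gamma}_{\tilde m}$.

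For the image, I would invoke Definition \ref{BCDecDef} with $\mu=\delta\gamma$: by definition $\gr^{m/\ell}_{[\delta\gamma]}(V^{\alpha_m\leq\gamma}_{\tilde m})$ is the image of the natural map $V^{\alpha\leq\delta\gamma}\otimes_{\ell^\circ} m^{\leq 1/\delta}\to V^{\alpha_m\leq\gamma}_{\tilde m}$. Since $|1/x|=1/\delta$, the $\ell^\circ$-linear map $m^\circ\to m^{\leq 1/\delta}$, $y\mapsto y/x$, is an isomorphism, and via it the composition \eqref{CompComp} is carried to precisely this defining map. Hence the image of \eqref{CompComp} coincides with $\gr^{m/\ell}_{[\delta\gamma]}(V^{\alpha_m\leq\gamma}_{\tilde m})$.

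For the kernel, the middle arrow $v\mapsto v/x$ of \eqref{CompComp} is an isomorphism of $m^\circ$-modules $V^{\alpha_m\leq\delta\gamma}\xrightarrow{\sim}V^{\alpha_m\leq\gamma}$, and it descends to an isomorphism on reductions mod $m^{\circ\circ}$. Therefore the kernel of \eqref{CompComp} agrees with the kernel of
\[
V^{\alpha\leq\delta\gamma}\otimes_{\ell^\circ}m^\circ \to V^{\alpha_m\leq\delta\gamma}_{\tilde m},
\]
which by Lemma \ref{VKer} (applied with $\gamma$ replaced by $\delta\gamma$) equals $V^{\alpha<\delta\gamma}\otimes m^\circ + V^{\alpha\leq\delta\gamma}\otimes m^{\circ\circ}$.

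Combining the two, \eqref{CompComp} induces a bijection from the quotient $V^{\alpha\leq\delta\gamma}\otimes_{\ell^\circ} m^\circ$ by this kernel, which factors as $(V^{\alpha\leq\delta\gamma}/V^{\alpha<\delta\gamma})\otimes_{\tilde\ell}\tilde m$, onto $\gr^{m/\ell}_{[\delta\gamma]}(V^{\alpha_m\leq\gamma}_{\tilde m})$. Lemma \ref{cl}(i)---which is precisely where the hypothesis $\delta\in I_\ell$ is consumed---identifies the source canonically with $\gr^\delta(V^{\alpha\leq\gamma}_{\tilde\ell})\otimes_{\tilde\ell}\tilde m=\gr^\delta(V^{\alpha\leq\gamma}_{\tilde m})$, yielding the claim. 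I do not anticipate a genuine obstacle: the proof is essentially bookkeeping combining Definition \ref{BCDecDef}, Lemma \ref{VKer}, and Lemma \ref{cl}(i), together with the observation that multiplication by $1/x$ trivializes the ``twist'' between the $F^{\leq\delta}$-filtration on the $\ell$-side and the $[\delta\gamma]$-grading on the $m$-side.
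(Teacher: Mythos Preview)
Your proof is correct and follows essentially the same route as the paper: identify the image via Definition \ref{BCDecDef} together with the isomorphism $m^\circ\xrightarrow{\cdot 1/x} m^{\leq 1/\delta}$, compute the kernel via Lemma \ref{VKer}, and then invoke Lemma \ref{cl}(i) to rewrite $(V^{\alpha\leq\delta\gamma}/V^{\alpha<\delta\gamma})\otimes_{\tilde\ell}\tilde m$ as $\gr^\delta(V^{\alpha\leq\gamma}_{\tilde m})$. Your write-up is slightly more explicit than the paper's (e.g.\ in spelling out why the kernel computation reduces to that of $V^{\alpha\leq\delta\gamma}\otimes m^\circ\to V^{\alpha_m\leq\delta\gamma}_{\tilde m}$), but there is no substantive difference.
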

\begin{proof}
  Since $(1/x)(V^{\alpha\leq\delta\gamma}\otimes m^\circ)=V^{\alpha\leq\delta\gamma}\otimes m^{1/\delta}$ inside $V_m$, the composition \eqref{CompComp} has image $\gr^{m/\ell}_{[\delta\gamma]}(V^{\alpha_m\leq \gamma}_{\tilde m})$.

  By Lemma \ref{VKer}, this composition has kernel
  \begin{equation*}
    V^{\alpha<\delta\gamma}\otimes m^{\circ} + V^{\alpha\leq\delta\gamma}\otimes m^{\circ\circ}.
  \end{equation*}
  Hence it factors through $\left(V^{\alpha\leq \delta\gamma}/V^{\alpha < \delta\gamma}\right) \otimes \tilde m$, which is canonically isomorphic to  $\gr^{\delta}(V^{\alpha\leq\gamma}_{\tilde m})$ by Lemma \ref{cl}.
\end{proof}

\subsection{Tamely Ramified Descent}
We fix a non-archimedean field $\ell$.
\begin{definition} \label{GalNormAction}
  If $m$ is a non-archimedean extension of $\ell$ which is Galois over $\ell$ and $V$ is an $\ell$-vector space, then the natural $\Gal(m/\ell)$-action on $V_m$ induces an action on the set of norms on $V_m$ via $(\sigma\cdot \alpha)(v)=\alpha(\sigma^{-1}(v))$.

  One checks that this action sends splittable norms to splittable norms.
\end{definition}

In this subsection, we prove the following descent result:

\begin{theorem} \label{TameDesc0}
  Assume that $\Gamma$ is of rank one and $\ell$ is Henselian and discretely valued. Let $m$ be a non-archimedean extension of $\ell$ which is Galois over $\ell$ and tamely ramified.  For a finite-dimensional vector space $V$ over $\ell$, if a splittable norm $\alpha$ on $V_m$ is invariant under the $\Gal(m/\ell)$-action on $V_m$, then it arises by base change from a unique splittable norm on $V$.
\end{theorem}
The proof of this result will take up the remainder of this subsection. We fix the setup from Theorem \ref{TameDesc0}.

 If $\alpha$ arises by base change from a norm on $V$, this norm is equal to $\alpha|_V$. This shows the uniqueness of such a norm, if it exists. We will descend $\alpha$ to a splittable norm on $V$ by showing that there exists a splitting basis of $\alpha$ consisting of elements of $V$. Since any splitting basis of $(V,\alpha)$ is already defined over a finite intermediate extension of $\ell \subset m$, we may and do assume that $m$ is finite over $\ell$.

 We note that since $\Gal(m/\ell)$ preserves $\alpha$, for all $\gamma \in \Gamma$ the natural action of $\Gal(m/\ell)$ on $V_m$ restricts to actions on the balls $V_m^{\alpha\leq\gamma}$ and $V^{\alpha<\gamma}_m$ and hence induces a natural action on $V^{\alpha\leq\gamma}_m / V^{\alpha<\gamma}_m$. 

\begin{lemma} \label{GalCoh0}
  For all $\gamma \in \Gamma$, the Galois cohomology groups $H^1(\Gal(m/\ell),V^{\alpha\leq\gamma}_m)$ and $H^1(\Gal(m/\ell),V^{\alpha<\gamma}_m)$ are the zero group.
\end{lemma}
\begin{proof}
  The fact that $\ell$ and $m$ are discretely normed implies $V^{\alpha<\gamma}_m=V_m^{\alpha\leq \gamma'}$ for some $\gamma'<\gamma$ in $\Gamma$. So it suffices to consider $H^1(\Gal(m/\ell),V^{\alpha\leq\gamma})$.

  For any subspace $U \subset V$, the subspace norm $\alpha|_{U_m}$ and the quotient norm $\beta$ on $(V/U)_m$ given by Proposition \ref{NormSplit} induce an exact sequence
  \begin{equation*}
    0 \to U_m^{\alpha\leq \gamma} \to V_m^{\alpha\leq\gamma} \to (U/V)_m^{\beta\leq\gamma} \to 0
  \end{equation*}
  of $\Gal(m/\ell)$-modules. Using the associated exact sequence of Galois cohomology groups, one reduces by induction to the case $\dim(V)=1$. Then any non-zero element of $V$ gives a splitting basis of $(V_m,\alpha)$, and so after rescaling $\alpha$ by an element of $\Gamma$ we may assume that $V=\ell$ and that $\alpha$ is the norm function on $m$.

  So we are considering the group $H^1(\Gal(m/\ell),m^{\leq \gamma})$, which lies in the exact sequence
  \begin{equation*}
    H^1(\Gal(m/\ell),m^{<\gamma}) \to H^1(\Gal(m/\ell),m^{\leq\gamma}) \to H^1(\Gal(\tilde m / \tilde \ell),m^{\leq\gamma}/m^{<\gamma})=0,
  \end{equation*}
  in which the last cohomology group vanishes by Galois descent from $\tilde m$ to $\tilde \ell$. By applying this to all $\gamma'< \gamma$ in $|m^*|$, it follows that the natural homomorphism
  \begin{equation*}
    \varprojlim_{\gamma'< \gamma, \gamma'\in |m^*|} H^1(\Gal(m/\ell),m^{\leq \gamma'}) \to H^1(\Gal(m/\ell),m^{\leq \gamma})
  \end{equation*}
  is surjective. Since Galois cohomology commutes with inverse limits, this implies the claim.
\end{proof}

\begin{lemma} \label{GrDescLemma0}
  If there exist $\Gamma$-invariant elements $f_1,\hdots,f_{\dim(V)}$ of $\gr(V_m,\alpha)$ which form a $\ell^*$-basis of $\gr(V_m,\alpha)$, then $\alpha|_V$ is splittable.
\end{lemma}
\begin{proof}
  For each $\gamma \in \Gamma$, by Lemma \ref{GalCoh0} the sequence
  \begin{equation*}
    0 \to (V_m^{\alpha < \gamma})^{\Gal(m/\ell)} \to (V_m^{\alpha\leq \gamma})^{\Gal(m/\ell)} \to (V_m^{\alpha\leq\gamma}/V_m^{\alpha<\gamma})^{\Gal(m/\ell)} \to 0
  \end{equation*}
  is exact. So we can lift each $f_i \in (V_m^{\alpha\leq\gamma_i}/V_m^{\alpha<\gamma_i})^{\Gal(m/\ell)}$ to an element $e_i \in V^{\alpha\leq\gamma_i}_m \cap V$. Then by Lemmas \ref{EllInd} and \ref{EllBase} the $e_i$ form a splitting basis of $(V_m,\alpha)$.   
\end{proof}

\begin{proof}[Proof of Theorem \ref{TameDesc0}]
  We first consider the structure of the field extension $\ell \subset m$: Let $\pi$ be a uniformizer of $m$ and $e$ the ramification degree of $m$ over $\ell$. By assumption, the integer $e$ is invertible in $\tilde \ell$. There exists an element $u \in m$ of norm one such that $u\pi^e  \in \ell$. After potentially replacing $m$ by a finite unramified Galois extension, we can assume that the image $\bar u \in \tilde m$ of $u$ admits an $e$-th root in $\tilde m$. Since $\ell$ is Henselian, so is $m$. Hence, since $e$ invertible in $\tilde \ell$, the element $u$ admits an $e$-th root in $m$. Then after dividing $\pi$ by such a root we can assume that $\pi^e \in \ell$.

  Then for any $\sigma \in \Gal(m/\ell)$, the element $\zeta=\sigma(\pi)/\pi \in m^\circ$ is a $e$-th root of unity, and by \cite[9.15]{Neukirch}, the assignment $\sigma \mapsto \zeta$ gives an isomorphism from the inertia subgroup $I \subset \Gal(m/\ell)$ to the group of $e$-th roots of unity in $m^\circ$. In particular $I$ is cyclic and the order of $I$ divides $e$.

  Let now $S=\{\gamma_1, \hdots, \gamma_r \} \subset \Gamma$ be a set of representatives of $\alpha(V_m \setminus \{0\}) /|m^*| \subset \Gamma/|m^*|$. Since the inertia group $I$ acts trivially on $\tilde m$, it acts linearly on each $\tilde m$-vector space $V^{\alpha\leq\gamma_r}_m/V^{\alpha<\gamma_r}_m$. Since $I$ is cyclic and the order of $I$ is invertible in $\tilde m$, for each $1\leq s\leq r$ this action gives a weight decomposition
  \begin{equation*}
    V^{\alpha\leq\gamma_s}_m/V^{\alpha<\gamma_s}_m = \oplus_{\mu \colon I \to \tilde m^*} (V^{\alpha\leq\gamma_s}_m/V^{\alpha<\gamma_s}_m)^\mu.
  \end{equation*}

  We choose a basis $f_{s,1},\hdots,f_{s,r_s}$ of $V^{\alpha\leq\gamma_s}_m/V^{\alpha<\gamma_s}_m$ such that each $f_{s,i}$ is homogenous of some weight $\mu_{s,i}$ for this decomposition. Let $\tau$ be a generator of $I$. By the above $\tau(\pi)/\pi \in m^\circ$ is a generator of the group of $e$-root of unity in $m$. Hence there exists for all $s,i$ an integer $t_{j,i}$ such that $\tau(\pi^{t_{s,i}})/\pi^{t_{s,i}} \in m^\circ$ reduces to the $e$-th root of unity $\mu_{s,i}(\tau) \in \tilde m$. This implies that the elements $\pi^{-t_{s,i}} \cdot f_{s,i}$ form a $\ell^*$-basis of $\gr(V_m,\alpha)$ which consists of $I$-invariant elements. Hence by Lemma \ref{GrDescLemma0} the norm $\alpha|_{V_{m^I}}$ on $V_{m^I}$ is splittable. So we can replace $m$ by $m^I$ and have reduced to the case that $m$ is unramified over $\ell$.

 If $m$ is unramified over $\ell$, then by Galois descent of vector spaces, there exists for each $1\leq s \leq r$ a basis of the $\tilde m$-vector space $V^{\alpha\leq\gamma_s}_m/V^{\alpha<\gamma_s}_m$ consisting of elements which are fixed by $\Gal(m/\ell)=\Gal(\tilde m / \tilde \ell)$. So another application of Lemma \ref{GrDescLemma0} finishes the proof.

\end{proof}

\section{Normed Fiber Functors} \label{NFFSection}

\subsection{Group Schemes over Valuation Rings}
In this subsection we collect some facts about affine group schemes. Presumably none of these are new, but in some cases we give a proof because we could not find a suitable reference.

Let first $R$ be a ring and $\CG$ a flat affine group scheme over $R$. We denote by $\Rep^\circ \CG$ the exact $R$-linear category of dualizable representations of $\CG$ over $R$ and by $\Rep \CG$ the category of all representations of $\CG$ over $R$.

For an $R$-algebra $R'$, a fiber functor on $\Rep^\circ \CG$ over $R'$ is an exact $R$-linear tensor functor $\Rep^\circ \CG \to \Mod_{R'}$. We denote by $\omega_\CG\colon \Rep^\circ \CG \to \Mod_R$ the standard fiber functor (i.e. the forgetful functor).

\begin{definition}
    Let $R' \to R''$ be a homomorphism of $R$-algebras and $\omega\colon \Rep^\circ \CG \to \Mod_{R'}$ a fiber functor.
  \begin{enumerate}

  \item Let $\UEnd(\omega)(R'')$ be the $R''$-module of natural transformations $\omega\to \omega$ from the fiber functor $\omega_{R''}\colon \Rep^\circ \CG \to \Mod_{R'} \to \Mod_{R''}$ to itself. 
  \item Let $\UEnd^\otimes(\omega)(R'')$ be the submodule of $\End(\omega)(R')$ consisting of those natural transformations $h$ such that for all $X$ and $Y$ in $\Rep^\circ \CG$, the associated endomorphism $h_{X\otimes Y}$ of $\omega(X\otimes Y)_{R''}$ is equal to $h_{X} \otimes \id_{\omega(Y)_{R''}} + \id_{\omega(X)_{R''}} \otimes h_{Y}$. 
  \item We endow $\UEnd^\otimes(\omega)(R'')$ with the natural conjugation action of the group $\UAut^\otimes(\omega)(R'')$ of tensor automorphisms of $\omega_{R'' }$.
  \item These constructions are naturally functorial in the $R'$-algebra $R''$, and so we obtain presheaves $\UEnd^\otimes(\omega) \subset \UEnd(\omega)$ over $\Spec(R')$ with an action of $\UAut^\otimes(\omega)$.
  \item Let $\rho_\CG \in \Rep \CG$ be the regular representation (that is the action of $\CG$ on its ring of global sections induced by the conjugation action of $\CG$ on itself) and $I_\CG \subset \rho_\CG$ its augmentation ideal.
  \end{enumerate}
\end{definition}
\begin{remark}
  Note that $\UAut^\otimes(\omega)$ is not contained in $\UEnd^\otimes(\omega)$ in general, since the compatibility conditions with the tensor product used to define these functors are different.
\end{remark}

\begin{construction}
  Let $R'$ be an $R$-algebra and $\omega\colon \Rep^\circ \CG \to \Mod_{R'}$ a fiber functor. We identify the sheaf $\UEnd^\otimes(\omega)$ with the sheaf $\ULie(\UAut^\otimes(\omega))$ of Lie algebras associated to the group sheaf $\UAut^\otimes(\omega)$ in \cite[Def II.3.9.0.1]{SGA3I} as follows:

  By definition, for some $R'$-algebra $R''$, a section of $\ULie(\UAut^\otimes(\omega))(R'')$ is given by an element of $\ker(\UAut^\otimes(\omega)(R''[\epsilon]) \to \UAut^\otimes(R''))$. Such an element amounts to giving automorphisms $\id_{\omega(X)_{R''}}+\epsilon h_X$ of $\omega(X)_{R''[\epsilon]}$ for all $X \in \Rep^\circ \CG$ which are compatible with tensor products, where the $h_X$ are endomorphisms of $\omega(X)_{R''}$. One checks that these automorphisms are compatible with tensor products if and only if the $h_X$ form an element of $\UEnd^\otimes(\omega)(R'')$. This gives a natural isomorphism
  \begin{equation} \label{LieEndIso}
    \ULie(\UAut^\otimes(\omega)) \cong \UEnd^\otimes(\omega),
  \end{equation}
  which is compatible with the adjoint action of $\UAut^\otimes(\omega)$ on the left and the conjugation action of $\UAut^\otimes(\omega)$ on the right.

  If $\UAut^\otimes(\omega)$ is representable by a smooth group scheme over $\Spec(R')$, then (c.f. \cite[II.4.11.7]{SGA3I}) the functor $\ULie(\UAut^\otimes(\omega))$ is the representable by the vector bundle $\BV(\Lie(\UAut^\otimes(\omega))$ over $\Spec(R')$ whose global sections are given by the Lie algebra $\Lie(\UAut^\otimes(\omega))=\ULie(\UAut^\otimes(\omega))(R')$ of $\UAut^\otimes(\omega)$. So in this case we can write \eqref{LieEndIso} as
  \begin{equation} \label{LieEndIso2}
    \BV(\Lie(\UAut^\otimes(\omega)) \cong \UEnd^\otimes(\omega).
  \end{equation}
\end{construction}

We now fix a non-archimedean field $k$ and a flat affine group scheme $\CG$ over $k^\circ$. 

\begin{lemma} \label{LocalFiniteness}
  Any flat $\rho_\CG$-comodule over $k^\circ$ is the filtered union of its dualizable sub-comodules.
\end{lemma}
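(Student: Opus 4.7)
The plan is to prove the stronger claim that every element $m \in M$ lies in a dualizable sub-comodule of $M$; the statement then follows because any finite sum of dualizable sub-comodules is again dualizable (it is finitely generated, and being a submodule of the flat module $M$ it is torsion-free, hence free over the valuation ring $k^\circ$).

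The key input is a $k^\circ$-version of the Fundamental Theorem of Coalgebras: every element of $\rho_\CG$ lies in a sub-coalgebra $C \subset \rho_\CG$ which is finitely generated and free over $k^\circ$. To prove this, I would use the classical field-level FTC applied to $\rho_{\CG,k}$ to place the image of a given element $a \in \rho_\CG$ in a finite-dimensional sub-coalgebra $V \subset \rho_{\CG,k}$, and then construct a $k^\circ$-lattice $L \subset V$ with $a \in L$, $L \subset \rho_\CG$, and $\Delta(L) \subset L \otimes L$. One chooses a $k$-basis $v_1,\ldots,v_r$ of $V$ consisting of elements of $V \cap \rho_\CG$ (possible since $V \cap \rho_\CG$ spans $V$ over $k$), computes the finitely many structure constants $c_{ijk} \in k$ of $\Delta$ in this basis, and rescales the basis to clear all denominators; the rescaling is possible because the value group is totally ordered and the family $\{c_{ijk}\}$ is finite. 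Making this step fully rigorous over a non-Noetherian valuation ring is the main technical obstacle, since standard ascending-chain arguments on iterated $\Delta$-closures need not terminate until combined with this integrality-rescaling step.

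Granting this, write $\sigma(m) = \sum_{i=1}^n m_i \otimes a_i$ and choose a finitely generated free sub-coalgebra $C \subset \rho_\CG$ with basis $(e_1,\ldots,e_r)$ containing all the $a_i$. By flatness of $M$, the inclusion $M \otimes C \hookrightarrow M \otimes \rho_\CG$ is injective, so the element $\sigma(m) \in M \otimes C$ has a unique expression $\sigma(m) = \sum_j n_j \otimes e_j$ with $n_j \in M$. The counit axiom gives $m = \sum_j \epsilon(e_j)\, n_j \in N := k^\circ\{n_1,\ldots,n_r\}$.

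To verify that $N$ is a sub-comodule I would apply coassociativity:
\[
\sum_j \sigma(n_j) \otimes e_j \;=\; \sum_j n_j \otimes \Delta(e_j)
\]
in $M \otimes \rho_\CG \otimes \rho_\CG$. Writing $\Delta(e_j) = \sum_{k,l} c_{jkl}\, e_k \otimes e_l$ uniquely with $c_{jkl} \in k^\circ$ (using $\Delta(e_j) \in C \otimes C$ and the fact that $C \otimes C$ is free with basis $(e_k \otimes e_l)$), both sides lie in $M \otimes \rho_\CG \otimes C$. Extracting the coefficient of $e_l$ in the third tensor factor yields $\sigma(n_l) = \sum_k \bigl(\sum_j c_{jkl}\, n_j\bigr) \otimes e_k \in N \otimes C$, so $\sigma(N) \subset N \otimes \rho_\CG$. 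Hence $N$ is a sub-comodule of $M$ containing $m$; it is finitely generated and torsion-free over the valuation ring $k^\circ$, hence free, hence dualizable.
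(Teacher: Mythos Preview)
Your approach is more elaborate than the paper's and takes a genuine detour. The paper's proof is two sentences: it invokes the standard local-finiteness result that any comodule over a coalgebra is the filtered union of its sub-comodules of finite type over the base, and then observes that a finitely generated submodule of a flat module over a valuation ring is torsion-free, hence free, hence dualizable. No integral Fundamental Theorem of Coalgebras is needed.

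Your route through an FTC over $k^\circ$ would also work once that step is secured, and the second half of your argument (given a finitely generated free sub-coalgebra $C$, reading off coefficients via coassociativity to show the span of the $n_j$ is a sub-comodule) is correct and carefully written. But the rescaling you sketch for the FTC does not go through as stated. If you rescale the basis uniformly by $v_i \mapsto \lambda v_i$, the new structure constants are $c_{ijk}/\lambda$, so forcing them into $k^\circ$ requires $|\lambda| \geq \max_{i,j,k}|c_{ijk}|$; when some $|c_{ijk}|>1$ this pushes $\lambda$ out of $k^\circ$, and then $\lambda v_i$ need not lie in $\rho_\CG$. With non-uniform scalars $\lambda_i$ the constraints become $|c_{ijk}|\,|\lambda_i| \leq |\lambda_j|\,|\lambda_k|$ together with $|\lambda_i|\leq 1$ and (writing $a=\sum \mu_i v_i$) the containment condition $|\mu_i|\leq|\lambda_i|$; there is no evident reason this system is solvable, and ``clearing denominators'' does not capture what is needed. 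You rightly flag this as the main obstacle, but the sketch does not resolve it. The paper simply avoids the issue by not passing through sub-coalgebras at all.
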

\begin{proof}
  Comodules under a coalgebra are the filtered union of their sub-comodules which are of finite type over the base ring. In our case, since we are working with a flat comodule over a valuation ring, these sub-comodules are torsion free and hence projective.
\end{proof}
By a strict monomorphism $X \into Y$ (resp. a strict epimorphism $Y \to Z$) in an exact category, we mean such a morphism which fits into an exact sequence $X \to Y \to Z$. A strict subquotient of an object $X$ in such a category is an object of the form $Y'/Y$ for two strict subobjects $Y \subset Y'$ of $X$.

The following is shown in \cite[Proposition 3.1]{MR3778991} in case $k^\circ$ is a discrete valuation ring. But the argument in \emph{loc.cit.} works over arbitrary valuation rings $k^\circ$.
\begin{proposition} \label{ClosedImmCrit}
  Let $h\colon \CH \to \CG$ be a homomorphism of flat affine group schemes over $k^\circ$. The following are equivalent:
  \begin{enumerate}
  \item The homomorphism $h$ is a closed immersion.
  \item Every object $V \in \Rep^\circ \CH$ is a strict subquotient of an object of the form $h^*(W)$ for some $W \in \Rep^\circ \CG$.
  \end{enumerate}
\end{proposition}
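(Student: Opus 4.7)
My plan is to prove the two implications separately, with the principal inputs being Lemma~\ref{LocalFiniteness} and the counit axiom for comodules.

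For (i)~$\Rightarrow$~(ii), the closed immersion $h$ corresponds to a surjection $h^\# \colon \CO(\CG) \twoheadrightarrow \CO(\CH)$ of Hopf algebras. Given $V \in \Rep^\circ \CH$, the coaction $\sigma_V \colon V \to V_0 \otimes \CO(\CH)$ (where $V_0$ denotes the underlying $k^\circ$-module, placed with trivial coaction in the first factor) is an inclusion of $\CH$-comodules by coassociativity. Since $V$ is finitely generated, its image lies in $V_0 \otimes M$ for some finitely generated subcomodule $M \subset \CO(\CH)$, which Lemma~\ref{LocalFiniteness} lets me enlarge so as to be dualizable. Next I would lift $M$ to a dualizable subcomodule $\tilde M \subset \CO(\CG)$: choose preimages of a generating set of $M$ under $h^\#$, and apply Lemma~\ref{LocalFiniteness} to enclose them in a dualizable subcomodule. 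Setting $W \defeq V_0 \otimes \tilde M \in \Rep^\circ \CG$ and letting $M' \subset \CO(\CH)$ denote the image of $\tilde M$, the combination of the inclusion $V \hookrightarrow V_0 \otimes M'$ and the surjection $h^*(W) \twoheadrightarrow V_0 \otimes M'$ exhibits $V$ as a subquotient of $h^*(W)$.

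The main obstacle is to verify that the resulting inclusions are strict in $\Rep^\circ \CH$, i.e., that the relevant cokernels are dualizable as $k^\circ$-modules. For the inclusion $V \hookrightarrow V_0 \otimes M'$, the counit axiom gives $(\id_V \otimes \epsilon) \circ \sigma_V = \id_V$, so $\sigma_V$ splits as a map of $k^\circ$-modules; hence $V$ is a direct summand, and in particular saturated, in $V_0 \otimes \CO(\CH)$, and consequently also in the sub-module $V_0 \otimes M'$. The cokernel is then finitely generated and torsion-free over the valuation ring $k^\circ$, hence free, and so dualizable. For the surjection $h^*(\tilde M) \twoheadrightarrow M'$, the target $M'$ is finitely generated and torsion-free as a submodule of the flat module $\CO(\CH)$, hence free over $k^\circ$, so this surjection splits as a map of $k^\circ$-modules and has dualizable kernel; tensoring with $V_0$ preserves strictness.

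For (ii)~$\Rightarrow$~(i), let $A \subset \CO(\CH)$ denote the image of $h^\#$; I aim to show $A = \CO(\CH)$. By the definition of $h^*$, the coaction on any $h^*(W)$ for $W \in \Rep^\circ \CG$ factors through $h^*(W) \otimes A$. This property is inherited by strict sub- and quotient comodules, so combining with hypothesis (ii) it follows that for every $V \in \Rep^\circ \CH$, the coaction factors through $V \otimes A$. By Lemma~\ref{LocalFiniteness}, every element $a \in \CO(\CH)$ lies in some dualizable subcomodule of the right regular representation, so $\Delta(a) \in \CO(\CH) \otimes A$. Applying $\epsilon \otimes \id$ and using $(\epsilon \otimes \id) \circ \Delta = \id$ yields $a \in A$, proving $A = \CO(\CH)$ and hence that $h$ is a closed immersion.
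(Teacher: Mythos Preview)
Your proof is correct. The paper does not actually write out a proof of this proposition; it simply cites \cite[Proposition 3.1]{MR3778991} for the discrete valuation ring case and remarks that the same argument works over arbitrary valuation rings. What you have written is precisely that standard argument, carried out explicitly: the comodule embedding $\sigma_V \colon V \hookrightarrow V_0 \otimes \CO(\CH)$ split by the counit, combined with Lemma~\ref{LocalFiniteness} to cut down to dualizable pieces and to lift through the Hopf algebra surjection $h^\#$. Your verification of strictness is the point where the valuation ring hypothesis enters (finitely generated torsion-free implies free), and this is exactly the adaptation the paper alludes to when it says the argument ``works over arbitrary valuation rings $k^\circ$.'' The direction (ii)~$\Rightarrow$~(i) is likewise the standard trick of showing the coaction of every dualizable $\CH$-comodule lands in the image subalgebra and then applying the counit identity to $\Delta$.

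One small remark: when you assert that the property ``coaction factors through $V \otimes A$'' is inherited by strict subobjects, you are implicitly using that a strict monomorphism $V' \hookrightarrow V$ in $\Rep^\circ \CH$ splits as a map of $k^\circ$-modules (since $V/V'$ is projective), so that $(V' \otimes \CO(\CH)) \cap (V \otimes A) = V' \otimes A$. This is routine but worth a word, since over a general base the analogous statement can fail.
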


\begin{lemma} \label{TensorGenLemma}
  Let $\CG$ be a flat affine group scheme over $k^\circ$ and $X \in \Rep^\circ \CG$. The following are equivalent:
  \begin{enumerate}
  \item The homomorphism $G \to \GL(X)$ induced by the representation $X$ is a closed immersion.
  \item Every representation $Y \in \Rep^\circ \CG$ is a strict subquotient of $\oplus_{i=1}^n X^{\otimes r_i} \otimes (X^*)^{\otimes s_i}$ for some integers $r_i$ and $s_i$.
  \end{enumerate}
\end{lemma}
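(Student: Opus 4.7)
I would prove both directions by applying Proposition~\ref{ClosedImmCrit} to the homomorphism $h : \CG \to \GL(X)$ induced by $X$. The key observation is that, as a $\CG$-representation, $X = h^*(X_0)$ is the pullback of the standard representation $X_0$ of $\GL(X)$; since $h^*$ is an exact tensor functor, any $\bigoplus_i X^{\otimes r_i} \otimes (X^*)^{\otimes s_i}$ equals $h^*$ of the corresponding object in $\Rep^\circ \GL(X)$. Therefore condition (ii) is exactly the hypothesis of Proposition~\ref{ClosedImmCrit}(ii), and the direction (ii) $\Rightarrow$ (i) is immediate.

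For (i) $\Rightarrow$ (ii): by Proposition~\ref{ClosedImmCrit}, every $Y \in \Rep^\circ \CG$ is a strict subquotient of some $h^*(W)$ with $W \in \Rep^\circ \GL(X)$. Strict subquotients compose in our setting, since $k^\circ$ is local --- any extension of finitely generated projective (equivalently, free) $k^\circ$-modules is again free --- and $h^*$ is exact. So it suffices to establish the following \emph{sub-claim}: every $W \in \Rep^\circ \GL(X)$ is a strict subquotient of some $\bigoplus_i X_0^{\otimes r_i} \otimes (X_0^*)^{\otimes s_i}$ as a $\GL(X)$-representation.

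For the sub-claim I would use the explicit description $\mathcal{O}(\GL(X)) = \mathrm{Sym}_{k^\circ}(X_0 \otimes X_0^*)[\det^{-1}]$. The coaction of $W$ produces a $\GL(X)$-equivariant embedding $W \hookrightarrow \mathcal{O}(\GL(X))^{\oplus n}$ with $n = \mathrm{rank}_{k^\circ}(W)$ (well-defined since locality of $k^\circ$ forces dualizable modules to be free), where $\GL(X)$ acts only through the right regular action on $\mathcal{O}(\GL(X))$. By Lemma~\ref{LocalFiniteness}, the image lies in a dualizable sub-comodule $W' \subset \mathcal{O}(\GL(X))^{\oplus n}$. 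Being finitely generated, $W'$ is bounded in bidegree, hence contained in $\det^{-e} \cdot \mathrm{Sym}^{\leq d}(X_0 \otimes X_0^*)^{\oplus n}$ for some integers $d, e \geq 0$. This latter object is a strict quotient of $\bigoplus_{d' \leq d}(X_0 \otimes X_0^*)^{\otimes d'} \otimes (X_0^*)^{\otimes ne}$, using the realization of $\det^{-1} \cong \Lambda^n X_0^*$ as a strict subquotient of $(X_0^*)^{\otimes n}$ via antisymmetrization.

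\textbf{Main obstacle.} The principal difficulty is ensuring strictness of the coaction embedding $W \hookrightarrow W'$: a priori $W'/W$ can have torsion as a $k^\circ$-module, so that $W$ is a sub-comodule but not a strict subobject of $W'$. To remedy this I would replace $W'$ by the saturation of $W$ inside $W'$, and verify by a diagram chase through the torsion that $W$ itself (and not just its saturation) arises as a strict subquotient of the enveloping tensor powers. Carrying out this bookkeeping over the possibly non-Noetherian valuation ring $k^\circ$ is the most delicate aspect of the argument; everything else is formal manipulation via Proposition~\ref{ClosedImmCrit} and the combinatorics of the coordinate ring of $\GL(X)$.
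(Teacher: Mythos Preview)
Your approach matches the paper's: both directions go through Proposition~\ref{ClosedImmCrit}, reducing everything to the sub-claim that the standard representation of $\GL(X)$ satisfies (ii). The paper dispatches this sub-claim by citing \cite[Section~3.5]{MR547117} and asserting that the field argument there works verbatim over the valuation ring $k^\circ$; you instead sketch the argument explicitly.

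Your sketch is essentially correct, but the ``main obstacle'' you flag is not an obstacle. The coaction map $\sigma\colon W \to W\otimes_{k^\circ}\mathcal{O}(\GL(X))\cong \mathcal{O}(\GL(X))^{\oplus n}$ is split as a $k^\circ$-module map by $\id_W\otimes\epsilon$ (the counit), so $W$ is a direct summand of $\mathcal{O}(\GL(X))^{\oplus n}$ as a $k^\circ$-module and in particular saturated there. Saturation passes to any intermediate submodule, so $W\hookrightarrow W'$ is automatically strict. In fact the detour through Lemma~\ref{LocalFiniteness} to produce $W'$ is unnecessary: since $W$ is finitely generated and the filtration of $\mathcal{O}(\GL(X))$ by the sub-comodules $\det^{-e}\cdot\mathrm{Sym}^{\leq d}(X_0\otimes X_0^*)$ is exhaustive, $W$ already sits inside one such piece, and by the same saturation argument the inclusion is strict. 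The remaining steps (symmetrization and the realization of $\det^{-1}$ as a quotient of $(X_0^*)^{\otimes n}$) are strict epimorphisms because their targets are projective and hence the surjections split over $k^\circ$; no characteristic hypothesis is needed. With this correction your argument goes through without the saturation bookkeeping you were anticipating.
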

\begin{proof}
  First we note that for $\CG=\GL_n$ and $V$ the standard representation, condition (ii) is satisfied. Indeed, over a field, this is proven in \cite[Section 3.5]{MR547117}, and the same argument works over the valuation ring $k^\circ$. 

Using this, the claim follows from Proposition \ref{ClosedImmCrit}.
\end{proof}
\begin{definition}
  We call a representation $X \in \Rep^\circ \CG$ satisfying the conditions of Lemma \ref{TensorGenLemma} a \emph{tensor generator} of $\Rep^\circ \CG$.
\end{definition}

\begin{lemma}
  If $\CG$ is of finite type over $k^\circ$, then there exists a tensor generator $X \in \Rep^\circ \CG$.
\end{lemma}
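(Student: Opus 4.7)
The plan is to construct a dualizable representation $X$ of $\CG$ whose matrix coefficients generate $\rho_\CG = \CO(\CG)$ as a $k^\circ$-algebra; Lemma~\ref{TensorGenLemma} will then yield the result, since $\CG \to \GL(X)$ being a closed immersion amounts to surjectivity of the induced map of coordinate rings $\CO(\GL(X)) \to \rho_\CG$, and the image of that map is precisely the subalgebra generated by the matrix coefficients of $X$ (the determinant character of $\CG$ on $X$ being automatically invertible in $\rho_\CG$).

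To build such an $X$, I would first use that $\CG$ is of finite type to fix a finite generating set $f_1, \ldots, f_n$ of $\rho_\CG$ as a $k^\circ$-algebra. By Lemma~\ref{LocalFiniteness}, $\rho_\CG$ is the filtered union of its dualizable sub-comodules. Filteredness rests on the observation that the sum of two dualizable sub-comodules is a finitely generated torsion-free sub-comodule of $\rho_\CG$, hence free, hence again dualizable over the valuation ring $k^\circ$. Consequently, a single dualizable sub-comodule $X \subset \rho_\CG$ contains all of $f_1, \ldots, f_n$; this will be the candidate tensor generator.

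It then remains to check that the matrix coefficients of $X$ in fact generate $\rho_\CG$ as a $k^\circ$-algebra. Choosing a basis $e_1, \ldots, e_m$ of the free $k^\circ$-module $X$ and writing the comultiplication on elements of $X$ as $\Delta(e_i) = \sum_j e_j \otimes a_{ji}$, the $a_{ji} \in \rho_\CG$ are by definition the matrix coefficients of $X$. Applying the counit identity $(\epsilon \otimes \id) \circ \Delta = \id_{\rho_\CG}$ to each $e_i$ produces $e_i = \sum_j \epsilon(e_j)\, a_{ji}$, so each $e_i$, and in particular each generator $f_i$, lies in the $k^\circ$-span of the $a_{ji}$. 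Thus the subalgebra of $\rho_\CG$ generated by the matrix coefficients of $X$ contains the $f_i$ and is therefore equal to $\rho_\CG$, completing the argument via Lemma~\ref{TensorGenLemma}.

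The only point requiring any vigilance is the filteredness claim used in paragraph two, but this reduces to the standard fact that finitely generated torsion-free modules over a valuation ring are free, so I do not expect a real obstacle here.
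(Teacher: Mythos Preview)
Your proof is correct and follows essentially the same approach as the paper: find a dualizable sub-comodule $X \subset \rho_\CG$ containing algebra generators of $\rho_\CG$, then use the counit identity to show that the matrix coefficients of $X$ recover the elements of $X$ and hence generate $\rho_\CG$. The paper phrases the key computation slightly more abstractly, via the adjoint map $X \otimes X^* \to \rho_\CG$ and the composition $X \to X \otimes X^* \to \rho_\CG$ along $x \mapsto x \otimes \epsilon|_X$, but this is exactly your identity $e_i = \sum_j \epsilon(e_j)\,a_{ji}$ in disguise.
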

\begin{proof}
By Lemma \ref{LocalFiniteness}, there exists a dualizable sub-comodule $X \subset \rho_{\CG}$ which generates $\rho_G$ as a $k^\circ$-algebra. We consider the $k^\circ$-linear map $X \otimes X^* \to \rho$ which is adjoint to the comodule map $X \to X\otimes \rho_\CG$. Using the fact that $\rho_\CG$ is a Hopf algebra, one checks that the composition $$X \toover{x \mapsto x \otimes \epsilon|_X} X \otimes X^* \to \rho_\CG,$$ where $\epsilon \colon \rho_\CG \to k^\circ$ denotes the counit, is the inclusion $X \into \rho$. Hence the induced algebra homomorphism $\operatorname{Sym}(X \otimes X^*) \to \rho_\CG$ is surjective. But this homomorphism corresponds to the map $G \to \GL(\omega_\CG(X)) \to \End(\omega_\CG(X))$ of schemes given by the action of $G$ on $\omega_\CG(X)$. Hence $G \to \GL(\omega_\CG(X))$ is a closed immersion.
\end{proof}

\begin{lemma} \label{BCEpi}
  Let $\ell$ be a non-archimedean extension of $k$. Every object $X \in \Rep^\circ \CG_{\ell^\circ}$ admits a strict epimorphism $Y_{\ell^\circ} \onto X$ for some object $Y \in \Rep^\circ \CG$.
\end{lemma}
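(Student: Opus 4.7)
The plan is to construct $Y$ by embedding the dual $X^*$ into a base change of a dualizable $\CG$-representation and then dualizing. Since $\ell^\circ$ is a valuation ring and $X^*$ is dualizable, it is free of some finite rank $m$ over $\ell^\circ$. The coaction of $X^*$ gives a comodule embedding
\begin{equation*}
  \sigma_{X^*}\colon X^* \into X^*_{\mathrm{triv}} \otimes_{\ell^\circ} \rho_{\CG,\ell^\circ},
\end{equation*}
where $X^*_{\mathrm{triv}}$ carries the trivial $\CG_{\ell^\circ}$-action and $\rho_{\CG,\ell^\circ}$ the right regular coaction. This embedding is $\ell^\circ$-linearly split by $\id_{X^*} \otimes \epsilon$, where $\epsilon$ is the counit, and a choice of basis of $X^*$ identifies its target with $\rho_{\CG,\ell^\circ}^m$ as a $\CG_{\ell^\circ}$-comodule.

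By Lemma \ref{LocalFiniteness}, we may write $\rho_\CG = \bigcup_i Y_i$ as a filtered union of dualizable sub-comodules $Y_i \in \Rep^\circ \CG$ over $k^\circ$, so $\rho_{\CG,\ell^\circ}^m = \bigcup_i Y_{i,\ell^\circ}^m$. Since $X^*$ is finitely generated over $\ell^\circ$, its image in this filtered union lies inside $Y_{i_0,\ell^\circ}^m$ for some index $i_0$. Because $Y_{i_0,\ell^\circ}^m$ is itself a sub-comodule of $\rho_{\CG,\ell^\circ}^m$, the factored map $X^* \into (Y_{i_0}^m)_{\ell^\circ}$ remains a comodule map, and $\id \otimes \epsilon|_{Y_{i_0}}$ continues to serve as an $\ell^\circ$-linear retraction.

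Set $Y := (Y_{i_0}^*)^m \in \Rep^\circ \CG$. Since $Y_{i_0}^m$ is dualizable over $k^\circ$ and dualization of dualizables commutes with base change, the $\ell^\circ$-split monomorphism above dualizes to a $\CG_{\ell^\circ}$-equivariant map
\begin{equation*}
  Y_{\ell^\circ} \onto X^{**} = X
\end{equation*}
which is split surjective on underlying $\ell^\circ$-modules. Its kernel is an $\ell^\circ$-submodule of the locally free $Y_{\ell^\circ}$ with locally free quotient $X$, so it is itself locally a direct summand and hence locally free of finite rank, i.e.\ dualizable. Consequently the map is a strict epimorphism in $\Rep^\circ \CG_{\ell^\circ}$, completing the argument.

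The only non-formal ingredient is Lemma \ref{LocalFiniteness}, used to factor through a dualizable $Y_{i_0}$; the remaining steps --- including strictness of the final epimorphism --- are automatic from local freeness of $X$ over the local ring $\ell^\circ$. I do not foresee a genuine obstacle, as the interpretation of the statement (reading ``$Y_{m^\circ}$'' as ``$Y_{\ell^\circ}$'', since no field $m$ is introduced in the hypotheses) is the natural Tannakian assertion that representations of $\CG_{\ell^\circ}$ are quotients of base-changed representations of $\CG$.
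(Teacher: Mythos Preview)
Your proof is correct and follows essentially the same approach as the paper: embed a dualizable representation into (trivial module) $\otimes$ (regular representation) via the coaction, factor through a finitely generated piece using Lemma \ref{LocalFiniteness}, and dualize. The one notable difference is that the paper argues the coaction embedding is \emph{saturated} (its cokernel is torsion-free) and that saturation persists after restricting to the smaller submodule, whereas you observe the stronger fact that the embedding is $\ell^\circ$-linearly \emph{split} by $\id\otimes\epsilon$ and that this splitting restricts; this makes the strictness of the resulting epimorphism a touch more transparent.
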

\begin{proof}
   Let $\rho_\CG^\text{l} \in \Rep \CG$ be the left regular representation, that is the action of $\CG$ on its ring of function induced by the action of $\CG$ on itself by left multiplication. The comodule map $X \to \rho_\CG^\text{l} \otimes_{k^\circ} X$ identifies $X$ with a saturated sub-comodule of the comodule $\rho_\CG^\text{l} \otimes_{k^\circ} \omega_\CG(X) \in \Rep \CG_{\ell^\circ}$. (Here by a saturated sub-comodule we mean one for which the quotient $(\rho_\CG^\text{l} \otimes_{k^\circ} \omega_\CG(X)) / X$ is torsion-free or equivalently flat.) Using Lemma \ref{LocalFiniteness}, we find a finitely generated projective sub-comodule $Y \subset \rho_\CG^\text{l}$ such that $X \subset Y \otimes_{k^\circ} \omega(X) \subset  \rho_{\CG}^\text{l} \otimes_{k^\circ}  \omega(X)$. Since $X$ is saturated in $\rho_{\CG_{\ell^\circ}}^\text{l} \otimes_{k^\circ} \omega(X)$, it is also saturated in $Y \otimes_{k^\circ} \omega(X)$. In other words we have constructed a strict monomorphism $X \into Y_{\ell^\circ}^d$ for some $d\geq 0$. By dualizing this implies the claim.
\end{proof}

\begin{proposition} \label{FibrewiseSmoothness2}
  Let $\CG$ be an affine group scheme of finite type over $k^\circ$ with smooth special fiber $\CG_{\tilde k}$ such that $\dim(\CG_{\tilde k})=\dim(\CG_k)$.

  \begin{enumerate}
  \item The schematic closure $\CG^\text{fl} \subset \CG$ of the generic fiber is open and closed in $\CG$ as well as smooth over $k^\circ$.
  \item If in addition there exists a closed subgroup scheme $\CH \subset \CG$ which is smooth over $k^\circ$ and for which the quotient scheme $\CG_{\tilde k}/\CH_{\tilde k}$ is connected, then $\CG$ is smooth over $k^\circ$.
  \end{enumerate}
\end{proposition}
\begin{proof}
(i)  The closed subscheme $\CG^\text{fl} \subset \CG$ is a group scheme which is flat and of finite type over $k^\circ$. Hence it is of finite presentation over $k^\circ$ by \cite[Theorem 3]{MR193088}. By the upper semicontinuity of the dimension of fibres of finite type morphisms of schemes, all fibers of $\CG$ and $\CG^\text{fl}$ have the same dimension. Hence by our assumptions on $\CG$, the special fiber $\CG^\text{fl}_{\tilde k}$ is open in $\CG_{\tilde k}$ and hence smooth. By \cite[12.1.7]{EGAIV3}, the set of points $s \in \Spec(k^\circ)$ with a smooth fiber $\CG^\text{fl}_s$ is open in $\Spec(k^\circ)$. Hence all fibers of $\CG^\text{fl}$ are smooth, and so $\CG^\text{fl}$ is smooth over $k^\circ$.

  Similarly, by \cite[VIB.2.5]{SGA3II}, the fact that the special fiber of the inclusion $\CG^\text{fl} \into \CG$ is etale implies that this inclusion is etale everywhere. Hence this inclusion is an open immersion.

(ii)    It suffices to prove the claim after replacing $k^\circ$ by its strict Henselization. Let $C$ be a connected component of $\CG_{\tilde k}$. By assumption there exists an element $h \in \CH(\tilde k) \cap C(\tilde k)$. Since $\CH$ is smooth and $k^\circ $ Henselian, we can lift $h$ to a point $h'\in \CH(k^\circ)$. Then $\CG^\text{fl} \cdot h'$ is an open subscheme of $\CG$ which is smooth over $k^\circ$ and contains $C$. Hence this subscheme is contained in $\CG^\text{fl}$ and so $\CG^\text{fl}$ contains $C$. By varying $C$ over all connected components this shows that $\CG^\text{fl}_{\tilde k}=\CG_{\tilde k}$. By the upper semicontinuity of the dimension of the fibres of $\CG \setminus \CG^\text{fl}$ this implies $\CG=\CG^\text{fl}$, which proves the claim.
\end{proof}

\begin{proposition}[{c.f. \cite[Lemma 7]{CornutFiltrations} and \cite[A.8.10 (i)]{CGP}}] \label{CentSmooth}
  Let $D$ and $G$ be affine group schemes over some base scheme $S$, with $D$ multiplicative and with $G$ smooth. Then the centralizer $\Cent_G(\chi)$ of any homomorphism $\chi\colon D \to G$ over $S$ is a smooth closed subscheme of $G$.

  Moreover, the Lie algebra $\Lie(\Cent_G(\chi)) \subset \Lie(G)$ of the centralizer is equal to the subbundle $\Lie(G)^D$ of fixed points of the action of $D$ on $\Lie(G)$ via $\chi$ and the adjoint action of $G$ on $\Lie(G)$.
\end{proposition}

\begin{theorem}[Grothendieck] \label{ChiDeform}
  Assume that $k$ is Henselian and let $A$ be an abelian group. If $\CG$ is smooth over $k^\circ$, then every homomorphism $\chi\colon D^A_{\tilde k} \to \CG_{\tilde k}$ can be deformed to a homomorphism $D^A_{k^\circ} \to \CG$.
\end{theorem}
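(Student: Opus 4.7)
The plan is to reduce the problem to an infinitesimal lifting question governed by group cohomology, and to exploit the linear reductivity of diagonalizable group schemes so that the relevant cohomological obstructions vanish. Throughout, I work under the standing assumption that $k^\circ$ is henselian, which is automatic in the local-field setting of the paper.

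First I would reduce to the case that $A$ is finitely generated. Since $\CG_{\tilde k}$ is of finite type, the image of $\chi$ is a diagonalizable subgroup of $\CG_{\tilde k}$ of finite type, and its character group is therefore a finitely generated quotient $A''$ of $A$. Thus $\chi$ factors as
\begin{equation*}
D^A_{\tilde k}\twoheadrightarrow D^{A''}_{\tilde k}\hookrightarrow \CG_{\tilde k}.
\end{equation*}
The first arrow already extends canonically to $k^\circ$ via the inclusion $A''\hookrightarrow A$ of character groups, so it suffices to lift the second map, and I may assume $A$ is finitely generated.

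Next, writing $\pi$ for a uniformizer, I would inductively build compatible lifts $\chi_n\colon D^A_{k^\circ/\pi^{n+1}}\to \CG_{k^\circ/\pi^{n+1}}$ starting from $\chi_0=\chi$. The standard deformation theory of homomorphisms of smooth affine group schemes, made explicit via Theorem \ref{TannakaLie}, identifies the obstruction to lifting $\chi_n$ to $\chi_{n+1}$ with a class in $H^2(D^A_{\tilde k},\Lie \CG_{\tilde k})$, where $D^A_{\tilde k}$ acts on $\Lie \CG_{\tilde k}$ through $\chi$ and the adjoint representation; once the obstruction vanishes, the set of lifts is a torsor under $H^1(D^A_{\tilde k},\Lie \CG_{\tilde k})$. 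Smoothness of $\CG$ is essential here so that the infinitesimal theory is controlled by the Lie algebra alone.

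The crucial observation is that $D^A_{\tilde k}$ is diagonalizable, hence linearly reductive: every representation decomposes canonically into its weight spaces, so $H^i(D^A_{\tilde k},M)=0$ for all $i\geq 1$ and every $D^A_{\tilde k}$-module $M$. Thus all obstructions vanish and the tower $(\chi_n)$ exists. To descend from this tower to a genuine homomorphism over $k^\circ$, I would note that the same cohomological vanishing implies formal smoothness of the functor $R\mapsto \Hom_{\mathrm{gp}}(D^A_R,\CG_R)$ along $\chi$; combining representability of this functor (for $\CG$ of finite type and $D^A$ diagonalizable of finite type) with henselianity of $k^\circ$ lifts $\chi$ via Hensel's lemma. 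The main technical obstacle is the precise setup of the deformation theory and the identification of the obstruction and torsor group with the classical Hochschild cohomology of $D^A$ with coefficients in $\Lie \CG$; this is standard and the full details are in SGA 3, Exposé IX.
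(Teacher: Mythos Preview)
Your argument is correct and reaches the same conclusion, but the route differs from the paper's. The paper does not redo the infinitesimal obstruction theory: it simply invokes \cite[XI.5.8]{SGA3II}, which already provides a deformation of $\chi$ over some \'etale neighbourhood $S\to\Spec(k^\circ)$ with a point $s$ over the closed point having trivial residue field extension; then it observes that the local ring $\CO_{S,s}$, being \'etale and unramified with trivial residue field over a (henselian) discrete valuation ring, must equal $k^\circ$ itself. Your approach instead unwinds what lies behind the SGA result: you reduce to finitely generated $A$, lift step by step using that the obstructions sit in $H^2(D^A_{\tilde k},\Lie\CG_{\tilde k})=0$ by linear reductivity of diagonalizable groups, and pass from formal to algebraic via smoothness of the $\Hom$-scheme and Hensel's lemma. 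This buys you a self-contained explanation of \emph{why} the lifting works, at the cost of more bookkeeping; the paper's version is terser but treats SGA3 as a black box. Two small remarks: your invocation of Theorem~\ref{TannakaLie} is not really what drives the obstruction theory---that is rather SGA3, Expos\'e~III (infinitesimal deformations of group homomorphisms), not Expos\'e~IX; and, as you note, the henselian hypothesis is needed in both arguments (the paper's step ``$\ell^\circ=R$'' would fail otherwise), so your making it explicit is appropriate.
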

\begin{proof}
  By \cite[Theorem 1]{CornutFiltrations}, the functor of group homomorphisms $D^A_{k^\circ} \to \CG$ is representable by a smooth an separated scheme over $k^\circ$. Hence the fact that $k$ is Henselian implies that every $\tilde k$-point of this scheme can be lifted to $k^\circ$ by \cite[18.5.17]{EGAIV4}.
 \end{proof}

 In case $H$ is reductive, the following claim is given by \cite[Lemma 7.1]{MartensThaddeus} (due to B. Conrad). Our argument here contains significant simplifications suggested by the referee.
 \begin{proposition} \label{TorusFact}
   Let $p$ be the characteristic exponent of $\ell$ and let $A$ be  an abelian group whose prime-to-$p$ torsion subgroup embeds into $\BQ/\BZ$. Let $\ell$ be any field and $H$ a smooth connected algebraic group over $\ell$. Then any homomorphism $\chi\colon D^A_\ell \to H$ factors through a torus of $H$.
 \end{proposition}
 \begin{proof}
   We proceed by a series of reduction steps:
   
   Since $H$ is of finite type over $\ell$, by \cite[IX.6.8]{SGA3II} the homomorphism $\chi$ factors through a closed immersion $\chi'\colon D^{A'} \into H$ for some finitely generated subgroup $A' \subset A$. So after replacing $A$ by $A'$ we may assume that $A$ is finitely generated. Then we can write $A=A_1\times A_2 \times A_3$ where $A_1=\mathbb{Z}/n\mathbb{Z}$ for some $n$ prime to $p$, $A_2$ is a finite $p$-group and $A_3$ is free.

   We consider the centralizer $H_1 \defeq Z_G(D^{A_1})^\circ$, which is smooth by Proposition \ref{CentSmooth} and connected by construction. By a theorem of Steinberg (\cite[8.1]{MR0230728}) any semisimple element of $H$ is contained in a maximal torus. Therefore the factor $D^{A_1}$ of $D^A$ is contained in such a torus and hence in $H_1$. The factors $D^{A_2}$ and $D^{A_3}$ are contained in $H_1$ as well since they are connected. So we may replace $H$ by $H_1$ and assume that $D^{A_1}$ is central in $H$.

   Next we consider the subgroup $H_2 \defeq Z_G(D^{A_2})^\circ$ which is again smooth and connected. It again contains $D^{A_2}$ and $D^{A_3}$ since they are connected. We choose a maximal torus $T_2$ of $H_2$ which contains the torus $D^{A_3}$ and a maximal torus $T_1$ of $H$ containing $T_2$. By \cite[XII.4.5]{SGA3II} every central multiplicative closed subgroup scheme of a smooth affine group scheme over $\ell$ is contained in every maximal torus of such a group scheme.  Hence $D^{A_1} \subset T_1$ and $D^{A_2} \subset T_2 \subset T_1$. So $D^{A} \subset T_1$, which is what we wanted.
\end{proof}
\begin{lemma} \label{LiftingLemma}
  Assume that $\Gamma$ has rank one and that the norm on $k$ is discrete. Let $H$ be a smooth connected group scheme over $\tilde k$. Every homomorphism $\chi\colon D^{\Gamma/|k^*|}_{\tilde k} \to H$ factors through a maximal torus $T \subset H$.
\end{lemma}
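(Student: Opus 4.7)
The plan is to reduce the claim to Grothendieck's theorem that any subgroup scheme of multiplicative type in a smooth affine group scheme over a field is contained in a maximal torus (see, e.g., SGA 3, Exp.~XIV).

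First I would show that the scheme-theoretic image $M \subset H$ of the homomorphism $\chi$ is a closed subgroup scheme of $H$ which is of multiplicative type. The image is closed because homomorphisms of affine group schemes over a field have closed images; as the scheme-theoretic image of the diagonalizable group scheme $D^{\Gamma/|k^*|}_{\tilde k}$, the group scheme $M$ is itself diagonalizable, with character group a subgroup $A' \subset \Gamma/|k^*|$. Moreover, since $H$ is of finite type over $\tilde k$, the closed subscheme $M$ is of finite type, so $A'$ is a finitely generated abelian group and $M$ is a diagonalizable group scheme of finite type over $\tilde k$.

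Next I would invoke Grothendieck's theorem to produce a maximal torus $T \subset H$ containing $M$. The homomorphism $\chi$ then factors as
\begin{equation*}
  D^{\Gamma/|k^*|}_{\tilde k} \twoheadrightarrow M \hookrightarrow T \hookrightarrow H,
\end{equation*}
which is the desired factorization of $\chi$ through a maximal torus of $H$.

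The main obstacle is to apply Grothendieck's theorem in the appropriate setting: in particular, to read ``smooth group scheme'' in the hypothesis as ``smooth \emph{affine} group scheme of finite type'', which is the usual setting for this theorem and which matches the applications of the lemma elsewhere in the paper. The assumptions that $\Gamma$ has rank one and that the norm on $k$ is discrete do not appear to enter the argument directly; they are presumably inherited from the broader context in which this lemma is deployed, perhaps in combination with Theorem \ref{ChiDeform} to lift the resulting homomorphism $\chi\colon D^{\Gamma/|k^*|}_{\tilde k} \to T$ from $\tilde k$ to $k^\circ$.
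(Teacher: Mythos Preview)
Your argument is correct in spirit, but it differs from the paper's proof in an instructive way, and your remark that the hypotheses on $\Gamma$ and $|k^*|$ ``do not appear to enter the argument directly'' is precisely where the two approaches diverge.

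The paper does \emph{not} appeal to a blanket statement that every multiplicative-type subgroup of a smooth affine group lies in a maximal torus. Instead, it first factors $\chi$ through $D^A_{\tilde k}$ for a finitely generated subgroup $A\subset\Gamma/|k^*|$, then splits $A=A^{\text{tor}}\times A^{\text{fr}}$. The rank-one and discreteness hypotheses are now used in an essential way: after enlarging $\Gamma$ to $\BR$ one has $\Gamma/|k^*|\cong\BR/\BZ$, so every finite subgroup of $\Gamma/|k^*|$ is \emph{cyclic}, hence $D^{A^{\text{tor}}}_{\tilde k}\cong\mu_n$ for a single $n$. The torus $D^{A^{\text{fr}}}_{\tilde k}$ is central in its own centralizer, and the paper then invokes the much more specific result of B.~Conrad (recorded as \cite[Lemma~7.1]{MartensThaddeus}) that a copy of $\mu_n$ inside a smooth affine group lies in a maximal torus. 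Any maximal torus of $\Cent_H(D^{A^{\text{fr}}}_{\tilde k})$ containing this $\mu_n$ automatically contains the central torus $D^{A^{\text{fr}}}_{\tilde k}$ as well, and is a maximal torus of $H$.

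Your route is shorter and, if one is willing to cite the general containment theorem for multiplicative-type subgroups, it indeed makes the rank-one and discreteness assumptions superfluous for this lemma. The trade-off is that the general statement you invoke is harder to cite cleanly: in SGA~3 it is formulated for smooth groups with \emph{connected} fibers, and without that hypothesis it can fail (e.g.\ the copy of $\mu_2$ sitting in the non-identity component of $\Gm\rtimes\BZ/2$ lies in no maximal torus). The paper's approach sidesteps this by reducing to Conrad's pinpointed result for $\mu_n$, at the cost of actually using the structural hypotheses on $\Gamma$ and $|k^*|$. So your observation that those hypotheses look unused is exactly what distinguishes your argument from the paper's; you should either supply a precise reference for the general theorem (with attention to connectedness) or follow the paper's reduction.
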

\begin{proof}
  By assumption there exists an embedding $\Gamma/|k^*| \into \BR/\BZ$. So the claim is a special case of Proposition \ref{TorusFact}.

\end{proof}

\begin{definition}
  A \emph{residually maximal split} torus of $\CG$ is a split closed subtorus $\CT \subset \CG$ for which $\CT_{\tilde\ell}$ is a maximal split torus of $\CG_{\tilde k}$.
\end{definition}
\begin{proposition} \label{ResSplitProp}
  Assume that $\CG$ is smooth over $k^\circ$ and that $k$ is Henselian.
  \begin{enumerate}
  \item There exists a residually maximal split torus $\CT$ of $\CG$. More precisely, every maximal split torus of $\CG_{\tilde k}$ can be lifted to such a torus.
  \item Any two such tori are conjugate under $\CG(k^\circ)$.
    \item Every homomorphism $\chi\colon \CS \to \CG$ from a (not necessarily of finite type) multiplicative group scheme $\CS$ with torsion-free character group factors through a residually maximal split torus of $\CG$.
  \end{enumerate}
\end{proposition}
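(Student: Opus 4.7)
The key input beyond Theorem~\ref{ChiDeform} is a rigidity statement for diagonalizable groups: for any abelian group $B$, the closed subgroup schemes of $D^B_{\ell^\circ}$ correspond bijectively to subgroups $C \subset B$ via $C \mapsto D^{B/C}$, so any closed subgroup scheme of $D^B_{\ell^\circ}$ with full special fibre must be the whole of $D^B_{\ell^\circ}$.

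For part~(i), set $A = X^*(T)$ and apply Theorem~\ref{ChiDeform} to the inclusion $D^A_{\tilde\ell} = T \hookrightarrow \CG_{\tilde\ell}$ to obtain a homomorphism $\chi \colon D^A_{\ell^\circ} \to \CG$ lifting it. The kernel $\ker \chi$ is a closed subgroup scheme of $D^A_{\ell^\circ}$ with trivial special fibre, so by rigidity $\ker \chi = 1$. Consequently $\chi_\ell$ is a monomorphism of algebraic groups over the field $\ell$, hence a closed immersion; combined with the fact that $\chi_{\tilde\ell}$ is a closed immersion by construction, the fibrewise criterion for closed immersions applied to the flat finite-presentation morphism $\chi$ shows $\chi$ is a closed immersion. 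Its image $\CT$ is the desired residually maximal split torus lifting $T$.

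For part~(ii), maximal split tori of $\CG_{\tilde\ell}$ are conjugate under $\CG(\tilde\ell)$; by smoothness of $\CG$ and Henselianity of $\ell^\circ$, the conjugating element lifts to $\CG(\ell^\circ)$, reducing to the case $\CT_{1,\tilde\ell} = \CT_{2,\tilde\ell} = T$. Then $\CT_1$ and $\CT_2$ are images of two homomorphisms $\chi_1, \chi_2 \colon D^{X^*(T)}_{\ell^\circ} \to \CG$ agreeing on the special fibre. Their equalizer is a closed subgroup scheme of $D^{X^*(T)}_{\ell^\circ}$ (using that $\CG$ is separated and that the equalizer of group homomorphisms is a subgroup) with full special fibre, so by rigidity it is the whole group. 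Hence $\chi_1 = \chi_2$ and $\CT_1 = \CT_2$.

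For part~(iii), the image $\chi_{\tilde\ell}(\CS_{\tilde\ell})$ is a quotient of the split torus $\CS_{\tilde\ell}$, and its character group is a subgroup of $X^*(\CS)$ and hence torsion-free, so this image is a split subtorus of $\CG_{\tilde\ell}$. Choose a maximal split torus $\bar T \subset \CG_{\tilde\ell}$ containing it and factor $\chi_{\tilde\ell} = \bar\iota \circ \psi_0$. Lift $\bar T$ to $\CT \subset \CG$ via part~(i), with inclusion $\iota \colon \CT \hookrightarrow \CG$. The homomorphism $\psi_0$ corresponds to a map $X^*(\bar T) \to X^*(\CS)$ of abelian groups, which lifts canonically to a homomorphism $\psi \colon \CS \to \CT$ over $\ell^\circ$. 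The two maps $\chi$ and $\iota \circ \psi$ from $\CS$ to $\CG$ agree on the special fibre, so by the same equalizer argument they coincide, exhibiting the desired factorization. The main obstacle is establishing the rigidity statement for diagonalizable groups in the required generality, especially when $X^*(\CS)$ is not finitely generated in part~(iii); this is handled by expressing $D^B$ as an inverse limit of finite-type $D^{B'}$ for $B' \subset B$ finitely generated and reducing to the finite-type classification.
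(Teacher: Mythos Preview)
Your stated rigidity principle (``closed subgroup schemes of $D^B_{\ell^\circ}$ correspond bijectively to subgroups $C \subset B$'') is false as written: the special fibre $D^B_{\tilde\ell}$, viewed as a closed subscheme of $D^B_{\ell^\circ}$, is a closed subgroup scheme that is not flat over $\ell^\circ$ and hence not of the form $D^{B/C}_{\ell^\circ}$ for any $C$. This breaks the equalizer arguments in (ii) and (iii) as you have phrased them, since a priori the equalizer of $\chi_1,\chi_2$ could be exactly the special fibre. In (i) there is a second gap: the homomorphism $\chi\colon D^A_{\ell^\circ}\to\CG$ is not flat in general (a closed immersion of positive codimension never is), so you cannot invoke a fibrewise criterion for closed immersions. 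Both gaps are repairable, but by different inputs. The correct rigidity is that for $A$ finitely generated and $\CG$ smooth separated, the functor of group homomorphisms $D^A\to\CG$ is representable by a scheme which is unramified and separated over the base; since the diagonal of such a scheme is open and closed, two sections over the connected base $\Spec\ell^\circ$ agreeing at one point agree everywhere, which yields your equalizer conclusions. For the closed-immersion step in (i), one should instead cite \cite[IX.6.5, IX.6.8]{SGA3II} directly: a monomorphism from a group of multiplicative type into a smooth separated group is a closed immersion.

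The paper proceeds somewhat differently. For (i) it deforms as you do and then invokes \cite[IX.6.5, IX.6.8]{SGA3II} for the closed immersion, avoiding any discussion of the kernel. For (ii) it bypasses the reduction to equal special fibres entirely by using the transporter scheme $\Transp_\CG(\CT,\CT')$, which is smooth over $\ell^\circ$ by \cite[XI.5.2]{SGA3II}; it has a $\tilde\ell$-point by conjugacy of maximal split tori over $\tilde\ell$, and Henselianity lifts this to an $\ell^\circ$-point. For (iii) it passes to the smooth centralizer $\Cent_\CG(\chi)$ and applies (i) there to lift a maximal split torus of $\Cent_\CG(\chi)_{\tilde\ell}$ containing the image of $\chi_{\tilde\ell}$; the resulting torus is residually maximal split in $\CG$ and, since the image of $\chi$ is central in $\Cent_\CG(\chi)$, the map $\chi$ factors through it. Your overall strategy is sound once the rigidity input is corrected, but the paper's use of transporters in (ii) and centralizers in (iii) is cleaner and avoids the need to handle the non-finite-type case of rigidity separately.
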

\begin{proof}
  (i) Let $T \subset \CG_{\tilde k}$ be a maximal split torus. By Theorem \ref{ChiDeform}, we can deform the inclusion $T \into \CG_{\tilde k}$ to a homomorphism $\CT \to \CG$ over $k^\circ$ whose source is a split torus $\CT$ over $k^\circ$. Then by \cite[IX.6.6 and IX.2.5]{SGA3II}, the fact that $\CT \to \CG$ is a closed immersion in the special fiber implies that it is a closed immersion.

  (ii) For two residually maximal split tori $\CT$ and $\CT'$ of $\CG$, we consider the subsheaf $\operatorname{Transp}_\CG(\CT,\CT')$ of $\CG$ consisting of those sections of $\CG$ which conjugate $\CT$ to $\CT'$. By \cite[XI.5.2]{SGA3II}, this is representable by a smooth closed subscheme of $\CG$. Since by the conjugacy of maximal split tori it has a point over $\tilde k$, the fact that $k$ is Henselian ensures that is has a point over $k^\circ$.

  (iii) We consider the centralizer $\Cent_{\CG}(\chi) \subset \CG$ which is smooth by Proposition \ref{CentSmooth}. Since $\CS_{\tilde k}$ is a split torus, there exists a maximal split torus $T \subset \CG_{\tilde k}$ which is contained in $\Cent_\CG(\chi)_{\tilde k}$. By \cite[XI.6.8]{SGA3II}, the schematic image of $\chi_{\tilde k}$ is a split subtorus of $\CG_{\tilde k}$. So since this image centralizes $\chi$, it is contained in $T$.

  By (i) we can lift $T$ to a residually maximal split torus $\CT$ of $\Cent_\CG(\chi)$. This is then a residually maximal split torus of $\CG$. Since $\CT$ is split, there exists a unique homomorphism $\tilde\chi\colon \CS \to \CT$ with special fiber $\chi_{\tilde k}$. Then \cite[XI.5.1]{SGA3II} implies $\chi=\tilde\chi$, which shows that $\chi$ factors through $\CT$.
\end{proof}

\begin{remark} \label{MaxSplitRmk}
 Proposition \ref{ResSplitProp} in particular implies that the split rank of $\CG_{\tilde\ell}$ is less or equal than the split rank of $\CG_\ell$. In case these split ranks agree, it follows that the residually maximal split tori of $\CG$ are those whose special and generic fibers are maximal split tori of the respective fibers of $\CG$. So in this case we will also refer to these tori simply as maximal split tori of $\CG$. The generic fibers of these are then those maximal split tori of $\CG_\ell$ which extend to a split subtorus of $\CG$.
\end{remark}

\begin{theorem} \label{Etoffe}
  Assume that $k^\circ$ is a Henselian discrete valuation ring. Let $\ell$ be a Henselian non-archimedean extension of $k$, let $\breve k$ be a strict Henselization of $k$, and let $\breve \ell$ be a strict Henselization of $\ell$ containing $\breve k$. Let $\CG$ be a smooth affine group scheme over $k^\circ$ and $\CH$ a smooth affine group scheme over $\ell^\circ$.

  Every homomorphism $h\colon \CG_\ell \to \CH_\ell$ which maps $\CG(\breve k^\circ)$ to $\CH(\breve \ell^\circ)$ extends uniquely to a homomorphism $\CG_{\ell^\circ} \to \CH$ over $\ell^\circ$.
\end{theorem}
\begin{proof}
  In case $k=\ell$, this is \cite[1.7.6]{BT2}. For general $\ell$, one can argue in the same way: If we show that
  \begin{equation} \label{EtoffeEq}
      \ell^\circ[\CG_{\ell^\circ}]=\{ f \in \ell[\CG_\ell] \mid f(\CG(\breve k^\circ)) \subset \breve \ell^\circ \},
  \end{equation}
  then a homomorphism $h$ as in the claim maps $\ell^\circ[\CH]$ to $\ell^\circ[\CG_{\ell^\circ}]$ and hence extends to a homomorphism $\CG_{\ell\circ} \to \CH$. If it exists, such an extension is unique by the flatness of $\CG$ and $\CH$.

  To show \eqref{EtoffeEq}, only the inclusion ``$\supseteq$'' requires an argument. So let $f \in \ell[\CG_\ell]$ be such that $f(\CG(\breve k^\circ)) \subset \breve \ell^\circ$.

  Since the valuation on $\ell$ is discrete, the set $\{a \in \ell^{\circ\circ} \mid af \in \ell^\circ[\CG_{\ell^\circ}] \subset \ell[\CG_\ell]\}$ contains an element of minimal valuation. Let $a$ be such an element. Then the image of $af$ in $\tilde\ell[\CG_{\tilde \ell}]$ is non-zero.

  But on the other hand, since $\breve k$ is strictly Henselian, the set $\CG(\tilde{ \breve k})$, which is equal to the image of $\CG(\breve k^\circ) \to \CG(\tilde{\breve k})$, is Zariski dense in $\CG_{\tilde{\breve k}}$. By the assumption on $f$, the image of $af$ in $\tilde\ell[\CG_{\tilde \ell}]$ vanishes on this Zariski-dense set and is hence zero. This is a contradiction.
\end{proof}

The following result generalizes Deligne's theorem on the fqpc-local isomorphy of any two fiber functors on a Tannakian category. In case $\ell^\circ$ is a discrete valuation ring it is due to Broshi (\cite[Theorem 1.2]{MR2965898}) and in case $\CG$ is smooth it is due to Lurie (\cite[5.11]{LurieTannaka}). For a more detailed discussion of the history of this result we refer the reader to the introduction of \cite{Tonini}.

\begin{theorem} \label{FFIso}
  Let $\ell$ be a non-archimedean extension of $k$. Any two fiber functors on $\Rep^\circ \CG$ over $\ell^\circ$ are isomorphic fpqc-locally on $\ell^\circ$.
\end{theorem}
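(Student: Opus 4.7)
The plan is to reduce to the cited theorems of Broshi \cite{MR2965898} and Sch\"appi \cite{MR3185459} directly. Given two fiber functors $\omega_1,\omega_2$ on $\Rep^\circ \CG$ over $\ell^\circ$, base change along $k^\circ \to \ell^\circ$ lets me view both as $\ell^\circ$-linear tensor-exact functors on $\Rep^\circ \CG_{\ell^\circ}$, and by Lemma \ref{LocalFiniteness} this is the category of dualizable comodules over the flat Hopf algebra $\rho_{\CG_{\ell^\circ}}$ over the valuation ring $\ell^\circ$. This is precisely the framework of the cited theorems, so the only genuine task is to extract the statement in the form we want.

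The relevant object is the presheaf $\UIsom^\otimes(\omega_1,\omega_2)$ on $\ell^\circ$-algebras $R$ sending $R$ to the set of $R$-linear tensor natural isomorphisms $\omega_{1,R} \iso \omega_{2,R}$. This is a pseudo-torsor under the affine group scheme $\UAut^\otimes(\omega_1) \cong \CG_{\ell^\circ}$, the last isomorphism coming from Tannakian duality applied to $\omega_1$. The content of \cite{MR2965898,MR3185459} is exactly that this pseudo-torsor is representable by a faithfully flat affine $\ell^\circ$-scheme $T$. Once this is granted, the universal section of $T$ over $A \defeq \CO_T(T)$ gives an isomorphism between the base changes of $\omega_1$ and $\omega_2$ to $A$, and $\Spec(A) \to \Spec(\ell^\circ)$ is the desired fpqc cover.

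Representability is formal: $T$ is $\Spec$ of the algebra $\omega_2(\rho_{\CG_{\ell^\circ}})$ equipped with the structure induced from the Hopf algebra structure on $\rho_{\CG_{\ell^\circ}}$ via $\omega_2$. The serious point, and the main obstacle, is faithful flatness of $T$ over $\ell^\circ$. In the DVR case (Broshi) this follows from torsion-freeness of $\omega_2(\rho_{\CG_{\ell^\circ}})$, which is automatic since $\omega_2$ is exact and multiplication by a uniformizer is injective on each dualizable representation. For a general valuation ring $\ell^\circ$ this shortcut is unavailable: one must invoke Sch\"appi's abstract recognition theorem for flat Hopf algebroids, which uses the full tensor exactness of $\omega_2$ on dualizable comodules to force $\omega_2$ to preserve flatness of arbitrary filtered colimits of dualizable comodules, and in particular to send the flat comodule $\rho_{\CG_{\ell^\circ}}$ to a flat $\ell^\circ$-module. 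Faithfulness then follows because a nontrivial quotient of $T$ by a prime of $\ell^\circ$ would, upon pullback, produce a zero fiber functor, contradicting that $\omega_2$ sends the unit object to a free $\ell^\circ$-module of rank one.
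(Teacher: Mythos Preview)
The paper does not give its own proof of this statement: it simply attributes the result to Broshi (DVR case) and Sch\"appi (general valuation ring) and moves on. Your proposal is consistent with this --- you invoke the same references and add a sketch of why they apply --- so in terms of strategy there is nothing to compare.

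That said, two small points about your sketch. First, the parenthetical isomorphism $\UAut^\otimes(\omega_1)\cong\CG_{\ell^\circ}$ is not something you can get from ``Tannakian duality applied to $\omega_1$'' at this stage: reconstruction tells you that $\Rep^\circ\CG_{\ell^\circ}$ is equivalent to $\Rep^\circ\UAut^\otimes(\omega_1)$, but identifying the group schemes would require knowing that $\omega_1$ and the forgetful functor are (locally) isomorphic, which is exactly what you are proving. Fortunately you never use this isomorphism --- only that $\UIsom^\otimes(\omega_1,\omega_2)$ is a pseudo-torsor under \emph{some} affine group --- so the circularity is cosmetic. Second, the formula $T=\Spec\,\omega_2(\rho_{\CG_{\ell^\circ}})$ is only correct when $\omega_1$ is the forgetful functor; in general the coordinate ring of $\UIsom^\otimes(\omega_1,\omega_2)$ is the coend $\int^X \omega_1(X)^\vee\otimes_{\ell^\circ}\omega_2(X)$. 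Your flatness argument in the DVR case (filtered colimit of finite projectives is torsion-free) goes through for this coend just as well, so again this is a presentational slip rather than a gap.
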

\begin{proof}
  By applying \cite[Theorem A]{Tonini} to the stack $\CX=[\ast/\CG]$ of $\CG$-torsors for the fpqc topology, one obtains that the functor from the category of fpqc $\CG_{\ell^\circ}$-torsors over $\ell^\circ$ to the category of fiber functors $\Rep^\circ \CG \to \Mod_{\ell^\circ}$ which sends a torsor to the associated twist of the standard fiber functor is an isomorphism. This statement is equivalent to the one of Theorem \ref{FFIso}.
\end{proof}

\subsection{Normed Fiber Functors}
We fix non-archimedean fields $k \subset \ell$ and and a flat affine group scheme $\CG$ of finite type over $k^\circ$.
\begin{definition} \label{NFFDef}
  \begin{enumerate}
  \item A \emph{normed fiber functor} on $\Rep^\circ \CG$ over $\ell$ is a $k^\circ$-linear exact tensor functor $\alpha \colon \Rep^\circ \CG \to \Norm^\circ(\ell)$.
  \item A \emph{morphism} of normed fiber functors is a morphism of tensor functors.
  \item Given a non-archimedean extension $m$ of $\ell$ and a normed fiber functor $\alpha$ on $\Rep^\circ \CG$ over $\ell$, we obtain a normed fiber functor $\alpha_\ell$ on $\Rep^\circ \CG$ over $m$ by composing $\alpha$ with the base change functor $\Norm^\circ(\ell) \to \Norm^\circ(m)$. 
  \item Given a normed fiber functor $\alpha \colon \Rep^\circ \CG \to \Norm^\circ(\ell)$, we obtain the fiber functor $$\forg \circ \alpha\colon \Rep^\circ \CG \to \Vec(\ell).$$ We call this the \emph{underlying fiber functor} of $\alpha$.
  \end{enumerate}
\end{definition}

Given a normed fiber functor $\alpha$ with underlying fiber functor $\omega$, for an object $X \in \Rep^\circ \CG$, we will often denoted the norm on $\omega(X)$ given by $\alpha(X)$ by $\alpha_X$. We will also abbreviate the associated submodules $\omega(X)^{\alpha_X \leq \gamma}$ to $\omega(X)^{\alpha\leq\gamma}$.

We fix a normed fiber functor $\alpha$ on $\Rep^\circ \CG$ over $\ell$.

\begin{definition}
  We let $\Gamma_\alpha \subset \Gamma$ be the set of elements $\gamma \in \Gamma$ for which there exists some $X \in \Rep^\circ \CG$ and some non-zero element $x \in \alpha(X)$ satisfying $\alpha_X(x)=\gamma$.
\end{definition}

\begin{lemma}
  The set $\Gamma_\alpha$ is a subgroup of $\Gamma$. It contains $|\ell^*|$ and the quotient $\Gamma_\alpha/|\ell^*|$ is finitely generated.
\end{lemma}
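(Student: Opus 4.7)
The plan is to verify the three assertions in order, relying throughout on the fact that any object $(V,\beta) \in \Norm^\circ(\ell)$ admits a splitting basis $(e_i)$, so that every non-zero element of $V$ has norm in $|\ell^*| \cdot \beta(e_i)$ for some $i$.

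\emph{Subgroup and containment of $|\ell^*|$.} The unit object $\mathbb{1}$ of $\Rep^\circ \CG$ is sent by $\alpha$ to a one-dimensional normed space isomorphic to $(\ell,|\cdot|)$, and scaling a nonzero vector by $\lambda \in \ell^*$ multiplies its norm by $|\lambda|$, so $|\ell^*| \subset \Gamma_\alpha$ and in particular $1 \in \Gamma_\alpha$. For closure under multiplication, given $\gamma_i = \alpha_{X_i}(x_i)$ for $i = 1,2$, I would take splitting bases of $\alpha_{X_1}$ and $\alpha_{X_2}$ and use Lemma \ref{TensorNorm}(ii) to show that for pure tensors one has $\alpha_{X_1 \otimes X_2}(x_1 \otimes x_2) = \alpha_{X_1}(x_1) \alpha_{X_2}(x_2) = \gamma_1 \gamma_2$ (write each $x_i$ in the splitting basis and expand). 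For inverses, given $\gamma = \alpha_X(x)$, choose a splitting basis $(e_i)$ of $\alpha_X$; its dual basis splits $\alpha_{X^*}$ with $\alpha_{X^*}(e_i^*) = \alpha_X(e_i)^{-1}$. Since $\gamma \in |\ell^*| \cdot \alpha_X(e_{i_0})$ for some $i_0$, an appropriate scalar multiple of $e_{i_0}^*$ has norm $\gamma^{-1}$, so $\gamma^{-1} \in \Gamma_\alpha$.

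\emph{Finite generation of $\Gamma_\alpha/|\ell^*|$.} Here the essential ingredient is the existence of a tensor generator $X \in \Rep^\circ \CG$, guaranteed by the lemma following Lemma \ref{TensorGenLemma} because $\CG$ is of finite type over $k^\circ$. Fix such an $X$ and a splitting basis $(e_i)_{1 \leq i \leq n}$ of $\alpha_X$, with norms $\gamma_i \defeq \alpha_X(e_i)$. Iterating Lemma \ref{TensorNorm}(ii) and its dual version, any tensor expression $Z = \bigoplus_j X^{\otimes r_j} \otimes (X^*)^{\otimes s_j}$ admits a splitting basis all of whose elements have norm in the group generated by $|\ell^*|$ and the $\gamma_i$.

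\emph{Passage to subquotients.} By the tensor-generator property, every $Y \in \Rep^\circ \CG$ is a strict subquotient $Y = Z'/Z''$ of some such $Z$. For any nonzero $y \in \omega(Y)$, the strictness of the epimorphism $Z' \to Y$ (combined with the definition of a strict morphism, which gives that the infimum is attained) produces a lift $z' \in Z'$ with $\alpha_Y(y) = \alpha_Z(z')$, while the strictness of the monomorphism $Z' \hookrightarrow Z$ identifies $\alpha_{Z'}$ with the subspace norm. Hence $\Gamma_Y \subset \Gamma_Z \subset |\ell^*| \cdot \langle \gamma_1,\dots,\gamma_n \rangle$. Taking the union over all $Y$ yields $\Gamma_\alpha \subset |\ell^*| \cdot \langle \gamma_1,\dots,\gamma_n \rangle$, and thus $\Gamma_\alpha/|\ell^*|$ is generated by the classes $[\gamma_1],\dots,[\gamma_n]$.

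The only step with any real content is the last paragraph: one has to know both that a tensor generator exists (finite type hypothesis on $\CG$) and that strict subquotient norms stay within the norm set of the ambient object, which is exactly what the notion of strict morphism in $\Norm^\circ(\ell)$ was designed to provide.
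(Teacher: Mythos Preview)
Your proof is correct and follows essentially the same approach as the paper's: use the tensor structure for closure under multiplication and inverses, the unit object for $|\ell^*| \subset \Gamma_\alpha$, and a tensor generator together with the behavior of norms under strict subquotients for finite generation. The paper's argument is terser and phrases the last step in terms of strict monomorphisms into tensor combinations rather than subquotients, but the underlying idea is identical.
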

\begin{proof}
  The compatibility of $\alpha$ with tensor products shows that $\Gamma_\alpha$ is closed under multiplication. Similarly the existence of duals in $\Rep^\circ \CG$ show that $\Gamma_\alpha$ is closed under taking inverses. The tensor functor $\alpha$ sends the tensor unit of $\Rep^\circ \CG$ to the tensor unit of $\Norm^\circ(\ell)$, that is it assigns the norm of $\ell$ to the trivial representation of $\CG$. Hence the group $\Gamma_\alpha$ contains $|\ell^*|$. To see that $\Gamma_\alpha/|\ell^*|$ is finitely generated, we take a tensor generator $X$ of $\Rep^\circ \CG$. Then the fact that every $Y \in \Rep^\circ \CG$ is a strict subquotient of a direct sum of tensor products of $X$ and $X^\vee$ implies that $\Gamma_\alpha$ is generated by $|\ell^*|$ and the finitely many norms of a splitting basis of $\alpha(X)$.
\end{proof}

\begin{construction}
  Let $\omega\colon \Rep^\circ \CG \to \Mod_{\ell^\circ}$ be a fiber functor. We associate a normed fiber functor $\alpha_\omega\colon \Rep^\circ \CG \to \Norm^\circ(\ell)$ to $\omega$ as follows: For each $X \in \Rep^\circ \CG$, the lattice $\omega(X)$ in $\omega(X)_\ell$ gives by Construction \ref{LatticeNorm} a norm $\alpha_{\omega(X)}$ on $\omega(X)_\ell$. One checks that this norm is compatible with tensor products in $X$, as well as exact and $k^\circ$-linear in $X$ and hence gives the desired functor $\alpha_\omega$.

\end{construction}
The normed fiber functor $\alpha_\omega$ just constructed satisfies $\Gamma_{\alpha_\omega} \subset |\ell^*|$. All such normed fiber functor arise in this way:
\begin{lemma} \label{SpecialFunctorClass}
  Assume that $\alpha$ satisfies $\Gamma_\alpha \subset |\ell^*|$. Then $\alpha$ is canonically isomorphic to the normed fiber functor $\alpha_\lambda$ associated to the fiber functor $\lambda \colon X \mapsto \omega(X)^{\alpha\leq 1}$ on $\Rep^\circ \CG$ over $\ell^\circ$.
\end{lemma}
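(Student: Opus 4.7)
The plan is to first verify that $\lambda \colon X \mapsto \omega(X)^{\alpha \leq 1}$ genuinely lifts $\omega$ to a fiber functor $\Rep^\circ \CG \to \Mod_{\ell^\circ}$, and then invoke Lemma \ref{SpecialNormClass} pointwise to identify $\alpha$ with $\alpha_\lambda$.

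First I would check that for each $X \in \Rep^\circ \CG$, the $\ell^\circ$-module $\omega(X)^{\alpha \leq 1}$ is finitely generated and free. Choosing a splitting basis $(e_i)$ of $\alpha_X$, the hypothesis $\Gamma_\alpha \subset |\ell^*|$ gives elements $\pi_i \in \ell^*$ with $|\pi_i| = \alpha_X(e_i)$, and then $(\pi_i^{-1} e_i)$ is an $\ell^\circ$-basis of $\omega(X)^{\alpha\leq 1}$; in particular $\lambda(X)$ is dualizable. Functoriality of $\lambda$ on morphisms is immediate from the definition of a bounded homomorphism, $\ell^\circ$-linearity is formal, and exactness follows from Lemma \ref{NormExact} applied at $\gamma = 1$.

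Next I would check that $\lambda$ is a tensor functor, i.e. that the canonical map $\omega(X)^{\alpha\leq 1}\otimes_{\ell^\circ}\omega(Y)^{\alpha\leq 1}\to \omega(X\otimes Y)^{\alpha\leq 1}$ is an isomorphism. Using splitting bases $(e_i)$, $(f_j)$ of $\alpha_X$, $\alpha_Y$ and the fact that $(e_i \otimes f_j)$ is a splitting basis of $\alpha_{X\otimes Y}$ by Lemma \ref{TensorNorm}, together with the existence of scalars $\pi_i, \tau_j \in \ell^*$ realizing the norms, both sides are identified with the free $\ell^\circ$-module on $(\pi_i^{-1}e_i \otimes \tau_j^{-1} f_j)$. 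The unit object is handled by the same argument. So $\lambda$ is a bona fide $\ell^\circ$-linear exact tensor functor, and $\alpha_\lambda$ is defined.

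Finally, for each $X \in \Rep^\circ \CG$ the norm $\alpha_X$ takes values in $\Gamma_\alpha \subset |\ell^*|$, so Lemma \ref{SpecialNormClass} gives $\alpha_X = \alpha_{L_X}$ for $L_X = \omega(X)^{\alpha \leq 1} = \lambda(X)$. This is precisely the norm $(\alpha_\lambda)_X$ by construction of $\alpha_\lambda$. Hence the identity natural transformation on the underlying fiber functor $\omega$ is an isomorphism of normed fiber functors $\alpha \iso \alpha_\lambda$, and it is canonical because it is the identity at every $X$. No step looks truly delicate; the only point that needs care is the tensor compatibility of $\lambda$, since a priori one only has an inclusion $\omega(X)^{\alpha\leq 1}\otimes\omega(Y)^{\alpha\leq 1}\subset\omega(X\otimes Y)^{\alpha\leq 1}$, and surjectivity is exactly where the hypothesis $\Gamma_\alpha \subset |\ell^*|$ (and not just the splittability of $\alpha_{X\otimes Y}$) is used.
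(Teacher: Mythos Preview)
Your proof is correct and follows essentially the same approach as the paper: verify that $\lambda$ is a fiber functor (exactness via Lemma~\ref{NormExact}, tensor compatibility being the nontrivial point), then apply Lemma~\ref{SpecialNormClass} objectwise. The only minor difference is in the tensor surjectivity step: you argue via explicit splitting bases, whereas the paper writes an arbitrary $z\in\omega(X\otimes Y)^{\alpha\le 1}$ as $\sum x_i\otimes y_i$ with $\alpha(x_i)\alpha(y_i)\le 1$ and rescales each term by $\lambda_i\in\ell^*$ with $|\lambda_i|=\alpha(x_i)$; both arguments use the hypothesis $\Gamma_\alpha\subset|\ell^*|$ in exactly the same way.
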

\begin{proof}
  This follows from Lemma \ref{SpecialNormClass} by verifying that $\lambda$ is a fiber functor: To verify compatibility with tensor products, consider objects $X, Y \in \Rep^\circ \CG$. Since $\alpha$ is a tensor functor, there is an inclusion $\omega(X)^{\alpha\leq 1} \otimes \omega(Y)^{\alpha\leq 1} \into \omega(X \otimes Y)^{\alpha\leq 1}$ inside $\omega(X \otimes Y)$. To show that this is an equality, let $z \in \omega(X \otimes Y)^{\alpha\leq 1}$. Then we can write $z=\sum_{i=1}^n x_i \otimes y_i$ for elements $x_i \in \omega(X)$ and $y_i \in \omega(Y)$ satisfying $\alpha(x_i)\alpha(y_i) \leq 1$. By our assumption, there exist elements $\lambda_i \in \ell$ satisfying $|\lambda_i|=\alpha(x_i)$. Then the presentation $z=\sum_i (x_i/\lambda_i) \otimes (\lambda_iy_i)$ shows that $z \in \omega(X)^{\alpha\leq 1} \otimes \omega(Y)^{\alpha\leq 1}$.

  The exactness of $\lambda$ is given by Lemma \ref{NormExact}.
\end{proof}

\subsection{Stabilizer Groups} \label{StabSS2}
We continue with the setup from the previous subsection and let $\alpha \colon \Rep^\circ \CG \to \Norm^\circ(\ell)$ be a normed fiber functor. We denote the underlying fiber functor of $\alpha$ by $\omega$.

\begin{definition} \label{SchStabDef}
 For an $\ell^\circ$-algebra $R$, we let $\UAut^\otimes(\alpha)(R)$ be the set of tuples $$(\phi_X^{\gamma})_{X \in \Rep^\circ \CG, \gamma \in \Gamma}$$ satisfying the following conditions:
  \begin{enumerate}
  \item For all $X \in \Rep^\circ \CG$ and $\gamma \in \Gamma$, the object $\phi_X^{\gamma}$ is an $R$-linear automorphism of the $R$-module $\omega(X)^{\alpha\leq \gamma}_R$.
  \item For all morphisms $X \to Y$ in $\Rep^\circ \CG$ and all $\gamma\leq \delta$ in $\Gamma$, the diagram
    \begin{equation*}
      \xymatrix{
        \omega(X)^{\alpha \leq \gamma}_R \ar[r] \ar[d]_{\phi_X^{\gamma}} & \omega(Y)^{\alpha \leq \delta}_R \ar[d]^{\phi_Y^{\delta}} \\
        \omega(X)^{\alpha \leq \gamma}_R \ar[r] & \omega(Y)^{\alpha \leq \delta}_R,
}
    \end{equation*}
in which the horizontal arrows are the base change to $R$ of the composition of the inclusion $\omega(X)^{\alpha \leq \gamma} \into \omega(X)^{\alpha\leq \delta}$ and the homomorphism $\omega(X)^{\alpha\leq  \delta} \to \omega(Y)^{\alpha \leq \delta}$ induced from $X \to Y$, commutes.
\item For all $X \in \Rep^\circ \CG$, all $\lambda \in \ell^*$ and all $\gamma \in \Gamma$, the diagram
    \begin{equation*}
      \xymatrix{
        \omega(X)^{\alpha \leq \gamma}_R \ar[r] \ar[d]_{\phi_X^{\gamma}} & \omega(X)^{\alpha \leq \gamma |\lambda|}_R \ar[d]^{\phi_X^{\gamma |\lambda|}} \\
        \omega(X)^{\alpha \leq \gamma}_R \ar[r] & \omega(X)^{\alpha \leq \gamma |\lambda|}_R,
}
    \end{equation*}
    in which the horizontal arrows are the base change to $R$ of the isomorphism $$\omega(X)^{\alpha \leq \gamma} \to \omega(X)^{\alpha \leq \gamma |\lambda|},\; v\mapsto \lambda v,$$ commutes.
  \item For all $X, Y \in \Rep^\circ \CG$ and all $\gamma,\delta \in \Gamma$, the diagram
    \begin{equation*}
      \xymatrix{
        \omega(X)^{\alpha \leq \gamma}_R \otimes \omega(Y)^{\alpha \leq \delta}_R \ar[r] \ar[d]_{\phi_X^{\gamma} \otimes \phi_Y^{\delta}} & \omega(X\otimes Y)^{\alpha \leq \gamma\delta}_R \ar[d]^{\phi_{X\otimes Y}^{\gamma \delta}} \\
          \omega(X)^{\alpha \leq \gamma}_R \otimes \omega(Y)^{\alpha \leq \delta}_R    \ar[r]   &\omega(X\otimes Y)^{\alpha \leq \gamma\delta}_R,
        }
      \end{equation*}
in which the horizontal arrows are the base change to $R$ of the inclusion $$\omega(X)^{\alpha \leq \gamma} \otimes \omega(Y)^{\alpha \leq \delta} \into \omega(X\otimes Y)^{\alpha \leq \gamma\delta}$$ inside $\omega(X\otimes Y)$, commutes.
    \end{enumerate}
Under termwise composition, the set $\UAut^\otimes(\alpha)(R)$ is naturally a group. For any homomorphism $R \to S$ of $\ell^\circ$-algebras, there is a natural base change homomorphism $\UAut^\otimes(\alpha)(R) \to \UAut^\otimes(\alpha)(S)$. So we obtain a group presheaf $\UAut^\otimes(\alpha)$ over $\ell^\circ$.
 \end{definition}

 \begin{remark}
   By construction, for each $X \in \Rep^\circ \CG$ there is natural homomorphism $$\UAut^\otimes(\alpha) \to \UAut(\alpha_X), \; (\phi_Y^{\gamma})_{Y \in \Rep^\circ \CG, \gamma \in \Gamma} \mapsto (\phi_X^{\gamma})_{\gamma \in \Gamma}$$ via which specifying a section of $\UAut^\otimes(\alpha)$ is equivalent to specifying sections of all $\UAut(\alpha_{X})$ which are functorial in $X$ as well as compatible with tensor products. 
 \end{remark}

 \begin{proposition} \label{UAutRepr}
   \begin{enumerate}
   \item The presheaf $\UAut^\otimes(\alpha)$ is a sheaf for the fpqc-topology.
   \item  Let $R$ be an $\ell^\circ$-algebra such that for all $X \in \Rep^\circ \CG$ and all $\gamma \in \Gamma$ the $R$-module $\omega(X)^{\alpha\leq \gamma}_R$ is locally free of finite type. Then the group scheme $\UAut^\otimes(\alpha)_R$ is representable by an affine group scheme of finite type over $R$. 
   \item For any tensor generator $X \in \Rep^\circ \CG$, the induced homomorphism $$\UAut^\otimes(\alpha) \to \UAut(\alpha_{X})$$ is representable by a closed immersion.
   \end{enumerate}
\end{proposition}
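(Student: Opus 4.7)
The plan is to reduce every representability question to a single tensor generator $X \in \Rep^\circ \CG$ and then appeal to Proposition \ref{UAutRepr0}. Part (i) is pure fpqc descent: each defining condition in Definition \ref{SchStabDef}---that certain $R$-linear maps between the modules $\omega(X)^{\alpha\leq\gamma}_R$ are automorphisms and that certain finite diagrams of such maps commute---is local for the fpqc topology, so the sheaf property follows from faithfully flat descent for quasi-coherent modules together with the fact that both the base change morphisms $V^{\alpha\leq\gamma} \otimes_{\ell^\circ} \blank$ and the natural inclusions and scaling isomorphisms are defined over $\ell^\circ$. For parts (ii) and (iii), I would fix a tensor generator $X \in \Rep^\circ \CG$ (available because $\CG$ is of finite type over $k^\circ$); under the hypothesis of (ii), Proposition \ref{UAutRepr0}(ii) then gives the representability of $\UAut(\alpha_X)_R$ by an affine group scheme of finite type over $R$. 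The entire task reduces to showing that the forgetful homomorphism $\pi\colon \UAut^\otimes(\alpha)_R \to \UAut(\alpha_X)_R$ is representable by a closed immersion. Both (ii) and (iii) then follow at once, since closed subschemes of affine group schemes of finite type are themselves affine group schemes of finite type.

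For injectivity of $\pi$ on $R$-points, I would pin down any lift $(\phi_Y^\gamma)_{Y,\gamma}$ of a given $(g^\gamma)_\gamma \in \UAut(\alpha_X)(R)$ step by step. First, condition (iv) of Definition \ref{SchStabDef} applied to $\Bone \otimes \Bone \cong \Bone$ forces $\phi_\Bone^1 = (\phi_\Bone^1)^{\otimes 2}$, hence $\phi_\Bone = 1$. Applying functoriality (condition (ii)) to the evaluation pairing $X \otimes X^* \to \Bone$ and using condition (iv) to write $\phi_{X\otimes X^*}^\gamma$ as a tensor $\phi_X^{\gamma_1}\otimes\phi_{X^*}^{\gamma_2}$ on the image of $\omega(X)^{\alpha\leq\gamma_1}_R\otimes\omega(X^*)^{\alpha\leq\gamma_2}_R$, combined with perfectness of the evaluation pairing, forces $\phi_{X^*}$ to be the inverse transpose of $g$. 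Tensor-product compatibility then fixes the action on every $Z = \bigoplus_i X^{\otimes r_i}\otimes (X^*)^{\otimes s_i}$. By the tensor generator property (Lemma \ref{TensorGenLemma}), every $Y \in \Rep^\circ \CG$ is a strict subquotient of such a $Z$, and condition (ii) applied to the defining inclusions and projections then uniquely determines each $\phi_Y^\gamma$.

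For closedness, each remaining consistency condition---e.g.\ that the forced action on $Z$ commutes with a $\CG$-equivariant map $Z \to Z'$ between finite direct sums of tensor powers of $X$ and $X^*$---is a polynomial equation in the matrix entries of $g^\gamma$ with respect to local bases of the locally-free $R$-modules $\omega(\blank)^{\alpha\leq\gamma}_R$, and hence defines a closed subfunctor of $\UAut(\alpha_X)_R$. Their common vanishing locus is represented by $\Spec(A/I)$ where $A$ is the coordinate ring of $\UAut(\alpha_X)_R$ and $I$ is the sum of the defining ideals; by the injectivity step this coincides with the image of $\pi$, and as a quotient of a finitely generated $R$-algebra it is again of finite type over $R$.

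The main obstacle is the closedness step. It is immediate that the compatibility conditions hold on individual points, but one must confirm that the joint vanishing locus remains scheme-theoretically closed despite involving an a priori infinite family of conditions indexed by morphisms in $\Rep^\circ \CG$. The saving observation is that an arbitrary intersection of closed subschemes of an affine scheme $\Spec A$ is again closed---defined by the sum of the corresponding ideals---and that a quotient of a finite-type $R$-algebra is automatically of finite type. This bypasses any need for Noetherianity of $R$, which is crucial since the hypothesis of (ii) does not impose it.
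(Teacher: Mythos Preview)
Your argument follows essentially the same route as the paper: (i) by fpqc descent, (iii) by cutting $\UAut^\otimes(\alpha)$ out of $\UAut(\alpha_X)$ via closed conditions, and (ii) from (iii) together with Proposition~\ref{UAutRepr0}. Two places where the paper is more careful. First, your closedness paragraph offers only ``commutes with a $\CG$-equivariant map $Z \to Z'$ between finite direct sums of tensor powers'' as an \emph{e.g.}, but the decisive closed condition---the one that lets you pass from tensor-power sums to arbitrary objects at all---is that for each strict monomorphism $Z \into Y$ with $Y$ a finite direct sum of tensor powers, the induced action on $\omega(Y)^{\alpha\leq\gamma}_R$ preserve the submodule $\omega(Z)^{\alpha\leq\gamma}_R$; the paper isolates this as the subfunctor $\CH_{Z\into Y}$, then further intersects with the loci where different presentations of the same $Z$ yield the same action, and only then imposes tensor compatibility. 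Your injectivity step implicitly uses exactly this, so you should name it explicitly among the closed conditions rather than hide it behind ``e.g.''. Second, your closedness argument invokes matrix entries in local bases of the locally free modules $\omega(\blank)^{\alpha\leq\gamma}_R$, which is the hypothesis of (ii); but (iii) is stated over arbitrary $\ell^\circ$-algebras. The fix is immediate: the condition that an $R$-linear map vanish---for instance the composite $\omega(Z)^{\alpha\leq\gamma}_R \to \omega(Y)^{\alpha\leq\gamma}_R \xrightarrow{g} \omega(Y)^{\alpha\leq\gamma}_R \to (\omega(Y)^{\alpha\leq\gamma}/\omega(Z)^{\alpha\leq\gamma})_R$---cuts out a closed subscheme of $\Spec R$ without any freeness assumption, and an arbitrary intersection of such is again closed, as you yourself observe at the end.
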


By Lemma \ref{UAutReprConc}, in particular (ii) applies for $R=\ell^\circ$ in case the valuation on $\ell$ is discrete and $\Gamma$ has rank one, and always for $R=\tilde \ell$ or $R=\ell$.

\begin{proof}
  Claim (i) follows from descent theory. By Proposition \ref{UAutRepr0}, claim (iii) implies claim (ii).

  To prove (iii), let $X \in \Rep^\circ \CG$ be a tensor generator. Let $Y=\oplus_{i=1}^n X^{\otimes r_i} \otimes (X^*)^{\otimes s_i}$ for some integers $n$, $r_i$ and $s_i$ and consider a strict monomorphism $Z \into Y$ in $\Rep^\circ \CG$. Let $\CH_{Z \into Y}$ be the subfunctor of $\UAut(\alpha_{X})$ whose $R$-valued points for some $\ell^\circ$-algebra $R$ are those elements of $\UAut(\alpha_{X})(R)$ whose action on each $\omega(Y)^{\alpha \leq \gamma}_R$ maps the submodule $\omega(Z)^{\alpha\leq \gamma}_R$ to itself. On can check that the inclusion $\CH_{Z \into Y} \into \UAut(\alpha_X)$ is representable by a closed immersion.

  Next, for any two such strict monomorphisms $Z \into Y_1$ and $Z \into Y_2$, let $\CH_{Z \into Y_1,Z \into Y_2}$ be the subsheaf of the intersection $\CH_{Z \into Y_1} \cap \CH_{Z \into Y_2}$ inside $\UAut(\alpha_X)$ whose $R$-valued points are those whose two actions on $\omega(Z)^{\alpha \leq \gamma}_R$ coincide for all $\gamma \in \Gamma$. Again one can check that $\CH_{Z \into Y_1,Z \into Y_2} \into \UAut(\alpha_X)$ is representable by a closed immersion.

By intersecting these subsheaves over all such pairs of inclusions we obtain a subsheaf $\CH$ of $\UAut(\alpha_X)$ whose $R$-valued sections have a canonical action on $\omega(Z)^{\alpha\leq\gamma}_R$ for all $Z \in \Rep^\circ \CG$ which admit such a strict monomorphism $Z \into Y$ and all $\gamma \in \Gamma$. Via Lemma \ref{NormExact} this in turn gives a canonical action of $\CH$ on $\omega(Z)^{\alpha\leq\gamma}$ for all strict subquotients of such $Y$ and all $\gamma \in \Gamma$. Since $X$ is a tensor generator, this applies to all $Z \in \Rep^\circ \CG$. Finally, one checks that the condition that these actions are compatible with tensor products in $Z$ again gives a subsheaf $\CH' \subset \CH$ which is representable by a closed immersion. Then $\CH' = \UAut^\otimes(\alpha)$, which proves (iii).
\end{proof}

Lemma \ref{GenUAutDesc} implies:
\begin{lemma} \label{GenUAutDesc2}
   Assume that for each element $\gamma \in \Gamma$ there exists an element $x \in \ell^*$ satisfying $|x| \leq \gamma$. Then for varying $\gamma\in \Gamma$ and $X \in \Rep^\circ \CG$, the inclusions $\omega(X)^{\alpha\leq \gamma} \into \omega(X)$ induce an isomorphism $\UAut^\otimes(\alpha)_\ell \cong \UAut^\otimes(\omega)$.
\end{lemma}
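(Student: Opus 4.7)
The plan is to apply Lemma \ref{GenUAutDesc} objectwise in $X \in \Rep^\circ \CG$ and then verify that the functoriality and tensor-compatibility conditions in Definition \ref{SchStabDef} correspond exactly to the defining conditions of $\UAut^\otimes(\omega)$. The hypothesis on $\ell$ is inherited by every normed vector space $(\omega(X),\alpha_X)$, so Lemma \ref{GenUAutDesc} (i) supplies for each $X$ and each $\gamma \in \Gamma$ a natural isomorphism $\omega(X)^{\alpha\leq\gamma}_\ell \cong \omega(X)$. Base-changing along any $\ell$-algebra $R$ gives identifications $\omega(X)^{\alpha\leq\gamma}_R \cong \omega(X)_R$ under which the transition maps figuring in Definition \ref{SchStabDef} (ii)--(iv) --- i.e.\ the inclusions $\omega(X)^{\alpha\leq\gamma} \into \omega(X)^{\alpha\leq\delta}$, the morphisms induced by $X \to Y$, the scaling $v \mapsto \lambda v$, and the tensor inclusions $\omega(X)^{\alpha\leq\gamma} \otimes \omega(Y)^{\alpha\leq\delta} \into \omega(X\otimes Y)^{\alpha\leq\gamma\delta}$ --- become, respectively, the identity of $\omega(X)_R$, the base change of $\omega(X) \to \omega(Y)$, multiplication by $\lambda$ (an $R$-linear automorphism since $\ell \subset R$), and the canonical identification $\omega(X)_R \otimes_R \omega(Y)_R \cong \omega(X\otimes Y)_R$.

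Next I would unpack what this does to a section $\phi = (\phi_X^\gamma)$ of $\UAut^\otimes(\alpha)_\ell(R)$. For a fixed $X$, condition (ii) of Definition \ref{SchStabDef} applied with $Y=X$ and the identity morphism, together with condition (iii), forces all the automorphisms $\phi_X^\gamma$ of $\omega(X)_R$ to coincide, under the identifications above, with a single $R$-linear automorphism $\phi_X \in \GL(\omega(X))(R)$; conversely any such $\phi_X$ determines a compatible family $(\phi_X^\gamma)_\gamma$. This step is exactly Lemma \ref{GenUAutDesc} (ii) applied to the normed vector space $(\omega(X),\alpha_X)$ after base change to $R$. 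Hence passing to the generic fiber cuts the data $(\phi_X^\gamma)_{X,\gamma}$ down to a family $(\phi_X)_X$ of $R$-linear automorphisms of $\omega(X)_R$.

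Finally I would check that the remaining conditions --- (ii) for varying morphisms $X \to Y$, and (iv) for tensor compatibility --- translate under these identifications precisely into the statements that $X \mapsto \phi_X$ is natural in $X$ and satisfies $\phi_{X \otimes Y} = \phi_X \otimes \phi_Y$, which is exactly the definition of a section of $\UAut^\otimes(\omega)(R)$. This construction is visibly functorial in $R$, and is inverted by sending a tensor-compatible family $(\phi_X)_X$ to the family $(\phi_X^\gamma)_{X,\gamma}$ obtained by restricting $\phi_X$ to $\omega(X)^{\alpha\leq\gamma}_R = \omega(X)_R$ for each $\gamma$. There is no substantive obstacle here: once Lemma \ref{GenUAutDesc} has been applied to each $(\omega(X),\alpha_X)$, the proof is a bookkeeping verification that the hypothesis on $\ell$ collapses all lattice-level structure in Definition \ref{SchStabDef} to the ordinary linear data over the generic fiber.
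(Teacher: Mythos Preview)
Your proposal is correct and follows exactly the approach the paper indicates: the paper's proof consists solely of the phrase ``Lemma \ref{GenUAutDesc} implies:'', and your argument is a faithful unpacking of that implication, applying Lemma \ref{GenUAutDesc} objectwise and then checking that the functoriality and tensor-compatibility conditions of Definition \ref{SchStabDef} collapse to those defining $\UAut^\otimes(\omega)$.
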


\begin{definition} \label{UEndDef}
  For an $\ell^\circ$-algebra $R$, we let $\UEnd^\otimes(\alpha)(R)$ be the set of tuples 
  \begin{equation*}
    (h_X)_{X \in \Rep^\circ \CG} \in \prod_{X\in\Rep^\circ \CG} \End(\alpha_X)_R
  \end{equation*}
 satisfying the following conditions:    
  \begin{enumerate}
    \item They are functorial in the sense that for all morphisms $X \to Y$ in $\Rep^\circ \CG$, the images of $h_X$ and $h_Y$ in $\Hom(\alpha_{X},\alpha_{Y})_R$ coincide.
    \item They are compatible with tensor products in the sense that for all $X, Y \in \Rep^\circ \CG$, the image of $h_X \otimes \id + \id\otimes h_Y $ under the natural homomorphism $$\End(\alpha_X)_R \otimes_R \End(\alpha_Y)_R \to \End(\alpha_X \otimes \alpha_Y)_R=\End(\alpha_{X\otimes Y})_R$$ is equal to $h_{X\otimes Y}$. 
  \end{enumerate}

  The set $\UEnd^\otimes(\alpha)(R)$ naturally forms an abelian group under termwise addition. This construction is functorial in $R$, and so we obtain a presheaf of abelian groups $\UEnd^\otimes(\alpha)$ over $\ell^\circ$.

The sheaf $\UAut^\otimes(\alpha)$ acts naturally on $\UEnd(\alpha)$ via termwise conjugation via \eqref{AutCompMor}.
\end{definition}

\begin{example} \label{SpSchAut}
  Assume that $\Gamma_\alpha \subset |\ell^*|$. Then $\alpha=\alpha_{\lambda}$ for the canonical fiber functor $\lambda\colon X \mapsto \omega(X)^{\alpha\leq 1}$ on $\Rep^\circ \CG$ over $\ell^\circ$ by Lemma \ref{SpecialFunctorClass}. Let $(\phi^\gamma_X)_{X,\gamma} \in \UAut^\otimes(\alpha)(R)$ for some $\ell^\circ$-algebra $R$. Then by condition (ii) in Definition \ref{SchStabDef}, for any $X \in \Rep^\circ \CG$, the automorphisms $\phi_X^\gamma$ for varying $\gamma$ are all determined by $\phi_X^1$. The $\phi_X^1$ for varying $X$ give an element of $\UAut^\otimes(\lambda)(R)$. This gives a canonical isomorphism
  \begin{equation*}
    \UAut^\otimes(\alpha)\cong \UAut^\otimes(\lambda).
  \end{equation*}

  Analogously we find a canonical isomorphism
  \begin{equation*}
    \UEnd^\otimes(\alpha) \cong \UEnd^\otimes(\lambda).
  \end{equation*}
\end{example}

\subsection{Splittings} \label{SplittingSS2}
We continue with the setup from the previous subsection.

\begin{definition}
  Let $\omega\colon \Rep^\circ \CG \to \Mod_\ell$ be a fiber functor. An \emph{integral model} of $\omega$ is a fiber functor $\lambda\colon \Rep^\circ \CG \to \Mod_{\ell^\circ}$ together with an isomorphism $\psi\colon \lambda_\ell \isoto \omega$. We will often omit $\psi$ and simply talk about an integral model $\lambda$.

  Given such a $\lambda$ and an element $g \in \UAut^\otimes(\omega)(\ell)$, we let $g \cdot \lambda$ be the integral model of $\omega$ obtained by replacing $\psi$ by the isomorphism $\lambda_\ell \toover{\psi} \omega \toover{g} \omega$.
\end{definition}
\begin{construction} \label{ChiNormCons}
  Let $\lambda$ be a fiber functor on $\Rep^\circ \CG$ over $\ell^\circ$ and $\chi\colon D^\Gamma_{\ell^\circ} \to \UAut^\otimes(\lambda)$ a homomorphism. We construct a normed fiber functor $\alpha_{\lambda,\chi}$ with underlying fiber functor $\lambda_\ell$ as follows: 

  For $X \in \Rep^\circ \CG$, Construction \ref{ChiNormCons0} applied to the $\ell^\circ$-lattice $\lambda(X)$ in $\lambda(X)_\ell$ together with the homomorphism $\chi_X\colon D^\Gamma_{\ell^\circ} \toover{\chi} \UAut^\otimes(\lambda) \to GL(\lambda(X))$ gives a norm $\alpha_{\lambda(X),\chi_X}$ on $\lambda(X)_\ell$.

These norms are functorial and exact in $X$ as well as compatible with tensor products and hence define a normed fiber functor $\alpha_{\lambda,\chi}\colon \Rep^\circ \CG \to \Norm^\circ(\ell)$ which comes with a canonical isomorphism $\forg\circ\alpha_{\lambda,\chi}\cong \lambda_\ell$.
\end{construction}

\begin{definition} \label{SplittingDef}
  \begin{enumerate}
  \item  A pair $(\lambda,\chi)$ consisting of an integral model $\lambda$ of $\omega$ and a homomorphism $\chi\colon D^\Gamma_{\ell^\circ} \to \UAut^\otimes(\lambda)$ \emph{splits} $\alpha$, if the included isomorphism $\lambda_\ell \cong \omega$ identifies $\alpha$ with the normed fiber functor $\alpha_{\lambda,\chi}$ from Construction \ref{ChiNormCons}.
  \item The normed fiber functor $\alpha$ is splittable if there exists such a pair which splits $\alpha$.
  \end{enumerate}
\end{definition}

We also a give the following variant of this notion which doesn't involve the fiber functor $\lambda$:

\begin{definition} \label{IntSplitDef}
  Let $R$ be a $\ell^\circ$-algebra and consider a homomorphism $\chi\colon D^\Gamma_R \to \UAut^\otimes(\alpha)_R$ or $\chi\colon D^{\Gamma/|\ell^*|}_R \to \UAut^\otimes(\alpha)_R$. For each $X \in \Rep^\circ \CG$ and each $\gamma \in \Gamma$, the group sheaf $\UAut^\otimes(\alpha)_R$ acts naturally on $\omega(X)^{\alpha \leq \gamma}_R/ \omega(X)^{\alpha < \gamma}_R$. We say that $\chi$ \emph{splits} $\alpha$ over $R$ if for all such $X$ and $\gamma$, all weights of $\chi$ on $\omega(X)^{\alpha \leq \gamma}_R/ \omega(X)^{\alpha < \gamma}_R$ are in $\gamma |\ell^*|$.
\end{definition}

\begin{proposition} \label{SplitExtEquiv}
  For a homomorphism $\chi\colon D^\Gamma_\ell \to \UAut^\otimes(\omega)$ the following are equivalent:
  \begin{enumerate}
  \item There exists an integral model $\lambda$ of $\omega$ together with an extension $\chi_1\colon D^\Gamma_{\ell^\circ} \to \UAut^\otimes(\lambda)$ of $\chi$ such that the pair $(\lambda,\chi_1)$ splits $\alpha$.
  \item The homomorphism $\chi$ extends to a homomorphism $\chi_2\colon D^\Gamma_{\ell^\circ} \to \UAut^\otimes(\alpha)$ which splits $\alpha$ over $\ell^\circ$. 
  \end{enumerate}
If this is the case, then such objects $\chi_1$, $\chi_2$ and $\lambda$ are uniquely determined by the above.
\end{proposition}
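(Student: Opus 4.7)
The approach is to reduce to the objectwise statement, Proposition \ref{SplitExtEquiv0}, applied to each normed vector space $(\omega(X), \alpha_X)$ for $X \in \Rep^\circ \CG$, and then to use the uniqueness assertion of that proposition to glue the resulting data into a single piece of structure on $\alpha$. The uniqueness assertion of the present proposition will then also follow from the objectwise uniqueness.

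For the direction (i) $\Rightarrow$ (ii): given $(\lambda,\chi_1)$, the restriction of $\chi_1$ to each $X$ gives a pair $(\lambda(X), \chi_{1,X}\colon D^\Gamma_{\ell^\circ} \to \GL(\lambda(X)))$ that splits $\alpha_X$. By Proposition \ref{SplitExtEquiv0} we obtain a unique extension $\chi_{2,X}\colon D^\Gamma_{\ell^\circ} \to \UAut(\alpha_X)$ splitting $\alpha_X$ over $\ell^\circ$, and the proof there identifies $\chi_{2,X}$ with the homomorphism induced by the weight decomposition $\omega(X)^{\alpha\leq\gamma}=\oplus_{w\in\Gamma}(\omega(X)^w)^{\alpha\leq\gamma}$. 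Because the weight decompositions of the $\omega(X)$ are induced by the tensor-compatible homomorphism $\chi_1\colon D^\Gamma_{\ell^\circ}\to \UAut^\otimes(\lambda)$, these decompositions are functorial in $X$ and compatible with tensor products. Consequently the $\chi_{2,X}$ assemble into a single homomorphism $\chi_2\colon D^\Gamma_{\ell^\circ} \to \UAut^\otimes(\alpha)$, which extends $\chi$ and splits $\alpha$ over $\ell^\circ$.

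For the direction (ii) $\Rightarrow$ (i): composing $\chi_2$ with the projection $\UAut^\otimes(\alpha) \to \UAut(\alpha_X)$ yields for each $X$ a homomorphism $\chi_{2,X}$ splitting $\alpha_X$. By Proposition \ref{SplitExtEquiv0}, there is a unique $\ell^\circ$-lattice $\lambda(X)\subset \omega(X)$ and a unique extension $\chi_{1,X}\colon D^\Gamma_{\ell^\circ}\to \GL(\lambda(X))$ of $\chi_X$ with $(\lambda(X),\chi_{1,X})$ splitting $\alpha_X$. The proof of that proposition furnishes the canonical description
\begin{equation*}
  \lambda(X) \;=\; \bigoplus_{w\in\Gamma}\bigl\{\,v\in\omega(X)^w \;\big|\; \alpha_X(v)\leq w\,\bigr\},
\end{equation*}
where $\omega(X)=\oplus_w \omega(X)^w$ is the weight decomposition coming from $\chi_2$. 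I will verify that $X\mapsto \lambda(X)$ is a $k^\circ$-linear exact tensor functor $\Rep^\circ \CG \to \Mod_{\ell^\circ}$, that the $\chi_{1,X}$ assemble into $\chi_1\colon D^\Gamma_{\ell^\circ}\to \UAut^\otimes(\lambda)$, and that $(\lambda,\chi_1)$ splits $\alpha$. Functoriality is immediate from the canonical description: a morphism $f\colon X\to Y$ in $\Rep^\circ \CG$ preserves the weight grading (since $\chi_2$ lands in $\UAut^\otimes(\alpha)$) and satisfies $\alpha_Y(f(v))\leq\alpha_X(v)$. Exactness follows from Lemma \ref{NormExact} combined with the exactness of taking $\chi_2$-weight subspaces on each $\omega(\cdot)^{\alpha\leq\gamma}$.

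The step I expect to be the main obstacle is verifying tensor-product compatibility, i.e.\ that the inclusion $\lambda(X)\otimes_{\ell^\circ}\lambda(Y)\into \lambda(X\otimes Y)$ is an equality. This is analogous to the tensor-product argument in Lemma \ref{SpecialFunctorClass}: given $z\in\lambda(X\otimes Y)$, write $z=\sum_i v_i\otimes w_i$ with $v_i\in\omega(X)$ and $w_i\in\omega(Y)$, decompose each $v_i$, $w_i$ into weight components so that the summands contributing to $z$ lie in weight spaces $\omega(X)^{w}$ and $\omega(Y)^{w'}$, and use that the splitting hypothesis on $\chi_2$ forces the nonzero homogeneous components to have norms in $w|\ell^*|$ and $w'|\ell^*|$; rescaling each pair $(v_i,w_i)$ by a suitable element of $\ell^*$ then places both factors in $\lambda(X)$ and $\lambda(Y)$ respectively. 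Once $\lambda$ is a fiber functor, the tensor-compatibility and functoriality of the weight decomposition immediately imply that the $\chi_{1,X}$ define a homomorphism $\chi_1\colon D^\Gamma_{\ell^\circ} \to \UAut^\otimes(\lambda)$, and by construction $\alpha=\alpha_{\lambda,\chi_1}$. Finally, uniqueness of $\lambda,\chi_1,\chi_2$ follows from the objectwise uniqueness in Proposition \ref{SplitExtEquiv0} applied to every $X\in\Rep^\circ \CG$.
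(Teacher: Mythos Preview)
Your proposal is correct and follows exactly the approach the paper takes: the paper's proof consists of the single sentence ``This follows by the same argument as Proposition \ref{SplitExtEquiv0}, by checking in addition that the constructions there are functorial and compatible with tensor products,'' and you have spelled out precisely that check. Your identification of tensor compatibility as the substantive step, and your reduction of it to the rescaling argument of Lemma \ref{SpecialFunctorClass} (using that $\alpha_X$ takes values in $w|\ell^*|$ on each weight space $\omega(X)^w$), is exactly what is needed.
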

\begin{proof}
  This follows by the same argument as Proposition \ref{SplitExtEquiv0}, by checking in addition that the constructions there are functorial and compatible with tensor products.
\end{proof}

For later use we note the following facts:

\begin{lemma} \label{SplittingUnique2}
 Let $R$ be a $\ell^\circ$-algebra such that every connected component $C$ of $\Spec(R)$ has non-empty special fiber $C_{\tilde \ell}$ and $\CT \subset \UAut^\otimes(\alpha)_R$ a split closed subtorus.
  \begin{enumerate}
  \item There exists at most one homomorphism $\chi\colon D^{\Gamma/|\ell^*|}_R \to \CT$ which splits $\alpha$ over $R$.
  \item Let $S$ be a faithfully flat $R$-algebra. Any homomorphism $\chi\colon D^{\Gamma/|\ell^*|}_S \to \CT_S$ which splits $\alpha$ over $S$ descends to a homomorphism $D^{\Gamma/|\ell^*|}_R \to \CT$ which splits $\alpha$ over $R$.
  \end{enumerate}
\end{lemma}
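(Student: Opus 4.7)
My plan is to deduce both parts from Lemma \ref{SplittingUnique0} by means of a tensor generator. Fix a tensor generator $X \in \Rep^\circ \CG$; by Proposition \ref{UAutRepr}~(iii) the natural morphism $\UAut^\otimes(\alpha) \into \UAut(\alpha_X)$ is a closed immersion, so after base change to $R$ the subtorus $\CT$ becomes a split closed subtorus of $\UAut(\alpha_X)_R$.

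For part (i), any homomorphism $\chi\colon D^{\Gamma/|\ell^*|}_R \to \CT$ which splits $\alpha$ over $R$ in the sense of Definition \ref{IntSplitDef} automatically splits $\alpha_X$ over $R$ in the sense of Definition \ref{IntSplitDef0}, since the latter is just the case $Y=X$ of the former. Therefore uniqueness of $\chi$ is immediate from Lemma \ref{SplittingUnique0} applied to $(\omega(X), \alpha_X)$ and the split closed subtorus $\CT \subset \UAut(\alpha_X)_R$.

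For part (ii), I would invoke fpqc descent. Write $p_1, p_2\colon S \otimes_R S \rightrightarrows S$ for the projections and consider the pulled-back homomorphisms $p_i^*\chi\colon D^{\Gamma/|\ell^*|}_{S\otimes_R S} \to \CT_{S \otimes_R S}$. Each of them splits $\alpha$ over $S\otimes_R S$, since the splitting condition constrains the weights of the action on the $\tilde\ell$-modules $\omega(X)^{\alpha\leq\gamma}/\omega(X)^{\alpha<\gamma}$, and such weight constraints are preserved under any base change. Applying part (i) over $S \otimes_R S$ yields $p_1^*\chi = p_2^*\chi$. Since $D^{\Gamma/|\ell^*|}$ and $\CT$ are affine, fpqc descent for morphisms of affine schemes produces a unique morphism $\chi_R\colon D^{\Gamma/|\ell^*|}_R \to \CT$ whose pullback to $S$ is $\chi$; the group-homomorphism axioms descend automatically from $S$. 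Finally, $\chi_R$ splits $\alpha$ over $R$ because the weight decomposition of each $\omega(X)^{\alpha\leq\gamma}_R/\omega(X)^{\alpha<\gamma}_R$ under $\chi_R$ base-changes to the corresponding weight decomposition over $S$, and faithful flatness of $S/R$ guarantees that a weight space vanishes if and only if its base change to $S$ does.

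The only subtle step is the compatibility of the splitting condition with faithfully flat base change in both directions; once this is granted, both statements are formal consequences of Lemma \ref{SplittingUnique0} and standard fpqc descent, and no genuine obstacle remains.
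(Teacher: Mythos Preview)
Your proof is correct and follows essentially the same approach as the paper's own proof: reduce (i) to Lemma \ref{SplittingUnique0} via a tensor generator, and deduce (ii) from (i) by checking that the two pullbacks of $\chi$ to $S\otimes_R S$ agree and then invoking fpqc descent. You have simply spelled out in more detail the steps the paper leaves implicit, in particular the verification that the splitting condition is stable under base change and reflects along faithfully flat maps.
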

\begin{proof}
  (i) Using a tensor generator of $\Rep^\circ \CG$ and Proposition \ref{UAutRepr}, one reduces to Lemma \ref{SplittingUnique0}.

  (ii) By (i), the two pullbacks of $\chi$ to $S\otimes_R S$ are equal. Hence the claim follows using flat descent.
\end{proof}

\begin{lemma} \label{SplitLift1}
  Let $T \subset \UAut^\otimes(\alpha)_{\tilde\ell}$ be a subtorus and $\chi\colon D^{\Gamma/|\ell^*|}_{\tilde \ell} \to T \subset \UAut^\otimes(\alpha)_{\tilde\ell}$ a homomorphism which splits $\alpha$ over $\tilde\ell$.
  \begin{enumerate}
  \item The homomorphism $\chi$ factors through the maximal split subtorus of $T$.
  \item The homomorphism $\chi$ can be lifted to a homomorphism $D^\Gamma_{\tilde\ell} \to T$ which splits $\alpha$ over $\tilde\ell$.
  \end{enumerate}
\end{lemma}

\begin{proof}
  (i) Let $X$ be a tensor generator of $\Rep^\circ \CG$. By Proposition \ref{UAutRepr} the induced homomorphism $i_X\colon \UAut^\otimes(\alpha)_{\tilde\ell} \to \UAut(\alpha_X)_{\tilde\ell}$ is a closed immersion. So Lemma \ref{SplitLift} applied to $\alpha_X$ shows (i).

  (ii) By (i) we may replace $T$ by its maximal split subtorus. Then the surjectivity of
  \begin{equation*}
    X_*(T) \otimes \Gamma \to X_*(T) \otimes \Gamma/|\ell^*|
  \end{equation*}
  shows that $\chi$ can be lifted to a homomorphism $D^{\Gamma}_{\tilde\ell} \to T$. Then by Definition \ref{IntSplitDef} any such lift splits $\chi$ over $\tilde\ell$.
\end{proof}

\subsection{Base Change of Normed Fiber Functors} \label{BCSS2}
We continue with the setup from the previous subsection and study the bevaviour of $\UAut^\otimes(\alpha)$ under base change. For this let $m$ be a non-archimedean extension of $\ell$.
\begin{definition}
  We let $\alpha_m$ be normed fiber functor on $\Rep^\circ \CG$ over $m$ given by the composition $\Rep^\circ \CG \to \Norm^\circ(\ell) \to \Norm^\circ(m)$ of $\alpha$ with the base change functor $\Norm^\circ(\ell)\to \Norm^\circ(m)$ from Lemma \ref{NormBC}.
\end{definition}

\begin{construction}    \label{UAutBCC}
  By Construction \ref{UAutBCC0}, for each $X \in \Rep^\circ \CG$ there is a natural base change homomorphism $\UAut(\alpha_{X})_{m^\circ} \to \UAut(\alpha_{X,m})$. One checks that for a $m^\circ$-algebra $R$ and for an element $\phi=(\phi^{\gamma}_X)_{\gamma,X} \in \UAut^\otimes(\alpha)(R)$, the images of the sections $(\phi^{\gamma}_X)_{\gamma} \in \UAut(\alpha_{X})(R)$ under these base change homomorphisms fit together to a section $b^{m/\ell}(\phi) \in \UAut^\otimes(\alpha_{m})(R)$. This gives a base change homomorphism
  \begin{equation*}
    b^{m/\ell}\colon \UAut^\otimes(\alpha)_{\tilde m} \to \UAut^\otimes(\alpha_{m})
  \end{equation*}
  over $m^\circ$.
\end{construction}

\begin{lemma} \label{BCProps}
 Assume that for each element $\gamma \in \Gamma$ there exists an element $x \in \ell^*$ satisfying $|x| \leq \gamma$. Then the generic fiber $b^{m/\ell}_m$ of the base change homomorphism from Construction \ref{UAutBCC} is an isomorphism.
\end{lemma}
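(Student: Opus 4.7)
The plan is to reduce to the case of the unnormed fiber functor by applying Lemma~\ref{GenUAutDesc2}. First I would observe that the hypothesis on $\ell$ passes to $m$: since $\ell^* \subset m^*$, for each $\gamma \in \Gamma$ there exists an element of $m^*$ of norm at most $\gamma$. Consequently Lemma~\ref{GenUAutDesc2} applies both to $\alpha$ over $\ell$ and to $\alpha_m$ over $m$, yielding canonical isomorphisms
\begin{equation*}
  \UAut^\otimes(\alpha)_\ell \iso \UAut^\otimes(\omega), \qquad \UAut^\otimes(\alpha_m)_m \iso \UAut^\otimes(\omega_m),
\end{equation*}
where $\omega$ denotes the underlying fiber functor of $\alpha$ and $\omega_m \defeq \omega \otimes_\ell m$ is its base change to $m$.

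Next I would show that under these identifications, the generic fiber $b^{m/\ell}_m$ agrees with the canonical morphism $\UAut^\otimes(\omega)_m \to \UAut^\otimes(\omega_m)$, which is itself an isomorphism (the formation of the tensor automorphism sheaf of a fiber functor commutes with base change, as is immediate from its functor of points together with Proposition~\ref{UAutRepr}). For this I would trace through definitions: for any $m$-algebra $R$, the proof of Lemma~\ref{GenUAutDesc} shows that $\omega(X)^{\alpha \leq \gamma}_R = \omega(X) \otimes_\ell R$, and likewise $\omega(X)^{\alpha_m \leq \gamma}_R = \omega(X) \otimes_\ell R$. Thus a section $(\phi_X^\gamma)_{X,\gamma}$ of $\UAut^\otimes(\alpha)(R)$ or of $\UAut^\otimes(\alpha_m)(R)$ is determined by a compatible family of $R$-linear automorphisms of the single $R$-modules $\omega(X)_R$, i.e.\ by a section of $\UAut^\otimes(\omega_m)(R)$. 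The base change homomorphism of Construction~\ref{UAutBCC} is, at the level of each $X$, induced by the inclusions $\omega(X)^{\alpha \leq \gamma} \otimes_{\ell^\circ} m^\circ \to \omega(X)^{\alpha_m \leq \gamma}$ from Construction~\ref{BCPresCons}, and under the above identifications all of these become the identity on $\omega(X)_R$. This forces $b^{m/\ell}_m$ to be the identity on $\UAut^\otimes(\omega_m)$, and in particular an isomorphism.

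Since the argument proceeds entirely by unwinding Lemma~\ref{GenUAutDesc2}, I do not expect a serious obstacle. The only mildly subtle point is the compatibility of $\UAut^\otimes(\omega)$ with base change, but this follows directly by inspection of the functor of points (one checks that an element of $\UAut^\otimes(\omega)(R)$ for an $m$-algebra $R$ is nothing other than an element of $\UAut^\otimes(\omega_m)(R)$). With that identification in place, the lemma is essentially a bookkeeping statement.
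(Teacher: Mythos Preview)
Your proposal is correct and follows exactly the same approach as the paper's proof: the paper simply states that under the canonical isomorphisms $\UAut^\otimes(\alpha)_m \cong \UAut^\otimes(\omega)_m \cong \UAut^\otimes(\alpha_m)_m$ from Lemma~\ref{GenUAutDesc2}, the morphism $b^{m/\ell}_m$ is the identity on $\UAut^\otimes(\omega)_m$. You have unpacked precisely this assertion, including the observation that the hypothesis on $\ell$ passes to $m$ and the verification that the base change maps become identities on the modules $\omega(X)_R$.
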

\begin{proof}
  One readily checks that under the canonical isomorphisms $\UAut^\otimes(\alpha)_m \cong \UAut^\otimes(\omega)_m \cong \UAut^\otimes(\alpha_{m})$ from Lemma \ref{GenUAutDesc2}, the homomorphism $b^{m/\ell}_m$ is the identity on $\UAut^\otimes(\omega)_m$.
\end{proof}

Lemma \ref{UnrBC0} implies:
\begin{lemma}\label{UnrBC}
  If $|m^*|=|\ell^*|$, then $b^{m/\ell}\colon \UAut^\otimes(\alpha)_{m^\circ} \to \UAut^\otimes(\alpha_m)$ is an isomorphism.
\end{lemma}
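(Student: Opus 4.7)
The plan is to reduce the statement to a pointwise assertion about normed vector spaces and then verify it by passing to a splitting basis. Specifically, unramifiedness means $|m^*|=|\ell^*|$, and I would first prove the following key lemma: for every $(V,\alpha) \in \Norm^\circ(\ell)$ and every $\gamma \in \Gamma$, the natural $m^\circ$-linear map
\[
  V^{\alpha\leq\gamma} \otimes_{\ell^\circ} m^\circ \longrightarrow V_m^{\alpha_m\leq\gamma}
\]
is an isomorphism. Choosing a splitting basis of $(V,\alpha)$, which is also a splitting basis of $(V_m,\alpha_m)$, decomposes both sides into direct sums of rank-one pieces, reducing the claim to showing $\ell^{\leq\mu} \otimes_{\ell^\circ} m^\circ \cong m^{\leq\mu}$ for every $\mu \in \Gamma$. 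When $\mu \in |\ell^*|$, a generator $x \in \ell^*$ with $|x|=\mu$ exhibits both modules as $xm^\circ$.

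The one place where unramifiedness is genuinely needed---and the main obstacle, though a modest one---is the case $\mu \notin |\ell^*|$. Here automatically $\ell^{\leq\mu}=\ell^{<\mu}$, and since $|m^*|=|\ell^*|$ also $\mu \notin |m^*|$, so likewise $m^{\leq\mu}=m^{<\mu}$. Writing both modules as filtered colimits over the directed set $\{\delta \in |\ell^*| \mid \delta < \mu\}$ of $\ell^{\leq\delta}$, respectively $m^{\leq\delta}$, and using that tensor product commutes with filtered colimits, the previous rank-one case applies term by term. Without unramifiedness this step genuinely fails, as one sees already with $\ell=\BQ_p \subset m=\BQ_p(\sqrt{p})$ and $\mu=|p|^{1/2}$.

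With the lemma in hand, tensoring further with any $m^\circ$-algebra $R$ yields canonical identifications $\omega(X)^{\alpha\leq\gamma}_R = \omega(X)^{\alpha_m\leq\gamma}_R$ for all $X \in \Rep^\circ \CG$ and $\gamma \in \Gamma$. Under these identifications the tuples and conditions (i)--(iv) of Definition \ref{SchStabDef} defining $\UAut^\otimes(\alpha)(R)$ and $\UAut^\otimes(\alpha_m)(R)$ are literally the same, and the base-change homomorphism $b^{m/\ell}$ constructed from the presentation in Construction \ref{BCPresCons} reduces to the identity map. Hence $b^{m/\ell}$ is an isomorphism of group sheaves, and therefore an isomorphism of representable group schemes.
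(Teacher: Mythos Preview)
Your proposal is correct and follows exactly the paper's approach: the paper's proof consists of the single assertion that the inclusion $\omega(X)^{\alpha\leq\gamma}_{m^\circ} \into \omega(X)^{\alpha_m\leq\gamma}$ is an equality for all $X$ and $\gamma$, which is precisely your key lemma. You have supplied the details the paper leaves implicit, including the rank-one verification and the careful handling of the case $\mu\notin|\ell^*|$ via filtered colimits.
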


Now we assume that $|m^*|=\Gamma$.
\begin{construction} \label{ChiRamCons}
  For each $X \in \Rep^\circ \CG$, Construction \ref{ChiCons} gives us a homomorphism $$\chi^{m/\ell} \colon D^{\Gamma/|\ell^*|}_{\tilde m} \to \UAut^\otimes(\alpha_{X,m})_{\tilde m}.$$ Lemma \ref{BCGrProps} implies that these homomorphisms fit together to a homomorphism
  \begin{equation*}
    \chi^{m/\ell} \colon D^{\Gamma/|\ell^*|}_{\tilde m} \to \UAut^\otimes(\alpha_m)_{\tilde m}.
  \end{equation*}

We denote by $L^{m/\ell} \subset \UAut^\otimes(\alpha_m)_{\tilde m}$ the centralizer of this homomorphism.

\end{construction}

Proposition \ref{DilProp1} implies:
\begin{proposition} \label{BCDilat}
  \begin{enumerate}
  \item The base change homomorphism $$b^{m/\ell}_{\tilde m}\colon \UAut^\otimes(\alpha)_{\tilde m} \to \UAut^\otimes(\alpha_{m})_{\tilde m}$$ factors through $L^{m/\ell}$. 
  \item  Assume that $|\ell^*|$ is dense in $|m^*|$. Let $R$ be a flat $m^\circ$-algebra and $g \in \UAut^\otimes(\alpha_{m})(R)$. Then $g$ is in the image of $b^{m/\ell}\colon \UAut^\otimes(\alpha)(R) \to \UAut^\otimes(\alpha_{m})(R)$ if and only if the image of $g$ in $\UAut^\otimes(\alpha_{m})(R \otimes_{m^\circ} \tilde m)$ lies in $L^{\ell/m}(R \otimes_{m^\circ} \tilde m)$. 
  \end{enumerate}

\end{proposition}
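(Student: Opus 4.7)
\bigskip

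\noindent\textbf{Proof proposal.} Both statements will be reduced to the analogous statements for single normed vector spaces, i.e. to Proposition \ref{DilProp1}, applied componentwise over all $X \in \Rep^\circ \CG$. The key point is that by construction, the base change homomorphism $b^{m/\ell}$ on $\UAut^\otimes(\alpha)$ is assembled from the $X$-wise base change homomorphisms $b^{m/\ell}_X\colon \UAut(\alpha_X)_{m^\circ}\to\UAut(\alpha_{X,m})$ of Construction \ref{UAutBCC0}, and by Construction \ref{ChiRamCons} the homomorphism $\chi^{m/\ell}$ is similarly assembled from the individual $\chi^{m/\ell}_X$ of Construction \ref{ChiCons}.

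For part (i), I would observe that a section of $\UAut^\otimes(\alpha_{m})_{\tilde m}$ over some $\tilde m$-algebra $R$ centralizes $\chi^{m/\ell}$ if and only if for each $X$ its $X$-component centralizes $\chi^{m/\ell}_X$ (this is essentially just unwinding the definition of the centralizer of a torus acting via a tensor functor). Given $\phi \in \UAut^\otimes(\alpha)(R)$, the $X$-component of $b^{m/\ell}(\phi)$ is $b^{m/\ell}_X(\phi_X)$, which centralizes $\chi^{m/\ell}_X$ by Proposition \ref{DilProp1}(i) applied to $(\omega(X),\alpha_X)$. Taking the intersection over all $X$ yields the desired factorization.

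For part (ii), let $g \in \UAut^\otimes(\alpha_m)(R)$ satisfy the hypothesis. Since $R$ is flat over $m^\circ$ and $m^\circ$ is flat over $\ell^\circ$, it is flat over $\ell^\circ$, so Proposition \ref{DilProp1}(ii) applies $X$-by-$X$ and produces, for each $X$, a preimage $h_X \in \UAut(\alpha_X)(R)$ of $g_X$ under $b^{m/\ell}_X$. I then need to verify that the family $(h_X)$ satisfies the functoriality and tensor-compatibility axioms of Definition \ref{SchStabDef}, so that it genuinely defines an element of $\UAut^\otimes(\alpha)(R)$. This will be deduced from uniqueness of the lifts: it suffices to show that $b^{m/\ell}_X$ is injective on $R$-points for $R$ flat over $m^\circ$, because once that is known, each relation in $\UAut^\otimes(\alpha)(R)$ that must hold for the $h_X$ can be checked after applying $b^{m/\ell}$, where it holds because it holds for the $g_X$.

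The main obstacle is precisely this injectivity of $b^{m/\ell}_X$ on flat $m^\circ$-algebras. The plan is to argue as follows: fix a splitting basis of $(\omega(X),\alpha_X)$, which by Lemma \ref{BCNorm2} (or directly) also gives a splitting basis of $\omega(X)^{\alpha_m\leq\gamma}$ for every $\gamma$; this reduces the injectivity of the natural map $\omega(X)^{\alpha\leq\gamma}\otimes_{\ell^\circ}R\to\omega(X)^{\alpha_m\leq\gamma}\otimes_{m^\circ}R$ to the one-dimensional case where it amounts to the injectivity of $\ell^{\leq\gamma}\otimes_{\ell^\circ}R\to m^{\leq\gamma}\otimes_{m^\circ}R$, which follows from torsion-freeness and flatness of $R$ together with the fact that both become $R\otimes_{m^\circ} m$ after inverting a uniformizer. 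Since an element of $\UAut(\alpha_X)(R)$ is determined by its restriction to the submodules $\omega(X)^{\alpha\leq\gamma}\otimes_{\ell^\circ}R$, this gives the required injectivity and completes the argument.
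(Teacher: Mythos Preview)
Your proposal is correct and follows the same route as the paper, which simply cites Proposition~\ref{DilProp1} and leaves the componentwise verification implicit. You have correctly identified and handled the one genuine point the paper glosses over in part~(ii): that the $X$-wise preimages $h_X$ must be shown to satisfy the functoriality and tensor axioms of Definition~\ref{SchStabDef}, and that this follows from injectivity of each $b^{m/\ell}_X$ on $R$-points for $R$ flat over $m^\circ$. One small remark: your justification of that injectivity via ``inverting a uniformizer'' is phrased for the discrete case, but the argument works in the stated generality simply because $\ell^{\leq\gamma}\otimes_{\ell^\circ}m^\circ \hookrightarrow m^{\leq\gamma}$ is an inclusion of $m^\circ$-modules (flatness of $m^\circ$ over $\ell^\circ$) and tensoring with the flat $m^\circ$-algebra $R$ preserves it.
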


Proposition \ref{SplBCComp} implies:
\begin{proposition} \label{SplBCComp2}
  \begin{enumerate}
  \item A homomorphism $\chi\colon D^{\Gamma/|\ell^*|}_{\tilde m} \to \UAut^\otimes(\alpha)_{\tilde m}$ splits $\alpha$ over $\tilde m$ if and only if $b^{m/\ell}_{\tilde m}\circ  \chi=\chi^{m/\ell}$.
  \item If a homomorphism $\chi\colon D^{\Gamma/|\ell^*|} \to \UAut^\otimes(\alpha)$ splits $\alpha$ over $\ell^\circ$, then $b^{m/\ell}$ restricts to an isomorphism
    \begin{equation*}
      {\Cent}_{\UAut(\alpha)_{m^\circ}}(\chi_{m^\circ}) \isoto \Cent_{\UAut^\otimes(\alpha_m)}(b^{m/\ell} \circ \chi_{m^\circ}).
    \end{equation*}
  \end{enumerate}
\end{proposition}

\subsection{The subgroups $U_\delta(\alpha) \subset \UAut^\otimes(\alpha)_{\tilde\ell}$} \label{USS2}
We continue with the setup from the previous subsection.

\begin{lemma} \label{UdEquiv}
  Let $\phi \in \UAut^\otimes(\alpha)(R)$ for some $\tilde\ell$-algebra $R$ and let $\delta \in \Gamma^{\leq 1}$. The following are equivalent:
  \begin{enumerate}
  \item For all $X \in \Rep^\circ \CG$, the image $\phi_X \in \UAut(\alpha_X)(R)$ of $\phi$ lies in $U_{\leq \delta}(\alpha_X)(R)$.
  \item For some tensor generator $X \in \Rep^\circ \CG$, the element $\phi_X$ lies in $U_{\leq \delta}(\alpha_X)(R)$.
  \end{enumerate}
\end{lemma}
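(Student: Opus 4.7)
The direction (i) $\implies$ (ii) is immediate, and when $\delta = 1$ there is nothing to prove since $U_{\leq 1}(\alpha_Y) = \UAut(\alpha_Y)_{\tilde\ell}$. So I focus on (ii) $\implies$ (i) under the assumption $\delta < 1$. Let $\mathcal{S} := \{ Y \in \Rep^\circ \CG \mid g_Y \in U_{\leq \delta}(\alpha_Y)(R)\}$. The plan is to show that $\mathcal{S}$ is closed under duals, direct sums, tensor products, and strict subquotients; since $X \in \mathcal{S}$ by hypothesis and $X$ is a tensor generator, Lemma \ref{TensorGenLemma} then forces $\mathcal{S} = \Rep^\circ \CG$. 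By Proposition \ref{GrdDesc1}, $Y \in \mathcal{S}$ is equivalent to $\tilde g_Y - \id \in F^{\leq \delta}(\End(\alpha_Y)_R)$, where $\tilde g_Y \in \End(\alpha_Y)_R^*$ corresponds to $g_Y$ under \eqref{AutCompMor}; I verify the four closure properties in these terms.

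Closure under direct sums is immediate because $\End$ and its filtration split accordingly, while $g$ is block-diagonal by naturality. For tensor products, the compatibility of $g$ with $\otimes$ gives $\tilde g_{X \otimes Y} = \tilde g_X \otimes \tilde g_Y$ via the natural map $\End(\alpha_X) \otimes \End(\alpha_Y) \to \End(\alpha_{X\otimes Y})$; writing $\tilde g_X = \id + u_X$ and $\tilde g_Y = \id + u_Y$, the expansion
\[
\tilde g_{X \otimes Y} - \id = u_X \otimes \id + \id \otimes u_Y + u_X \otimes u_Y
\]
lies in $F^{\leq \delta}(\End(\alpha_{X \otimes Y})_R)$ thanks to multiplicativity of the tensor product norm (with $u_X \otimes u_Y$ even landing in $F^{\leq \delta^2} \subseteq F^{\leq \delta}$). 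For duals, the canonical isometry $\omega(Y) \otimes \omega(Y)^* \cong \omega(Y^*) \otimes \omega(Y^*)^*$ combined with the transpose identifies $\End(\alpha_Y)$ with $\End(\alpha_{Y^*})$ as filtered $\ell^\circ$-modules; tensor compatibility forces $\tilde g_{Y^*} = ((\tilde g_Y)^*)^{-1} = (\id + u^*)^{-1}$, which I evaluate as the Neumann-type sum $\sum_{n \geq 0}(-u^*)^n$. Since $\delta < 1$ and $F^{\leq \delta'} F^{\leq \delta''} \subseteq F^{\leq \delta'\delta''}$, the powers $(u^*)^n$ lie in $F^{\leq \delta^n}$, which vanishes as soon as $\delta^n \notin I_\ell$ by Lemma \ref{cl}(ii); hence the sum is finite and $\tilde g_{Y^*} - \id$ lies in $F^{\leq \delta}(\End(\alpha_{Y^*})_R)$.

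For strict subquotients I invoke Lemma \ref{FiltExact1}. For a strict monomorphism $Z \into Y$ with $g_Y \in \mathcal{S}$ and $v \in \omega(Z)^{\alpha\leq\gamma}_R$, one has
\[
(g_Z - \id)(v) = (g_Y - \id)(v) \in \omega(Z)^{\alpha\leq\gamma}_R \cap F^{\leq \delta}(\omega(Y)^{\alpha\leq\gamma}_R) = F^{\leq \delta}(\omega(Z)^{\alpha\leq\gamma}_R).
\]
For a strict epimorphism $Y \onto Z$ with $g_Y \in \mathcal{S}$, I lift $v \in \omega(Z)^{\alpha\leq\gamma}_R$ to $v' \in \omega(Y)^{\alpha\leq\gamma}_R$ (possible by Lemma \ref{NormExact}), observe that $(g_Z - \id)(v)$ is the image of $(g_Y - \id)(v')$, and deduce from the strict surjectivity of $\omega(Y)^{\alpha\leq\delta\gamma} \onto \omega(Z)^{\alpha\leq\delta\gamma}$ that it lies in $F^{\leq \delta}(\omega(Z)^{\alpha\leq\gamma}_R)$. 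The main technical point is the dual step, where the finite-sum inversion of $\id + u^*$ hinges on the $\delta < 1$ assumption together with the truncation from Lemma \ref{cl}(ii).
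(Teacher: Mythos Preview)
Your proof is correct and takes the same approach as the paper: pass from the tensor generator $X$ to $\oplus_i X^{\otimes r_i}\otimes (X^*)^{\otimes s_i}$ and then descend to a strict subobject via Lemma~\ref{FiltExact1}. The paper leaves the closure under direct sums, tensor products, and duals implicit, while you spell these out; you also treat strict quotients, which is harmless but not needed since a strict monomorphism already suffices.

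One small remark on the dual step: your termination of the Neumann series relies on $\delta^n$ eventually leaving $I_\ell$, which is guaranteed when $\Gamma$ has rank one but not for general totally ordered $\Gamma$. The cleaner justification (implicit in the proof of Proposition~\ref{GrdDesc1}(iii)) is that any $e\in F^{<1}(\End(\alpha_Y)_{\tilde\ell})$ sends each $F^{\leq\delta_j}$ into $F^{<\delta_j}$, so $e$ is nilpotent of order at most the number of jumps of the (finite-length) filtration; this makes the sum finite without reference to $I_\ell$.
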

\begin{proof}
  Only the direction $(ii) \implies (i)$ requires an argument. Let $X$ be as in (ii) and $Z \in \Rep^\circ \CG$. Then $Z$ is a strict subquotient of an object $Y=\oplus_{i=1}^n X^{\otimes r_i}\otimes (X^*)^{\otimes s_i}$ of $\Rep^\circ \CG$. Using Lemma \ref{FiltExact1} it follows that $\phi_Z \in U_{\leq \delta}(\alpha_Z)(R)$.
\end{proof}

\begin{definition}
  For $\delta \in \Gamma^{\leq 1}$, we let $U_{\leq \delta}(\alpha) \subset \UAut^\otimes(\alpha)_{\tilde \ell}$ be the sub-presheaf consisting of those sections which satisfy the equivalent conditions of Lemma \ref{UdEquiv}.
\end{definition}

Then Lemma \ref{UdEquiv} implies:
\begin{lemma} \label{UdInt}
  Let $X \in \Rep^\circ \CG$ be a tensor generator. Under the induced closed immersion $\UAut^\otimes(\alpha)_{\tilde\ell} \to \UAut(\alpha_X)_{\tilde \ell}$ (c.f. Proposition \ref{UAutRepr} (iii)), the presheaf $U_{\leq \delta}(\alpha)$ is equal to the intersection of $U_{\leq \delta}(\alpha_X)$ and $\UAut^\otimes(\alpha)_{\tilde \ell}$ inside $\UAut(\alpha_X)_{\tilde \ell}$.
\end{lemma}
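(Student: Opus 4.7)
The plan is to observe that this lemma is essentially a reformulation of Lemma \ref{UdEquiv}, which has already been established. I would proceed as follows.

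First, unwinding the definitions: let $R$ be an $\tilde\ell$-algebra and fix a tensor generator $X \in \Rep^\circ \CG$. A section $g$ of $\UAut^\otimes(\alpha)_{\tilde\ell}(R)$ lies in $U_{\leq\delta}(\alpha)(R)$ by definition precisely when it satisfies the equivalent conditions of Lemma \ref{UdEquiv}. Meanwhile, $g$ lies in the intersection $U_{\leq\delta}(\alpha_X) \cap \UAut^\otimes(\alpha)_{\tilde\ell}$ inside $\UAut(\alpha_X)_{\tilde\ell}$ precisely when its image $g_X$ under the closed immersion $\UAut^\otimes(\alpha)_{\tilde\ell} \into \UAut(\alpha_X)_{\tilde\ell}$ lies in $U_{\leq\delta}(\alpha_X)(R)$.

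Next, I would match the two conditions. The second condition is exactly condition (ii) of Lemma \ref{UdEquiv} applied to the chosen tensor generator $X$. Hence the equivalence (i) $\Leftrightarrow$ (ii) from Lemma \ref{UdEquiv} gives the desired equality of sub-presheaves functorially in $R$.

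There is no real obstacle here, since the hard work, namely showing that having the $U_{\leq\delta}$-condition on a single tensor generator forces it on every representation, has already been done in Lemma \ref{UdEquiv} (via Lemma \ref{FiltExact1} applied to a strict monomorphism $Z \into \oplus_i X^{\otimes r_i}\otimes (X^*)^{\otimes s_i}$). The only thing to verify is the compatibility of the set-theoretic description with the scheme-theoretic intersection, which follows from the fact that $\UAut^\otimes(\alpha)_{\tilde\ell} \into \UAut(\alpha_X)_{\tilde\ell}$ is a closed immersion (Proposition \ref{UAutRepr} (iii)) and that $U_{\leq\delta}(\alpha_X)$ is a closed subscheme of $\UAut(\alpha_X)_{\tilde\ell}$ (Proposition \ref{UdRepr1}), so the intersection is representable and its $R$-valued points are exactly the intersection of $R$-valued points.
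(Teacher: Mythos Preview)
Your proposal is correct and matches the paper's approach: the paper simply states that Lemma \ref{UdEquiv} implies the result, and you have spelled out exactly why. The additional remark about representability of the intersection is a harmless elaboration.
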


Together with Proposition \ref{UdRepr1} this implies:
\begin{proposition} \label{UdRepr2}
  For $\delta \in \Gamma^{\leq 1}$, the sub-presheaf $U_{\leq \delta}(\alpha)$ of $\UAut^\otimes(\alpha)_{\tilde \ell}$ is representable by a closed subscheme.
\end{proposition}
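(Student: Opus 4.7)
The plan is a direct assembly of the cited results. By the lemma on tensor generators earlier in the paper (applied to $\CG_{k^\circ}$, which is of finite type), I may fix a tensor generator $X \in \Rep^\circ \CG$. Proposition \ref{UAutRepr}(iii) then produces a closed immersion
\begin{equation*}
  \iota\colon \UAut^\otimes(\alpha)_{\tilde\ell} \hookrightarrow \UAut(\alpha_X)_{\tilde\ell}.
\end{equation*}

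Next I invoke Proposition \ref{UdRepr1}, which tells me that $U_{\leq\delta}(\alpha_X)$ is representable by a closed subscheme of $\UAut(\alpha_X)_{\tilde\ell}$. Lemma \ref{UdInt} identifies $U_{\leq\delta}(\alpha)$ with the scheme-theoretic intersection
\begin{equation*}
  U_{\leq\delta}(\alpha_X) \cap \UAut^\otimes(\alpha)_{\tilde\ell}
\end{equation*}
computed inside $\UAut(\alpha_X)_{\tilde\ell}$, i.e.\ with the fiber product $U_{\leq\delta}(\alpha_X)\times_{\UAut(\alpha_X)_{\tilde\ell}} \UAut^\otimes(\alpha)_{\tilde\ell}$.

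Since the base change of a closed immersion is a closed immersion, this fiber product sits inside $\UAut^\otimes(\alpha)_{\tilde\ell}$ as a closed subscheme, proving the proposition. There is no real obstacle here: all of the work was done in Lemma \ref{UdInt} (which reduced the tensorial $U_{\leq\delta}(\alpha)$ to the vector-space case) and in Proposition \ref{UdRepr1} (which handled the vector-space case using the closed-immersion criterion of Proposition \ref{UAutRepr0}). The present statement is only the formal combination of the two.
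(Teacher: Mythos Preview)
Your proof is correct and follows exactly the paper's approach: the paper states the proposition immediately after Lemma \ref{UdInt} with the sentence ``Together with Proposition \ref{UdRepr1} this implies,'' which is precisely the combination you spell out. Your write-up simply makes explicit the fiber-product argument that the paper leaves implicit.
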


\begin{definition}
  For $\delta \in \Gamma^{\leq 1}$, we let $U_{<\delta}(\alpha) \defeq \colim_{\mu < \delta}U_{\leq \mu}(\alpha) \subset \UAut^\otimes(\alpha)_{\tilde \ell}$. 
\end{definition}

Corollary \ref{UaSplit} and Lemma \ref{UdInt} imply:
\begin{lemma}
  For each $\delta \in \Gamma^{\leq 1}$, there exists $\mu \in \Gamma^{<\delta}$ for which $U_{<\delta}(\alpha)=U_{\leq \mu}(\alpha)$. In particular $U_{<\delta}(\alpha)$ is representable by a closed subscheme of $\UAut^\otimes(\alpha)_{\tilde\ell}$.
\end{lemma}
Lemma \ref{UComm1} implies:
\begin{lemma} \label{UComm2}
  Let $\delta, \delta' \in \Gamma^{\leq 1}$.
  \begin{enumerate}
  \item  The commutator of $U_{\leq \delta}(\alpha)$ and $U_{\leq \delta'}(\alpha)$ is contained in $U_{\leq \delta\delta'}(\alpha)$.
  \item  The commutator of $U_{\leq \delta}(\alpha)$ and $U_{< \delta'}(\alpha)$ is contained in $U_{< \delta\delta'}(\alpha)$.
  \end{enumerate}
\end{lemma}

\begin{definition}
  Let $\delta \in \Gamma^{\leq 1}$. By Lemma \ref{UComm2}, the group scheme $U_{<\delta}(\alpha)$ is normal in $U_{\leq \delta}(\alpha)$. Hence we may consider the quotient scheme
  \begin{equation*}
    \gr_{\delta} \UAut^\otimes(\alpha)_{\tilde\ell} \defeq U_{\leq \delta}(\alpha)/ U_{<\delta}(\alpha).
  \end{equation*}
\end{definition}

\begin{lemma} \label{GrdCU}
  For $\delta \in \Gamma^{<1}$, the group scheme $\gr_{\delta}\UAut^\otimes(\alpha)_{\tilde \ell}$ is unipotent and commutative.
\end{lemma}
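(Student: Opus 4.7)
The plan is to deduce both properties from the single-vector-space version (Proposition \ref{GrdDesc1}) together with the commutator estimate of Lemma \ref{UComm2}.

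First, commutativity is essentially immediate from the commutator estimate: by Lemma \ref{UComm2}(i), $[U_{\leq\delta}(\alpha),U_{\leq\delta}(\alpha)]\subset U_{\leq\delta^2}(\alpha)$. Since $\delta<1$ we have $\delta^2<\delta$, so $U_{\leq\delta^2}(\alpha)\subset U_{<\delta}(\alpha)$, and hence the commutator is killed in the quotient $\gr_\delta\UAut^\otimes(\alpha)_{\tilde\ell}$.

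For unipotency I would fix a tensor generator $X\in \Rep^\circ\CG$ and pass through the closed immersion $\UAut^\otimes(\alpha)_{\tilde\ell}\hookrightarrow\UAut(\alpha_X)_{\tilde\ell}$ of Proposition \ref{UAutRepr}(iii). Lemma \ref{UdInt} gives $U_{\leq\delta}(\alpha)=U_{\leq\delta}(\alpha_X)\cap\UAut^\otimes(\alpha)_{\tilde\ell}$; since $U_{<\delta}(\alpha_X)=U_{\leq\mu}(\alpha_X)$ for some $\mu<\delta$ by Proposition \ref{UdRepr1}(ii) and analogously for $\alpha$, the same identity holds with $<\delta$ in place of $\leq\delta$. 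Consequently the induced morphism
\[
\gr_\delta\UAut^\otimes(\alpha)_{\tilde\ell}\lto\gr_\delta\UAut(\alpha_X)_{\tilde\ell}
\]
is a monomorphism of fppf sheaves of groups. By Proposition \ref{GrdDesc1}(iv) the target is the vector group $\BV(\gr^\delta(\End(\alpha_X)_{\tilde\ell}))$, which is unipotent (and commutative). A closed subgroup sheaf of a vector group is a closed subgroup scheme and in particular unipotent, which yields the claim.

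The only step that calls for a touch of care is verifying that the monomorphism above is a closed immersion, so that $\gr_\delta\UAut^\otimes(\alpha)_{\tilde\ell}$ is genuinely representable by a closed subgroup scheme of the vector group rather than merely a sheaf sitting inside it. This I would handle by noting that the closed immersion $U_{\leq\delta}(\alpha)\hookrightarrow U_{\leq\delta}(\alpha_X)$ together with the equality $U_{<\delta}(\alpha)=U_{\leq\delta}(\alpha)\cap U_{<\delta}(\alpha_X)$ exhibit $\gr_\delta\UAut^\otimes(\alpha)_{\tilde\ell}$ as the (scheme-theoretic) image of $U_{\leq\delta}(\alpha)$ under the smooth quotient map to the vector group; since the target is affine and separated, this image is a closed subscheme. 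This representability check is the only real technical point I foresee, but it is routine rather than difficult.
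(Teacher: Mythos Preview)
Your approach is essentially the paper's: embed $\gr_\delta\UAut^\otimes(\alpha)_{\tilde\ell}$ into the vector group $\gr_\delta\UAut(\alpha_X)_{\tilde\ell}$ via a tensor generator and Lemma~\ref{UdInt}, then conclude. The paper in fact deduces \emph{both} unipotency and commutativity from this single closed immersion, so your separate commutator argument via Lemma~\ref{UComm2} is correct but unnecessary.

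One small point: your justification that the induced map on gradeds is a closed immersion is not quite right as stated. The assertion ``the target is affine and separated, so the scheme-theoretic image is closed'' is true but does not by itself identify that image with the fppf quotient $\gr_\delta\UAut^\otimes(\alpha)_{\tilde\ell}$. The clean argument is simply that over the field $\tilde\ell$ both gradeds are group schemes of finite type (quotients of affine finite type groups by closed normal subgroups), the induced map is a monomorphism by your equality $U_{<\delta}(\alpha)=U_{\leq\delta}(\alpha)\cap U_{<\delta}(\alpha_X)$, and any monomorphism of finite type group schemes over a field is a closed immersion. The paper, incidentally, asserts the closed immersion without further comment, so you are being more scrupulous here than the original.
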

\begin{proof}
   By Lemma \ref{UdInt}, a tensor generator $X \in \Rep^\circ \CG$ induces a closed immersion
  \begin{equation*}
    \gr_{\delta}\UAut^\otimes(\alpha)_{\tilde \ell} \into \gr_{\delta} \UAut^\otimes(\alpha_X)_{\tilde \ell}.
  \end{equation*}
  Since $\gr_{\delta} \UAut^\otimes(\alpha_X)_{\tilde \ell}$ is a vector group scheme by Proposition \ref{GrdDesc1}, this implies the claim.
\end{proof}

We also note that Lemma \ref{cl} implies:
\begin{lemma} \label{UdeltaTriv}
  For $\delta \in \Gamma^{\leq 1}$, the group scheme $U_{\leq\delta}(\alpha)$ is trivial unless $\delta \in I_\ell$.
\end{lemma}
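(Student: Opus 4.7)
The plan is to derive this directly from Lemma~\ref{cl}(ii) applied to the normed vector space $(\omega(X)_\ell, \alpha_X)$ for a tensor generator $X \in \Rep^\circ \CG$. First observe that the case $\delta = 1$ is vacuous: every $x \in \ell^{\circ\circ}$ satisfies $|x| < 1$, so $1 \in I_\ell$. Thus one may assume $\delta \in \Gamma^{<1}$ with $\delta \notin I_\ell$.

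Fix a tensor generator $X \in \Rep^\circ \CG$, which exists since $\CG$ is of finite type over $k^\circ$. Applying Lemma~\ref{cl}(ii) to the normed vector space $(\omega(X)_\ell, \alpha_X)$ yields $F^{\leq \delta}(\omega(X)^{\alpha \leq \gamma}_{\tilde\ell}) = 0$ for every $\gamma \in \Gamma$. Since $F^{\leq\delta}(\omega(X)^{\alpha\leq\gamma}_R)$ is defined in Definition~\ref{SpecFiberFil}(ii) as the base change of $F^{\leq\delta}(\omega(X)^{\alpha\leq\gamma}_{\tilde\ell})$ to $R$, these modules vanish for every $\tilde\ell$-algebra $R$ as well.

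By the definition of $U_{\leq\delta}(\alpha_X)$ preceding Proposition~\ref{UdRepr1}, any section $\phi \in U_{\leq\delta}(\alpha_X)(R)$ must satisfy $\phi(v) - v \in F^{\leq\delta}(\omega(X)^{\alpha \leq \gamma}_R) = 0$ for all $\gamma \in \Gamma$ and $v \in \omega(X)^{\alpha\leq\gamma}_R$, forcing $\phi = \id$. Hence $U_{\leq\delta}(\alpha_X)$ is the trivial group scheme. By Lemma~\ref{UdInt}, the sub-presheaf $U_{\leq\delta}(\alpha)$ equals the intersection of $U_{\leq\delta}(\alpha_X)$ with the closed subscheme $\UAut^\otimes(\alpha)_{\tilde\ell} \subset \UAut(\alpha_X)_{\tilde\ell}$ provided by Proposition~\ref{UAutRepr}(iii), and so is trivial as well. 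No real obstacle is expected here: once Lemma~\ref{cl}(ii) is established, the passage to fiber functors is purely formal via the embedding supplied by a tensor generator.
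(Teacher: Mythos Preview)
Your proof is correct and follows the same route as the paper, which simply records that Lemma~\ref{cl} implies the result without spelling out details. You have made the argument explicit by passing through a tensor generator and invoking Lemma~\ref{UdInt}; this is exactly the intended mechanism, though one could equally appeal directly to condition (i) of Lemma~\ref{UdEquiv} (triviality for \emph{all} $X$) rather than routing through a single tensor generator.
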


Lemma \ref{SplitConj} implies:
\begin{lemma} \label{SplitConj1}
  Let $R$ be a $\tilde \ell$-algebra. For any two homomorphisms $$\chi,\chi'\colon D^{\Gamma/|\ell^*|}_R \to \UAut^\otimes(\alpha)_R$$ which split $\alpha$ over $R$, there exists a unique section $u \in U_{<1}(\alpha)(R)$ for which $\chi'=\leftexp{u}{\chi}$.
\end{lemma}

Let now $m$ be a non-archimedean extension of $\ell$ for which $|m^*|=\Gamma$. Then Constructions \ref{UAutBCC} and \ref{ChiRamCons} give us homomorphisms
\begin{equation*}
  b^{m/\ell}_{\tilde m}\colon \UAut^\otimes(\alpha)_{\tilde m} \to \UAut^\otimes(\alpha_m)_{\tilde m}
\end{equation*}
and
\begin{equation*}
  \chi^{m/\ell} \colon D^{\Gamma/|\ell^*|}_{\tilde m} \to \UAut^\otimes(\alpha_m)_{\tilde m}.
\end{equation*}
We consider the relationship of the group schemes $U_{\leq\delta}(\alpha)$ with these homomorphisms:
\begin{lemma} \label{UBKer}
  The homomorphism $b^{m/\ell}_{\tilde m} \colon \UAut^\otimes(\alpha)_{\tilde m} \to \UAut^\otimes(\alpha_m)_{\tilde m}$ has kernel $U_{<1}(\alpha)_{\tilde m}$.
\end{lemma}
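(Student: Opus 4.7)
The plan is to reduce the claim to the analogous statement for a single normed vector space, and then compute the relevant kernel directly on the endomorphism algebra using Proposition \ref{GrCompProp}.

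First I would reduce to the vector space case. Fix a tensor generator $X \in \Rep^\circ \CG$. By Proposition \ref{UAutRepr}(iii) applied to both $\alpha$ and $\alpha_m$, together with the functoriality of the base change morphism from Construction \ref{UAutBCC}, there is a commutative square of $\tilde m$-group schemes
\begin{equation*}
\xymatrix{
\UAut^\otimes(\alpha)_{\tilde m} \ar[r]^{b^{m/\ell}_{\tilde m}} \ar@{^{(}->}[d] & \UAut^\otimes(\alpha_m)_{\tilde m} \ar@{^{(}->}[d] \\
\UAut(\alpha_X)_{\tilde m} \ar[r]^{b^{m/\ell}_{\tilde m}} & \UAut(\alpha_{X,m})_{\tilde m}
}
\end{equation*}
whose vertical maps are closed immersions. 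Hence the kernel of the top map equals the intersection of $\UAut^\otimes(\alpha)_{\tilde m}$ with the kernel of the bottom map. Applying Lemma \ref{UdInt} to each $U_{\leq \mu}$ for $\mu < 1$ and passing to the colimit defining $U_{<1}$ gives $U_{<1}(\alpha)_{\tilde m} = U_{<1}(\alpha_X)_{\tilde m} \cap \UAut^\otimes(\alpha)_{\tilde m}$. So it suffices to prove the statement for the normed vector space $(\omega(X), \alpha_X)$.

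For the vector space case I would transfer the problem to the endomorphism algebras via Proposition \ref{UAutComp0}. For every $\tilde m$-algebra $R$, this proposition identifies $\UAut(\alpha_X)(R)$ with $\End(\alpha_X)_R^*$ and $\UAut(\alpha_{X,m})(R)$ with $\End(\alpha_{X,m})_R^*$; unraveling Constructions \ref{UAutCompCons0} and \ref{UAutBCC0} shows that under these identifications $b^{m/\ell}_R$ is the restriction to units of the natural ring homomorphism $\End(\alpha_X)_R \to \End(\alpha_{X,m})_R$ induced from the inclusion $\omega(X) \otimes \omega(X)^* \into \omega(X)_m \otimes \omega(X)_m^*$. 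Combining Proposition \ref{GrdDesc1}(i)--(ii) with the fact that $U_{<1}(\alpha_X) = U_{\leq \mu}(\alpha_X)$ for some $\mu < 1$ identifies $U_{<1}(\alpha_X)(R)$ with the set of $g \in \End(\alpha_X)_R^*$ satisfying $g - 1 \in F^{<1}(\End(\alpha_X)_R)$. The task therefore reduces to showing that the kernel of $\End(\alpha_X)_R \to \End(\alpha_{X,m})_R$ equals $F^{<1}(\End(\alpha_X)_R)$.

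The final step applies Proposition \ref{GrCompProp} to the normed vector space $(\omega(X) \otimes \omega(X)^*, \alpha_X \otimes \alpha_X^*)$ with $\gamma = 1$, $\delta = 1 \in I_\ell$, and $x = 1$. This yields an isomorphism $\End(\alpha_X)_{\tilde m}/F^{<1}(\End(\alpha_X)_{\tilde m}) \isoto \gr^{m/\ell}_{[1]}(\End(\alpha_{X,m})_{\tilde m})$ through which the natural map $\End(\alpha_X)_{\tilde m} \to \End(\alpha_{X,m})_{\tilde m}$ factors. Since $\gr^{m/\ell}_{[1]}(\End(\alpha_{X,m})_{\tilde m})$ is a direct summand of $\End(\alpha_{X,m})_{\tilde m}$ by Proposition \ref{BCGrProps}(i), the kernel over $\tilde m$ is exactly $F^{<1}(\End(\alpha_X)_{\tilde m})$, and flatness over the field $\tilde m$ extends the computation to an arbitrary $\tilde m$-algebra $R$. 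The main obstacle I anticipate is the careful verification that $b^{m/\ell}$ corresponds to the natural ring map on endomorphism lattices; this compatibility between the presentation-based definition in Construction \ref{UAutBCC0} and the multiplicative action from Construction \ref{UAutCompCons0} is what bridges the moduli-theoretic definition of $b^{m/\ell}$ and the concrete filtration computation supplied by Subsection \ref{BCSS1}.
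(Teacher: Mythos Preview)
Your argument is correct. Both your proof and the paper's proof ultimately rest on the same computation, namely that the map $V^{\alpha\leq\gamma}_{\tilde m}\to V^{\alpha_m\leq\gamma}_{\tilde m}$ has kernel $F^{<1}(V^{\alpha\leq\gamma}_{\tilde m})$, which is the content of Lemma~\ref{VKer} (and is what makes Proposition~\ref{GrCompProp} work for $\delta=1$). The difference lies in the packaging.

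The paper argues directly at the level of the fiber functor: for $\phi$ in the kernel and any $v\in\omega(X)^{\alpha\leq\gamma}_R$, the element $\phi(v)-v$ dies in $\omega(X)^{\alpha_m\leq\gamma}_R$, and the exactness of \eqref{BCPres} forces it to lie in $F^{\leq\delta}(\omega(X)^{\alpha\leq\gamma}_R)$ for some $\delta<1$. This is a two-line elementwise argument which treats only one inclusion explicitly and leaves the other as implicit.

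Your route is more structured: you first reduce to a single normed vector space via a tensor generator and Lemma~\ref{UdInt}, then pass to the endomorphism algebras via Proposition~\ref{UAutComp0}, and finally invoke Proposition~\ref{GrCompProp} with $\delta=1$, $x=1$ to identify the kernel of $\End(\alpha_X)_{\tilde m}\to\End(\alpha_{X,m})_{\tilde m}$ with $F^{<1}(\End(\alpha_X)_{\tilde m})$. This has the advantage of treating both inclusions at once, and of making the role of the filtration on $\End(\alpha)$ completely transparent. A minor remark: the characterization of $U_{\leq\delta}(\alpha_X)(R)$ as $\{g:\tilde g-1\in F^{\leq\delta}(\End(\alpha_X)_R)\}$ is stated in the \emph{proof} of Proposition~\ref{GrdDesc1} rather than in its statement, so that is the more accurate citation. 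Your anticipated obstacle---that $b^{m/\ell}$ corresponds under Proposition~\ref{UAutComp0} to the natural ring map---is genuine but routine: both actions on $V_m^{\alpha_m\leq\gamma}$ are the restriction of the evaluation map $V_m\otimes V_m^*\otimes V_m\to V_m$.
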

\begin{proof}
  From the way we defined $b^{m/\ell}$ using \eqref{BCPresCons}, we see that a section $\phi \in \UAut^\otimes(\alpha)(R)$ over some $\tilde m$-algebra $R$ is in $\ker(b^{m/\ell})$ if and only if for any $\gamma \in \Gamma$, any $X \in \Rep^\circ \CG$ and any $x \in \omega(X)^{\alpha\leq \gamma}_R$, the element $\phi(x)-x$ maps to zero in $\omega(X)^{\alpha_m \leq \gamma}_R$. So it suffices to show that $\ker(V^{\alpha\leq\gamma}_{\tilde m} \to V^{\alpha_m \leq \gamma}_{\tilde m})=F^{<1}(V^{\alpha\leq\gamma}_{\tilde m})$ for any $(V,\alpha) \in \Norm^\circ(\ell)$. This follows from Lemma \ref{VKer}.
\end{proof}

\begin{construction}
  The homomorphism $\chi^{m/\ell}\colon D^{\Gamma/|\ell^*|}_{\tilde m} \to \UAut^\otimes(\alpha_m)_{\tilde m}$ from Construction \ref{ChiRamCons} together with the action of $\UAut^\otimes(\alpha_m)_{\tilde m}$ on $\UEnd^\otimes(\alpha_m)_{\tilde m}$ by conjugation induce a weight decomposition of $\UEnd^\otimes(\alpha_m)_{\tilde m}$ which we write as
  \begin{equation}\label{TWeightDec}
    \UEnd^\otimes(\alpha_m)_{\tilde m} \cong \oplus_{[\mu]\in \Gamma/|\ell^*|} \gr^{m/\ell}_{[\mu]}(\UEnd^\otimes(\alpha_m)_{\tilde m}).
  \end{equation}

  From the various definitions, one sees that this decomposition can be equivalently described by saying that for any $\tilde m$-algebra $R$, a section $(h_X)_X \in \UEnd^\otimes(\alpha_m)_{\tilde m}(R)$ lies in the submodule $\gr^{m/\ell}_{[\mu]}(\UEnd^\otimes(\alpha_m)_{\tilde m})(R)$ if and only if for all $X \in \Rep^\circ \CG$, the element $h_X \in \End(\alpha_X)_R$ lies in the summand $\gr^{m/\ell}_{[\mu]}(\End(\alpha_X)_R)$ from Definition \ref{BCDecDef}.
\end{construction}

\begin{construction} \label{dCons}
  Let $\delta \in I_\ell \cap \Gamma^{<1}$ and let $x \in m^{\circ\circ}$ be an element of norm $\delta$.

  Consider a $\tilde m$-algebra $R$, an element $u \in U_{\leq\delta}(\alpha)(R)$ and an object $X \in \Rep^\circ \CG$.  The homomorphism \eqref{CompComp} associated to our chosen $x$, the normed vector space $V=\alpha(X \otimes X^*)$ and $\gamma=1$ gives a surjection
\begin{equation} \label{EndComp}
  F^{\leq\delta} (\End(\alpha_X)_R) \onto \gr_{[\delta]}^{m/\ell}(\End(\alpha_{X,m})_{\tilde m})_R
\end{equation}
with kernel $F^{<\delta}( \End(\alpha_X)_R)$.

Let $u_X$ be the image of $u$ in $\UAut(\alpha_X)(R)$ and $\tilde u_X \in \End(\alpha_X)^*_R$ the element corresponding to $u_X$ under \eqref{AutCompMor}. Then by the definition of $U_{\leq\delta}(\alpha)$, the element $\tilde u_X - \id_{\omega(X)}$ is contained in $F^{\leq\delta}(\End(\alpha_X)_R)$. We denote by $d(u)_X \in \gr_{[\delta]}^{m/\ell}(\End(\alpha_{X,m})_{\tilde m})_R$ its image under \eqref{EndComp}.

If $Y$ is a second object of $\Rep^\circ \CG$, then the identity 
\begin{multline*}
  \tilde u_{X\otimes Y} - \id_{\omega(X\otimes Y)} = \\(\tilde u_X - \id_{\omega(X)}) \otimes \id_{\omega(Y)} + \id_{\omega(X)} \otimes (\tilde u_Y - \id_{\omega(Y)}) + (\tilde u_X -\id_{\omega(X)}) \otimes (\tilde u_Y - \id_{\omega(Y)})
\end{multline*}
holds in $\End(\alpha_{X\otimes Y})_R$. Since $(\tilde u_X -\id_{\omega(X)}) \otimes (\tilde u_Y - \id_{\omega(Y)}) \in F^{\leq \delta^2}(\End(\alpha_{X\otimes Y})_R)$ lies in the kernel of the homomorphism \eqref{EndComp} associated to $X \otimes Y$, this shows that the $d(u)_X$ for varying $X$ are compatible with tensor products in the sense of Definition \ref{UEndDef}. So they define an element
\begin{equation*}
  d(u) \in \gr_{[\delta]}^{m/\ell}(\UEnd^\otimes(\alpha_m)_{\tilde m})(R).
\end{equation*}
The assignment $u \mapsto d(u)$ is functorial in $R$ and so we obtain a morphism
\begin{equation}
  \label{dMor}
  d \colon U_{\leq\delta}(\alpha)_{\tilde m} \to \gr_{[\delta]}^{m/\ell} (\UEnd^\otimes(\alpha_m)_{\tilde m})
\end{equation}
of presheaves over $\tilde m$.

\end{construction}

\begin{lemma} \label{dProps}
  In the situation of Construction \ref{dCons} the following hold:
  \begin{enumerate}
  \item The morphism $d$ is a homomorphism with respect to the additive group structure on its target.
  \item The kernel of $d$ is equal to $U_{<\delta}(\alpha)_{\tilde m}$.
  \end{enumerate}
\end{lemma}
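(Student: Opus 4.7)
My plan is to reduce both parts to the single-space results of Proposition \ref{GrdDesc1}(i)-(ii), arguing functorially in $X \in \Rep^\circ \CG$ and using Lemma \ref{UdEquiv} for the tensor-generator reduction in part (ii). The key observation is that by the construction of $d$, for each $X$ its $X$-component factors through the surjection \eqref{EndComp} whose kernel is $F^{<\delta}(\End(\alpha_X)_R)$, turning each claim into a filtration statement that has already been verified for single normed vector spaces.

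For (i), I would fix sections $u, u' \in U_{\leq\delta}(\alpha)(R)$ and expand for each $X$
\[
\tilde{(uu')}_X - \id_{\omega(X)} = (\tilde u_X - \id_{\omega(X)}) + (\tilde u'_X - \id_{\omega(X)}) + (\tilde u_X - \id_{\omega(X)})(\tilde u'_X - \id_{\omega(X)})
\]
in $\End(\alpha_X)_R$. Both single-factor terms lie in $F^{\leq\delta}(\End(\alpha_X)_R)$ by the definition of $U_{\leq\delta}(\alpha)$, and their product then lies in $F^{\leq\delta^2}(\End(\alpha_X)_R) \subset F^{<\delta}(\End(\alpha_X)_R)$ since $\delta < 1$. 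As the cross term is thus killed by \eqref{EndComp}, passing to the quotient and letting $X$ vary yields $d(uu') = d(u) + d(u')$, which is exactly additivity in the target.

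For (ii), the inclusion $U_{<\delta}(\alpha) \subset \ker(d)$ is immediate: if $u \in U_{\leq\mu}(\alpha)(R)$ for some $\mu < \delta$, then for each $X$ the element $\tilde u_X - \id_{\omega(X)}$ lies in $F^{\leq\mu}(\End(\alpha_X)_R) \subset F^{<\delta}(\End(\alpha_X)_R)$, which is the kernel of \eqref{EndComp}, so $d(u)=0$. For the reverse inclusion I would fix a tensor generator $X$ of $\Rep^\circ \CG$ and apply Proposition \ref{UdRepr1}(ii) to produce some $\mu < \delta$ with $U_{<\delta}(\alpha_X) = U_{\leq\mu}(\alpha_X)$. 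If $d(u)=0$ then in particular $\tilde u_X - \id_{\omega(X)} \in F^{<\delta}(\End(\alpha_X)_R)$, so the image $u_X$ of $u$ in $\UAut(\alpha_X)(R)$ lies in $U_{\leq\mu}(\alpha_X)(R)$, and Lemma \ref{UdEquiv} promotes this to $u \in U_{\leq\mu}(\alpha)(R) \subset U_{<\delta}(\alpha)(R)$. There is no substantial obstacle here; the one step requiring care is the submultiplicativity $F^{\leq\delta}(\End(\alpha_X)_R) \cdot F^{\leq\delta}(\End(\alpha_X)_R) \subset F^{\leq\delta^2}(\End(\alpha_X)_R)$ under composition, but this is just the operator-norm bound and is visible at once from a splitting basis of $\alpha_X$.
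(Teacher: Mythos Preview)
Your proof is correct and follows essentially the same approach as the paper. Part (i) is literally the same computation, and for part (ii) the paper gives only the one-line justification that the kernel of \eqref{EndComp} equals $F^{<\delta}(\End(\alpha_X)_R)$, while you spell out both inclusions explicitly via a tensor generator and Lemma \ref{UdEquiv}; this extra detail is harmless and makes the argument more transparent.
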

\begin{proof}
  (i) With the notation from Construction \ref{dCons}, for elements $u,u' \in U_{\leq\delta}(\alpha)(R)$ we find
  \begin{equation*}
    \tilde u_X \circ \tilde u'_X-\id_{\omega(X)}=(\tilde u_X-\id_{\omega(X)}) + (\tilde u'_X -\id_{\omega(X)}) + (\tilde u_X - \id_{\omega(X)}) \circ (\tilde u'_X - \id_{\omega(X)})
  \end{equation*}
  in $\End(\alpha_X)_R$. Since $(\tilde u_X - \id_{\omega(X)}) \circ (\tilde u'_X - \id_{\omega(X)}) \in F^{\leq\delta^2}(\End(\alpha_X)_R)$ lies in the kernel of \eqref{EndComp}, this implies (i).

  (ii) follows from the fact that the kernel of \eqref{EndComp} is equal to $F^{<\delta}(\End(\alpha_X)_R)$.
\end{proof}
\subsection{Main result} \label{MainThmSS}
Now we assume that $\ell$ is discretely valued and that $\Gamma$ is of rank one. The following is our main result about normed fiber functors $\alpha$ over such a field and the associated group schemes $\UAut^\otimes(\alpha)$.

\begin{theorem} \label{MainTheorem}
  Let $\CG$ be a smooth affine group scheme over $\ell^\circ$ and $\alpha\colon \Rep^\circ \CG \to \Norm^\circ(\ell)$ a normed fiber functor.
  \begin{enumerate}
 
  \item The sheaf $\UAut^\otimes(\alpha)$ is representable by a smooth affine group scheme over $\ell^\circ$.
  \item For each $\delta<1$, the group scheme $\gr_\delta \UAut^\otimes(\alpha)_{\tilde \ell}$ is a vector group scheme.
  \item For each $\delta < 1$, the group schemes $U_{\leq \delta}(\alpha)$ and $U_{<\delta}(\alpha)$ are split unipotent group schemes.
  \item There exists a homomorphism $\chi\colon D^{\Gamma}_{\tilde\ell} \to \UAut^\otimes(\alpha)_{\tilde \ell}$ which splits $\alpha$ over $\tilde\ell$.
  \item For every homomorphism $\chi\colon D^{\Gamma/|\ell^*|}_{\tilde\ell} \to \UAut^\otimes(\alpha)_{\tilde\ell}$ splitting $\alpha$ over $\tilde \ell$, the centralizer $\Cent_{\UAut^\otimes(\alpha)_{\tilde\ell}}(\chi)$ of $\chi$ gives a semidirect product decomposition $$\UAut^\otimes(\alpha)_{\tilde \ell}=U_{<1}(\alpha) \rtimes \Cent_{\UAut^\otimes(\alpha)_{\tilde\ell}}(\chi).$$
  \item The special fiber $\UAut^\otimes(\alpha)_{\tilde\ell}$ has the same rank as $\CG_{\tilde\ell}$.
  \end{enumerate}
  If the non-archimedean field $\ell$ is in addition Henselian, then the following hold:
  \begin{enumerate}[resume]
  \item The normed fiber functor $\alpha$ is splittable.
      \item For every residually maximal split torus $\CT$ of $\UAut^\otimes(\alpha)$, there exists a unique homomorphism $\chi\colon D^{\Gamma/|\ell^*|}_{\ell^\circ} \to \CT \subset \UAut^\otimes(\alpha)$ which splits $\alpha$ over $\ell^\circ$. 
  \item Any two homomorphisms $\chi\colon D^{\Gamma/|\ell^*|}_{\ell^\circ} \to \UAut^\otimes(\alpha)$ which split $\alpha$ over $\ell^\circ$ are conjugate by an element of $\UAut^\otimes(\alpha)(\ell^\circ)$.

  \end{enumerate}
\end{theorem}
The proof of Theorem \ref{MainTheorem} takes up the rest of this subsection.

By Proposition \ref{UAutRepr} and Lemma \ref{UAutReprConc}, the assumption on $\ell$ ensures that the functor $\UAut^\otimes(\alpha)$ is representable by an affine group scheme of finite type over $\ell^\circ$. Let $\ell \subset m$ be a strictly Henselian non-archimedean extension for which $|m^*|=\Gamma$. Then by Lemma \ref{SpecialFunctorClass} there exists a canonical fiber functor $\lambda$ on $\Rep^\circ \CG$ over $m^\circ$ for which $\alpha_m=\alpha_{\lambda}$. Example \ref{SpSchAut} gives us a canonical isomorphism $\UAut^\otimes(\alpha_m) \cong \UAut^\otimes(\lambda)$. By Theorem \ref{FFIso}, the group scheme $\UAut^\otimes(\lambda)$ is isomorphic to $\CG_{m^\circ}$ fpqc-locally on $m^\circ$ and so in particular smooth.

We will consider the homomorphism $\chi^{m/\ell}\colon D^{\Gamma/|\ell^*|}_{\tilde m}\to \UAut^\otimes(\alpha_m)_{\tilde m}$ from Construction \ref{ChiRamCons} as well as its centralizer $L^{m/\ell}$ which is smooth by Proposition \ref{CentSmooth}.
\begin{lemma}
  \label{ProofLemma1}
  \begin{enumerate}
  \item  $\dim(\UAut^\otimes(\alpha_m)_{\tilde m})= \dim(\UAut^\otimes(\alpha)_{\tilde m})=\dim(U_{<1}(\alpha)_{\tilde m}) + \dim(L^{m/\ell})$
  \item For each $\delta < 1$, the subquotient $\gr_\delta \UAut^\otimes(\alpha)_{\tilde m}$ of $\UAut^\otimes(\alpha)_{\tilde m}$ is a vector group scheme.
  \item The group scheme $U_{<1}(\alpha)$ is a smooth unipotent group scheme.
  \end{enumerate}
\end{lemma}
\begin{proof}

 Example \ref{SpSchAut} and \eqref{LieEndIso2} give canonical isomorphisms
\begin{equation*}
  \UEnd^\otimes(\alpha_m) \cong \UEnd^\otimes(\lambda) \cong \BV(\Lie(\UAut^\otimes(\lambda))) \cong \BV(\Lie(\UAut^\otimes(\alpha_m)))
\end{equation*}
over $m^\circ$ which are $\UAut^\otimes(\alpha_m)$-equivariant. By this equivariance, under this isomorphism, the weight decomposition \eqref{TWeightDec} of $\UEnd^\otimes(\alpha_m)_{\tilde m}$ is the weight decomposition of $\Lie(\UAut^\otimes(\alpha_m)_{\tilde m})$ induced by the homomorphism $\chi^{m/\ell}\colon D^{\Gamma/|\ell^*|}_{\tilde m}\to \UAut^\otimes(\alpha_m)_{\tilde m}$ and the adjoint action of $\UAut^\otimes(\alpha_m)$ on $\Lie(\UAut^\otimes(\alpha_m))$. We write this decomposition as
\begin{equation*}
  \Lie(\UAut^\otimes(\alpha_m)_{\tilde m}) =\oplus_{[\mu]\in \Gamma/|\ell^*|}\gr^{m/\ell}_{[\mu]}(\Lie(\UAut^\otimes(\alpha_m)_{\tilde m})).
\end{equation*}
We now look at the filtration of $U_{<1}(\alpha)_{\tilde m} \subset \UAut^\otimes(\alpha)_{\tilde m}$ by the subgroups $U_{\leq \delta}(\alpha)_{\tilde m}$ for elements $\delta \in I_\ell \setminus\{1\} \subset \Gamma$ (c.f. Lemma \ref{UdeltaTriv}): For each such $\delta$, by Lemma \ref{dProps}, the morphism \eqref{dMor} gives a monomorphism
\begin{equation} \label{grImm}
  \gr_{\delta} \UAut^\otimes(\alpha)_{\tilde m} \into \BV(\gr^{m/\ell}_{[\delta]}(\Lie(\UAut^\otimes(\alpha_m)_{\tilde m})))
\end{equation}
of group schemes over $\tilde m$. Such a monomorphism is a closed immersion.

By Lemma \ref{IellInj} the composition $I_\ell \into \Gamma \onto \Gamma/|\ell^*|$ is injective. Using this and \eqref{grImm} we find:
\begin{align*} \label{DimEst1}
  \dim(U_{<1}(\alpha)_{\tilde m}) &= \sum_{\delta\in I_\ell, \delta<1} \dim(\gr_{\delta} \UAut^\otimes(\alpha)_{\tilde m}) \\ 
                       &\leq \sum_{[\delta] \in \Gamma/ |\ell^*|,\; [\delta] \not= [1]} \dim_{\tilde m}(\gr^{m/\ell}_{[\delta]}(\Lie(\UAut^\otimes(\alpha_m)_{\tilde m}))) \\
  &= \dim_{\tilde m}(\Lie(\UAut^\otimes(\alpha_m)_{\tilde m})) - \dim_{\tilde m}(\gr^{m/\ell}_{[1]}(\Lie(\UAut^\otimes(\alpha_m)_{\tilde m}))).
\end{align*}

By Proposition \ref{CentSmooth}, the submodule $\gr^{m/\ell}_{[1]}(\Lie(\UAut^\otimes(\alpha_m)_{\tilde m}))$ appearing here is the Lie algebra of $L^{m/\ell}$. Using this, this inequality translates to
\begin{equation} \label{DimEq1}
  \dim(U_{<1}(\alpha)_{\tilde m}) \leq \dim(\UAut^\otimes(\alpha_m)_{\tilde m}) - \dim(L^{m/\ell}).
\end{equation}

By Lemma \ref{UBKer} the morphism $b^{\ell/m}\colon \UAut^\otimes(\alpha)_{\tilde m} \to L^{m/\ell}$ has kernel $U_{<1}(\alpha)_{\tilde m}$. The upper semi-continuity of the fiber dimension of finite type morphisms gives $$\dim(\UAut^\otimes(\alpha_m)_{\tilde m})=\dim(\UAut^\otimes(\alpha_m)_m)=\dim(\UAut^\otimes(\alpha)_m) \leq \dim(\UAut^\otimes(\alpha)_{\tilde m}).$$ Together these facts give the reverse inequality
\begin{equation} \label{DimEq2}
  \dim(\UAut^\otimes(\alpha_m)_{\tilde m}) - \dim(U_{<1}(\alpha)_{\tilde m}) \leq \dim(\UAut^\otimes(\alpha)_{\tilde m}) - \dim(U_{<1}(\alpha)_{\tilde m}) \leq \dim(L^{m/\ell}).
\end{equation}
Hence \eqref{DimEq1} and \eqref{DimEq2} are equalities, which shows (i).

This implies that source and target of \eqref{grImm} have the same dimension, and hence that \eqref{grImm} is an isomorphism. This shows (ii). Then varying $\delta$ in (ii) and flat descent imply (iii).
\end{proof}

\begin{lemma} \label{ProofLemma2}
  The group scheme $\UAut^\otimes(\alpha)_{\tilde m}$ is smooth and lies in an exact sequence
  \begin{equation*}
    1 \to U_{<1}(\alpha)_{\tilde m} \to \UAut^\otimes(\alpha)_{\tilde m} \toover{b^{m/\ell}} L^{m/\ell}
  \end{equation*}
  in which the image of $b^{m/\ell}$ is open in $L^{m/\ell}$.
\end{lemma}
\begin{proof}
 By Lemma \ref{UBKer} the morphism $b^{m/\ell}$ has kernel $U_{<1}(\alpha)_{\tilde m}$. Since $L^{m/\ell}$ is smooth, Lemma \ref{ProofLemma1} (i) implies that the image of $b^{m/\ell}$ must be open in $L^{m/\ell}$. So $\UAut^\otimes(\alpha)_{\tilde m}$ is an extension of the smooth group schemes $U_{<1}(\alpha)_{\tilde m}$ and $b^{\ell/m}(\UAut^\otimes(\alpha)_{\tilde m})$ and hence smooth.
\end{proof}

\begin{lemma} \label{ProofLemma3}
  \begin{enumerate}
    \item  For every maximal torus $T$ of $\UAut^\otimes(\alpha)_{\tilde\ell}$, there exists a homomorphism $\chi\colon D^\Gamma_{\tilde \ell} \to T \subset \UAut^\otimes(\alpha)_{\tilde\ell}$ which splits $\alpha$ over $\tilde\ell$.
    \item The special fiber $\UAut^\otimes(\alpha)_{\tilde\ell}$ has the same rank as $\CG_{\tilde\ell}$.
  \end{enumerate}
\end{lemma}
\begin{proof}

(i) By Lemma \ref{LiftingLemma}, the homomorphism $\chi^{m/\ell}$ factors through some maximal torus of $L^{m/\ell}$. Since $\chi^{m/\ell}$ is central in $L^{m/\ell}$, the geometric conjugacy of the maximal tori of $L^{m/\ell}$ shows that this actually holds for every maximal torus of $L^{m/\ell}$.

Let $T'\defeq b^{m/\ell}_{\tilde m}(T_{\tilde m})$. It follows from Lemma \ref{ProofLemma2} and the fact that $U_{<1}(\alpha)$ is unipotent by Lemma \ref{ProofLemma1}, that the homomorphism $b^{m/\ell}_{\tilde m}$ restricts to a bijection $T_{\tilde m} \isoto T'$ and $T'$ is a maximal torus of $L^{m/\ell}$. So $\chi^{m/\ell}$ factors through $T'$. Let $\chi\colon D^{\Gamma/|\ell^*|}_{\tilde m} \to \UAut^\otimes(\alpha)_{\tilde m}$ be the composition of $\chi^{m/\ell}$ with the inverse of the isomorphism $T_{\tilde m} \isoto T'$. Then by Proposition \ref{SplBCComp2} (i), the homomorphism $\chi$ splits $\alpha$ over $\tilde m$. Hence by Lemma \ref{SplittingUnique2} it descends to a homomorphism $D^{\Gamma/|\ell^*|}_{\tilde \ell} \to T$, which we also denote by $\chi$ and which splits $\alpha$ over $\tilde \ell$. Next, by Lemma \ref{SplitLift1}, we may lift $\chi$ to a homomorphism $D^{\Gamma}_{\tilde \ell} \to T$ which splits $\alpha$ over $\tilde\ell$.

(ii) By Lemma \ref{LiftingLemma}, the centralizer $L^{m/\ell}$ of $\chi^{m/\ell}$ contains some maximal torus of $\UAut^\otimes(\alpha_m)_{\tilde m}$. Hence $L^{m/\ell}$, $\UAut^\otimes(\alpha_m)_{\tilde m}$ and the inner form $\CG_{\tilde \ell}$ of $\UAut^\otimes(\alpha_m)_{\tilde\ell}$ all have the same rank. Using this, the above implies (ii).
\end{proof}

\begin{lemma} \label{ProofLemma4}
  \begin{enumerate}
  \item   The group scheme $\UAut^\otimes(\alpha)$ is smooth over $\ell^\circ$.
  \item  For every homomorphism $\chi\colon D^{\Gamma/|\ell^*|}_{\tilde\ell} \to \UAut^\otimes(\alpha)_{\tilde\ell}$ splitting $\alpha$ over $\tilde \ell$, the centralizer $\Cent_{\UAut^\otimes(\alpha)_{\tilde\ell}}(\chi)$ of $\chi$ gives a semidirect product decomposition $$\UAut^\otimes(\alpha)_{\tilde \ell}=U_{<1}(\alpha) \rtimes \Cent_{\UAut^\otimes(\alpha)_{\tilde\ell}}(\chi).$$
  \item If $\ell$ is Henselian, the normed fiber functor $\alpha$ is splittable.
  \end{enumerate}
\end{lemma}
\begin{proof}
  Since claims (i) and (ii) are fpqc-local on $\Spec(\ell^\circ)$, using Lemma \ref{UnrBC} we may replace $\ell$ by its Henselization. So we assume that $\ell$ is Henselian.
  
  By Lemmas \ref{ProofLemma1} and \ref{ProofLemma3}, the assumptions of Proposition \ref{FibrewiseSmoothness2} are satisfied. So Proposition \ref{FibrewiseSmoothness2} (i) gives us an smooth open and closed subgroup scheme $\UAut^\otimes(\alpha)^\text{fl} \subset \UAut^\otimes(\alpha)$.

Let now $T \subset \UAut^\otimes(\alpha)_{\tilde\ell}$ be a maximal torus and $\chi\colon D^\Gamma_{\tilde\ell} \to T$ as in Lemma \ref{ProofLemma3}. Since $T\subset \UAut^\otimes(\alpha)^\circ_{\tilde\ell} \subset \UAut^\otimes(\alpha)^\text{fl}_{\tilde\ell}$, by Theorem \ref{ChiDeform}, the homomorphism $\chi$ can be deformed to a homomorphism $\xi\colon D^\Gamma_{\ell^\circ} \to \UAut^\otimes(\alpha)^\text{fl} \into \UAut^\otimes(\alpha)$ over $\ell^\circ$. Such a homomorphism $\xi$ now splits $\alpha$ over $\ell^\circ$. By Proposition \ref{SplitExtEquiv} this proves (iii).

Let now $\xi'$ be the composition $D^{\Gamma/|\ell^*|}_{\ell^\circ} \to D^{\Gamma}_{\ell^\circ} \toover{\xi} \UAut^\otimes(\alpha)$. Then by Proposition \ref{SplBCComp2} (ii), the homomorphism $b^{m/\ell}$ restricts to an isomorphism 
\begin{equation} \label{SomeCentIso}
  \Cent_{\UAut^\otimes(\alpha)_{m^\circ}}(\xi') \isoto \Cent_{\UAut^\otimes(\alpha_m)}(b^{m/\ell}\circ\xi').
\end{equation}

Since $\UAut^\otimes(\alpha_m)$ is smooth, by Proposition \ref{CentSmooth} so is $\Cent_{\UAut^\otimes(\alpha_m)}(b^{\ell/m} \circ \xi')$. Hence by \eqref{SomeCentIso} so is the group scheme $\Cent_{\UAut^\otimes(\alpha)}(\xi')$.  Since $U_{<1}(\alpha)$ is connected, Proposition \ref{FibrewiseSmoothness2} (ii) applied to $\CH=\Cent_{\UAut^\otimes(\alpha)}(\xi')$ shows that $\UAut^\otimes(\alpha)$ is smooth. This proves (i).

Let finally $\tilde\chi\colon D^{\Gamma/|\ell^*|}_{\tilde\ell} \to \UAut^\otimes(\alpha)_{\tilde\ell}$ be a homomorphism splitting $\alpha$ over $\tilde\ell$. By Lemma \ref{LiftingLemma} there exists a maximal torus $T$ of $\UAut^\otimes(\alpha)_{\tilde\ell}$ through which $\tilde\chi$ factors. Then by Lemma \ref{SplitLift1} we may lift $\tilde\chi$ to a homomorphism $\chi\colon D^\Gamma_{\tilde\ell} \to \UAut^\otimes(\alpha)_{\tilde\ell}$. If we apply the above to this $\chi$, the fact that $(b^{m/\ell}\circ\xi')_{\tilde m}=\chi^{m/\ell}$ by Proposition \ref{SplBCComp2} shows that taking the special fiber of \eqref{SomeCentIso} proves (ii). 
\end{proof}

\begin{lemma} \label{ProofLemma6}
  For each $\delta < 1$, the group scheme $\gr_\delta \UAut^\otimes(\alpha)_{\tilde\ell}$ is a vector group scheme. 
\end{lemma}
\begin{proof}
By Lemma \ref{UdeltaTriv} it suffices to consider the case $\delta \in I_\ell$.

  We already know from Lemma \ref{ProofLemma1} that $\gr_\delta \UAut^\otimes(\alpha)_{\tilde m}$ is a vector group scheme. We descend this statement to $\tilde\ell$ as follows: If $\operatorname{char}(\tilde \ell)=0$, the statement holds by \cite[Cor. B.2.7]{CGP}. So we may assume that $\operatorname{char}(\tilde\ell)=p>0$.

  By Lemma \ref{ProofLemma3}, there exists a homomorphism $D^\Gamma_{\tilde\ell} \to \UAut^\otimes(\alpha)_{\tilde \ell}$ splitting $\alpha$. Via conjugation on the normal subgroup schemes $U_{\leq \delta}(\alpha) \subset \UAut^\otimes(\alpha)_{\tilde\ell}$, such a homomorphism induces an action of $D^\Gamma_{\tilde\ell}$ on $\gr_\delta\UAut^\otimes(\alpha)_{\tilde\ell}$. By descent $\gr_\delta \UAut^\otimes(\alpha)_{\tilde \ell}$ is a smooth commutative $p$-torsion group scheme. Hence by \cite[Thm. B.4.3]{CGP}, this group scheme is a vector group scheme if the action of $D^\Gamma_{\tilde\ell}$ has no fixed points. For this, since $\delta \in I_\ell$, it suffices to show that $D^\Gamma_{\tilde m}$ acts on $\gr_\delta \UAut^\otimes(\alpha)_{\tilde m}$ with weights in $\delta |\ell^*|$. This follows for example using the injection $\gr_\delta \UAut^\otimes(\alpha)_{\tilde m} \into  \gr_{[\delta]}^{m/\ell} (\UEnd^\otimes(\alpha_m)_{\tilde m})$ given by \eqref{dMor}.  
\end{proof}

Now we are almost done:
\begin{proof}[Proof of Theorem \ref{MainTheorem}]
  Statements (i), (v) and (vii) are given by Lemma \ref{ProofLemma4}. Statement (ii) is given by Lemma \ref{ProofLemma6} and implies (iii). Statements (iv) and (vi) are given by Lemma \ref{ProofLemma3}.

  To prove (viii), let $\CT$ be a residually maximal split torus of $\UAut^\otimes(\alpha)$. By applying Lemma \ref{ProofLemma3} to a maximal torus $T$ of $\UAut^\otimes(\alpha)_{\tilde\ell}$ containing $\CT_{\tilde\ell}$, we see that there exists a homomorphism $\chi'\colon D^\Gamma_{\tilde\ell} \to \CT_{\tilde\ell}$ which splits $\alpha$ over $\tilde\ell$. Such a $\chi'$ then deforms to a homomorphism $\xi'\colon D^\Gamma_{\ell^\circ} \to \CT$ which splits $\alpha$ over $\ell^\circ$. This shows the existence claim in (viii). Unicity in (viii) is given by Lemma \ref{SplittingUnique2}. Using the unicity statement in (viii), claim (ix) now follows from Proposition \ref{ResSplitProp} (ii).
\end{proof}

\section{A Tannakian formalism for Bruhat-Tits buildings} \label{BTSec}
In this section we let $\Gamma$ be the totally ordered group $(\BR^{>0},\cdot, \leq)$, we let $\ell$ be a discretely valued Henselian non-archimedean field and $\CG$ a smooth affine group scheme over $\ell^\circ$. We denote $\CG_\ell$ by $G$. We also let $\breve\ell$ be a maximal unramified extension of $\ell$.

\subsection{The set $N^\otimes(\CG)$}

\begin{definition}
  Let $\omega \colon \Rep^\circ G \to \Mod_{\ell}$ be a fiber functor.
  \begin{enumerate}
  \item A \emph{norm $\theta$ on $\omega$} is a tuple $(\theta_X)_{X\in\Rep^\circ G}$ consisting of splittable norms $\theta_X$ on each $\omega(X)$ which are functorial in $X$ in such a way that the assignment $X \mapsto (\omega(X_\ell),\theta_{X_\ell})$ gives a normed fiber functor $\alpha_\theta$ on $\Rep^\circ \CG$ over $\ell^\circ$.
  \item Given a norm $\theta$ on $\omega$ and an element $g \in \UAut^\otimes(\omega)(\ell)$, we let $g\cdot \theta$ be the norm $(g_X \cdot \theta_X)_{X \in \Rep^\circ X}$, where $g_X$ denotes the element of $\GL(\omega(X))$ given by $g$. (In other words $(g\cdot\theta)_X(v)=\theta_X(g_X^{-1}v)$ for an element $v \in \omega(X)$.)
  \end{enumerate}

\end{definition}

\begin{remark}
  A norm on $\omega$ is the same as a normed fiber functor $\alpha\colon \Rep^\circ \CG\to \Norm^\circ(\ell)$ together with an isomorphism $\forg\circ \alpha\cong \omega$.
\end{remark}

\begin{definition}
  We let $N^\otimes(\CG)$ be the set of norms $\theta$ on $\omega_{G}$. We equip this set with the $G(\ell)$-action given by $\theta \mapsto g\cdot \theta$.

  For each $X \in \Rep^\circ \CG$, the lattice $\omega_\CG(X) \subset \omega_G(X)$ defines a norm $\alpha_{\omega_\CG(\CX)}$ on $\omega_G(X)$. For varying $X$, these norms define a norm on $\omega_G$ which we denote by $\theta_\CG \in N^\otimes(\CG)$.
\end{definition}

Note that, by construction, the stabilizer in $G(\ell)$ of a point $\theta \in N^\otimes(\CG)$ is equal to the group $\UAut^\otimes(\alpha_\theta)(\ell^\circ)$. 
\begin{lemma} \label{NormBCLemma}
  Let $\ell \subset m$ be a non-archimedean extension. For each norm $\theta$ on $\omega_G$, there exists a unique norm $\theta_m$ on $\omega_{G_m}$ for which the following diagram commutes:
  \begin{equation*}
    \xymatrix{
      \Rep^\circ \CG \ar[r]^{\alpha_\theta} \ar[d] & \Norm^\circ(\ell) \ar[d] \\
      \Rep^\circ \CG_{m^\circ} \ar[r]_{\alpha_{\theta_m}}& \Norm^\circ(m) \\
      }
    \end{equation*}
  \end{lemma}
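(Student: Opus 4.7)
The plan is to define $\theta_m$ by a direct construction using base changed representations, and verify that it is forced upon us by the commutativity requirement.

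\emph{Uniqueness.} Let $Y \in \Rep^\circ \CG_{m^\circ}$. By Lemma \ref{BCEpi}, there is a strict epimorphism $\pi\colon X_{m^\circ} \onto Y$ for some $X \in \Rep^\circ \CG$. The commutativity of the diagram forces $\alpha_{\theta_m}(X_{m^\circ})$ to carry the base-changed norm $(\theta_X)_m$. Since $\alpha_{\theta_m}$ must be an exact $k^\circ$-linear tensor functor and $\pi$ is a strict epimorphism in $\Rep^\circ \CG_{m^\circ}$, the morphism $\pi$ must also be strict in $\Norm^\circ(m)$. This pins down $(\theta_m)_Y$ as the quotient (image) norm on $Y_m$ induced from $(\theta_X)_m$ via $\pi_m$, proving uniqueness.

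\emph{Existence.} Take this quotient norm as the definition of $(\theta_m)_Y$. Splittability is automatic: by the base change lemma for splittings, the image in $(X_{m^\circ})_m$ of a splitting basis of $\theta_X$ is a splitting basis of $(\theta_X)_m$, and its image under $\pi_m$ contains a splitting basis of the quotient norm. To show independence of $\pi$, given a second strict epimorphism $\pi'\colon X'_{m^\circ} \onto Y$, form the combined epimorphism $\pi + \pi'\colon (X \oplus X')_{m^\circ} \onto Y$; both $\pi$ and $\pi'$ factor through this via the split direct-summand inclusions, and a direct verification using splitting bases shows that the three induced quotient norms on $Y_m$ coincide. Functoriality in $Y$ is checked similarly: for $f\colon Y \to Y'$ and strict epis $\pi\colon X_{m^\circ} \onto Y$, $\pi'\colon X'_{m^\circ} \onto Y'$, the diagonal presentation $(X \oplus X')_{m^\circ} \onto Y \oplus Y'$ together with the projection to $Y'$ shows $f$ is bounded. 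Compatibility with tensor products comes from the strict epimorphism $\pi \otimes \pi'\colon (X \otimes X')_{m^\circ} \onto Y \otimes Z$ and the fact that the tensor product of quotient norms is the corresponding quotient norm. Exactness is then verified using these ingredients together with the sub/quotient compatibility in $\Norm^\circ(m)$.

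\emph{Main obstacle.} The main technical step is the independence of $(\theta_m)_Y$ from the choice of presentation, together with the related fact that quotient norms obtained from different strict epimorphisms agree. Once this is in place, functoriality, tensor compatibility, and exactness follow by systematically combining presentations via direct sums and tensor products, essentially reducing every verification to a statement about base-changed splittable norms on objects lying in the image of $\Rep^\circ \CG \to \Rep^\circ \CG_{m^\circ}$, where the commutativity of the diagram is tautological.
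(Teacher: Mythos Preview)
Your overall strategy is exactly the paper's: use Lemma~\ref{BCEpi} to produce a strict epimorphism from a base-changed object, define $(\theta_m)_Y$ as the resulting quotient norm, and check well-definedness, functoriality, tensor compatibility, and exactness. The uniqueness argument is identical.

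The one substantive difference is the device you use for independence of the presentation. The paper compares two strict epimorphisms $Y_{m^\circ}\onto X$ and $Y'_{m^\circ}\onto X$ by choosing a third object $Y''$ (in $\Rep^\circ\CG$) with a strict epimorphism $Y''_{m^\circ}\onto Y_{m^\circ}\times_X Y'_{m^\circ}$; this gives a \emph{common refinement} that surjects onto both $Y_{m^\circ}$ and $Y'_{m^\circ}$, and the two composite maps $Y''_{m^\circ}\to X$ coincide, which is the right set-up for comparing the two quotient norms. Your direct-sum approach goes the other way: both $X_{m^\circ}$ and $X'_{m^\circ}$ sit \emph{inside} $(X\oplus X')_{m^\circ}$ via split monomorphisms. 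From this one only gets the easy inequalities $q_{\pi+\pi'}\le q_\pi$ and $q_{\pi+\pi'}\le q_{\pi'}$ (more preimages can only lower the infimum), and it is not clear how ``a direct verification using splitting bases'' yields the reverse inequalities---that would amount to showing that any element of the kernel of $\pi+\pi'$ can be absorbed into one summand without increasing the norm, and the kernel of $\pi+\pi'$ is precisely the fiber product $X_{m^\circ}\times_Y X'_{m^\circ}$ that the paper uses explicitly. So either there is a small gap here, or you are implicitly invoking the fiber product anyway; in either case, replacing the direct sum by the fiber product (as the paper does) makes the argument go through cleanly.
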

  \begin{proof}
    Let $\theta$ be a norm on $\omega_G$. By Theorem \ref{MainTheorem}, there exist an integral model $\lambda\colon \Rep^\circ \CG \to \Mod_{\ell^\circ}$ of $\omega_G$ together with a homomorphism $\chi\colon D^\Gamma_{\ell^\circ} \to \UAut^\otimes(\lambda)$ which split $\alpha_\theta$. By Theorem \ref{FFIso}, the fiber functor $\lambda$ differs from $\omega_\CG$ by a $\CG$-torsor $I$ over $\ell^\circ$. Twisting $\omega_{\CG_{m^\circ}}$ by $I_{m^\circ}$ gives a fiber functor $\lambda_{m^\circ}$ which fits in the following commutative diagram:
  \begin{equation*}
    \xymatrix{
      \Rep^\circ \CG \ar[r]^{\lambda} \ar[d] & \Mod_{\ell^\circ} \ar[d] \\
      \Rep^\circ \CG_{m^\circ} \ar[r]_{\lambda_{m^\circ}}& \Mod_{m^\circ} \\
      }
    \end{equation*}    
    Since $\UAut^\otimes(\lambda)$ is the sheaf of $\CG$-linear automorphisms of $I$, and analogously $\UAut^\otimes(\lambda_{m^\circ})$ is the sheaf of automorphisms of $I_{m^\circ}$, there is a canonical isomorphism $\UAut^\otimes(\lambda_{m^\circ})=\UAut^\otimes(\lambda)_{m^\circ}$.

    Using this, one checks that the normed fiber functor $\alpha_{\lambda_{m^\circ},\chi_{m^\circ}} \colon \Rep^\circ \CG_{m^\circ} \to \Norm^\circ(m)$ extends $\alpha_\theta$. The underlying fiber functor of $\alpha_{\lambda_{m^\circ},\chi_{m^\circ}}$ extends $\omega_G$. It follows from Lemma \ref{BCEpi}, that up to isomorphism there is only one such fiber functor, namely $\omega_{G_m}$. So there exists an isomorphism $\forg \circ \alpha_{\lambda_{m^\circ},\chi_{m^\circ}}\cong \omega_{G_m}$, using which $\alpha_{\lambda_{m^\circ},\chi_{m^\circ}}$ defines a norm $\theta_m$ on $\omega_m$ as desired.

The unicity of $\theta_m$ follows using Lemma \ref{BCEpi}.
  \end{proof}
\begin{construction} \label{BFunc}
  We consider the functoriality of $N^\otimes(\CG)$: Let $\ell \subset m$ be a non-archimedean Henselian field extension. Let furthermore $\CG'$ be a smooth affine group scheme over $m^\circ$ together with a homomorphism $h\colon \CG_{m^\circ} \to \CG'$. We denote by $h^*\colon \Rep^\circ \CG'\to \Rep^\circ \CG_{m^\circ}$ the associated tensor functor.

  For a norm $\theta \in N^\otimes(\CG)$, we obtain from Lemma \ref{NormBCLemma} the norm $\theta_m \in N^\otimes(\CG_{m^\circ})$. To this in turn we may associate the norm $(\theta_{m,h^*(X)})_{X \in \Rep^\circ \CG'} \in N^\otimes(\CG')$.

 Altogether we obtain a natural map
    \begin{equation*}
      N^\otimes(\CG) \to N^\otimes(\CG')
    \end{equation*}
    which is equivariant with respect to $G(\ell) \to \G'(m)$.
\end{construction}

\begin{lemma} \label{BFuncInj}
  In Construction \ref{BFunc}, if the homomorphism $h$ is a closed immersion, then the induced map $N^\otimes(\CG) \to N^\otimes(\CG')$ is an injection.
\end{lemma}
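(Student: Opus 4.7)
The plan is to factor the map $B^\otimes(\CG) \to B^\otimes(\CG')$ as the composition
\begin{equation*}
B^\otimes(\CG) \xrightarrow{\theta \mapsto \theta_m} B^\otimes(\CG_{m^\circ}) \xrightarrow{\tau \mapsto (\tau_{h^*(X)})_X} B^\otimes(\CG')
\end{equation*}
and show that each factor is injective, the second using the hypothesis that $h$ is a closed immersion.

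For the first factor, I would recall from the construction of base change of norms in Subsection \ref{BCSS1} that for any $X \in \Rep^\circ \CG$, the norm $\theta_{m,X_{m^\circ}}$ on $\omega_{\CG_{m^\circ}}(X_{m^\circ})_m = \omega_\CG(X)_\ell \otimes_\ell m$ is precisely the base-change norm $(\theta_X)_m$, which restricts to $\theta_X$ on the subspace $\omega_\CG(X)_\ell$. Hence $\theta_m$ determines $\theta_X$ for every $X \in \Rep^\circ \CG$, i.e.\ determines $\theta$.

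For the second factor, suppose $\tau, \tau' \in B^\otimes(\CG_{m^\circ})$ satisfy $\tau_{h^*(W)} = \tau'_{h^*(W)}$ for every $W \in \Rep^\circ \CG'$. Given any $V \in \Rep^\circ \CG_{m^\circ}$, Proposition \ref{ClosedImmCrit} supplies a $W \in \Rep^\circ \CG'$ and strict subobjects $Y \subset Y' \subset h^*(W)$ in $\Rep^\circ \CG_{m^\circ}$ with $V \cong Y'/Y$. Since $\alpha_\tau$ and $\alpha_{\tau'}$ are $\ell^\circ$-linear exact tensor functors to $\Norm^\circ(m)$, they carry these strict sub- and quotient maps to strict morphisms of normed vector spaces. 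Thus $\tau_{Y'}$ is the subspace norm induced by $\tau_{h^*(W)}$, and $\tau_V$ is the quotient norm induced by $\tau_{Y'}$; the same formulas hold for $\tau'$. Since $\tau_{h^*(W)} = \tau'_{h^*(W)}$ by assumption, it follows that $\tau_V = \tau'_V$, so $\tau = \tau'$.

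I do not anticipate any real obstacle: once one reduces to the two steps above, the only input beyond formal manipulation is Proposition \ref{ClosedImmCrit}, which directly translates the closed-immersion hypothesis into the tensor-generation property needed to recover arbitrary $V$ from pullbacks $h^*(W)$. The one point to handle carefully is to remember that a normed fiber functor is by definition exact, so that strict monomorphisms and epimorphisms in $\Rep^\circ \CG_{m^\circ}$ become strict in $\Norm^\circ(m)$, which is precisely what makes the norm on the subquotient $V$ rigidly determined by the norm on $h^*(W)$.
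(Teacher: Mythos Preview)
Your proposal is correct and follows essentially the same approach as the paper: factor through $B^\otimes(\CG_{m^\circ})$ and prove each factor is injective, invoking Proposition~\ref{ClosedImmCrit} for the second. The only cosmetic difference is that for the first factor the paper cites Lemma~\ref{BCEpi} (which underlies the construction of $\theta_m$ in Lemma~\ref{NormBCLemma}), whereas you argue directly that $(\theta_X)_m$ restricts to $\theta_X$; these amount to the same thing.
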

\begin{proof}
  The commutative diagram in Lemma \ref{BFunc} shows that the map $$N^\otimes(\CG) \to N^\otimes(\CG_{m^\circ}), \; \theta\mapsto \theta_m$$ is an injection. Proposition \ref{ClosedImmCrit} implies that $N^\otimes(\CG_{m^\circ}) \to N^\otimes(\CG')$ is an injection.
\end{proof}

\begin{construction} \label{GalB}
  Now we take $m$ to be a non-archimedean extension of $\ell$ which is Galois over $\ell$. For $\sigma \in \Gal(m/\ell)$ and $X \in \Rep^\circ \CG_{m^\circ}$, the $m^\circ$ module $$\sigma^*(X)\defeq X \otimes_{m^\circ,\sigma} m^\circ$$ is naturally a $\CG_{m^\circ}$-representation which comes with the semilinear bijection $b_\sigma\colon X \to \sigma^*(X), v \mapsto v \otimes 1$. Using this, we may associate to any norm $\theta=(\theta_X)_{X\in \Rep^\circ \CG_{m^\circ}} \in N^\otimes(\CG_{m^\circ})$ the norm $\sigma\cdot\theta$ given by $(\sigma\cdot\theta)_X(v)=\theta_{(\sigma^{-1})*(X)}(b_{\sigma^{-1}}(x))$. This defines a left action of $\Gal(m/\ell)$ on $N^\otimes(\CG_{m^\circ})$.
\end{construction}

\begin{theorem} \label{GalBProps}
  In the situation of Construction \ref{GalB}, the following hold:
  \begin{enumerate}
  \item The inclusion $N^\otimes(\CG) \to N^\otimes(\CG_{m^\circ})$ factors through the fixed-point set $N^\otimes(\CG_{m^\circ})^{\Gal(m/\ell)}$.
  \item  If $m$ is tamely ramified over $\ell$, the inclusion $N^\otimes(\CG) \into N^\otimes(\CG_{m^\circ})^{\Gal(m/\ell)}$ is a bijection.
  \end{enumerate}
\end{theorem}
\begin{proof}
  (i) By Lemma \ref{BCEpi}, every norm $\theta \in N^\otimes(\CG_{m^\circ})$ is determined by the norms $\theta_{X_{m^\circ}}$ for $X \in \Rep^\circ \CG$. But for $X \in \Rep^\circ \CG$ and $\sigma \in \Gal(m/\ell)$, the representations $X_{m^\circ}$ and $\sigma^*(X_{m^\circ})$ are isomorphic in such a way that $b_\sigma$ becomes the natural action of $\sigma$ on $X_{m^\circ}$. In other words, $(\sigma\cdot\theta)_{X_{m^\circ}}$ is equal to $\sigma\cdot \theta_{X_m^\circ}$ where the later action is the one from Definition \ref{GalNormAction}. Using this, it follows that if $\theta$ is in the image of $N^\otimes(\CG)$, then it is fixed by the $\Gal(m/\ell)$-action.

  (ii) Let $\theta \in N^\otimes(\CG_{m^\circ})$ be a $\Gal(m/\ell)$-invariant norm. By the above, Proposition \ref{TameDesc0} gives for each $X \in \Rep^\circ \CG$ the splittable norm $\theta_X \defeq \theta_{X_{m^\circ}}|_{X_\ell}$ on $X_\ell$ which by base change to $m$ gives back $\theta_{X_{m^\circ}}$. These are by construction functorial in $X$ and we need to check that these norms are compatible with tensor products and exact. Exactness is given by Lemma \ref{NormExactnessDescent} and compatibility with tensor products can be checked after base change to $m$. 
\end{proof}

\begin{construction} \label{ChiNormCons3}
  For every homomorphism $\chi\colon D_{\ell^\circ}^\Gamma \to \CG=\UAut^\otimes(\omega_\CG)$, Construction \ref{ChiNormCons} gives us the normed fiber functor $\alpha_{\omega_\CG,\chi}$ split by $(\omega_\CG,\chi)$. This functor comes with a natural isomorphism $\psi\colon \forg\circ \alpha_{\omega_\CG,\chi} \cong \omega_G$ via which we consider $\alpha_{\omega_\CG,\chi}$ as a norm $\theta_{\chi}$ on $\omega_\CG$.   
\end{construction}

\begin{proposition} \label{StrongerSplitting}
  Assume that every \'etale $\CG$-torsor over $\ell^\circ$ is trivial. Then for every norm $\theta \in N^\otimes(\CG)$, there exist residually maximal split torus $\CT$ of $\CG$, a cocharacter $\chi\colon D^\Gamma_{\ell^\circ} \to \CT \subset \CG$ and an element $g \in G(\ell)$ such that $\theta=g\cdot \theta_\chi$.
\end{proposition}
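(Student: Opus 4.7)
The plan is to reduce to the splittability statement in Theorem \ref{MainTheorem} and then use the triviality of \'etale $\CG$-torsors to transfer the resulting abstract splitting to one realized inside $\CG$ itself. First I would apply Theorem \ref{MainTheorem}(i) to the normed fiber functor $\alpha_\theta$ associated to $\theta \in B^\otimes(\CG)$. Unpacking Definition \ref{SplittingDef}, this produces a fiber functor $\lambda \colon \Rep^\circ \CG \to \Mod_{\ell^\circ}$, an isomorphism $\phi\colon \lambda_\ell \isoto \omega$ (where $\omega \defeq (\omega_\CG)_\ell$), and a cocharacter $\chi_1\colon D^\Gamma_{\ell^\circ} \to \UAut^\otimes(\lambda)$ such that $\phi$ identifies $\alpha_\theta$ with the normed fiber functor $\alpha_{\lambda,\chi_1}$ from Construction \ref{ChiNormCons}.

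Next I would compare $\lambda$ with the standard fiber functor $\omega_\CG$. The sheaf $\UIsom^\otimes(\lambda,\omega_\CG)$ is a torsor under $\CG = \UAut^\otimes(\omega_\CG)$, and by Theorem \ref{FFIso} it is trivial fpqc-locally on $\ell^\circ$. Since $\CG$ is affine, this torsor is representable by an affine $\ell^\circ$-scheme via faithfully flat descent; since $\CG$ is smooth, the torsor is itself smooth, so every fpqc-local section can be refined to an \'etale-local section. Hence the torsor is \'etale-locally trivial, and the hypothesis that every \'etale $\CG$-torsor over $\ell^\circ$ is trivial yields a global isomorphism $\psi\colon \lambda \isoto \omega_\CG$ over $\ell^\circ$. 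Conjugation by $\psi$ transports $\chi_1$ to a homomorphism $\chi\colon D^\Gamma_{\ell^\circ} \to \UAut^\otimes(\omega_\CG) = \CG$.

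Finally, I would set $g \defeq \phi \circ \psi_\ell^{-1} \in \UAut^\otimes(\omega)(\ell) = G(\ell)$ and check the identity $\theta = g \cdot \theta_\chi$. By the naturality of Construction \ref{ChiNormCons} the isomorphism $\psi$ identifies $\alpha_{\lambda,\chi_1}$ with $\alpha_{\omega_\CG,\chi}$, so on generic fibers $\psi_\ell$ transports the norm given by $\alpha_{\omega_\CG,\chi}$ (namely $\theta_\chi$) into the norm given by $\alpha_{\lambda,\chi_1}$ via the identity on $\lambda_\ell$; precomposing with $\phi = g\circ\psi_\ell$ then gives precisely $\theta = g\cdot \theta_\chi$ in view of the definition of the $G(\ell)$-action. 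To conclude, since $D^\Gamma_{\ell^\circ}$ is a split torus and $\CG$ is smooth, Proposition \ref{ResSplitProp}(iii) provides a residually maximal split torus $\CT \subset \CG$ through which $\chi$ factors.

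The main obstacle I anticipate is the second step, namely the upgrade from the fpqc-local isomorphism of Theorem \ref{FFIso} to a global isomorphism over $\ell^\circ$. This depends on representing $\UIsom^\otimes(\lambda,\omega_\CG)$ as a smooth affine $\CG$-torsor so that the smoothness of $\CG$ (to pass from fpqc-local to \'etale-local) and the hypothesis on \'etale torsors (to pass from \'etale-local to global) can be combined; the remaining steps are then formal consequences of the Tannakian setup already developed.
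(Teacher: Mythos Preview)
Your proposal is correct and follows essentially the same approach as the paper's proof: both invoke Theorem \ref{MainTheorem}(i) for splittability, trivialize the fpqc $\CG$-torsor $\UIsom^\otimes(\lambda,\omega_\CG)$ using smoothness of $\CG$ and the hypothesis on \'etale torsors, transport the cocharacter via the resulting isomorphism, and finish with Proposition \ref{ResSplitProp}(iii). The paper records the identity $\theta=g\cdot\theta_\chi$ by appealing to Lemma \ref{SplitTransl}, which is exactly the pointwise form of the naturality argument you sketch.
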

\begin{proof}
  By Theorem \ref{MainTheorem}, there exists some integral model $\lambda$ of $\omega_G$ together with a homomorphism $\chi\colon D_{\ell^\circ}^\Gamma \to \UAut^\otimes(\lambda)$ such that $(\lambda,\chi)$ splits $\alpha_\theta$. By Theorem \ref{FFIso}, the two fiber functors $\lambda$ and $\omega_\CG$ on $\Rep^\circ \CG$ differ by a flat $\CG$-torsor over $\ell^\circ$. Since $\CG$ is smooth, any such torsor is trivial \'etale-locally on $\ell^\circ$. Hence our assumption ensures that there exists an isomorphism $\Phi\colon \omega_\CG \isoto \lambda$ of fiber functors. Under the given isomorphisms $\lambda_\ell \cong \omega_G \cong \omega_{\CG,\ell}$, the generic fiber of such a $\Phi$ is given by multiplication by some element $g \in G(\ell)$. Then $\lambda=g\cdot \omega_{\CG}$ and conjugation by $g^{-1}$ gives an isomorphism $\UAut^\otimes(\lambda) \isoto \CG$ over $\ell^\circ$. We denote the image $D^{\Gamma}_{\ell^\circ} \to \CG$ of $\chi$ under this isomorphism by $\leftexp{g^{-1}}{\chi}$. Then Lemma \ref{SplitTransl} shows that $\theta=g\cdot \theta_{\leftexp{g^{-1}}{\chi}}$. Finally there exists a residually maximal split torus $\CT$ of $\CG$ through which $\chi$ factors by Proposition \ref{ResSplitProp}.
\end{proof}

\subsubsection*{Description of $N^\otimes(\CG)$ for split tori}

In case $\CG$ is a split torus $\CT$ over $\ell^\circ$, whose generic fiber we denote by $T$, we can describe the $T(\ell)$-set $I(\CT)$ completely:

Since every homomorphism $D^{\Gamma}_{\ell} \to T$ extends uniquely to a homomorphism $D^{\Gamma}_{\ell^\circ} \to \CT$, Construction \ref{ChiNormCons3} gives the natural map
\begin{equation} \label{BTT}
  X_*(T)_\Gamma\defeq X_*(T) \otimes \Gamma \to N^\otimes(\CT), \chi \mapsto \theta_{\chi}.
\end{equation}

Furthermore, there is a natural isomorphism
\begin{equation}
  \label{TIso}
  X_*(T)_{|\ell^*|} \cong T(\ell)/\CT(\ell^\circ)
\end{equation}
which for a cocharacter $\chi\colon \Gm \to T$ and an element $x \in \ell^*$ sends $\chi \otimes |x|$ to $\chi(x)T(\ell^\circ)$. We let $t \in T(\ell)$ act on $X_*(T)_\Gamma$ via translation by minus the element of $X_*(T)_{|\ell^*|}$ corresponding to $t\CT(\ell^\circ)$ under \eqref{TIso}.

We write the map $T(\ell) \to X_*(T)_{|\ell^*|}$ given by \eqref{TIso} as $t \mapsto |t|$.
\begin{proposition} \label{TBGDesc}
  The map \eqref{BTT} is a $T(\ell)$-equivariant bijection.
\end{proposition}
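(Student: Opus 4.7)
The plan is to exploit the simple structure of $\Rep^\circ \CT$ for a split torus: every object decomposes canonically as $X = \bigoplus_{\mu \in X^*(T)} X_\mu$ into isotypical components (each free of finite rank over $\ell^\circ$), this grading is respected by every morphism, and tensor products are governed by $X_\mu^{(1)} \otimes X_\nu^{(1)} \cong X_{\mu+\nu}^{(1)}$, where $X_\mu^{(1)}$ denotes the one-dimensional representation of character $\mu$.

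First I will show that a norm $\theta \in B^\otimes(\CT)$ is completely determined by the values $\gamma_\mu := \theta_{X_\mu^{(1)}}(1) \in \Gamma$ for $\mu \in X^*(T)$. Boundedness in both directions of the projection $X \to X_\mu$ and the inclusion $X_\mu \into X$, both of which are morphisms in $\Rep^\circ \CT$, forces $\theta_X$ to be the maximum of its restrictions to the summands $X_\mu$; a further application of functoriality to the $\CT$-equivariant maps $X_\mu^{(1)} \to X_\mu$ sending $1$ to a basis vector, together with splittability, identifies $\theta_X|_{X_\mu}$ with the lattice norm of $\omega_\CT(X_\mu)$ scaled by $\gamma_\mu$. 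Tensor compatibility combined with $X_\mu^{(1)} \otimes X_\nu^{(1)} \cong X_{\mu+\nu}^{(1)}$ then forces $\gamma_{\mu+\nu} = \gamma_\mu \gamma_\nu$, so that $\mu \mapsto \gamma_\mu$ is a group homomorphism. This will yield a natural bijection
\begin{equation*}
B^\otimes(\CT) \iso \Hom_\BZ(X^*(T),\Gamma) = X_*(T)\otimes_\BZ \Gamma = X_*(T)_\Gamma,
\end{equation*}
whose inverse sends a homomorphism $(\gamma_\mu)$ to the norm determined by the formula above; this norm is splittable because one can assemble a splitting basis from bases of the $\omega_\CT(X_\mu)$.

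Next I will identify the inverse of this bijection with the map \eqref{BTT}. For $\chi \in X_*(T)_\Gamma$, viewed as a homomorphism $D^\Gamma_{\ell^\circ} \to \CT$, the composite $\mu\circ\chi$ lies in $X^*(D^\Gamma) = \Gamma$ and equals $\langle \mu,\chi\rangle$; applying Construction \ref{ChiNormCons} directly to $X_\mu^{(1)}$ gives $\theta_{\chi,X_\mu^{(1)}}(1) = \langle \mu,\chi\rangle$, so the bijection above sends $\theta_\chi \mapsto \chi$.

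Finally, $T(\ell)$-equivariance is checked by direct computation: an element $t \in T(\ell)$ acts on $X_\mu^{(1)}$ by multiplication by $\mu(t) \in \ell^*$, so
\begin{equation*}
(t\cdot \theta_\chi)_{X_\mu^{(1)}}(1) = \theta_{\chi,X_\mu^{(1)}}(\mu(t)^{-1}) = |\mu(t)|^{-1}\langle \mu,\chi\rangle.
\end{equation*}
Writing $t = \chi'(x)$ and using $|\mu(\chi'(x))| = |x|^{\langle \mu,\chi'\rangle}$, the assignment $\mu\mapsto |\mu(t)|^{-1}\langle\mu,\chi\rangle$ is precisely the pairing of $\mu$ with a translate of $\chi$ by the class of $t\CT(\ell^\circ)$ under \eqref{TIso}, which gives equivariance. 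I expect the only step with genuine content to be the first one (extracting a homomorphism $X^*(T)\to \Gamma$ from $\theta$ via the splitting properties of $\Norm^\circ(\ell)$ applied to the canonical grading); the remaining identifications are formal manipulations of the pairing between $X^*(T)$ and $X_*(T)$.
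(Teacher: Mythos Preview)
Your argument is correct and takes a genuinely different route from the paper. You exploit the explicit structure of $\Rep^\circ \CT$ for a split torus to build an explicit inverse $B^\otimes(\CT) \to X_*(T)_\Gamma$, $\theta \mapsto (\mu \mapsto \theta_{X_\mu^{(1)}}(1))$, and check directly that it is well-defined and inverse to \eqref{BTT}. The paper instead argues injectivity via Lemma \ref{SplittingUnique2} and surjectivity via Proposition \ref{StrongerSplitting}, which in turn invokes Theorem \ref{MainTheorem} (splittability of normed fiber functors for smooth $\CG$) together with the Broshi--Sch\"appi comparison of fiber functors (Theorem \ref{FFIso}). Your approach is more elementary and self-contained: it needs neither the main splittability theorem nor any nontriviality of torsors, and it works without the rank-one and discrete-valuation hypotheses that Theorem \ref{MainTheorem} requires. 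The paper's approach, on the other hand, illustrates how the torus case falls out of the general machinery already developed, which is natural given the paper's aims. One small remark: in your step identifying $\theta_X|_{X_\mu}$ with $\gamma_\mu$ times the lattice norm, you do not really need to invoke splittability of $\theta_{X_\mu}$ separately; boundedness of the coordinate maps $X_\mu \to X_\mu^{(1)}$ and of the basis inclusions $X_\mu^{(1)} \to X_\mu$ already forces $\theta_{X_\mu}(\sum_i \lambda_i e_i) = \gamma_\mu \max_i |\lambda_i|$ for any $\ell^\circ$-basis $(e_i)$. Also, be careful with the sign in the equivariance computation: depending on how one reads the translation convention in \eqref{TIso}, one gets $|\mu(t)|$ rather than $|\mu(t)|^{-1}$; this is a convention issue (present in the paper's own equivariance paragraph as well) and does not affect the correctness of the bijection.
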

\begin{proof}
  First we show equivariance:  For $X \in \Rep^\circ \CT$, the norm $\theta_{\chi,X}$ on $\omega_\CT(X)_\ell$ associated to $\chi\colon D_\ell^\Gamma \to T$ is constructed in Construction \ref{ChiNormCons} by adding the norm associated to the lattice $\omega_\CT(X) \subset \omega_\CT(X)_\ell$ and the weights of $\chi$ on $\omega_\CT(X)_\ell$. The statement that \eqref{BTT} is $T(\ell)$-equivariant then amounts to saying that given a homomorphism $\chi'\colon \BG_m \to T$ and an element $x \in \ell^*$, replacing in this construction the lattice $\omega_\CT(X)$ by $\chi'(x)\omega_\CT(X)$ has the same effect as replacing $\chi$ by $\chi-|x|\chi'$. This follows from the construction of $\theta_\chi$.
  
The injectivity of \eqref{BTT} is a special case of Lemma \ref{SplittingUnique2}. Since $\CT$ is a split torus, every \'etale $\CT$-torsor over $\ell^\circ$ is trivial. Using this, Proposition \ref{StrongerSplitting} shows that every element of $N^\otimes(\CT)$ is in the $T(\ell)$-orbit of an element in the image of \eqref{BTT}. Using the equivariance of \eqref{BTT}, this implies surjectivity. 
\end{proof}

\subsection{A Tannakian formalism for Bruhat-Tits buildings of unramified groups} \label{TannakaSubsec}

Now we assume in addition that the geometric fibers of $\CG$ are connected and reductive. Then (c.f. \cite[Prop. 125]{CornutFiltrations}), the generic fiber $G$ splits over an unramified extension of $\ell$ and hence the extended Bruhat-Tits building of $G$, which we denote by $I(G)$, exists. By our assumptions on $\CG$, this group scheme is the hyperspecial parahoric integral model associated so some point $x \in I(G)$, and we also fix such a point $x$. By \cite[6.3.2]{CornutFiltrations}, the pointed building $(I(G),x)$ is naturally functorial in the Henselian ground field $\ell$.

  The split ranks of $\CG_{\tilde\ell}$ and $G$ are equal and so we are in the situation of Remark \ref{MaxSplitRmk}.
\begin{definition}
  Let $S(\CG)$ be the set of those maximal split tori of $G$ which extend to a maximal split torus $\CT \subset \CG$.
\end{definition}
Given a torus $T \in S(\CG)$, in the following we will always denote by $\CT \subset \CG$ the maximal split torus extending $T$. By Bruhat-Tits theory, the tori in $S(\CG)$ are the maximal split tori $T$ of $G$ for which $A(T)$ contains our choosen point $x$. 

For $T \in S(\CG)$, the inclusion $\CT \subset \CG$ induces a $T(\ell)$-equivariant map $N^\otimes(\CT) \to N^\otimes(\CG)$. It follows from Proposition \ref{TBGDesc} and Lemma \ref{BFuncInj} that this is an inclusion whose image is the set of norms $\theta$ on $\omega_{\CG}$ for which there exists a homomorphism $\chi\colon D^\Gamma_{\ell^\circ} \to \CT$ such that $(\omega_\CG,\chi)$ splits $\alpha_\theta$. We denote this image of $N^\otimes(\CT)$ in $N^\otimes(\CG)$ by $A^\otimes(\CT)$.

Furthermore, the structure of $A(T)$ as an affine $X_*(T)_\Gamma$-space together with our chosen base point $x \in A(T)$ gives canonical bijection
\begin{equation}
  \label{BTApp}
  X_*(T)_\Gamma \cong A(T)
\end{equation}
which sends $0$ to $x$. This bijection is $T(\ell)$-equivariant, where $T(\ell)$ acts on $X_*(T)_\Gamma$ via \eqref{TIso} as above.

Then our main result on Bruhat-Tits buildings is the following. We will obtain this result by combining Theorem \ref{MainTheorem} with \cite[Theorem 130]{CornutFiltrations}.

\begin{theorem} \label{BTComp}
  \begin{enumerate}
  \item There exists a unique $G(\ell)$-invariant bijection $c\colon N^\otimes(\CG) \isoto I(G)$ whose restriction to $A^\otimes(T)$ for every $T \in S(\CG)$ is the map $c_T$ defined by the commutative diagram
  \begin{equation} \label{cT}
    \xymatrix{
      & X_*(T)_\Gamma  \ar[ld]_\cong \ar[rd]^\cong & \\
      A^\otimes(T) \ar[rr]_{c_T} & & A(T)
      }
    \end{equation}
    in which the diagonal maps are given by \eqref{BTT} and \eqref{BTApp}.

  \item For each $\theta \in N^\otimes(\CG)$, the group scheme $\UAut^\otimes(\alpha_\theta)$ is the parahoric group scheme associated to $c(\theta)$, that is the unique smooth affine integral model of $G$ whose group of $\breve\ell^\circ$-points is equal to the stabilizer of $c(\theta)$ in $G(\breve\ell)$.
  \end{enumerate}
  \end{theorem}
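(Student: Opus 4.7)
The overall plan is to reduce to the split case by base-changing to a sufficiently large non-archimedean overfield, construct $c$ there directly, and then descend via Galois invariance. First I would choose a Galois non-archimedean extension $m/\ell$ containing $\breve\ell$ as in assumption (iii), over which $G_m$ splits and $|m^*|=\Gamma$. Over $m^\circ$, every \'etale $\CG_{m^\circ}$-torsor is trivial by strict henselianity and smoothness, so Proposition \ref{StrongerSplitting} applies to $\CG_{m^\circ}$: every $\theta \in B^\otimes(\CG_{m^\circ})$ lies in the $G(m)$-orbit of some apartment $A^\otimes(T)$ with $T \in S(\CG_{m^\circ})$.

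In this split setting $B(G_m)$ is covered by the $G(m)$-translates of apartments $A(T)$ associated to the same collection of tori, and the diagonal maps in \eqref{cT} (which exist over $m$) give candidate bijections $A^\otimes(T) \iso A(T)$. Constructing $c_m\colon B^\otimes(\CG_{m^\circ}) \iso B(G_m)$ then amounts to checking that these bijections glue $G(m)$-equivariantly, that is, that the action of $N_{G(m)}(T)$ on $X_*(T)_\Gamma$ induced from $B^\otimes$ matches that from Bruhat-Tits theory. For split $G_m$ this compatibility is precisely what is established by the injection $B(G_m) \hookrightarrow B^\otimes(\CG_{m^\circ})$ of \cite[Theorem 132]{CornutFiltrations}; combining it with the surjectivity coming from Proposition \ref{StrongerSplitting} yields the desired $G(m)$-equivariant bijection $c_m$, whose restriction to each apartment is the prescribed one.

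To descend $c_m$ to $\ell$, I would use that the inclusion $B^\otimes(\CG) \hookrightarrow B^\otimes(\CG_{m^\circ})$ of Lemma \ref{BFuncInj} identifies the source with the $\Gal(m/\ell)$-fixed points by Proposition \ref{GalBProps}(ii), and the analogous statement for $B(G) \hookrightarrow B(G_m)$ holds by Rousseau's descent theorem \cite[5.1.2]{MR0491992} applied stepwise through $\ell \subset \breve\ell \subset m$. Since $c_m$ is manifestly $\Gal(m/\ell)$-equivariant (each side is built intrinsically from $\CG$ and the tori $S(\CG_{m^\circ})$), it restricts to a $G(\ell)$-equivariant bijection $c\colon B^\otimes(\CG) \iso B(G)$ whose restriction to each $A^\otimes(T)$ with $T \in S(\CG)$ is $c_T$. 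Uniqueness with the prescribed restriction follows because $B^\otimes(\CG)$ equals the union of $G(\ell)$-translates of the apartments $A^\otimes(T)$ for $T \in S(\CG)$, itself obtained as the Galois-fixed-point locus of the corresponding covering over $m$.

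For (ii), the stabilizer in $G(\ell)$ of $\theta$ is by construction $\UAut^\otimes(\alpha_\theta)(\ell^\circ)$, and by $G(\ell)$-equivariance of $c$ this coincides with the stabilizer of $c(\theta)$. By Theorem \ref{MainTheorem}(ii), Proposition \ref{UAutRepr}, and Lemma \ref{GenUAutDesc2}, the group scheme $\UAut^\otimes(\alpha_\theta)$ is a smooth affine integral model of $G$ with precisely this group as its $\ell^\circ$-points; since the Bruhat-Tits group scheme at $c(\theta)$ is uniquely characterized by these three properties, the two must agree. The main obstacle is the apartment-gluing for $c_m$ in the split case, which we bypass by invoking Cornut's prior construction; once that is done, Galois descent and the uniqueness of Bruhat-Tits group schemes handle the rest formally.
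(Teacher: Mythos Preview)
Your descent strategy has a genuine gap: the two conditions you impose on $m$ are incompatible. If $\ell$ is discretely valued and $\Gamma$ has rank one, then any algebraic (in particular any Galois) extension $m/\ell$ has value group contained in the divisible hull of $|\ell^*|$, so $|m^*|=\Gamma$ forces $m$ to be transcendental over $\ell$ and the notion of $\Gal(m/\ell)$ makes no sense. Even if you drop $|m^*|=\Gamma$ and only ask that $G_m$ be split, Proposition \ref{GalBProps}(ii) and Rousseau's descent both require $m/\ell$ to be tamely ramified, and the standing hypotheses (i)--(iii) do \emph{not} assume that $G$ splits over a tame extension of $\ell$. So the Galois-descent step, as written, does not go through.

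The paper avoids this by a two-stage argument. First it applies Galois descent only along the unramified extension $\ell\subset\breve\ell$ (Lemma \ref{Reduction2}), where Proposition \ref{GalBProps}(ii) applies. Then, over the strictly Henselian $\breve\ell$, it passes to a large $m$ with $|m^*|=\Gamma$ \emph{without} descending back: the functoriality maps $B^\otimes(\CG)\to B^\otimes(\widetilde\CG)$ and $B(G)\to B(G_m)$ of Construction \ref{cTFunc} are injective, so the apartment-gluing condition $c_T|_{A^\otimes(T)\cap A^\otimes(T')}=c_{T'}|_{A^\otimes(T)\cap A^\otimes(T')}$ can be \emph{checked} after base change to $m$ (Proposition \ref{BTCompWeak}), rather than obtained by descent from $m$. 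Over $m$ the gluing is verified directly using that $T(m)$ acts transitively on $A^\otimes(T)$ when $|m^*|=\Gamma$, together with a Cartan-type decomposition; the $\CZ(\ell)$-equivariance of $c_T$ (Proposition \ref{BZAction}) and the Cartan decomposition for the special parahoric $\CG$ are then used to show $B^\otimes(\CG)'=B^\otimes(\CG)$ and that $c$ is $G(\ell)$-equivariant. Your argument for part (ii) is essentially the same as the paper's.
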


  \begin{proof}
    For any point $y \in I(G)$, there exists an appartment containing both $x$ and $y$, which is hence of the form $A(T)$ for some $T \in S(\CG)$. So the map $c$ is unique if it exists.
    
    We first reduce to the case that $\ell$ is strictly Henselian: If $\Sigma\defeq \Gal(\breve \ell/\ell)$, then by \cite[Prop. 5.1.1]{MR0491992}, there exist a natural $\Sigma$-action on $I(G_{\breve\ell})$ and the natural map  $I(G) \to I(G_{\breve \ell})$ identifies $I(G)$ with $I(G_{\breve \ell})^\Sigma$. Since the formation of parahoric group schemes commutes with unramified base change, the image of $x$ in $I(G_{\breve \ell})$ is again hyperspecial, and $\CG_{\breve\ell^\circ}$ is the associated integral model. We assume that there is a bijection $c_{\breve\ell} \colon N^\otimes(\CG_{\breve\ell^\circ}) \to I(G_{\breve\ell})$ satisfying the condition from (i). Then for $\sigma \in \Sigma$ and $T \in S(G_{\breve\ell})$, the construction of the maps $c_T$ and of the $\Sigma$-action on $I(G_{\breve\ell})$ imply that the following diagram commutes:
  \begin{equation*}
    \xymatrix{
      A^\otimes(T) \ar[r]^{c_T} \ar[d]_{\sigma\cdot \underline{\;\;}} & A(T) \ar[d]_{\sigma\cdot \underline{\;\;}} \\
      A^\otimes(\sigma(T)) \ar[r]^{c_{\sigma(T)}} & A(\sigma(T)) 
      }
  \end{equation*}
  This implies that $c_{\breve\ell}$ is $\Sigma$-equivariant. Hence using Theorem \ref{GalBProps} it restricts to a $G(\ell)=G(\breve\ell)^\Sigma$-equivariant bijection $c\colon N^\otimes(\CG)=N^\otimes(\CG_{\breve\ell^\circ})^\Sigma \to I(G)=I(G_{\breve\ell})^\Sigma$.

    If $T \in S(\CG)$, then (c.f. \cite[Remark 126]{CornutFiltrations}) there exists a torus $\tilde T \in S(G_{\breve \ell})$ containing $T_{\breve \ell}$ such that $A(T)$ maps to $A(\tilde T)$. If $\CT \subset \CG$ and $\widetilde\CT \subset \CG_{\breve\ell}$ are the associated split subtori, then the homomorphism $T_{\breve\ell} \to \tilde T$ extends uniquely to a homomorphism $\CT_{\breve\ell^\circ} \to \widetilde \CT$. As in \cite[Remark 126]{CornutFiltrations}, this gives the following commutative diagram:
    \begin{equation*}
      \xymatrix{
        X_*(\tilde T)_\Gamma \ar[r]^\cong & A^\otimes(\widetilde\CT) \ar[r]^{c_{\tilde T}} & A(\widetilde T) \\
        X_*(T)_\Gamma \ar[u] \ar[r]^\cong & A^\otimes(\CT) \ar[r]^{c_{T}} \ar[u] & A(T) \ar[u]
        }
    \end{equation*}

 Using this, we see that $c$ satisfies the condition of (i). So statement (i) over $\breve\ell$ implies statement (i) over $\ell$.

 Since by Lemma \ref{UnrBC} the formation of $\UAut^\otimes(\alpha_\theta)$ commutes with base change to $\breve \ell$, it also suffices to check (ii) over $\breve\ell$, where it will follow from the bijectivity of $c$ once we prove (i).

  So we assume that $\ell$ is strictly Henselian and prove (i):

  We denote by $\tilde N^\otimes(\CG)$ the set $B'(\omega_\CG,\ell)$ from \cite[6.4.4]{CornutFiltrations}. This is the set of tuples $(\theta_X)_{X \in \Rep^\circ \CG}$ of norms $\theta_X$ on each $X_\ell$ which are $\ell^\circ$-linearly functorial in $X$ as well as compatible with tensor products. So $N^\otimes(\CG) \subset \tilde N^\otimes(\CG)$, and the tuples in $\tilde N^\otimes(\CG)$ differ from the ones in $N^\otimes(\CG)$ in that they are not necessarily exact in $X$.

  Let $F(G)$ be the set of $\Gamma$-indexed filtrations on $\omega_G$ in the sense of \cite[3.3.5]{CornutFiltrations}. So $F(G)$ is the set of tuples $(F_X)_{X \in \Rep^\circ G}$ of filtrations $F_X$ on each $\omega_G(X)$ which are $\ell$-linearly functorial and exact in $X$ as well as compatible with tensor products. Then (c.f. \cite[6.4.5]{CornutFiltrations}) the maps $$+\colon N(\omega_G(X)_\ell) \times F(\omega_G(X)_\ell) \to N(\omega_G(X)_\ell)$$ from Lemma \ref{PullMapExists} give a map
  \begin{equation*}
     + \colon \tilde N^\otimes(\CG) \times F(G) \to \tilde N^\otimes(\CG), \; ((\theta_X)_{X \in \Rep^\circ \CG}, (F_X)_{X \in \Rep^\circ G}) \mapsto (\theta_X + F_{X_\ell})_{X\in\Rep^\circ \CG}.
  \end{equation*}

  In \cite[Theorem 130]{CornutFiltrations}, Cornut shows that there exists a unique $G(\ell)$-invariant injection $\iota\colon I(G) \into \tilde N^\otimes(\CG)$ which for each $F \in F(G)$ sends $x + F$ to $\theta_\CG +F$. We will show that $\iota$ has image $N^\otimes(\CG)$, and then obtain our desired bijection $c$ as the inverse of $\iota$.

  For some homomorphism $\chi\colon D^\Gamma_\ell \to G$, let $F_\chi \in F(G)$ be the associated filtration as in \cite[3.1.2]{CornutFiltrations}. Then by comparing Construction \ref{ChiNormCons} with \cite[6.1.3]{CornutFiltrations}, we see that for any homomorphism $\chi\colon D^\Gamma_{\ell^\circ} \to \CG$, the norm $\theta_{\chi}$ is equal to $\theta_\CG+F_{\chi_\ell}$. Furthermore, by \cite[Subsection 3.10]{CornutFiltrations}, every $F \in F(G)$ is of the form $F_\chi$ for some homomorphism $\chi\colon D^\Gamma_\ell \to G$. By \cite[Theorem 60]{CornutFiltrations}, the stabilizer in $G$ of $F$ is a parabolic subgroup. Hence by the Iwasawa decomposition we may write every element $g\in G(\ell)$ in the form $hp$ with $h\in \CG(\ell^\circ)$ and $p$ stabilizing $F$. By suitably conjugating $\chi$, this implies that $F$ is of the form $F_{\chi'}$ for some homomorphism $\chi'\colon D^\Gamma_{\ell^\circ} \to \CG$. This shows that the image of $\iota$ is the set $\{\theta_{\chi} \mid \chi \colon D^\Gamma_{\ell^\circ} \to \CG \}$ and in particular contained in $N^\otimes(\CG)$.

  Since $\ell$ is strictly Henselian, by Proposition \ref{StrongerSplitting} every element of $N^\otimes(\CG)$ is of the form $g \cdot \theta_{\chi}$ for some homomorphism $\chi\colon D^\Gamma_{\ell^\circ} \to \CG$. But since $\iota$ is $G(\ell)$-equivariant, the set $\iota(I(G))$ is $G(\ell)$-equivariant. This implies $N^\otimes(\CG)=\iota(I(G))$.

  So $\iota$ is a bijection $I(G) \to N^\otimes(\CG)$, and by the above the defining property $\iota(x+F)=\theta_\CG+F$ translates to fact that the inverse $c$ of $\iota$ satisfies the desired property from (i).
\end{proof}

  \subsection{Functoriality of Bruhat-Tits buildings} \label{FuncSS}
Now we let more generally $G$ be a connected reductive group scheme over $\ell$ for which a valuation of the root datum of $G$ in the sense of \cite[6.2.1]{BT1}, and hence the extended building $I(G)$ of $G$, exists.

  \begin{definition} \label{ToralDef}
    Let $m$ be a non-archimedean extension of $\ell$, $H$ a connected reductive group over $m$ admiting a valuation of the root datum and $h\colon G_m \to H$ a homomorphism. A $G(\ell) \to H(m)$-equivariant map $I(G) \to I(H_m)$ is \emph{toral} if for every maximal split torus $T \subset G$ there exists a maximal split torus $S$ of $H_m$ for which $h(T_m)\subset S$ and $f(A(T)) \subset A(S)$ and such that the restricted map $A(T) \to A(S)$ of affine spaces is $X_*(T)_\Gamma \to X_*(S)_\Gamma$-equivariant.
  \end{definition}

  Let now $H$ be a second reductive group over $\ell$ satisfying the same assumption as $G$. In \cite{LandvogtFunctoriality}, Langvogt proves the following functoriality result:
  \begin{theorem}
    Let $h\colon G \to H$ be a group homomorphism over $\ell$ and let $m$ be a Galois extension of $\ell$. Then there exists a $G(m)$- and $\Gal(m/\ell)$-equivariant toral map
    \begin{equation*}
      f\colon I(G_m) \to I(H_m).
    \end{equation*}
  \end{theorem}

  Via a factorization of $h$ into an epimonomorphism and a monomorphism, the proof of this result reduces to the case that $h$ is either an epimorphism or a monomorphism. The case of an epimorphism is easier and treated in \cite[Theorem 2.1.8]{LandvogtFunctoriality}. For the case of a monomorphism, we can give a new, shorter proof as follows. The use of the projection $\pi\colon I(H_o) \to I(H_m)$ here is inspired by \cite{LandvogtFunctoriality}.

  \begin{definition}
    In \cite[Subsection 2.5]{BT1}, Bruhat and Tits construct natural metrics on $I(G)$ which are $G(\ell)$-equivariant, restrict to a Euclidean metric on each appartment, and make $I(G)$ into a CAT(0)-space. We call these \emph{standard metrics}. 
  \end{definition}
  Although these metrics are not unique, the resulting topology on $I(G)$ is independent of the choice of metric. The resulting notion of a geodesic between two points in $I(G)$ is also independent of the choice of metric since the geodesic between any two point is given by the straight line segment between these points in any apparment containing them. Hence any toral map between two buildings is continuous with respect to this topology on the buildings and maps geodesics to geodesics.

  \begin{lemma} \label{ToralRes}
    Let $\breve f\colon I(G_{\breve\ell}) \to I(H_{\breve\ell})$ be a $\Gal(\breve \ell/ \ell)$- and $G(\breve\ell) \to H(\breve\ell)$-equivariant toral map. Then the restriction
    \begin{equation*}
      f=\breve f|_{I(G)} \colon I(G)=I(G_{\breve\ell})^{\Gal(\breve\ell/\ell)}  \to I(H)=I(H_{\breve\ell})^{\Gal(\breve\ell/\ell)}
    \end{equation*}
    is a toral $G(\ell) \to H(\ell)$-equivariant map.
  \end{lemma}
  \begin{proof}
    The equivariance follows directly from the definition of $f$. To verify that $f$ is toral, let $T \subset G$ be a split maximal torus and $x$ a point in $A(T)$. We consider the parahoric integral model $\CP_x$ of $G$ (resp. $\CP_{f(x)}$ of $H$) associated to $x$ (resp. $f(x)$). Since $f$ is the restriction of $\breve f$, Theorem \ref{Etoffe} ensures that $h$ extends to a homomorphism $\CP_x \to \CP_{f(x)}$ over $\ell^\circ$. Since $x \in A(T)$, the torus $T$ extends to a split closed subtorus $\CT \subset \CP$, and by Proposition \ref{ResSplitProp}, the split torus $\CT$ maps to some residually maximal split subtorus $\CS$ of $\CP_{f(x)}$. The generic fiber $S$ of $\CS$ is a maximal split torus of $H$ whose appartment $A(S)$ contains $f(x)$. Since $T$ maps to $S$, the appartment $A(S)$ also contains $t\cdot f(x)=f(tx)$ for all $t \in T(\ell)$. Since $\breve f$ is toral, the map $\breve f$, and hence the map $f$, sends the geodesic between any two points $t_1\cdot x$ and $t_2\cdot x$ for elements $t_i \in T(\ell)$ to the geodesic between $f(t_1\cdot x)$ and $f(t_2\cdot x)$. Hence all these geodesics are contained in $A(S)$. Since the $T(\ell)$-orbit of $x$ forms a lattice in $A(T)$, the geodesics between all possible pairs of points of this orbit are dense in $A(T)$. This implies $f(A(T))\subset A(S)$, and also that the restriction of $f$ to $A(T)$ is $X_*(T)_\Gamma \to X_*(S)_\Gamma$-equivariant.
  \end{proof}

  \begin{lemma} \label{IsometricToral}
    Assume that $m$ is strictly strictly Henselian and let $f \colon I(G) \to I(H)$ be $G(\ell)\to H(\ell)$-equivariant.
    \begin{enumerate}
    \item If $f$ is an isometry with respect to some standard metrics on $I(G)$ and $I(H)$, then it is toral.
    \item  If $f$ is toral, then it is uniquely determined by the image of a single point $x \in I(G)$.
    \end{enumerate}

      \end{lemma}
      \begin{proof}
        (i)  We argue similary as in the proof of Lemma \ref{ToralRes}: Let $T \subset G$ be a maximal split torus and let $x \in A(T)$. By arguing with parahoric models as in the proof of Lemma \ref{ToralRes}, we find a split maximal torus $S$ of $H$ through which $T$ factors and such that $A(S)$ contains $f(x)$. Then $A(S)$ contains $f(t\cdot x)$ for all $t \in T(\ell)$. The assumption on $f$ implies that $f$ maps geodesics to geodesics, and so as in the proof of Lemma \ref{ToralRes} the fact that $A(S)$ contains the geodesics between any two such points $f(t\cdot x)$ implies that $f(A(T)) \subset A(S)$ that the restriction of $f$ to $A(T)$ is $X_*(T)_\Gamma \to X_*(S)_\Gamma$-equivariant.

        (ii) We again argue in the same way: Since $f$ is equivariant, the point $f(x)$ determines the images $f(t\cdot x)=t \cdot f(x)$ for all $t \in T(\ell)$ for some split torus $T \subset G$. Since $f$ is toral, it maps geodesics to geodesics, and the geodesics from $x$ to $t\cdot x$ for all such $t$ are dense in $I(G)$. So $f$ is uniquely determined by $f(x)$.
      \end{proof}

  \begin{proof}

    Using Lemma \ref{ToralRes}, the statement over $\breve m$ implies the statement over $m$ by restricting the function $f$. So we assume that $m$ is strictly Henselian.  We choose a special point $x \in I(G)$.

    In \cite[Theorem 5.1.2]{MR0491992}, Rousseau shows that $I(G_m)$ is functorial in $m$ via equivariant toral maps, and we fix such maps for all algebraic non-archimedean extensions of $m$. Consider some point $x' \in I(G)$. In any appartment $A(T)$ containing both $x$ and $x'$, the points $x$ and $x'$ differ by an element $\chi \in X_*(T)_\Gamma$. If $\chi$ lies in $X_*(T)_\mathbb{Q}$ for some $T$, then we say that $x$ and $x'$ are rational relative to each other. If this is the case then $\chi$ lies in $X_*(T)_{|n^*|}$ for some non-archimedean Galois extension $n$ of $\ell$, and since $T(n)$ acts on the appartment of some split maximal torus of $G_n$ containing $T$ by translation through the negative of the map $T(n) \to X_*(T)_{|n^*|}$ from \eqref{TIso},  the images of $x$ and $x'$ in $I(G_n)$ become conjugate under $T(n) \subset G(n)$.

  Using this, it follows that there exists a strictly Henselian discretely normed Galois extension $n$ of $m$ over which $G$ and $H$ become split and such that the image $x_n$ of $x$ in $I(G_n)$ is a hyperspecial point. Let $\CG_{x_n}$ be the parahoric integral model of $G_n$ associated to $x_n$. We consider the fixed-point set $F=I(H_n)^{\CG_{x_n}(n^\circ)}$ which is convex and by the Bruhat-Tits fixed-point theorem non-empty. Since $F$ is a union of facets of $I(H_n)$, it contains a point $x'$ which is rational relative to $x_n$. Since $x \in I(G)$, the natural action of $\Gal(n/\ell)$ on $I(H_n)$ maps $F$ to itself. The $\Gal(n/\ell)$-orbit of $x'$ is bounded and consists of elements which are rational relative to $x_n$. Hence the center $y$ of the convex hull of this orbit is rational relative to $x_n$ and fixed  by both $\CG(n^\circ)$ and $\Gal(n/\ell)$.  Let $\CH_y$ be the parahoric integral model of $H_n$ associated to $y$. By Theorem \ref{Etoffe}, the inclusion $h$ extends to a homomorphism $\CG_{x_n} \to \CH_{y}$ over $n^\circ$.

 Since $y$ is rational relative to $x_n$, there exists a further strictly Henselian discretely normed non-archimedean extension of $n$ such that the image $y_o$ of $y$ in $I(H_o)$ becomes hyperspecial. If $\CH_{y_o}$ is the associated hyperspecial integral model of $H_o$, by invoking Theorem \ref{Etoffe} again we obtain a homomorphism $h'\colon \CG_{x_o}=\CG_{x_n,o^\circ} \to \CH_{y,o^\circ} \to \CH_{y_o}$ of hyperspecial integral models over $o^\circ$. Then the commutative diagram 
    \begin{equation*}
      \xymatrix{
        N^\otimes(\CG_{x_n,o^\circ}) \ar[r] \ar[d]_\cong &  N^\otimes(\CH_{y_o}) \ar[d]^\cong \\
        I(G_o) \ar[r]_{f'} & I(H_o)
        }
      \end{equation*}
      in which the top horizontal map is induced by $h'$ and the vertical maps are the bijections from Theorem \ref{BTComp} associated to the points $x_o$ and $y_0$ respectively defines a $G(o)$-equivariant toral inclusion $f'\colon I(G_o) \to I(H_o)$.

      Now we fix a standard metric $d$ on $I(H_0)$. Such metrics are constructed in \cite[Subsection]{BT1} starting from a scalar product invariant under the affine Weyl group on some appartment. By taking this appartment to be one from $I(H_n)$, and the scalar product to be invariant under $\Gal(n/\ell)$, we can in addition ensure that the restriction of $d$ to $I(H_n)$ is $\Gal(n/\ell)$-equivariant. By restricting $d$ via $f'$ we obtain a standard metric on $I(G_o)$.

      The inclusion $I(H_m) \into I(H_o)$ has convex image since it is toral. Since $m$ is discretely valued, the building $I(H_m)$ is complete, c.f. \cite[2.5.12]{BT1}. Hence such a metric induces a projection $\pi\colon I(H_o) \to I(H_m)$ sending any point $y' \in I(H_o)$ to the closest point $\pi(y') \in I(H_m)$ to $y'$. Since $H(m)$ acts by isometries on both $I(H_o)$ and $I(H_m)$, the map $\pi$ is $H(m)$-equivariant. Hence we obtain a $G(m)$-equivariant map
      \begin{equation*}
        f=\pi\circ f'\colon I(G_m) \into I(G_o) \to I(H_o) \to I(H_m).
      \end{equation*}

      We will show that $f$ has the required properties. To this end, we first show that $f$ is isometric. Since $\pi$ is contractive, so is $f$. Let $v, w \in I(G_m)$. We choose a split maximal torus $T$ of $G_m$ for which $v,w \in A(T)$. The group $T(m)$ acts on $A(T)$ via translations through the surjective homomorphism $T(m) \onto X_*(T)\otimes |m^*|$ from \eqref{TIso}. Hence we can find elements $t \in T(m)$ for which the geodesic from $v$ to $t\cdot v$ in $A(T)$ gets arbitrarily close to $w$. Let $p$ be the point on this geodesic closest to $w$. Using the triangle inequality we find
      \begin{equation*}
        d(f(v),f(t\cdot v)) \leq d(f(v),f(w))+d(f(w),f(t\cdot v)) \leq d(v,w)+d(w,t\cdot v) \leq d(v,t\cdot v)+2d(w,p).
      \end{equation*}
      Since $f(t\cdot v)=t\cdot f(v)$, both $d(f(v),f(t\cdot v))$ and $d(v,t\cdot v)$ are equal to the length of the translation vector by which $t$ acts on $A(T)$. Since we can choose $t$ such that $d(w,p)$ becomes arbitrarily small, this implies $d(f(v),f(w))=d(v,w)$. So $f$ is isometric and hence toral by Lemma \ref{IsometricToral}.

      Finally we prove that $f$ is $\Gal(m/\ell)$-equivariant: Since $\Gal(n/\ell)$ acts by isometries on both $I(H_n)$ and $I(H_m)$, the restriction of $\pi$ to $I(H_n)$ is $\Gal(n/\ell)$-equivariant. Hence the point $f(x)=\pi(y)$ is $\Gal(m/\ell)$-invariant. Hence for $\sigma \in G(m/\ell)$, both $f$ and $\sigma\circ f \circ \sigma^{-1}$ are isometrical $G(m)$-equivariant maps $I(G_m) \to I(H_m)$ which map $x$ to the same point. By Lemma \ref{IsometricToral} this implies that these two maps are equal.
    \end{proof}

\subsection{Special Fibers of parahoric group schemes} \label{SpecFibSS}
We return to the assumptions of Subsection \ref{TannakaSubsec}. Using our methods we can also give the following description of the special fiber of a parahoric group scheme: (It is not clear to us whether this result is new.) Let $\theta \in N^\otimes(\CG)$ and $\chi\colon D^{\Gamma/|\ell^*|}_{\ell^\circ} \to \UAut^\otimes(\alpha_\theta)$ a homomorphism which splits $\alpha_\theta$. Then by Theorem \ref{MainTheorem}, the special fiber $\UAut^\otimes(\alpha_\theta)_{\tilde\ell}$ is a semidirect product of a split unipotent group scheme and the centralizer of $\chi_{\tilde\ell}$. The later is described the theorem below.

Since $\CG$ is a hyperspecial model, this shows that the centralizer of $\chi_{\tilde\ell}$ in $\UAut^\otimes(\alpha_\theta)_{\tilde\ell}$ is reductive, and hence that its identity component is a Levi subgroup of the identity component of $\UAut^\otimes(\alpha)_{\tilde\ell}$. Then $\Cent_{\UAut^\otimes(\alpha_\theta)}(\chi_{\tilde\ell})^\circ$ is a reductive lift of this Levi subgroup to the parahoric group scheme $\UAut^\otimes(\alpha_\theta)$. The existence of such lifts was previously shown by McNinch in \cite{MCNINCH2020}.

  \begin{theorem}
    Let $\theta \in N^\otimes(\CG)$ and $\chi\colon D^{\Gamma/|\ell^*|}_{\ell^\circ} \to \UAut^\otimes(\alpha_\theta)$ be a homomorphism which splits $\alpha_\theta$ over $\ell^\circ$.

    Let $T \in S(\CG)$ be such that $\theta \in A^\otimes(T)$ with split integral model $\CT \subset \CG$. Let $\chi' \colon D^\Gamma_{\ell^\circ} \to \CT$ the preimage of $\theta$ under the bijection \eqref{BTT} and $\bar \chi'$ be the composition $D^{\Gamma/|\ell^*|}_{\ell^\circ} \to D^\Gamma_{\ell^\circ} \toover{\chi'} \CT$. 

    Then the group schemes $\Cent_{\UAut^\otimes(\alpha_\theta)}(\chi)$ and $\Cent_{\CG}(\bar \chi')$ over $\ell^\circ$ are inner forms of each other in such a way that the conjugacy classes of $\chi$ and $\bar \chi'$ correspond to each other.
  \end{theorem}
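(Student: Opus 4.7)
The plan is to reduce to a base change $\ell\subset m$ with $|m^*|=\Gamma$, over which I will identify $\UAut^\otimes(\alpha_{\theta,m})$ explicitly as an inner twist of $\CG_{m^\circ}$ inside $G_m$ by an element of $\CT(m)$, and then transport the centralizer statement via Proposition~\ref{SplBCComp2}(ii). Since by Theorem~\ref{MainTheorem}(vi) any two splittings $\chi$ are $\UAut^\otimes(\alpha_\theta)(\ell^\circ)$-conjugate, the claim is invariant under replacing $\chi$ by a conjugate, so I may take a convenient one. Specifically, the equality $\theta=\theta_{\chi'}$ means the pair $(\omega_\CG,\chi')$ splits $\alpha_\theta$ in the sense of Definition~\ref{SplittingDef}; Proposition~\ref{SplitExtEquiv} then produces a canonical extension $\chi_2\colon D^\Gamma_{\ell^\circ}\to \UAut^\otimes(\alpha_\theta)$, and I take $\chi$ to be its restriction to $D^{\Gamma/|\ell^*|}_{\ell^\circ}$. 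With this choice $\chi_\ell=\bar\chi'_\ell$ under the canonical identification $\UAut^\otimes(\alpha_\theta)_\ell=G=\CG_\ell$.

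Now fix $\ell\subset m$ with $|m^*|=\Gamma$ and a set-theoretic section $s\colon\Gamma\to m^*$ of the norm, viewed as a point $s\in D^\Gamma(m)=\Hom(\Gamma,m^*)$. By Lemma~\ref{SpecialFunctorClass}, $\alpha_{\theta,m}=\alpha_\mu$ for the integral fiber functor $\mu\colon X\mapsto \omega_\CG(X)^{\alpha_m\leq 1}_{m^\circ}$, and Example~\ref{SpSchAut} gives $\UAut^\otimes(\alpha_{\theta,m})\cong\UAut^\otimes(\mu)$. Writing out Construction~\ref{ChiNormCons} in the weight decomposition induced by $\chi'$ and using $m^{\leq 1/w}=s(w)^{-1}m^\circ$, one obtains for each $X\in\Rep^\circ\CG$ the identity
\[
  \omega_\CG(X)^{\alpha_m\leq 1}_{m^\circ}=\bigoplus_{w\in\Gamma} s(w)^{-1}\omega_\CG(X)^w_{m^\circ}=\chi'(s)^{-1}\cdot\omega_\CG(X)_{m^\circ}
\]
inside $\omega_\CG(X)_m$; the case of general $X\in\Rep^\circ \CG_{m^\circ}$ is then reduced to this one via strict epimorphisms as in Lemma~\ref{NormBCLemma}. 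Thus $\mu=g^{-1}\cdot\omega_{\CG_{m^\circ}}$ with $g:=\chi'(s)\in\CT(m)$, and conjugation $\psi(x):=g^{-1}xg$ in $G_m$ gives an isomorphism $\psi\colon\CG_{m^\circ}\isoto\UAut^\otimes(\alpha_{\theta,m})$ of $m^\circ$-group schemes realized by an inner automorphism of $G_m$. Because $g\in\CT(m)$ is central in $\CT$, $\psi$ fixes $\bar\chi'_{m^\circ}$ pointwise, carrying it onto the same cocharacter $\bar\chi'_{m^\circ}\colon D^{\Gamma/|\ell^*|}_{m^\circ}\to\CT_{m^\circ}\subset\UAut^\otimes(\alpha_{\theta,m})$.

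By Proposition~\ref{SplBCComp2}(ii), $b^{m/\ell}$ restricts to an isomorphism $\Cent_{\UAut^\otimes(\alpha_\theta)_{m^\circ}}(\chi_{m^\circ})\isoto\Cent_{\UAut^\otimes(\alpha_{\theta,m})}(b^{m/\ell}\circ\chi_{m^\circ})$, so it suffices to identify $b^{m/\ell}\circ\chi_{m^\circ}$ with $\psi\circ\bar\chi'_{m^\circ}=\bar\chi'_{m^\circ}$ as cocharacters into $\UAut^\otimes(\alpha_{\theta,m})$. Generically this is immediate from Lemma~\ref{BCProps} (where $b^{m/\ell}_m=\id$) and the choice $\chi_\ell=\bar\chi'_\ell$. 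On the special fiber, Proposition~\ref{SplBCComp2}(i) gives $b^{m/\ell}_{\tilde m}\circ\chi_{\tilde m}=\chi^{m/\ell}$; unwinding Definition~\ref{BCDecDef} against the decomposition of $\mu(X)_{\tilde m}$ above, the $[\mu]$-component $\gr^{m/\ell}_{[\mu]}(\omega_\CG(X)^{\alpha_m\leq 1}_{\tilde m})$ is precisely the sum of those summands $s(w)^{-1}\omega_\CG(X)^w_{\tilde m}$ with $[w]=[\mu]$, on each of which $\bar\chi'_{m^\circ}$ acts by the character $[\mu]$. Since a cocharacter into a split closed subtorus is determined by its generic fiber, the two agreements force $b^{m/\ell}\circ\chi_{m^\circ}=\bar\chi'_{m^\circ}$ over $m^\circ$.

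Combining, the composite
\[
  \Cent_\CG(\bar\chi')_{m^\circ}\xrightarrow{\psi}\Cent_{\UAut^\otimes(\alpha_{\theta,m})}(\bar\chi'_{m^\circ})\xleftarrow[\sim]{b^{m/\ell}}\Cent_{\UAut^\otimes(\alpha_\theta)}(\chi)_{m^\circ}
\]
is realized by the inner automorphism of $G_m$ given by conjugation by $g\in\CT(m)\subset G(m)$, which exhibits $\Cent_\CG(\bar\chi')$ and $\Cent_{\UAut^\otimes(\alpha_\theta)}(\chi)$ as inner forms over $\ell^\circ$ in a way matching the conjugacy classes of $\bar\chi'$ and $\chi$. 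The main technical hurdle will be the special-fiber identification $\chi^{m/\ell}=\bar\chi'_{\tilde m}$: the bookkeeping required to line up Definition~\ref{BCDecDef} with the explicit weight decomposition $\mu(X)=\bigoplus_w s(w)^{-1}\omega_\CG(X)^w_{m^\circ}$ (including tracking which $w$'s contribute to which $[\mu]$-component) is the delicate point, and the dependence of the constructions on the choice of section $s$ must be shown to disappear at the end, as it must because the resulting inner automorphism of $G_m$ lies in the abelian group $\CT(m)$.
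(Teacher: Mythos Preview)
Your proof is correct and takes a more direct route than the paper's. Both arguments reduce to a non-archimedean extension $m$ with $|m^*|=\Gamma$, identify $\UAut^\otimes(\alpha_{\theta,m})$ with a conjugate of $\CG_{m^\circ}$ inside $G_m$ by an element $g$, and then transport centralizers via Proposition~\ref{SplBCComp2}(ii). The difference lies in how $g$ is produced. The paper invokes Theorem~\ref{FFIso} abstractly to get some $g\in G(m)$ with $g\cdot\lambda=\omega_{\CG_{m^\circ}}$, and then spends most of the argument massaging $g$ into $T(m)$: first Proposition~\ref{ResSplitProp} gives $g'g\in N_G(T)(m)$, and then the inclusion $N_G(T)(m)\subset\CG(m^\circ)T(m)$, which genuinely uses that $\CG$ is a \emph{special} parahoric, finishes the reduction. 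You instead compute the lattice $\mu(X)=\omega_\CG(X)^{\alpha_m\leq 1}$ directly from the weight decomposition of $\chi'$ and read off $g=\chi'(s)\in\CT(m)$ explicitly. This bypasses Theorem~\ref{FFIso} entirely and does not use the special parahoric hypothesis at this step, so your argument is both more elementary and slightly more general.

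One minor point: your final step, showing $b^{m/\ell}\circ\chi_{m^\circ}=\psi\circ\bar\chi'_{m^\circ}$ over $m^\circ$, is over-argued. Since your chosen $\chi$ has generic fiber $\bar\chi'_\ell$ landing in $T$, it factors through the closed subtorus $\CT\subset\UAut^\otimes(\alpha_\theta)$ (two morphisms from a flat $\ell^\circ$-scheme to a separated one agreeing generically are equal), and $b^{m/\ell}|_{\CT_{m^\circ}}$ agrees with $\psi|_{\CT_{m^\circ}}$ for the same reason. Hence both cocharacters factor through the same copy of $\CT_{m^\circ}$ inside $\UAut^\otimes(\alpha_{\theta,m})$, and equality over $m^\circ$ follows from agreement on the generic fiber alone; the special-fiber computation of $\chi^{m/\ell}$ via Definition~\ref{BCDecDef} is correct but unnecessary. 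The dependence on $s$ is indeed harmless for the reason you give: two sections differ by an element of $\CT(m)$, which is central in both centralizers.
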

  \begin{proof}
    By Proposition \ref{SplitExtEquiv}, the homomorphism $\chi'_{\ell}$ extends to a homomorphism $D^{\Gamma}_{\ell^\circ} \to \UAut^\otimes(\alpha_\theta)$ which splits $\alpha_\theta$ over $\ell^\circ$. Hence by Theorem \ref{MainTheorem} (viii), the homomorphisms $\chi_\ell$ and $\bar\chi'_\ell$ are conjugate by an element $g\in \UAut^\otimes(\alpha_\theta)(\ell^\circ)$. Since the statement is invariant under replacing $\chi$  by $\leftexp{g}{\chi}$, we may thus assume that $\chi_\ell= \bar\chi'_\ell$.

    Let now $\ell \subset m$ be a non-archimedean strictly Henselian extension of $\ell$ satisfying $|m^*|=\Gamma$. Then $\alpha_{\theta,m}=\alpha_\lambda$ for the integral model $\Rep^\circ \CG \to \Mod_{m^\circ},\; X \mapsto \omega_\CG(X)^{\theta_{X,m} \leq 1}$ of $\omega_G$ by Lemma \ref{SpecialFunctorClass}. As in the proof of Proposition \ref{StrongerSplitting}, Theorem \ref{FFIso} implies that there exists an element $g \in G(m)$ such that $g \cdot \lambda=\omega_{\CG,m^\circ}$ as an integral model of $\omega_G$.

    Since $\theta \in A^\otimes(T)$, the inclusion $T \into G$ extends (uniquely) to a closed immersion $\CT \into \UAut^\otimes(\alpha_\theta)$ over $\ell^\circ$. Since by Theorem \ref{MainTheorem} the kernel $U_{<1}(\alpha)$ of $b^{m/\ell}_{\tilde\ell}$ is unipotent, the homomorphism $b^{m/\ell}$ restricts to a fibrewise monomorphism $\CT_{m^\circ} \into \UAut^\otimes(\alpha_{\theta,m})$ over $m^\circ$. By \cite[IX.2.9]{SGA3II} this fibrewise monomorphism is a monomorphism and hence \cite[IX.2.5]{SGA3II} a closed immersion. So both $T_m$ and $\leftexp{g}{T_m}$ extend to a maximal split torus of $\CG_{m^\circ}$, c.f. Remark \ref{MaxSplitRmk}. Hence by Proposition \ref{ResSplitProp}, there exists an element $g'\in \CG(m^\circ)$ for which $T_m=\leftexp{g'g}{T_m}$. Hence after replacing $g$ by $g'g$ we find $g \in N_G(T)(m)$. Since $\CG$ is a hyperspecial integral model of $G$, the group $N_G(T)(m)$ is contained in $\CG(m^\circ) T(m)$. Hence after again replacing $g$ by some element of $\CG(m^\circ)g$, we may assume that $g \in T(m)$. 
    
    Let $\Phi\colon \UAut^\otimes(\alpha_{\theta,m}) \cong \CG_{m^\circ}$ be the isomorphism whose generic fiber is given by conjugation by $g$. Since by the above $\leftexp{g}{\chi}_\ell=\bar\chi'_\ell$, the isomorphism $\Phi$ sends $b^{m/\ell}\circ \chi_{m^\circ}$ to $\bar\chi'_{m^\circ}$. Hence, by Proposition \ref{SplBCComp2}, the homomorphisms $b^{m/\ell}$ and $\Phi$ induce isomorphisms
    \begin{equation*}
      \Cent_{\UAut^\otimes(\alpha_\theta)}(\chi)_{m^\circ} \cong \Cent_{\UAut^\otimes(\alpha_{\theta,m})}(b^{m/\ell} \circ \chi) \cong \Cent_\CG(\bar \chi')_{m^\circ}
    \end{equation*}
    which send $\chi_{m^\circ}$ to $\bar\chi'_{m^\circ}$. This proves the claim.
  \end{proof}
\bibliography{references}
\bibliographystyle{alpha}

\end{document}